\documentclass{amsart}
\usepackage{amsmath,amssymb,amsthm}
\usepackage{amsfonts,amssymb}

\usepackage{natbib}

\usepackage[dvipsnames]{xcolor}
\usepackage{tikz}
\usepackage{tikz-cd}
\usetikzlibrary{fit,calc,backgrounds}
\tikzset{overcross/.style={double, line width=1.5, white, double=#1, double distance=\knotlinewidth},
    overcross/.default={black},
    knot/.style={line width=\knotlinewidth, baseline=-.5ex}}
\newcommand{\knotlinewidth}{.7pt}

\usepackage{float}
\usepackage{graphicx}
\usepackage{caption}
\usepackage[margin=10pt]{subcaption}
\let\emptyset\varnothing

\usepackage{hyperref}

\usepackage{enumitem}
\usepackage{placeins}

\newtheorem{theorem}{Theorem}[section]
\newtheorem{corollary}[theorem]{Corollary}
\newtheorem{proposition}[theorem]{Proposition}
\newtheorem{lemma}[theorem]{Lemma}

\theoremstyle{definition}
\newtheorem{definition}[theorem]{Definition}

\theoremstyle{remark}
\newtheorem*{remark}{Remark}

\newcommand{\spinc}{\mathrm{Spin}^c}
\newcommand{\ind}{\mathrm{ind}}
\newcommand{\Id}{\mathrm{Id}}
\newcommand{\PD}{\mathrm{PD}}
\newcommand{\hol}{\mathrm{Hol}}
\newcommand{\Kh}{\mathrm{Kh}}

\newcommand{\R}{\mathbb{R}}
\newcommand{\Z}{\mathbb{Z}}

\newcommand{\m}{\mathbf{m}}
\newcommand{\n}{\mathbf{n}}
\newcommand{\bfo}{\mathbf{o}}
\newcommand{\bfk}{\mathbf{k}}

\newcommand{\frakt}{\mathfrak{t}}

\newcommand{\circled}[1]{%
	\tikz[baseline=(char.base)]{
		\node[draw,circle,inner sep=1pt] (char) {#1};}}

\usepackage{geometry}

\title{Plane Floer homology and the odd Khovanov homology of 2-knots}

\author{Dean Spyropoulos}
\address{Department of Mathematical Sciences, New Mexico State University, Las Cruces, NM 88003}
\email{dspyro@nmsu.edu}

\author{Rithwik Susheel Vidyarthi}
\address{Department of Mathematics, Michigan State University, East Lansing, MI 48824}
\email{vidyart2@msu.edu}

\author{Chen Zhang}
\address{Simons Center for Geometry and Physics, State University of New York, Stony Brook, NY 11794}
\email{czhang@scgp.stonybrook.edu}

\begin{document}

\begin{abstract}
We prove a conjecture of Migdail and Wehrli regarding the odd Khovanov cobordism maps associated to knotted spheres. Our key tool is Daemi's plane Floer homology, which we use in place of a Lee deformation. Continuing the analogy with Lee homology, we see this work as a potential first step toward a genuinely functorial model for odd Khovanov homology.
\end{abstract}

\maketitle

\section{Introduction}

Khovanov homology \cite{10.1215/S0012-7094-00-10131-7} is a homological invariant of links which categorifies the Jones polynomial. Early in its development, Jacobsson \cite{MR2113903}, Bar-Natan \cite{MR2174270}, and Khovanov \cite{MR2171235} each gave separate proofs that Khovanov homology is functorial with respect to smooth link cobordisms, but only up to sign. The sign defect in functoriality was corrected by several authors \cite{MR2443094, MR2496052, MR2647055, MR4598808, MR4376719}, resulting in genuinely functorial models for Khovanov homology.

Shortly after the discovery of Khovanov homology, Ozsv\'ath and Szab\'o \cite{ozsvath2003heegaard} proved the existence of a spectral sequence from the (reduced) Khovanov homology of a link in $\mathbb{Z}/2\mathbb{Z}$-coefficients to the Heegaard Floer homology of the branched double cover:
\[
\widetilde{\mathrm{Kh}}(L; \mathbb{Z}/2\mathbb{Z}) \Rightarrow \widehat{\mathrm{HF}}(\Sigma(-L); \mathbb{Z}/2\mathbb{Z}).
\]
However, this statement comes with a warning: it is a consequence of an encompassing \emph{link surgeries spectral sequence}, occurring over the integers and, in this more general setting, the $E_2$ page is evidently distinct from $\widetilde{\mathrm{Kh}}(L; \mathbb{Z})$.

Ozsv\'ath, Rasmussen, and Szab\'o \cite{MR3071132} provided a new candidate for the $E_2$ page called \emph{odd Khovanov homology}; the original theory has been retroactively declared \emph{even Khovanov homology}. The two theories necessarily agree over $\mathbb{Z}/2\mathbb{Z}$ coefficients, but are distinct otherwise (see, \textit{e.g.}, \cite{MR2777025}) and both categorify the Jones polynomial.

Recently, Migdail and Wehrli \cite{migdail2024functoriality} proved the following.

\begin{theorem}[\cite{migdail2024functoriality}, Corollary 1]
\label{thm:MW24}
Odd Khovanov homology is functorial under smooth link cobordisms in $\mathbb{R}^3\times I$ up to sign.
\end{theorem}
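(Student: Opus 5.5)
The natural route mirrors the movie-move proofs of Jacobsson, Bar-Natan and Khovanov for even Khovanov homology, adapted to the odd setting of Ozsv\'ath--Rasmussen--Szab\'o. First I will assign a chain map to each elementary piece of a smooth link cobordism in $\mathbb{R}^3\times I$. Generic cobordisms are presented as movies of diagrams, and the elementary moves are planar isotopies, Reidemeister moves, and Morse moves (births, deaths and saddles).

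\begin{itemize}
\item For a Reidemeister move, I will take the homotopy equivalence between odd Khovanov complexes that already appears in the ORS proof of invariance.
\item For a saddle, I will take the odd merge/split map, decorated by a choice of arrow at the crossing.
\item For a birth or death, I will take the odd unit or counit, tensoring with $v_+$ or extracting the coefficient of $v_-$.
\end{itemize}

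Composing these maps along a movie gives a chain map. The claim to prove is that isotopic cobordisms yield maps that agree up to an overall sign, and up to chain homotopy.

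By Carter--Saito, two movies of isotopic cobordisms are related by a finite sequence of the fifteen movie moves, together with interchanges of distant critical points and distant moves. So it suffices to check each movie move separately.

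\begin{itemize}
\item The interchange moves, and movie moves involving only Morse moves, reduce to the local relations of the odd TQFT, in the form of Putyra's chronological cobordisms. These hold up to the sign and exterior-algebra twists built into the odd theory, so they commute up to sign.
\item The moves in which a Reidemeister move is followed by its inverse (MM1--5) are handled by the fact that the ORS equivalences are homotopy inverse up to sign.
\end{itemize}

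The main obstacle is the remaining movie moves (MM6--15), especially those mixing Reidemeister III or II with saddles. For these I would use a rigidity argument in the style of Bar-Natan. First I will show that, in the relevant homological degree, the odd Khovanov complex of each tangle in the move has only $\pm1$ homotopy classes of automorphisms of the correct quantum degree; this is the "simple tangle" property. Then both sides of the move restrict to isomorphisms on such a local piece. Gluing the local tangle into the rest of the diagram, which is where locality of odd Khovanov homology is subtle because the odd theory is not a planar algebra, I will work with the ORS cube of resolutions and check that the induced map is determined up to sign by its value on a single generator, for instance the all-$v_+$ or oriented-resolution generator. The sign ambiguity arising from the choices of arrows at crossings is absorbed into the global sign.

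The hardest part will be making this gluing argument rigorous despite the failure of the planar algebra structure. The first thing I would try is to use the chronological cobordism category of Putyra, where the local-to-global extension works up to scalars in $\{\pm1\}$.
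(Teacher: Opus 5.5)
The paper does not prove this statement. It is imported as Corollary~1 of \cite{migdail2024functoriality}, with a tangle-cobordism version attributed to \cite{spyro25}. The paper uses it only to make $n(\xi)$ well defined, and in Section~\ref{s:rmoves} as the source of the Reidemeister chain maps against which the plane Floer maps are compared. So there is no in-paper argument to match. Judged on its own, your outline (Carter--Saito movie moves plus Bar-Natan-style rigidity) is the standard strategy, but it leaves unproved exactly the step that makes the odd case different from the even one.

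\textbf{The main gap is a locality principle for odd complexes.}
\begin{itemize}
\item In the even theory, Bar-Natan's planar algebra extends a local map $\phi$ on the complex of a tangle $T$ to $\phi\otimes\mathrm{id}$ on the complex of $T\cup T'$, compatibly with composition and homotopy. That is what upgrades ``the local map is $\pm\mathrm{id}$'' to ``the global map is $\pm\mathrm{id}$ with one sign,'' and it is also what makes distant moves commute.
\item The ORS complex instead uses a global sign assignment, and the odd merge and split maps satisfy the interchange law only up to sign. So a priori the sign of a movie-move map could vary with the resolution of the external crossings, and then the two sides would not agree up to a \emph{single} sign. Ruling that out is the content of the theorem.
\item Your substitute, that the global map is determined up to sign by its value on one generator such as the all-$v_+$ generator, is false in general. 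The global complex typically has many homotopy classes of degree-zero automorphisms, which can act by independent signs on different homology summands, and one generator sees only one summand. The paper meets the same phenomenon in its own setting: extending its RII comparison from simple closures to diagrams with external crossings requires showing that extra components cancel (Proposition~\ref{lem:trivial}).
\item Commuting a Reidemeister move past a distant saddle or Reidemeister move is not a relation of the odd TQFT, because Reidemeister equivalences are not cobordism maps. So it does not ``reduce to the local relations'' of chronological cobordisms without the same locality result.
\end{itemize}
You need an actual construction before the rigidity argument for MM6--10 and the distant interchanges go through: odd tangle complexes, e.g.\ over Putyra's chronological cobordisms, with a gluing operation that is functorial up to $\pm1$ and respects homotopies. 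Any such locality statement must be genuinely planar, since \cite{migdail2024functoriality} shows that functoriality up to sign fails in $S^3\times I$.

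\textbf{A smaller error concerns MM11--15.} The movie moves involving births, deaths or saddles (MM11--15 in the usual numbering) are not composites of isomorphisms, so automorphism rigidity does not apply to them. You must either compute them directly on the small local complexes, or show that the relevant morphism group in the homotopy category is infinite cyclic and that both sides are $\pm$ its generator. Over $\mathbb{Z}$, knowing that both sides are nonzero is not enough.
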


We refer to Migdail and Wehrli's extension of odd Khovanov homology to a projective 2-functor as the \emph{odd Khovanov 2-functor}. See \cite{spyro25} for another proof generalizing this result to tangle cobordisms. At the time of writing, it is unknown whether odd Khovanov homology possesses a genuinely functorial model, although Schelstraete and Vaz have provided a potential candidate in \cite{MR4190457} and \cite{schelstraete2023odd}.

Given Theorem \ref{thm:MW24}, it makes sense to study cobordism maps on odd Khovanov homology. The purpose of this paper is to study the cobordism maps induced by knotted 2-spheres (\emph{i.e.}, 2-knots).

The projective TQFT (see \cite{MR3071132}) underlying odd Khovanov homology is defined as follows. Let $\mathfrak{C}$ denote the category whose objects are closed 1-manifolds and whose morphisms are compact, orientable cobordisms between them. Define a projective functor
\[
\mathcal{F}: \mathfrak{C} \to \mathbb{Z}\mathrm{Mod}^{\mathrm{gr}}
\]
assigning to objects $S \in \mathrm{Ob}(\mathfrak{C})$ a graded $\mathbb{Z}$-module $\mathcal{F}(S)$. Let $V(S)$ denote the free abelian group generated by the components of $S$. Set
\[
\mathcal{F}(S) := \Lambda^*(V(S)).
\]
We fix $\mathrm{gr}(1) = 1$ and $\mathrm{gr}(a_i) = -1$, where $a_i$ represents a component of $S$, and extend to all elements linearly. Morphisms in $\mathfrak{C}$ decompose as compositions of four elementary cobordisms, corresponding to 2-dimensional 0-, 1- and 2-handle attachments. Throughout this paper, we read cobordisms from bottom to top. One-handles are realized either by merges or splits. For merges, we define
\[
\mathcal{F}\left(
\tikz[baseline={([yshift=-.5ex]current bounding box.center)}, scale=.5]{
	\draw[dashed, knot] (0,0) .. controls (0,0.25) and (1, 0.25) .. (1,0);
	\draw[knot] (0,0) .. controls (0,-0.25) and (1, -0.25) .. (1,0);
	\draw[knot] (1,0) .. controls (1,1) and (2,1) .. (2,0);
	\draw[dashed, knot] (2,0) .. controls (2, 0.25) and (3, 0.25) .. (3,0);
	\draw[knot] (2,0) .. controls (2, -0.25) and (3, -0.25) .. (3,0);
	\draw[knot] (1,2) .. controls (1,2.25) and (2,2.25) .. (2,2);
	\draw[knot] (1,2) .. controls (1,1.75) and (2,1.75) .. (2,2);
	\draw[knot] (0,0) .. controls (0,0.75) and (1, 1.25) .. (1,2);
	\draw[knot] (3,0) .. controls (3, 0.75) and (2, 1.25) .. (2,2);
	\node at (0.5, -0.5) {$a_i$};
	\node at (2.5, -0.5) {$a_{i+1}$};
	\node at (1.5, 2.5) {$a$};
}
\right)
: \Lambda^*(V(S_1)) \to \Lambda^*\left(\frac{V(S_1)}{(a_i - a_{i+1})}\right) \xrightarrow{\cong} \Lambda^* (V(S_2))
\]
where the first map is induced by the projection $V(S_1) \twoheadrightarrow V(S_1)/(a_i - a_{i+1})$. Splits are defined only up to sign: we define
\[
\mathcal{F}\left(
\tikz[baseline={([yshift=-.5ex]current bounding box.center)}, scale=.5]{
	\draw[knot]  (1,2) .. controls (1,3) and (0,3) .. (0,4);
	\draw [knot] (2,2) .. controls (2,3) and (3,3) .. (3,4);
	\draw[knot] (1,4) .. controls (1,3) and (2,3) .. (2,4);
	\draw[knot] (0,4) .. controls (0,3.75) and (1,3.75) .. (1,4);
	\draw[knot] (0,4) .. controls (0,4.25) and (1,4.25) .. (1,4);
	\draw[knot] (2,4) .. controls (2,3.75) and (3,3.75) .. (3,4);
	\draw[knot] (2,4) .. controls (2,4.25) and (3,4.25) .. (3,4);
	\draw[knot] (1,2) .. controls (1,1.75) and (2,1.75) .. (2,2);
	\draw[dashed, knot] (1,2) .. controls (1,2.25) and (2,2.25) .. (2,2);
    \node at (0.5, 4.5) {$a_i$};
    \node at (2.5, 4.5) {$a_{i+1}$};
    \node at (1.5, 1.5) {$a$};
}
\right)
: \Lambda^*(V(S_1)) \xrightarrow{\cong} (a_i - a_{i+1}) \wedge \Lambda^* V(S_2) \hookrightarrow \Lambda^*(V(S_2))
\]
where the second map is induced by inclusion. This map sends $1 \in \Lambda^*(V(S_1))$ to $\pm(a_i - a_{i+1}) \in \Lambda^*(V(S_2))$. We specify the sign diagrammatically by the following convention. 
\[
\tikz[baseline={([yshift=-.5ex]current bounding box.center)}, scale=.5]{
	\draw[knot]   (1,2) .. controls (1,3) and (0,3) .. (0,4);
	\draw[knot]   (2,2) .. controls (2,3) and (3,3) .. (3,4);
	\draw[knot]  (1,4) .. controls (1,3) and (2,3) .. (2,4);
	\draw[knot]  (0,4) .. controls (0,3.75) and (1,3.75) .. (1,4);
	\draw[knot]  (0,4) .. controls (0,4.25) and (1,4.25) .. (1,4);
	\draw[knot]  (2,4) .. controls (2,3.75) and (3,3.75) .. (3,4);
	\draw[knot]  (2,4) .. controls (2,4.25) and (3,4.25) .. (3,4);
	\draw[knot]  (1,2) .. controls (1,1.75) and (2,1.75) .. (2,2);
	\draw[knot, dashed] (1,2) .. controls (1,2.25) and (2,2.25) .. (2,2);
    \draw[stealth-, red, thick](1.8,3.7) -- (1.2,2.8);
    \node at (0.5, 4.5) {$a_i$};
    \node at (2.5, 4.5) {$a_{i+1}$};
    \node at (1.5, 1.5) {$a$};
    \node at (1.5, 0) {$(a_i - a_{i+1})\wedge\cdots$}
}
\hspace{3cm}
\tikz[baseline={([yshift=-.5ex]current bounding box.center)}, scale=.5]{
	\draw[knot]   (1,2) .. controls (1,3) and (0,3) .. (0,4);
	\draw[knot]   (2,2) .. controls (2,3) and (3,3) .. (3,4);
	\draw[knot]  (1,4) .. controls (1,3) and (2,3) .. (2,4);
	\draw[knot]  (0,4) .. controls (0,3.75) and (1,3.75) .. (1,4);
	\draw[knot]  (0,4) .. controls (0,4.25) and (1,4.25) .. (1,4);
	\draw[knot]  (2,4) .. controls (2,3.75) and (3,3.75) .. (3,4);
	\draw[knot]  (2,4) .. controls (2,4.25) and (3,4.25) .. (3,4);
	\draw[knot]  (1,2) .. controls (1,1.75) and (2,1.75) .. (2,2);
	\draw[knot, dashed] (1,2) .. controls (1,2.25) and (2,2.25) .. (2,2);
    \draw[-stealth, red, thick](1.8,3.7) -- (1.2,2.8);
    \node at (0.5, 4.5) {$a_i$};
    \node at (2.5, 4.5) {$a_{i+1}$};
    \node at (1.5, 1.5) {$a$};
    \node at (1.5, 0) {$(a_{i+1} - a_{i}) \wedge \cdots$}
}
\]
The 0-handle attachments are referred to as births; we define
\[
\mathcal{F}\left(
\tikz[baseline={([yshift=-.5ex]current bounding box.center)}, scale=.5]{
	\draw[knot] (0,0) .. controls (0,0.25) and (1, 0.25) .. (1,0);
	\draw[knot] (0,0) .. controls (0,-0.25) and (1,-0.25) .. (1,0);
	\draw[knot] (0,0) .. controls (0,-1) and (1,-1) .. (1,0);
}
\right)
: \Lambda^*(V(S_1)) \to \Lambda^*(V(S_2))
\]
as the map induced by the inclusion $V(S_1)) \hookrightarrow V(S_2))$. Similarly, for 2-handle attachments, referred to as deaths, we define
\[
\mathcal{F}\left(
\tikz[baseline={([yshift=-.5ex]current bounding box.center)}, scale=.5]{
	\draw[dashed, knot] (0,0) .. controls (0,0.25) and (1, 0.25) .. (1,0);
	\draw[knot] (0,0) .. controls (0,-0.25) and (1,-0.25) .. (1,0);
	\draw[knot] (0,0) .. controls (0,1) and (1,1) .. (1,0);
	\node at (0.5, -0.5) {$a$};
}
\right)
: \Lambda^*(V(S_1)) \to \Lambda^*\left(\frac{V(S_1)}{(a)}\right) \xrightarrow{\cong} \Lambda^* (V(S_2))
\]
where the first map is induced by the projection $V(S_1) \twoheadrightarrow V(S_1)/ (a)$.

Given a link $L$, choose a generic planar projection $D$ onto $\mathbb{R}^2$ and take its hypercube of resolutions with vertices in $\{0,1\}^n$ according to the following schematic.
\[
\begin{tikzcd}
& \tikz[baseline=1.6ex, scale = .8]{
\draw[knot] (0,0) -- (1,1);
\draw[knot] (1,0) -- (.65,.35);
\draw[knot] (.35,.65) -- (0,1);
} \arrow[dl, "v_i = 0"'] \arrow[dr, "v_i = 1"] & 
\\
\tikz[baseline=1.6ex, scale = .8]{
\draw[knot] (0,0) .. controls (0.45, 0.25) and (0.45, 0.75) .. (0,1);
\draw[knot] (1,0) .. controls (0.55, 0.25) and (0.55, 0.75) .. (1,1);
} & & \tikz[baseline=1.6ex, scale = .8]{
\draw[knot] (0,1) .. controls (0.25, 0.55) and (0.75, 0.55) .. (1,1);
\draw[knot] (0,0) .. controls (0.25, 0.45) and (0.75, 0.45) .. (1,0);
}
\end{tikzcd}
\]
Up to sign, the \emph{odd Khovanov complex} $\mathcal{C}(D)$ of a link diagram is obtained by applying $\mathcal{F}$ to the vertices $D_v$ and edges of this hypercube. Hereafter, $\mathrm{Kh}(L)$ always denotes the homology of the odd Khovanov complex. 

There are a few ways to produce a reduced odd Khovanov homology \cite[Section 4]{MR3071132}---we work with the following version. Let $D$ be an $n$-crossing diagram for $L$. For any $v\in \{0,1\}^n$, we let $\Lambda_\circ^* (V(D_v))$ be the subalgebra generated by the kernel of the map $V(D_v) \to \mathbb{Z}$ defined by \[\sum_i n_i a_i \mapsto \sum_i n_i.\] Define the \emph{reduced odd Khovanov complex} be the complex defined as with $\mathcal{C}(D)$, but assigning $\Lambda_\circ^*(V(D_v))$ to each resolution in place of $\Lambda^* (V(D_v))$. The corresponding submodule of $\mathcal{C}(D)$ turns out to be a subcomplex. We denote its homology by $\widetilde{\mathrm{Kh}}(L)$. Importantly, one can show that this version of reduced odd Khovanov homology is preserved under smooth link cobordisms.

\subsection{Statement of results}

First, let $\mathcal{S}$ denote a (potentially knotted) closed surface smoothly embedded in $S^4$ viewed as a cobordism from the empty link to itself. Note that $\mathcal{F}(\emptyset) = \mathbb{Z}$ in bidegree $(0,0)$. For both even and odd Khovanov homology, the degree of the induced map $\mathrm{Kh}(\mathcal{S}): \mathbb{Z} \to \mathbb{Z}$ is $(0,\chi(\mathcal{S}))$; thus, $\Kh(\mathcal{S}) = 0$ unless $\chi(\mathcal{S}) = 0$. Therefore, in the closed case, the only potentially interesting surfaces are tori and Klein bottles. 

However, we can still derive an invariant of 2-knots by ``popping'' the 2-knot $\xi \subset S^4$: removing a small neighborhood of a generic point $p \in \xi$ results in a disk $\xi^*$ which induces a map $\mathcal{F}(\emptyset) \to \mathcal{F}(\mathcal{U})$, for $\mathcal{U}$ the unknot. Denote this map by
\[
\Kh(\xi^*): \mathbb{Z} \to \mathbb{Z}\{1, v\}
\]
where $\mathcal{F}(\mathcal{U}) \cong \mathbb{Z}\{1, v\}$ denotes $\mathbb{Z}$-linear combinations in $v$ and $1$. It follows that $\Kh(\xi^*)(1) = n1$ for some $n\in \mathbb{Z}$. Let $n(\xi)$ denote the absolute value of this integer. By Theorem \ref{thm:MW24}, $n(-)$ is an invariant of 2-knots.

This construction is possible in even Khovanov homology, but it yields a trivial invariant (see \cite{rasmussen2005khovanov, MR2240683}). In contrast, we will prove the following.

\begin{theorem}
\label{thm:main}
For any smooth 2-knot $\xi \subset S^4$, $n(\xi)$ is the number of $\mathrm{spin}^c$ structures on the branched double cover of $S^4$, branched along $\xi$.
\end{theorem}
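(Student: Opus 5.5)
The number of $\spinc$ structures on the branched double cover $\Sigma(\xi)$ of $S^4$ along $\xi$ equals $|H^2(\Sigma(\xi);\Z)|$ when that group is finite, and is infinite otherwise. Since $\Sigma(\xi)$ is a rational homology sphere (branched double covers along 2-knots have $H_*(\,\cdot\,;\mathbb{Q})$ of $S^4$), we get $|H^2(\Sigma(\xi))| = |H_1(\Sigma(\xi))| = |\det|$-type quantity. So the goal is to show $n(\xi) = |H_1(\Sigma(\xi);\Z)|$.

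My plan is a movie-presentation argument organized around Daemi's plane Floer homology. Put $\xi$ in a normal form: a sequence of births, then saddles taking an unlink $U_k$ to a middle link $L$, then saddles back to an unlink $U_m$, then deaths. Popping a point on a death cap, $\Kh(\xi^*)$ factors as
\[
\Z=\mathcal{F}(\emptyset)\to \Kh(U_k)\to \Kh(L)\to \Kh(U_m)\to \Kh(U_{1})=\Z\{1,v\},
\]
and the coefficient of $1$ is the relevant quantity. Following the paper's analogy with Lee homology, I would replace the Lee deformation by the plane Floer homology $I^\sharp$-type theory $\mathrm{PFH}$. This has three ingredients. First, a spectral sequence, or filtered chain-level comparison, from odd Khovanov homology to plane Floer homology, compatible up to sign with the cobordism maps of Theorem~\ref{thm:MW24}. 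Second, for unlinks and unknots, the spectral sequence collapses, and the comparison identifies the generator $1$ with a canonical Floer class. Third, plane Floer cobordism maps for surfaces in $\R^3\times I$ are computed by counting instantons, or flat $SO(3)$/$U(1)$ connections, over the branched double cover of the cobordism.

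Granting these, the argument runs as follows. Compute the map $\mathrm{PFH}(\xi^*)$. The popped 2-knot is a disk, and its branched double cover is $\Sigma(\xi)$ minus a ball. For a rational homology 4-ball, the relevant moduli space of reducible, abelian flat connections consists of points indexed by $\hom(H_1,U(1))\cong H^2_{\mathrm{tors}}$, one for each $\spinc$ structure. Each contributes $\pm 1$, and I expect orientations to agree because they come from reducibles over a homology-trivial-over-$\mathbb{Q}$ manifold. So the coefficient is $\pm |H^2(\Sigma(\xi))|$. The spectral sequence then transfers this back to odd Khovanov homology. Both ends are unknots/empty, where the comparison is an isomorphism on the relevant filtration level, and the map has quantum degree $(0,\chi(\xi^*))=(0,1)$, so it is filtered of the right degree. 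Hence the $1$-coefficient of $\Kh(\xi^*)$ agrees up to sign with the $\mathrm{PFH}$ computation.

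The main obstacle is this transfer step. It requires naturality of the odd-Khovanov-to-plane-Floer spectral sequence under arbitrary link cobordisms, including Reidemeister moves and movie moves, at least on the associated graded level. It also requires that no higher differentials alter the leading term. I would first try to avoid full naturality by working only with elementary cobordisms: births, deaths and saddles correspond to Floer maps whose associated graded maps are exactly $\mathcal{F}$. Isotopies in the movie can be bypassed by choosing a movie without Reidemeister moves in the middle, up to an isotopy whose effect is only a sign. As a fallback, I would instead compute combinatorially that the middle-level composite equals $\pm$ the determinant of a Goeritz-type matrix of $L$, which is $|H_1(\Sigma(\xi))|$ via the handle decomposition of the double cover.
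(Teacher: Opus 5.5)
Your outline has the same architecture as the paper: Daemi's spectral sequence, collapse for unlinks, and one abelian solution per $\spinc$ structure on a $b^+=0$ branched cover (this last part is Proposition~\ref{prop:step2}). There is a genuine gap, though. The ingredient you ``grant'', compatibility of the spectral sequence with the cobordism maps, is Proposition~\ref{prop:step1}, the technical heart of the paper, and the shortcut you propose for it fails.

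A movie of a nontrivial 2-knot cannot avoid Reidemeister moves. Births, saddles and deaths applied to crossingless diagrams keep every frame crossingless, so the trace would lie in the slice $\R^2\times\{0\}\times I$ and $\xi$ would be unknotted. Hence the unlink diagrams $D_k$, $D_m$ at the two ends of the saddle stage have crossings, and $\Kh(\xi^*)$ involves the Reidemeister isomorphisms $\Kh(U_k)\to\Kh(D_k)$ and $\Kh(D_m)\to\Kh(U_m)$. Unlinks have many bigraded automorphisms (e.g.\ $\Lambda^*g$ for $g\in GL(V)$). So knowing that Morse moves induce $\mathcal{F}$ on $\widetilde{\mathrm{PKE}}_2$ does not determine the composite. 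You need Daemi's invariance equivalences for Reidemeister moves to be filtered and to induce the Migdail--Wehrli maps on $E_2$ up to a unit; asserting that their effect is ``only a sign'' presupposes exactly that.

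The paper proves this as follows. It realizes each move through pseudo-diagram moves (adding and removing crossings) and uses the explicit equivalences $\Phi=(g,n)$ and $\Psi$. It discards components by $\deg_h$ bookkeeping and computes the survivors. Surviving components that are not single band maps come from the stretched family $\mathbb{U}$ on $D^2\times S^2\#\overline{\mathbb{C}P^2}\setminus D^4$, and give births or deaths times a unit $c\in\tilde\Lambda$. Components living over $\overline{\mathbb{C}P^2}$ vanish. RII with external crossings needs Proposition~\ref{lem:trivial}, and RIII needs Lemma~\ref{lemma:R3computation}. Your Goeritz-determinant fallback is not a substitute, since nothing in your argument connects the odd Khovanov composite to such a determinant.

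There are three smaller points.

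First, Daemi's $E_2$ page is \emph{reduced} odd Khovanov homology of the mirror, and his setup uses connected 3-manifolds. So you cannot compare $\Kh(\xi^*)\colon\mathcal{F}(\emptyset)\to\mathcal{F}(\mathcal{U})$ directly with a map out of $\Sigma(\xi)\setminus B^4$; the cover of $S^3$ along $\emptyset$ is $S^3\sqcup S^3$. The paper instead uses $\xi^*\sqcup C\colon\mathcal{U}\to\mathcal{U}\sqcup\mathcal{U}$. Its reduced map computes $n(\xi)$, and its cover $\Sigma(\xi^*)\natural(S^1\times D^3)\setminus D^4\colon S^3\to S^1\times S^2$ has $b_2=0$.

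Second, Proposition~\ref{prop:daemihammer} then gives $1\mapsto\pm|H^2(\Sigma(\xi))|\cdot 1$, and the signs agree for a concrete reason rather than by expectation. Every $\spinc$ structure contributes the same pull-up-push-down map, with energy $u^0$, under one fixed homology orientation.

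Third, the spectral sequence comes from the homological filtration $\deg_h$, so the quantum degree $(0,1)$ plays no role in the transfer. What makes the transfer work is functoriality of $\mathcal{E}_2$, together with the fact that crossingless unlink diagrams give complexes in a single filtration level, where $E_2=E_\infty$.
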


Theorem \ref{thm:main} answers Conjecture 1 of \cite{migdail2024functoriality} in the affirmative. To see this, recall that the number of spin$^c$-structures on the branched double cover of $S^4$ branched along a surface is equal to the cardinality of its second cohomology group. Applying Poincar\'e duality and the universal coefficients theorem, we see that this is the order of the first homology, as in the cited article. Conversely, the map on odd Khovanov homology associated to a torus is zero, independent of embedding; we prove this as Corollary \ref{cor:extracredit}. In upcoming work \cite{mw1, mw2}, Migdail and Wehrli will reprove their conjecture for particular families of 2-knots---namely, ribbon 2-knots and even-twist spun knots---using techniques distinct from ours.

\subsection{Outline of proof}
\label{ss:outlineofproof}

To prove Theorem \ref{thm:main}, our main tool will be Daemi's \emph{plane Floer homology} \cite{daemi2015abelian}. The utility of (oriented) plane Floer homology, denoted $\widetilde{\mathrm{PFH}}$, is that it is (a) relatively easy to compute and (b) the $E_\infty$ page of a spectral sequence starting at odd Khovanov homology. We denote by $\widetilde{\Lambda}$ a particular ring of characteristic zero, which we ignore for now. We remark that we could have likely used framed instanton homology in place of plane Floer homology, in light of work by Scaduto \cite{MR3394316}, but plane Floer homology seems more natural given its similarities with Lee homology.

\begin{theorem}[See \cite{daemi2015abelian}, Theorem 4]
\label{thm:daemi}
There is a spectral sequence $\{\widetilde{\mathrm{PKE}}_r(L)\}$ with coefficients in $\widetilde{\Lambda}$ that abuts to $\widetilde{\mathrm{PFH}}(\Sigma(L))$. The second page of this spectral sequence is isomorphic to the reduced odd Khovanov homology of the mirror image of $L$ with coefficients in $\widetilde{\Lambda}$.
\end{theorem}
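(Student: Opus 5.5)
The plan is to reproduce, in the plane Floer setting, the Ozsv\'ath--Szab\'o link surgeries argument \cite{ozsvath2003heegaard} (and its instanton analogue due to Kronheimer--Mrowka and Scaduto \cite{MR3394316}). The first ingredient is an \emph{unoriented skein exact triangle} for $\widetilde{\mathrm{PFH}}$ of branched double covers. Let $L$, $L_0$, $L_1$ be links that differ only at one crossing, in the $0$- and $1$-resolution pattern above. Their branched double covers $\Sigma(L)$, $\Sigma(L_0)$, $\Sigma(L_1)$ are three Dehn fillings of a single $3$-manifold with torus boundary, with slopes forming a triad. I would prove the triangle by the usual method. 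I would define the three maps from the $2$-handle cobordisms between these fillings. Next I would show that each composite of two consecutive maps is chain homotopic to zero, using a one-parameter family of metrics that stretches along the separating $S^3$ / $\mathbb{RP}^3$ in the composite cobordism. The key count is that the relevant moduli spaces over the $\overline{\mathbb{CP}}^2$ summand cancel in pairs. Finally, I would deduce exactness from the triangle detection lemma, using a further two-parameter family over triple composites.

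Second, I would iterate this over all $n$ crossings of a diagram $D$ of $L$. This yields a filtered complex $X=\bigoplus_{v\in\{0,1\}^n}\widetilde{\mathrm{CPF}}(\Sigma(D_v))$, with differentials that count over families of metrics parametrized by faces of the cube. Its homology is $\widetilde{\mathrm{PFH}}(\Sigma(L))$. Filtering by $|v|$ gives the spectral sequence $\{\widetilde{\mathrm{PKE}}_r(L)\}$, and its $E_1$ page is $\bigoplus_v \widetilde{\mathrm{PFH}}(\Sigma(D_v))$. Each $D_v$ is an unlink of $k$ components, so $\Sigma(D_v)\cong \#^{k-1}(S^1\times S^2)$. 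I would compute its plane Floer homology directly: the flat connections form a torus $T^{k-1}$, and after a small perturbation one finds $\widetilde{\mathrm{PFH}}\cong H_*(T^{k-1};\widetilde\Lambda)\cong\Lambda^*(\widetilde\Lambda^{k-1})$. This should be matched with $\Lambda^*_\circ(V(D_v))\otimes\widetilde\Lambda$, where $H_1$ of the cover is identified with the kernel of the augmentation $V(D_v)\to\mathbb{Z}$ via lifts of arcs joining the components. It is this identification that produces the \emph{reduced} theory.

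Third, I would identify $d_1$. An edge of the cube is a merge or split, and the corresponding cobordism between connected sums of $S^1\times S^2$ is a single $2$-handle. For a merge, the attaching curve kills the class $a_i-a_{i+1}$, and the induced map on $H_*(T)$ is the projection $\Lambda^*(V)\to\Lambda^*(V/(a_i-a_{i+1}))$. For a split, a new $S^1\times S^2$ summand appears, and the map is wedging with the new generator $\pm(a_i-a_{i+1})$. Both can be computed with a neck-stretching/gluing argument that reduces to the model cobordism $S^1\times S^2\to S^3$ and its reverse. The orientation of the $2$-handle cobordism reverses the roles of $0$- and $1$-resolutions relative to the Khovanov convention, which is where the mirror image enters. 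The main obstacle is signs. The odd theory is sensitive to them, so I must show that the signs of $d_1$ define a sign assignment on the cube making each face anticommute in the odd sense; by the uniqueness of such assignments up to isomorphism \cite{MR3071132}, $E_2$ is then $\widetilde{\mathrm{Kh}}(\overline{L};\widetilde\Lambda)$. To handle this, I would orient the moduli spaces using homology orientations of the $2$-handle cobordisms, and check the square faces by comparing the two composite orientations of a single $2$-handle pair cobordism. I expect this check to be the most delicate step.
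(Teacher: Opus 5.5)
Your overall architecture is the one the paper takes from Daemi: a surgery triangle built from families of metrics, iterated to a cube whose cube filtration gives the spectral sequence, with $E_1$ computed from the Jacobian tori of $\#^{k-1}S^1\times S^2$ and the mirror coming from maps that run from the $1$- to the $0$-resolution. The triangle step fails as you state it, however. You never introduce local coefficient systems, and in untwisted plane Floer homology the $\overline{\mathbb{CP}}^2$ contributions do not cancel in pairs. By Proposition \ref{prop:daemihammer}, each $\mathrm{spin}^c$ structure $\mathfrak{s}$ contributes $u^{-c_1(\mathfrak{s})^2}$ times the pull-up-push-down map $D(W;Y_0,Y_1)(\mathfrak{t}_0,\mathfrak{t}_1)$, and this map depends only on the ends, not on $\mathfrak{s}$. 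For $(I\times Y)\#\overline{\mathbb{CP}}^2$ the conjugate structures $\mathfrak{s}_{2k+1}$ and $\mathfrak{s}_{-2k-1}$ have the same energy $(2k+1)^2$. The untwisted map is therefore $\sum_{k\in\Z}u^{(2k+1)^2}\cdot\Id=2\sum_{k\ge 0}u^{(2k+1)^2}\cdot\Id\neq 0$, and your null-homotopy of the two-step composites breaks down. The twist also matters in the other direction: it is what makes the family count of Subsection \ref{family} equal to the unit $c$, since without it the conjugate contributions there have opposite signs and cancel to $0$.

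The missing ingredient is Daemi's twisted theory $\widetilde{\mathrm{PFH}}(Y,\zeta)$. Put the core of the filling solid torus into $\zeta_{\mathbf{m}}$ at slopes $m_i\equiv 2 \pmod 3$. On the edge cobordisms, take the surface $Z$ to be the core of the $2$-handle when $m_i\equiv 2$ and its cocore when $m_i\equiv 0$. Then $Z$ has odd intersection number with the exceptional $(-1)$-sphere of every two-step composite, which is the cocore of the first handle union the core of the second. Consequently, the sign determined by $\langle c_1(\mathfrak{s}),Z\rangle$ is opposite for the two $\mathrm{spin}^c$ structures that differ by conjugation on the $\overline{\mathbb{CP}}^2$ summand. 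This is the mechanism of the computation in \emph{A cobordism inducing the zero map}.

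The twisted approach forces a step you omitted. The iterated triangle identifies the homology of the cube not with $\widetilde{\mathrm{PFH}}(\Sigma(L))$ but with $\widetilde{\mathrm{PFH}}(\Sigma(L),\zeta_{\mathbf{p}})$, where $\mathbf{p}=(2,\dots,2)$ and $\zeta_{\mathbf{p}}$ is the union of the lifts of the crossing arcs. You must therefore show $[\zeta_{\mathbf{p}}]=0$ in $H_1(\Sigma(L);\Z/2\Z)$ and then apply Lemma \ref{lem:localtrivial}. This is automatic for knots, where $H_1(\Sigma(K);\Z/2\Z)=0$, but it needs an argument for links. Inside the cube $\{0,1\}^n$ every edge goes from slope $1$ to slope $0$, so all local systems there are trivial and your $E_1$ and $d_1$ analysis is unaffected.
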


The spectral sequence is induced by a filtered complex $\widetilde{\mathrm{PKC}}(L)$. Our proof of Theorem \ref{thm:main} consists of two steps. First, we show that the cobordism maps are natural with respect to the spectral sequence. Let $\mathcal{E}_2$ denote the functor taking filtered complexes to the $E_2$ page of its associated spectral sequence.

\begin{proposition}
\label{prop:step1}
Suppose that $W$ is an elementary link cobordism; that is, $W$ is either a Reidemeister move or a Morse move. Then $\widetilde{\mathrm{PKC}}(W)$ is a filtered map of filtered complexes. Moreover, $\widetilde{\mathrm{PKE}}_2(W) = \mathcal{E}_2(\widetilde{\mathrm{PKC}}(W))$ agrees with the reduced odd Khovanov 2-functor of \cite{migdail2024functoriality}, up to unit of $\widetilde{\Lambda}$.
\end{proposition}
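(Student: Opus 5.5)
The plan is to work directly with Daemi's construction of $\widetilde{\mathrm{PKC}}(L)$ as a hypercube complex. For a diagram $D$ with $n$ crossings,
\[
\widetilde{\mathrm{PKC}}(D)=\bigoplus_{v\in\{0,1\}^n}\widetilde{\mathrm{PFC}}(\Sigma(D_v)),
\]
where the differential is a sum of maps $d_{v,w}$ for $v\le w$. These maps count plane Floer trajectories over families of metrics on the branched double covers of the elementary cobordisms between resolutions. The filtration is the one by $|v|$ (shifted by the homological grading), and $d_{v,w}$ raises it by $|w-v|$. The $E_1$ page is $\bigoplus_v \widetilde{\mathrm{PFH}}(\Sigma(D_v))$, and Daemi identifies this with $\Lambda^*_\circ(V(D_v))$ tensored with $\widetilde\Lambda$. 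Since each $\Sigma(D_v)$ is a connected sum of copies of $S^1\times S^2$, Theorem \ref{thm:daemi} identifies the $E_2$ page with reduced odd Khovanov homology via the edge maps.

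For each elementary move $W$ I would define $\widetilde{\mathrm{PKC}}(W)$ in the same hypercube style, as a sum of components indexed by pairs of vertices.

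\textbf{Morse moves.} For a birth, death or saddle disjoint from the crossings, the cobordism acts vertexwise. I take $\widetilde{\mathrm{PKC}}(W)$ to be the sum over $v$ of the plane Floer cobordism maps of $\Sigma(W_v)$, which is a 4-dimensional 1-handle or 3-handle, or a 2-handle along an unknot in a connected sum of copies of $S^1\times S^2$. I add higher components counting solutions over the product families of metrics, so that the map is a chain map, exactly as in the usual link surgery spectral sequence naturality. Each component goes from vertex $v$ to some vertex $w\ge v$, so the map is filtered. On $E_1$ only the components with $w=v$ survive, and these are the cobordism maps for $\#^k S^1\times S^2$. I would compute them using Daemi's exterior-algebra description: 1-handle and 3-handle maps are given by inclusion and projection, and the 0-framed 2-handle maps are given by wedge with a class $\pm(a_i-a_{i+1})$ or by projection. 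This matches the births, deaths, merges and splits of $\mathcal{F}$ up to a unit of $\widetilde\Lambda$, and the agreement then descends to $E_2$.

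\textbf{Reidemeister moves.} Here I would follow Ozsváth–Szabó and Baldwin. The Reidemeister I, II and III maps on $\widetilde{\mathrm{PKC}}$ are built as compositions of filtered quasi-isomorphisms: exact-triangle maps that identify the complex for the larger cube with a mapping cone, and then a cancellation of an acyclic piece. Each step is filtered with respect to the cube filtration, suitably shifted. Their $E_1$ components are cobordism maps of $\#^k S^1\times S^2$, and these realize exactly the odd Khovanov Reidemeister maps of \cite{migdail2024functoriality}, which are themselves defined by merges, splits, births, deaths and Gaussian elimination.

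\textbf{Main obstacle.} The main difficulty is the Reidemeister step. One has to check that the $E_2$-page maps coincide with Migdail–Wehrli's chain maps rather than merely being some isomorphism. The cleanest route is to use that $\widetilde{\mathrm{Kh}}$ of a diagram is generated near the move by a small set of elements, and to argue by rigidity: the relevant maps on odd Khovanov homology are determined up to a unit by their degree together with their compatibility with the Morse maps already matched. Failing that, I would compare the Gaussian eliminations term by term, since both are given by the same formulas in $\mathcal{F}$ at the $E_1$ level, up to signs that are units in $\widetilde\Lambda$.
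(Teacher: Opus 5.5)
Your treatment of Morse moves is essentially the paper's: the vertexwise elementary maps computed in Subsection~\ref{single}. One slip: only a split is a $2$-handle along an unknot; a merge attaches along an essential curve $S^1\times\mathrm{pt}$.

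The genuine gap is in the Reidemeister part, which is where all the work lies. Daemi proves invariance of $\widetilde{\mathrm{PKC}}$ only through pseudo-diagram moves. So $\widetilde{\mathrm{PKC}}(W)$ for a Reidemeister move is a composite of the exact-triangle equivalences $\Phi=(g,n)$ and $\Psi$ for adding and removing crossings. The filtration is $\deg_h$, whose term $\tfrac12\sigma(\m,\bfo)$ records self-intersections of the band cobordisms $S_\m^\n$. That term, not cube index ``suitably shifted'', decides which components preserve the filtration. When adding a negative RI crossing only $g$ survives to $E_2$ (a split map); for a positive one only $n$ survives.

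The surviving $n$-components are also not single-metric handle maps that you could read off from the exterior-algebra description. They count solutions over a one-parameter family of metrics on $U=D^2\times S^2\#\overline{\mathbb{C}P^2}\setminus D^4$. The neck-stretching computation of Subsection~\ref{family} shows they are birth or death maps multiplied by $c=\sum_k\pm(2k+1)u^{(2k+1)^2}$; this is exactly where ``up to a unit of $\widetilde{\Lambda}$'' comes from. By contrast, components whose cobordism is $(I\times\#^kS^1\times S^2)\#\overline{\mathbb{C}P^2}$ vanish by the conjugation argument. Your assertion that the $E_1$ components are ordinary cobordism maps of $\#^kS^1\times S^2$ skips all of this.

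Neither of your fallbacks closes the gap.

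\textbf{Rigidity.} The claim is false as stated. A map on $\widetilde{\mathrm{Kh}}$ of a given bidegree is not determined up to a unit, since bigraded pieces can have rank larger than one. You also have no filtered-level compatibility between the plane Floer Reidemeister maps and the Morse maps to invoke before the proposition is proved. A rigidity argument does work locally for the unlink closures, whose complexes sit in a single filtration level (see the remark in Subsection~\ref{ss:r2}). It must still be propagated to diagrams with external crossings.

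\textbf{Term-by-term comparison.} This assumes the plane Floer maps are given by Migdail--Wehrli's formulas, which is what has to be shown. Matching ``up to signs'' term by term is also too weak, because the statement allows only a single overall unit:
\begin{enumerate}
\item the RII maps have several nonzero components whose relative signs must be fixed (the paper uses the chain-map condition and the decorations);
\item the plane Floer RII-addition map agrees with Migdail--Wehrli's only after an explicit homotopy;
\item external crossings create extra filtration-preserving components that must be shown to cancel (Proposition~\ref{lem:trivial});
\item RIII requires an explicitly constructed homotopy (Lemma~\ref{lemma:R3computation}).
\end{enumerate}
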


Using Proposition \ref{prop:step1}, we can conclude from Daemi's work that the maps on odd Khovanov homology and plane Floer homology are related to each other by the spectral sequence of Theorem \ref{thm:daemi}. In particular, if the spectral sequence collapses, these maps are equivalent. For a smooth 2-knot $\xi \subset S^4$, denote by $\xi^*$ the knotted disk obtained by removing a small neighborhood of any point $p\in \xi$, and view $\xi^*$ as a cobordism $\emptyset\to \mathcal{U}$. Since the $E_2$ page is \emph{reduced} odd Khovanov homology, the cobordism we consider is $\xi^* \sqcup C$ for $C: \mathcal{U} \to \mathcal{U}$ an unknotted cylinder. We note that $\widetilde{\mathrm{PKC}}(\xi^* \sqcup C)$ is a filtered map of filtered complexes by Proposition \ref{prop:step1}. Consider the following diagram.

\[
\begin{tikzcd}[row sep = large]
\widetilde{\Lambda} \otimes_\mathbb{Z} \widetilde{\mathrm{Kh}}(\mathcal{U}) \cong \widetilde{\Lambda} \arrow[d, "\widetilde{\mathrm{Kh}}\text{(}\xi^* \sqcup C\text{)}"'] \arrow[r, Rightarrow] & \widetilde{\mathrm{PFH}}(S^3) \cong \widetilde{\Lambda} \arrow[d, "\widetilde{\mathrm{PFH}}\text{(}\Sigma\text{(}\xi^* \sqcup C\text{)}\text{)}"]  \\
\widetilde{\Lambda} \otimes_\mathbb{Z} \widetilde{\mathrm{Kh}}(\mathcal{U} \sqcup \mathcal{U}) \arrow[r, Rightarrow] & \widetilde{\mathrm{PFH}}(S^1 \times S^2)
\end{tikzcd}
\]

Since the spectral sequence collapses in the cases above, Proposition \ref{prop:step1} tells us that in order to compute $\widetilde{\mathrm{Kh}}(\xi^*\sqcup C)$, and thus $n(\xi)$, it suffices to compute $\widetilde{\mathrm{PFH}}(\Sigma(\xi^* \sqcup C))$. 

\begin{proposition}
\label{prop:step2}
Assume $\xi: S^2 \hookrightarrow S^4$ is a smoothly embedded 2-knot in $S^4$. The map
\[
\widetilde{\mathrm{PFH}}(\Sigma(\xi^* \sqcup C)): \widetilde{\mathrm{PFH}}(S^3) \cong \widetilde{\Lambda} \to \widetilde{\mathrm{PFH}}(S^1 \times S^2) \cong \widetilde{\Lambda} \{1,v\}
\]
is given by $1 \mapsto \pm \left| H^2(\Sigma(\xi))\right| \cdot 1$ where $\Sigma(\xi)$ is the branched double cover of $S^4$ branched along $\xi$.
\end{proposition}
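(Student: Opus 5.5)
We compute the map by topology: first identify the branched double cover of the cobordism as a 4-manifold, then evaluate it using the structure of Daemi's plane Floer theory, which we treat as an abelian, Seiberg--Witten-like theory summed over $\spinc$ structures or flat $U(1)$ connections.

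\textbf{Step 1: the cobordism.} The branched double cover of $I\times S^3$ along $\xi^*\sqcup C$ is the manifold $W=\Sigma(\xi)\setminus B^4$ with a 1-handle attached. Indeed, the cover of a collar along the unknotted cylinder $C$ is $I\times S^3$. The popped disk $\xi^*$ lifts to $\Sigma(\xi)$ minus a ball, and taking the disjoint union at the level of links corresponds to boundary connected sum in the cover. More precisely, $W$ is a cobordism from $S^3$ to $S^1\times S^2$ obtained as $(I\times S^3)\natural(\Sigma(\xi)\setminus B^4)$ followed by a 1-handle, or equivalently $(\Sigma(\xi)\setminus B^4)$ with a 1-handle. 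By the composition law for $\widetilde{\mathrm{PFH}}$ cobordism maps, the map factors as
\[
\widetilde{\mathrm{PFH}}(S^3)\xrightarrow{\;\Sigma(\xi)^\circ\;}\widetilde{\mathrm{PFH}}(S^3)\xrightarrow{\;1\text{-handle}\;}\widetilde{\mathrm{PFH}}(S^1\times S^2).
\]
The 1-handle map is the standard one; matching the Khovanov side, it sends $1\mapsto 1$ up to a unit. It therefore suffices to show that the punctured $\Sigma(\xi)$ acts on $\widetilde{\mathrm{PFH}}(S^3)\cong\widetilde{\Lambda}$ by $\pm|H^2(\Sigma(\xi))|$.

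\textbf{Step 2: homology of $\Sigma(\xi)$.} Since $\xi$ is a 2-sphere, the standard transfer argument applies, because $H_*(S^4\setminus\xi)$ agrees with $H_*(S^1)$ by Alexander duality. It shows that $X=\Sigma(\xi)$ is a closed 4-manifold with $b_1=b_2^\pm=0$, so $X$ is a rational homology sphere. Moreover $H^2(X;\Z)\cong H_1(X)$ is finite of odd order, and that order is $|\Delta_\xi(-1)|$.

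\textbf{Step 3: evaluation.} For a negative-definite cobordism with $b_1=0$ (here $b^+=0$), the plane Floer cobordism map is a sum over $\spinc$ structures, or equivalently over the torsion of $H^2$ labelling reducible abelian solutions. Each term is a monomial in the Novikov-type ring $\widetilde{\Lambda}$, weighted by the energy and the holonomy. Since $b_2=0$, every $\spinc$ structure $\frakt$ has $c_1(\frakt)$ torsion, so each term has zero topological energy. Each term also has index zero, because $b_1=b^+=0$ and $\chi$ and $\sigma$ vanish relative to $B^4$. Each solution space is then a single reduced point contributing $\pm1$, so the total is $\sum_{\frakt}\pm1$.

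\textbf{Main obstacle: signs.} The hardest step is showing the signs all agree, so that the sum does not cancel. I would first try to show the orientation of each moduli point is governed by a homology orientation of $X$, which is canonical since $b_1=b^+=0$. The action of $H^2(X)$ by tensoring line bundles preserves the orientation, so all $|H^2(X)|$ contributions carry the same sign. The weights in $\widetilde{\Lambda}$ should also coincide, because the energies are equal (all zero) and the holonomy contributions are trivial on torsion classes after the normalization defining $\widetilde{\Lambda}$. Granting this, the punctured $\Sigma(\xi)$ acts by $\pm|H^2(\Sigma(\xi))|$, and composing with the 1-handle map gives $1\mapsto\pm|H^2(\Sigma(\xi))|\cdot 1$, which proves Proposition~\ref{prop:step2}.
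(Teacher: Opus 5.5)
Your argument is essentially the paper's. Since $b_2=0$, every $\mathrm{Spin}^c$ structure on $\Sigma(\xi^*\sqcup C)$ is torsion with zero energy, each contributes the same map $1\mapsto 1$, and there are $|H^2(\Sigma(\xi))|$ of them. The only difference is that you peel off the 1-handle and use the composition law, whereas the paper applies the pull-up-push-down map to $J(\Sigma(\xi^*\sqcup C))\cong S^1\to J(S^3)\times J(S^1\times S^2)$ in one step; the 1-handle map should then be justified by Daemi's birth computation, Proposition~\ref{4.2}, rather than by ``matching the Khovanov side''.

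The sign step you leave as ``granted'' is supplied directly by Proposition~\ref{prop:daemihammer}. There each $\mathfrak{s}$ contributes $u^{-c_1(\mathfrak{s})\cdot c_1(\mathfrak{s})}D(W;Y_0,Y_1)(\mathfrak{t}_0,\mathfrak{t}_1)$, and $D$ depends only on the homology orientation of the triple, not on $\mathfrak{s}$, so the contributions cannot cancel.

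Separately, drop your aside that $|H_1(\Sigma(\xi))|=|\Delta_\xi(-1)|$. It is not needed, and it is false for 2-knots whose Alexander module has $\Z$-torsion: the 2-twist-spun trefoil has Alexander polynomial $1$ but $H_1(\Sigma(\xi))\cong\Z/3$.
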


In summary, we have reduced the proof of Theorem \ref{thm:main} to the proofs of Propositions  \ref{prop:step1} and  \ref{prop:step2}.

\begin{remark}
In \cite{migdail2024functoriality}, Migdail and Wehrli proved that odd Khovanov homology fails to be functorial up to sign for general cobordisms in $S^3 \times I$. In particular, odd Khovanov homology is not invariant under the sweep-around move \cite{MR4562565}; this is exhibited by example in \cite[Section 5]{migdail2024functoriality}. However, the cobordisms relevant to the definition of $n(-)$ are disks properly embedded in $D^4 \cong S^4 - \nu(p)$ for $p \in \xi \subset S^4$. Viewing $D^4$ as the cone of $S^3$, the isotopies between the sweep-around cobordism and the identity cobordism can be pushed away from the point at infinity, so we need not consider them in our proof of Theorem \ref{thm:main}.
\end{remark}

\subsection{Outline of paper}

Section \ref{s:pfh} is devoted to giving a brief introduction to plane Floer homology. With Proposition \ref{prop:daemihammer} from Section \ref{s:pfh}, we can already prove Proposition \ref{prop:step2}. In Section \ref{s:ss}, we restrict to branched double covers of links and detail the spectral sequence relating reduced odd Khovanov homology and plane Floer homology. Finally, Section \ref{s:rmoves} is devoted to proving Proposition \ref{prop:step1}, and is the technical heart of our paper. 

\subsection*{Acknowledgments}

We would like to thank Aliakbar Daemi, Juan Muñoz Echániz, Robert Lipshitz, and Stephan Wehrli for their interest, as well as Minh Lam Nguyen and Matthew Stoffregen for supplementing key insight during the development of this article. This project was supported by the grant from the National Science Foundation: RTG: Algebraic and Geometric Topology at Michigan State (DMS 2135960).

\section{Plane Floer homology}
\label{s:pfh}

In this section, we give a brief overview of plane Floer homology; see \cite[Sections 1,\,2]{daemi2015abelian} for more details. We include descriptions of important cobordism maps.

\subsection{Definition of plane Floer homology and cobordism maps}

Plane Floer homology is a functor from the category of 3-manifolds and cobordisms between them to the category of $\mathbb{Z}$-graded $\tilde{\Lambda}$-modules, where
\[
\tilde{\Lambda} := \left\{ \sum_{i=1}^\infty a_i u^{q_i} : a_i \in \mathbb{Z}, q_i \in \mathbb{Q}, ~\text{and}~ \lim_{i\to \infty} q_i = \infty \right\}.
\]

Let $Y$ be a connected, closed Riemannian 3-manifold. A \emph{$Spin^c$ structure} $\mathfrak{t}$ on $Y$ is a principal $\mathrm{Spin}^{c}(3)$-bundle $P$ such that the $SO(3)$-bundle induced by the adjoint action $\mathrm{ad}: \mathrm{Spin}^c(3) \cong U(2) \to SO(3)$ is identified with the framed bundle of $TY$. A smooth connection $\underline{B}$ on $\mathfrak{t}$ is called a \emph{$Spin^c$ connection} if the induced connection on $ad(\mathfrak{t})$ is the Levi-Civita connection. The determinant map $\mathrm{det}: U(2) \to U(1)$ induces a complex line bundle $L_\mathfrak{t}$, which is called the \emph{determinant line bundle}. The $\mathrm{Spin}^c$ connection $\underline{B}$ on $\mathfrak{t}$ also induces a connection of $L_\mathfrak{t}$, called the \emph{central part} of $\underline{B}$; it is denoted by $B$. Since we fix the induced connection on $TY$ to be the Levi-Civita connection, a $\mathrm{Spin}^c$ connection $\underline{B}$ is determined by its central part. 

Plane Floer homology studies the space of $\mathrm{Spin}^c$ connections with flat central parts, modulo the action of the gauge group. Note that $L_{\frakt}$ admits flat connections if and only if $c_{1}(L_{\frakt})$ is torsion. Given a torsion $\spinc$ structure $\frakt$, let $A_f(Y, \mathfrak{t})$ denote the $\spinc$ connections with flat centrals parts, which is an affine space modeled on the space of closed 1-forms. Let $\mathcal{G}(Y)$ denote the gauge group of all automorphisms of the $\mathrm{Spin}^c$ structure $\mathfrak{t}$; that is, smooth automorphisms of $\mathfrak{t}$ as a principal $U(2)$-bundle which acts trivially on $TY$. $\mathcal{G}(Y)$ can be identified with the space of smooth maps $Y \to S^1$, which acts on the space of $\mathrm{Spin}^c$ connections by pulling back. Any element $u: Y \to S^1\in  \mathcal{G}(Y)$ acts on $\underline{B}$ by sending it to a connection with central part $B - 2u^{-1} du$. Define
\[
\mathcal{R}(Y, \mathfrak{t}) = A_f(Y, \mathfrak{t}) \big/ \mathcal{G}(Y).
\]
Fixing a basepoint in $A_f(Y, \mathfrak{t})$, the 1-form action identifies $\mathcal{R}(Y, \mathfrak{t})$ with a rescaling of the Jacobian torus of $Y$,
\[
J(Y) := H^1(Y; \mathbb{R}) \big/ 2 H^1(Y; \mathbb{Z}).
\]
We also define $\mathcal{R}(Y) := \bigcup_{\mathfrak{t}} \mathcal{R}(Y, \mathfrak{t}).$

Let $W^{\circ}: Y_0 \to Y_1$ be a cobordism. We equip it with a Riemannian metric such that its restriction to the collar neighborhood of the boundary is isometric to a product metric. Then, glue cylindrical ends to $W^\circ$ to produce a non-compact and complete Riemannian manifold, denoted $W$. Let $\mathfrak{s}$ be a $\spinc$ structure on $W$. The space of (weighted Sobolev) $\mathrm{Spin}^c$ connections, denoted $\mathcal{A}(W, \mathfrak{s})$, vanishes if the ends $\mathfrak{s}|_{Y_i} = \mathfrak{t}_i$ are non-torsion; therefore, from now on we assume that $\mathfrak{t}$ is torsion.

For a cobordism $(W, \mathfrak{s}): (Y_0, \mathfrak{t}_0) \to (Y_1, \mathfrak{t}_1)$, one can define the \textit{configuration space of (weighted Sobolev) $\mathrm{Spin}^c$ connections} on $\mathfrak{s}$, denoted $\mathcal{B}(W, \mathfrak{s})$, similar to the definition of $\mathcal{R}(Y)$. Namely:
\[
\mathcal{B}(W, \mathfrak{s}):= \mathcal{A}(W, \mathfrak{s}) \big/ \mathcal{G}(W).
\]
There are restriction maps $r_i: \mathcal{B}(W, \mathfrak{s}) \to \mathcal{R}(Y_i, \mathfrak{t}_i)$ for $i=0, 1$. Picking a generic $\nu \in L_{k-1, \delta}^2 (W, \Lambda^2)$ (called a \emph{perturbation term}), we can define the moduli space
\[
M_\nu (W, \mathfrak{s}) = \{[\underline{A}]\in \mathcal{B}(W, \mathfrak{s}) : F_c^+(\underline{A}) = i \nu^+\}
\] 
where $F_c^+(\underline{A})$ is the self dual part of the curvature of the central part of $\underline{A}$. We define $M_\nu(W) = \bigcup_\mathfrak{s}(W, \mathfrak{s})$. Fixing an element in $M_\nu (W, \mathfrak{s})$, we can also identify it with the Jacobian torus.

\begin{proposition}[\cite{daemi2015abelian}, Lemma 1.6]
If $b^+(W) >0$, $\nu$ can be chosen so that the moduli space $M_\nu(W, \mathfrak{s})$ is empty. If $b^+(W) = 0$, then $M_\nu(W, \mathfrak{s})$ is isomorphic to $J(W) := H^1(W; \mathbb{R})\big/ 2H^1(W; \mathbb{Z})$ for any choice of $\nu$.
\end{proposition}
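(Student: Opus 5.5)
This is the standard abelian (ASD $U(1)$) moduli analysis, which I would carry out in three steps. The statement concerns a cobordism $W$ with cylindrical ends and $\spinc$ structure $\mathfrak{s}$ with torsion restrictions to the ends; we must show two things. If $b^+(W)>0$, the perturbed equation $F_c^+(\underline A)=i\nu^+$ has no solutions for generic $\nu$. If $b^+(W)=0$, the solution set is a torsor over $J(W)$.

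The first step is to linearize. Since the $\spinc$ connection is determined by its central part, fix a reference $\underline A_0$ and write the central part as $A=A_0+2ia$ with $a$ an imaginary-valued, or real after factoring $i$, weighted Sobolev 1-form in $L^2_{k,\delta}$. The equation becomes affine:
\[
d^+a=\tfrac12\bigl(\nu^+ + iF^+_{A_0}\bigr)=:\eta .
\]
Gauge transformations $u:W\to S^1$ act by $a\mapsto a-u^{-1}du$, up to the fixed scaling. So $M_\nu(W,\mathfrak s)$ is the set of solutions $a$ of this equation modulo the closed forms $u^{-1}du$. These represent exactly the integral lattice, rescaled as in the definition of $J$.

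The second step is Hodge theory on the manifold with cylindrical ends. With small positive weight $\delta$, the operator $d^*+d^+:L^2_{k,\delta}(\Lambda^1)\to L^2_{k-1,\delta}(\Lambda^0\oplus\Lambda^+)$ is Fredholm, by Atiyah--Patodi--Singer / Lockhart--McOwen. Its kernel is identified with $H^1(W;\R)$; more precisely, the weighted kernel modulo exact pieces captures the image of $H^1(W)$ with the correct boundary behavior. Its cokernel in the $\Lambda^+$ factor is the space $\mathcal H^+$ of $L^2$ self-dual harmonic forms, of dimension $b^+(W)$.

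The third step treats the two cases, and it is where the real work lies: the analytic identification of the kernel and cokernel of the weighted operator with topological quantities, including the handling of the limiting flat connections on the ends.

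\emph{Case $b^+(W)>0$.} Solvability of $d^+a=\eta$ requires the $L^2$-projection of $\eta$ onto $\mathcal H^+$ to vanish. Since $\nu\mapsto\nu^+$ surjects onto self-dual forms, a generic $\nu$ makes this projection nonzero, and then the moduli space is empty. One should check that the obstruction is independent of the choice of $A_0$. Changing $A_0$ shifts $F_{A_0}^+$ by $d^+$ of a weighted form, which is orthogonal to $\mathcal H^+$, so the obstruction is well defined.

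\emph{Case $b^+(W)=0$.} The cokernel vanishes, so solutions exist for every $\nu$. The solution set is the affine space $a_1+\ker d^+$. Impose Coulomb gauge $d^*a=0$, which uses up the identity component of the gauge group; this exploits the fact that $\Delta$ is invertible on the weighted spaces. The residual space is $\ker(d^*+d^+)\cong H^1(W;\R)$. I expect to need $b^+=0$ again here, through the identity $\|d^+a\|^2=\|d^-a\|^2$ valid for decaying forms, which forces $da=0$. The remaining gauge group $\pi_0\mathcal G=H^1(W;\Z)$ acts by translation through harmonic representatives of $u^{-1}du$, which gives $2H^1(W;\Z)$ after the factor $2$ from $B-2u^{-1}du$. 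Hence $M_\nu(W,\mathfrak s)\cong H^1(W;\R)/2H^1(W;\Z)=J(W)$, with the identification depending on a chosen base solution.
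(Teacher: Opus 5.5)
Your overall strategy is the right one, and your $b^+(W)>0$ argument works. That strategy is to reduce to the affine equation $d^+a=\eta$, locate the obstruction in the self-dual harmonic forms, and quotient the closed solutions by gauge. The gap is the choice of function space. With $a\in L^2_{k,\delta}$ and $\delta>0$, every $A_0+2ia$ has the same asymptotic flat limits as $A_0$. So you only see connections lying over a single point of $\mathcal{R}(Y_0,\mathfrak{t}_0)\times\mathcal{R}(Y_1,\mathfrak{t}_1)$. That cannot produce $J(W)$, and the paper needs $J(W)$ with $(r_0,r_1)$ identified with $(i_0,i_1)\colon J(W)\to J(Y_0)\times J(Y_1)$. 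Both of your key claims fail in this space:
\begin{itemize}
\item The $L^2_\delta$-kernel of $d^*+d^+$ consists of the $L^2$ harmonic $1$-forms. These realize only the image of $H^1_c(W;\mathbb{R})\to H^1(W;\mathbb{R})$, not all of $H^1(W;\mathbb{R})$.
\item When $b^+(W)=0$, the cokernel of $d^+$ on decaying forms need not vanish. Any bounded closed self-dual $\omega$ asymptotic to $(dt\wedge\alpha_i)^+$, with $\alpha_i$ harmonic on $Y_i$, satisfies $\langle d^+a,\omega\rangle=\int_W d(a\wedge\omega)=0$ for every decaying $a$.
\end{itemize}
For a concrete failure, take $W=\mathbb{R}\times Y$ with $b_1(Y)>0$. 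Then $b^+(W)=0$ and $J(W)=J(Y)$ is a torus. Yet in your space the moduli space for $\nu=0$ is a point, and it is empty whenever $\langle\nu^+,(dt\wedge\alpha)^+\rangle\neq 0$. Your gauge group is also mismatched with your configuration space. Transformations with $u^{-1}du$ decaying have $\pi_0$ equal to the kernel of $H^1(W;\mathbb{Z})\to H^1(Y_0\sqcup Y_1;\mathbb{Z})$, not $H^1(W;\mathbb{Z})$.

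The missing idea is to let the limits move. Take $a=a'+\sum_i\beta_i\pi_i^*h_i$ with $a'\in L^2_{k,\delta}$, $h_i\in\mathcal{H}^1(Y_i)$, and $\beta_i$ cutoffs supported on the ends. Let the gauge group consist of maps $W\to S^1$ asymptotic to pullbacks of maps $Y_i\to S^1$, so that $\pi_0\mathcal{G}=H^1(W;\mathbb{Z})$. By Stokes, $\langle d^+(\beta_i\pi_i^*h_i),\omega\rangle=\pm\langle h_i,\alpha_i\rangle_{L^2(Y_i)}$. So the new directions exactly kill the extra cokernel, and the obstruction space becomes the $L^2$ self-dual harmonic forms, of dimension $b^+(W)$; your $b^+>0$ argument is unchanged. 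For $b^+(W)=0$, $d^+$ is then onto, and the solution set is an affine space over closed extended forms. Every class in $H^1(W;\mathbb{R})$ has such a representative, asymptotic to harmonic forms on the ends. The exact ones, $df$ with $f$ asymptotically constant, are gauge-trivial, and the quotient is $H^1(W;\mathbb{R})/2H^1(W;\mathbb{Z})$. One small correction: $\|d^+a\|^2=\|d^-a\|^2$ is just Stokes and does not use $b^+(W)=0$. It still holds for extended $a$, since $da$ decays.
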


The \emph{(oriented) plane Floer homology} of a 3-manifold $Y$, denoted $\widetilde{\mathrm{PFH}}(Y)$, is equal to the cohomology of $\mathcal{R}(Y)$; in particular,
\[
\widetilde{\mathrm{PFH}}(Y) \cong H^*(J(Y); \tilde{\Lambda})^{\oplus \left| \mathrm{Tor}(H_1(Y; \mathbb{Z}))\right|} \cong \tilde{\Lambda}^{2^{b_1(Y)} \cdot \left| \mathrm{Tor}(H_1(Y; \mathbb{Z})) \right|}.
\]
To lift $\widetilde{\mathrm{PFH}}(Y)$ to the level of chain complexes, let $f$ be a Morse-Smale function on $\mathcal{R}(Y)$ and let $(\widetilde{\mathrm{PFC}}(Y), d_p)$ denote the corresponding Morse complex, called the \emph{(oriented) plane Floer complex} of $Y$. We equip $\widetilde{\mathrm{PFC}}(Y)$ with a grading on critical points $\alpha$ given by
\[
\deg_p(\alpha) := 2 \cdot \mathrm{ind}(\alpha) - b_1(Y).
\]

As a primary example, note that $\widetilde{\mathrm{PFC}}(S^3)$ is the Morse complex of $H^1(S^3; \mathbb{R}) \big/ 2H^1(S^3; \mathbb{Z}) \simeq *$, so $\widetilde{\mathrm{PFC}}(S^3) \cong \tilde{\Lambda}$.

\begin{remark}
To define the cobordism maps, one needs to work with moduli spaces parametrized by a family of metrics and fatten these moduli spaces in order to ensure compactness. However, since we only consider a single cobordism map arising from a family of metrics in this paper, we provide this computation in Section \ref{family} and omit a more general definition. The interested reader can consult \cite[\S 1.3]{daemi2015abelian}.
\end{remark}

Over a single metric, the cobordism maps are defined as follows. Let $\alpha$ and $\beta$ be critical points of Morse functions $f_0$ and $f_1$ on $\mathcal{R}(Y_0)$ and $\mathcal{R}(Y_1)$ respectively. Let $U_\alpha$ and $S_\beta$ denote the unstable manifold of $\alpha$ and the stable manifold of $\beta$ respectively. Let $\nu$ be a perturbation term and consider the moduli space
\[
M_\nu(\alpha,\beta,\mathfrak{s})=\{z\in M_\nu(W,\mathfrak{s}):r_0(z)\in U_\alpha ,r_1(z)\in S_\beta\}
\]
where $r_0$ and $r_1$ are the restriction maps. One can choose $\nu$ so that $M_\nu(\alpha, \beta, \mathfrak{s})$ is transversely cut-out. We compute
\begin{align*}
	\dim (M_\nu(\alpha,\beta,\mathfrak{s})) &= \dim (M_\nu(W, \mathfrak{s}))+ \dim(U_\alpha \times S_\beta)- \dim\left(\mathcal{R}(Y_0, \mathfrak{t}_0) \times \mathcal{R}(Y_1, \mathfrak{t}_1) \right)\\
	&=\dim (M_\nu(W, \mathfrak{s}))-\dim (\mathcal{R}(Y_0, \mathfrak{t}_0)) + \ind(\alpha) - \ind (\beta) \\
	&= \dim(J(W))- \dim(J(Y_0))+ \ind (\alpha) - \ind (\beta)\\
	&= b_1(W)-b_1(Y_0)+ \ind (\alpha) -\ind (\beta).
\end{align*}
The map on the plane Floer complex induced by the cobordism $W$ is given by counting the $0$-dimensional moduli space:
 \[
 f^\nu ({\alpha}) =\sum_{\beta \in Crit(f_1)}\left( \sum_{p \in {M_\nu}(\alpha,\beta,\mathfrak{s})} \epsilon_{p}u^{e(p)}\right)\beta,
 \] where $\epsilon_{p}\in\{1,-1\}$ is the orientation on $p$, which is determined by a choice of homology orientation on the triple $(W, Y_0, Y_1)$.
Equivalently, the cobordism map can also be computed from the Jacobian torus as follows.
\begin{proposition}[\cite{daemi2015abelian}, Proposition 1.22]
\label{prop:daemihammer}
Suppose $W: Y_0 \to Y_1$. Then
\begin{enumerate}
	\item if $b^+(W) > 0$, $\widetilde{\mathrm{PFH}}(W) = 0$;
	\item if $b^+(W) = 0$, then
	\[
		\widetilde{\mathrm{PFH}}(W) = \sum_{\mathfrak{s}: \mathfrak{t}_0 \to \mathfrak{t}_1} u^{-c_1(\mathfrak{s}) \cdot c_1(\mathfrak{s})} D(W; Y_0, Y_1)(\mathfrak{t}_0, \mathfrak{t}_1)
	\]
\end{enumerate}
where $\mathfrak{s}: \mathfrak{t}_0 \to \mathfrak{t}_1$ denotes a $\mathrm{Spin}^c$ structure $\mathfrak{s}$ on $W$, with $\mathfrak{s}|_{Y_0} = \mathfrak{t}_0$ and $\mathfrak{s}|_{Y_1} = \mathfrak{t}_1$ being torsion $\mathrm{Spin}^c$ structures.
\end{proposition}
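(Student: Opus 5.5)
The plan is to compute the chain-level cobordism map $f^\nu$ directly from the explicit description of the moduli spaces in the preceding proposition (\cite{daemi2015abelian}, Lemma 1.6), separately for each $\spinc$ structure $\mathfrak{s}$. Here I take $D(W;Y_0,Y_1)(\mathfrak{t}_0,\mathfrak{t}_1)$ to be the cohomological correspondence
\[
(r_1)_! \circ r_0^* : H^*(\mathcal{R}(Y_0,\mathfrak{t}_0);\tilde\Lambda) \to H^*(\mathcal{R}(Y_1,\mathfrak{t}_1);\tilde\Lambda)
\]
induced by the torus $J(W)$. This reading is not fixed by the statement as given, and everything below depends on it.

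\textbf{Case $b^+(W)>0$.} By the preceding proposition we may choose $\nu$ so that $M_\nu(W,\mathfrak{s})=\emptyset$ for every $\mathfrak{s}$. Then every $M_\nu(\alpha,\beta,\mathfrak{s})$ is empty, so $f^\nu=0$ at the chain level. The map on homology does not depend on the generic $\nu$: two perturbations are joined by a path, and the parametrized moduli spaces provide a chain homotopy, which is the standard argument of \cite[\S1.3]{daemi2015abelian}. Hence $\widetilde{\mathrm{PFH}}(W)=0$.

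\textbf{Case $b^+(W)=0$.} Now $M_\nu(W,\mathfrak{s})\cong J(W)$. The key point is that all solutions differ by closed $1$-forms, so in the affine identifications the restriction maps $r_0,r_1$ are affine maps of tori. Their linear parts are the restrictions $H^1(W;\R)\to H^1(Y_i;\R)$. The count of $M_\nu(\alpha,\beta,\mathfrak{s})$ is then a signed intersection number in $J(W)$ of $r_0^{-1}(U_\alpha)$ with $r_1^{-1}(S_\beta)$. This is exactly the Morse-theoretic model for the composite ``pull back along $r_0$, then integrate along $r_1$''. That composite is the map induced by the correspondence $\mathcal{R}(Y_0,\mathfrak{t}_0)\xleftarrow{r_0} J(W)\xrightarrow{r_1}\mathcal{R}(Y_1,\mathfrak{t}_1)$. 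In the degenerate case where $r_1$ fails to be a submersion onto the relevant subtorus, the generic count vanishes, and so does the pushforward, so the two descriptions still agree.

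Next, the weight $u^{e(p)}$ must be constant on each component and equal to $u^{-c_1(\mathfrak{s})\cdot c_1(\mathfrak{s})}$. The energy is the topological integral $\int_W F_c\wedge F_c$, normalized appropriately. Because the limits on the ends are flat, this integral is a Chern–Weil expression depending only on $\mathfrak{s}$. Since the ends are torsion, $c_1(\mathfrak{s})^2$ is a well-defined rational number, and the sign and normalization come from $F^+$ being the small perturbation. Summing over all $\mathfrak{s}$ extending $(\mathfrak{t}_0,\mathfrak{t}_1)$ then gives the stated formula.

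\textbf{Main obstacle.} I expect the hard part to be the orientation bookkeeping. One has to check that the signs $\epsilon_p$, which come from the homology orientation of $(W,Y_0,Y_1)$, agree with the orientation conventions used for fiber integration. One also has to check that the intersection count is a chain map that respects the Morse differentials on both sides. For this I would first reduce to the case where $f_0,f_1$ are perfect Morse functions, using the standard torus cell structures, so that both differentials vanish. Then it suffices to compare the two sides on a basis of $H^*$ of the tori, which is a linear-algebra computation on the lattices $H^1(\,\cdot\,;\Z)$.
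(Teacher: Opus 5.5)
Your argument is correct and is essentially the standard proof behind the cited result. The paper itself gives no proof beyond citing Daemi and recording the key identification of $(r_0,r_1)$ with $(i_0,i_1)\colon J(W)\to J(Y_0)\times J(Y_1)$, whose pull-up-push-down $F_{J(W)}$ is exactly your $(r_1)_!\circ r_0^*$.

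Two small points to tidy:

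\begin{enumerate}
\item The weight $u^{-c_1(\mathfrak{s})\cdot c_1(\mathfrak{s})}$ is constant because the energy is the topological Chern--Weil quantity, so no smallness of $\nu$ is needed.
\item Your ``degenerate case'' aside is unnecessary and risks conflating non-surjectivity with degeneracy. Transverse counts compute the homological pairing in all cases. For example, for the split cobordism $P_n$, $r_1$ is a non-surjective inclusion of tori, yet the pushforward is nonzero and gives $v_n\wedge(\cdot)$.
\end{enumerate}
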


We briefly describe the map $D(W; Y_0, Y_1)(\mathfrak{t}_0, \mathfrak{t}_1)$; see \cite[\S 1.4]{daemi2015abelian}. First, identify the restriction maps $(r_0, r_1)$ with the maps $(i_0, i_1): J(W) \to J(Y_0) \times J(Y_1)$ induced by the inclusion $Y_i \hookrightarrow W$. In particular, we can identify the diagram
\[
\begin{tikzcd}[row sep = large]
& M_\nu(W, \mathfrak{s}) \arrow[dl, "r_0"'] \arrow[dr, "r_1"] & \\
\mathcal{R}(Y_0, \mathfrak{t}_0)& & \mathcal{R}(Y_1, \mathfrak{t}_1)
\end{tikzcd}
\qquad \text{with} \qquad
\begin{tikzcd}[row sep = large]
& J(W) \arrow[dl, "i_0"'] \arrow[dr, "i_1"] & \\
J(Y_0) & & J(Y_1)
\end{tikzcd}
\]
From the latter correspondence, we can define the ``pull-up-push-down'' map 
\[
F_{J(W)} = \mathrm{PD}_{J(Y_1)}^{-1} \circ (i_1)_* \circ \mathrm{PD}_{J(W)} \circ (i_0)^*: H^*(J(Y_0)) \to H^*(J(Y_1))
\]
where $\mathrm{PD}_X: H^*(X; \mathbb{Z}) \xrightarrow{\sim} H_*(X; \mathbb{Z})$ is the Poincar\'e duality isomorphism. Notice that the Poincar\'e duality isomorphism requires a choice of orientation on the Jacobian torus, which corresponds to a choice of homology orientation on the triple $(W, Y_0, Y_1)$. We define $D(W; Y_0, Y_1)(\mathfrak{t}_0, \mathfrak{t}_1)$ to be the map 
\[
F_{J(W)} \otimes \mathrm{id}_{\tilde{\Lambda}}: H^*(\mathcal{R}(Y_0, \mathfrak{t}_0); \tilde{\Lambda}) \to H^*(\mathcal{R}(Y_1, \mathfrak{t}_1); \tilde{\Lambda}).
\]
\subsubsection{Plane Floer homology of 2-knots}

Since $H_2(S^3 \times I) = 0$, a surface $\mathcal{S}$ in $S^3\times I$ determines a unique 2-fold branched cover $\Sigma(\mathcal{S})$ (see \cite[Corollary 2.10]{MR1772842} and \cite{MR4821360}). We are interested in branched double covers of $S^3 \times I$ branched over $\xi^*\sqcup C$ as described in \S \ref{ss:outlineofproof}. 

\begin{proof}[Proof of Proposition \ref{prop:step2}]
For any 2-knot $\xi: S^2 \hookrightarrow S^4$, $\Sigma(\xi^* \sqcup C)$ is diffeomorphic to $\Sigma(\xi^*) \natural (S^1 \times D^3) \setminus D^4$, so $b_2(\Sigma(\xi^* \sqcup C)) = 0$. View $\Sigma(\xi^* \sqcup C)$ as a cobordism from $S^3$ to $S^1 \times S^2$. Since both $S^3$ and $S^1 \times S^2$ have a unique torsion $\mathrm{Spin}^c$ structure (denoted $\mathfrak{t}_0$ and $\mathfrak{t}_1$ respectively), Proposition \ref{prop:daemihammer} tells us that
	\[
	\widetilde{\mathrm{PFH}}(\Sigma(\xi^* \sqcup C)) = \sum_{\mathfrak{s}} D(\Sigma(\xi^*\sqcup C); S^3, S^1 \times S^2)(\mathfrak{t}_0,\mathfrak{t}_1)
	\]
where the sum is taken over the $\spinc$ structures of $\Sigma(\xi^* \sqcup C)$. Notice that all $\spinc$ structures $\mathfrak{s}$ are torsion, since $b_2(\Sigma(\xi^* \sqcup C)) = 0$, and therefore have trivial energy. To compute $D(\Sigma(\xi^* \sqcup C); S^3, S^1 \times S^2)(\mathfrak{t}_0,\mathfrak{t}_1)$, consider the pull-up-push-down map associated to the correspondence
	\[
	\begin{tikzcd}[row sep = large]
		&  J(\Sigma(\xi^* \sqcup C)) \arrow[dl] \arrow[dr] & \\
		J(S^3) & & J(S^1 \times S^2)
	\end{tikzcd}
	\]
	It follows that $F_{J(\Sigma(\xi^*\sqcup C))}: \widetilde{\Lambda} \to \widetilde{\Lambda} \{1, v\}$ maps 1 to 1, concluding the proof.
\end{proof}

In contrast, if we assume the validity of Proposition \ref{prop:step1}, then Proposition \ref{prop:daemihammer} tells us that odd Khovanov homology does not distinguish between different embeddings of the torus in $S^4$.

\begin{corollary}
	\label{cor:extracredit}
	For any embedding of the torus $\mathcal{T}: S^1 \times S^1 \hookrightarrow S^4$, $n(\mathcal{T}) = 0$.
\end{corollary}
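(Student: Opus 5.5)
The plan is to run the same argument as for Theorem \ref{thm:main}, with $\xi$ replaced by the torus, and to show that the plane Floer side now vanishes because the branched double cover has $b^+>0$. Remove a small disk around a generic point $p \in \mathcal{T}$ to get a punctured torus $\mathcal{T}^*\colon \emptyset \to \mathcal{U}$. Consider $\mathcal{T}^* \sqcup C\colon \mathcal{U} \to \mathcal{U}\sqcup\mathcal{U}$, exactly as in \S\ref{ss:outlineofproof}.

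The first step is to relate the integer $n(\mathcal{T})$ (the scalar by which $\Kh(\mathcal{T})$ acts on $\mathbb{Z}$) to the reduced map $\widetilde{\Kh}(\mathcal{T}^*\sqcup C)$. Capping the puncture of $\mathcal{T}^*$ with a death recovers $\mathcal{T}$. Since $\mathcal{T}^*$ has degree $(0,-1)$, we have $\Kh(\mathcal{T}^*)(1) = m\,v$ for some $m \in \mathbb{Z}$, and $\Kh(\mathcal{T})(1) = \pm m$. On the reduced side, $\widetilde{\Kh}(\mathcal{T}^*\sqcup C)$ sends the generator of $\widetilde{\Kh}(\mathcal{U})\cong\mathbb{Z}$ to $\pm m$ times a fixed generator in $\widetilde{\Kh}(\mathcal{U}\sqcup\mathcal{U})$, up to sign. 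The point is that wedging with $(a_1-a_2)$ commutes with the merge/birth/death maps in the obvious way. It therefore suffices to show that $\widetilde{\Kh}(\mathcal{T}^*\sqcup C)\otimes\widetilde{\Lambda} = 0$. Because $\widetilde{\Lambda}$ is torsion-free of characteristic zero, this forces $m=0$.

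The second step is the plane Floer computation. Let $X = \Sigma(\mathcal{T})$ be the branched double cover of $S^4$ along $\mathcal{T}$. Then
\[
\chi(X) = 2\chi(S^4) - \chi(\mathcal{T}) = 4 .
\]
For the signature, $\mathcal{T}$ is orientable with $[\mathcal{T}]^2 = 0$, so
\[
\sigma(X) = 2\sigma(S^4) - \tfrac12[\mathcal{T}]^2 = 0 .
\]
We also have $b_1(X) = b_3(X)$, and $b_1(X)$ is finite. Hence
\[
b_2(X) = 2 + 2b_1(X) \ge 2, \qquad b^+(X) = \tfrac12 b_2(X) \ge 1 .
\]
As in the proof of Proposition \ref{prop:step2}, $\Sigma(\mathcal{T}^*\sqcup C)$ is $X$ with a ball removed, boundary-summed with $S^1\times D^3$. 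Neither operation changes $b_2$ or the intersection form, so $b^+(\Sigma(\mathcal{T}^*\sqcup C)) \ge 1$. By part (1) of Proposition \ref{prop:daemihammer}, $\widetilde{\mathrm{PFH}}(\Sigma(\mathcal{T}^*\sqcup C)) = 0$.

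The third step transfers this vanishing back to Khovanov homology. By Proposition \ref{prop:step1}, $\widetilde{\mathrm{PKC}}(\mathcal{T}^*\sqcup C)$ is a filtered chain map, and its effect on $E_2$ pages is, up to a unit of $\widetilde{\Lambda}$, the reduced odd Khovanov map tensored with $\widetilde{\Lambda}$. For $\mathcal{U}$ and $\mathcal{U}\sqcup\mathcal{U}$ the spectral sequence of Theorem \ref{thm:daemi} collapses at $E_2$, since ranks agree with $\widetilde{\mathrm{PFH}}(S^3)$ and $\widetilde{\mathrm{PFH}}(S^1\times S^2)$. Consequently the induced map on $E_\infty = E_2$ is identified with the associated graded of the map on $\widetilde{\mathrm{PFH}}$, which is zero. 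Here $\widetilde{\Kh}(\mathcal{U})\otimes\widetilde{\Lambda}\cong\widetilde{\Lambda}$ lives in a single filtration level, so the associated graded carries no information beyond the map itself. Therefore $\widetilde{\Kh}(\mathcal{T}^*\sqcup C)\otimes\widetilde{\Lambda} = 0$, and so $n(\mathcal{T}) = 0$.

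I expect the main obstacle to be the first step: pinning down precisely how the unreduced closed-surface scalar matches the reduced punctured map, including tracking the filtration shift and the identification of generators. I would handle it by an explicit local computation with the birth, death and split maps of $\mathcal{F}$, together with the fact that $\mathcal{T}^*$ and $\mathcal{T}$ differ only by a disk.
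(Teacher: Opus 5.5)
Your proposal is correct and is essentially the paper's proof. A positive $b^+$ of the branched double cover kills the plane Floer map by Proposition~\ref{prop:daemihammer}(1), and Proposition~\ref{prop:step1}, together with collapse of the spectral sequence for $\mathcal{U}$ and $\mathcal{U}\sqcup\mathcal{U}$, transfers that vanishing to odd Khovanov homology. The differences are cosmetic: the paper works directly with $\mathcal{T}\sqcup C\colon \mathcal{U}\to\mathcal{U}$, so it needs no analogue of your first step, and simply asserts $b^+=1$; you instead pass through the punctured torus and derive $b^+\geq 1$ from $\chi=4$ and $\sigma=0$, which has the small advantage of not requiring $b_1(\Sigma(\mathcal{T}))=0$.
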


\begin{proof}
	Let $\mathcal{T}$ be an embedded torus in $S^4$. We have that $b^+(\Sigma(\mathcal{T} \sqcup C)) = 1 > 0$ so Proposition \ref{prop:daemihammer} gives us that $\widetilde{\mathrm{PFH}}(\Sigma(\mathcal{T}\sqcup C)) = 0$. Proposition \ref{prop:step1} implies the result.
\end{proof}

\subsubsection{Plane Floer homology with local coefficient systems}
The surgery exact triangle in Section \ref{s:ss} is formulated using plane Floer homology with a local coefficient system, denoted by $\widetilde{\mathrm{PFH}}(Y,\zeta)$, where $\zeta\subset Y$ is an embedded closed 1-manifold. Given a pair $(W,Z)$, where $W$ is a cobordism between $Y_0$ and $Y_1$, and $Z$ is a cobordism between $\zeta_0$ and $\zeta_1$, one can also define a cobordism map
\[
\widetilde{\mathrm{PFH}}(W,Z):\widetilde{\mathrm{PFH}}(Y_0,\zeta_0)\rightarrow\widetilde{\mathrm{PFH}}(Y_1,\zeta_1).
\]
See \cite[\S 1.4]{daemi2015abelian} for the definition. Since all local coefficient systems appearing in this paper are trivial, we omit the general definition and simply refer to the following lemma.

\begin{lemma}[\cite{daemi2015abelian}, Lemma 1.24]\label{lem:localtrivial}
	If $\zeta$ and ${\zeta}^{'}$ are 1-dimensional embedded sub-manifolds in a 3-manifold $Y$ representing the same element of $H_{1}(Y;\mathbb{Z}/2\mathbb{Z})$, then $\widetilde{\mathrm{PFH}}(Y,\zeta)\cong \widetilde{\mathrm{PFH}}(Y,\zeta^{'})$.
\end{lemma}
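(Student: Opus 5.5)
The plan is to prove that the two local coefficient systems $\Gamma_\zeta$ and $\Gamma_{\zeta'}$ on $\mathcal{R}(Y)$ are isomorphic as local systems. An isomorphism of local systems induces a chain isomorphism of the Morse complexes computing $\widetilde{\mathrm{PFH}}(Y,\zeta)$ and $\widetilde{\mathrm{PFH}}(Y,\zeta')$, because the differential only uses parallel transport along gradient flowlines. Recall from \cite[\S 1.4]{daemi2015abelian} that $\Gamma_\zeta$ has fiber $\tilde{\Lambda}$ over each class $[\underline{B}]$. Its parallel transport along a path of flat central connections $B_t$ is given by a holonomy-type factor obtained by pairing $\zeta$ with the path; concretely, a sign or phase determined by the holonomy of the central part along $\zeta$. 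Thus the local system depends on $\zeta$ only through the homomorphism $\pi_1(\mathcal{R}(Y,\mathfrak{t}))=2H^1(Y;\mathbb{Z})\to\{\pm1\}$, i.e.\ through the monodromy.

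\textbf{Step 1: reduce to a statement about monodromy.} Each component $\mathcal{R}(Y,\mathfrak{t})$ is a torus $J(Y)$, so a rank-one local system is determined up to isomorphism by its monodromy character on $\pi_1 = H_1(J(Y))\cong 2H^1(Y;\mathbb{Z})$. I will compute this character for $\Gamma_\zeta$. For a loop given by the gauge orbit of a map $u\colon Y\to S^1$, the connection changes by $-2u^{-1}du$. The monodromy is therefore $\exp$ of a pairing between $2[u]\in 2H^1(Y;\mathbb{Z})$ and $[\zeta]\in H_1(Y;\mathbb{Z})$, normalized so that the answer is $(-1)^{\langle [u],[\zeta]\rangle}$.

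\textbf{Step 2: show the character depends only on $[\zeta]$ mod $2$.} Suppose $[\zeta]\equiv[\zeta']$ in $H_1(Y;\mathbb{Z}/2\mathbb{Z})$. After orienting, we can write $\zeta'-\zeta=\partial S+2\eta$ for a surface $S$ and a $1$-cycle $\eta$. The $\partial S$ term contributes $\langle [u],\partial S\rangle=0$, and the $2\eta$ term contributes an even exponent. Hence the characters agree and $\Gamma_\zeta\cong\Gamma_{\zeta'}$.

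\textbf{Step 3: make the isomorphism explicit at chain level.} On the level of representatives, define the isomorphism fiberwise by multiplying by $\exp\bigl(i\int_S F_B\bigr)\cdot \mathrm{hol}_\eta(B)$ for the appropriate normalization. This factor is trivial on flat connections for the $S$-part. It is continuous in $[B]$ and intertwines parallel transport by construction. Applying it to each generator $\alpha$ of $\widetilde{\mathrm{PFC}}(Y)$ then gives a chain isomorphism commuting with $d_p$.

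\textbf{Main obstacle.} I expect the delicate point to be the normalization in Steps 1 and 3. The gauge group acts by $B\mapsto B-2u^{-1}du$, so one must check that the factor assigned to $\eta$ is genuinely gauge invariant rather than defined only up to sign. This factor of $2$ is exactly why only the mod $2$ class of $\zeta$ should matter. I would first fix a basepoint connection, write every class as $B_0+a$ with $a$ a closed $1$-form, and define the factor as $\exp\bigl(\pi i\int_\eta a\bigr)$. Under $a\mapsto a+2u^{-1}du$ this changes by $\exp\bigl(2\pi i\cdot\text{integer}\bigr)=1$, which verifies well-definedness.
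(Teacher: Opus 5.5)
The paper contains no proof of this statement. It is quoted from Daemi (Lemma 1.24), and the paper deliberately omits the definition of the local system, so there is no in-paper argument to compare against.

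Your route is the standard one and is structurally sound:
\begin{enumerate}
\item each $\mathcal{R}(Y,\mathfrak{t})$ is a torus with $\pi_1\cong 2H^1(Y;\mathbb{Z})$;
\item a rank-one $\tilde{\Lambda}$-local system on it is determined up to isomorphism by its monodromy character;
\item a character of the form $[u]\mapsto(-1)^{\langle [u],[\zeta]\rangle}$ depends only on $[\zeta]$ mod $2$, and your decomposition $\zeta'-\zeta=\partial S+2\eta$ is correct;
\item an isomorphism of local systems intertwines the Morse differentials.
\end{enumerate}
It even proves slightly more than stated. Torsion classes pair trivially with $H^1(Y;\mathbb{Z})$, so only the image of $[\zeta]$ in $(H_1(Y;\mathbb{Z})/\mathrm{Tor})\otimes\mathbb{Z}/2$ matters.

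The step that is not yet a proof is the monodromy formula in Step 1. It carries all of the content, and you fixed it by ``normalizing so that the answer is'' the sign you want. Your own description of the transport, via the holonomy of the central part, does not produce that sign. Because $u$ acts on $L_{\mathfrak{t}}$ by $u^2$, $\mathrm{hol}_\zeta(B)$ is a single-valued function on $\mathcal{R}(Y,\mathfrak{t})$ that winds $2\langle[u],[\zeta]\rangle$ times around the loop $[u]$. Transport by ratios of it therefore has trivial monodromy, and a phase- or Novikov-weighted transport could break the mod $2$ conclusion altogether.

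The sign $(-1)^{\langle[u],[\zeta]\rangle}$ arises from a square root of a function winding only $\langle[u],[\zeta]\rangle$ times, that is, a fourth root of $\mathrm{hol}_\zeta(B)$. Equivalently, it comes from a cobordism sign of the form $(-1)^{(\langle c_1(\mathfrak{s}),Z\rangle-Z\cdot Z)/2}$, which is the kind that makes conjugate $\mathrm{Spin}^c$ structures cancel on the $(-1)$-sphere in $\overline{\mathbb{C}P^2}$. Evaluate it on the mapping torus of $[u]$ with $Z=S^1\times\zeta$, where $Z\cdot Z=0$ and $\langle c_1(\mathfrak{s}),Z\rangle=\pm2\langle[u],[\zeta]\rangle$. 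You need to read the transport off Daemi's definition and check that it really has this form.

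Step 3 is unnecessary for the bare isomorphism. Parallel transport from a basepoint in each component already gives one, unique up to a sign per component. As written, Step 3 also does not typecheck:
\begin{itemize}
\item The fibres have integer coefficients, so multiplying by the $U(1)$-valued function $\exp(\pi i\int_\eta a)$ only makes sense on the double covers of square roots that generate $\Gamma_\zeta$ and $\Gamma_{\zeta'}$.
\item Since $u^{-1}du$ is imaginary, you need $B=B_0+2\pi i\,a$ with $a$ real for that factor to be unimodular and gauge invariant.
\end{itemize}
The $S$-dependent isomorphism is only worth keeping if you want it to commute with the cobordism maps $\widetilde{\mathrm{PFH}}(W,Z)$.
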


Let $[\zeta]$ denote the $\mathbb{Z}/2\mathbb{Z}$-homology class of the 1-manifold $\zeta$. Lemma \ref{lem:localtrivial} implies that, when $[\zeta]$ is trivial, $\widetilde{\mathrm{PFH}}(Y,\zeta)\cong \widetilde{\mathrm{PFH}}(Y)$. When the local coefficient system is trivial on both ends of the cobordism, $\widetilde{\mathrm{PFH}}(W,Z)$ differs from the ordinary cobordism map $\widetilde{\mathrm{PFH}}(W)$ by an extra sign determined by the evaluation of the first Chern class of the $\spinc$ structure on the cobordism surface.

\subsection{Cobordism maps on a single metric}
\label{single}

In this subsection we consider various elementary cobordisms between unlinks and the induced cobordism maps on plane Floer homology. The examples listed here will be crucial for proving Proposition \ref{prop:step1}. 

We use notation consistent with \cite[Section 4]{daemi2015abelian}. Let $U_n$ denote the standard  $n$-component unlink. Label the components of $U_n$ as $K_1,\ldots, K_n$ where $K_i$ is a circle of radius $\tfrac{1}{4}$ in the plane $\mathbb{R}^2\times \{0\}\subset \R^3$ at $(0,i)$. $\widetilde{H}^0(U_n)$ is generated by $v_i:H_0(U_n)\to \mathbb{Z}$ where
\[
v_i(K_j)=\delta_{ij} \text{ for } 1\leq j\leq n-1
\qquad\text{and}\qquad
v_i(K_n)=-1
\]
for each $i=1, \ldots, n-1$. $H_1(\Sigma(U_n); \mathbb{R})$ has a basis consisting of loops $\gamma_i\subset\Sigma(U_n)$ which arise as the lift of the path $p_i$ joining $K_i$ and $K_n$. Therefore $H^1(\Sigma(U_n); \R)$ has a basis consisting of the duals to $\gamma_i$. Then $\mathcal{R}(\Sigma(U_n))$ is identified with $S_1\times \cdots \times S_{n-1}$ where each $S_i$ corresponds to $
\gamma_i$. The cohomology of $\mathcal{R}(\Sigma(U_n))$ is generated by elements of the form $x_{i_1}\wedge \cdots \wedge x_{i_k}$ where $x_{i_j}$ denotes the Poincar\'e dual of $S_{i_j}$. We fix the following isomorphism:
\begin{align*}
	L_n: \widetilde{\mathrm{PFH}}(\Sigma(U_n)) &\to \Lambda^*(\widetilde{H}^0(U_n))\\
	x_{i_1}\wedge \cdots\wedge x_{i_k}&\mapsto (-1)^{k(n-1)}v_{i_1}\wedge \cdots \wedge v_{i_k}.
\end{align*}
 
Let $P_n:U_n\to U_{n+1}$ and $C_n:U_n\to U_{n+1}$ denote the split and birth cobordisms pictured below.
\[
\tikz[baseline={([yshift=-.5ex]current bounding box.center)}, scale=.5]{
    \draw[dashed, knot, -] (-6,0) .. controls (-6,.25) and (-5,.25) .. (-5,0);
    \draw[knot, -] (-6,0) .. controls (-6, -0.25) and (-5,-0.25) .. (-5,0);
    \draw[knot, -] (-6,2) .. controls (-6,2.25) and (-5,2.25) .. (-5,2);
    \draw[knot, -] (-6,2) .. controls (-6, 1.75) and (-5,1.75) .. (-5,2);
    \draw[knot, -] (-6,0) -- (-6,2);
    \draw[knot, -] (-5,0) -- (-5,2);
    \node at (-3.5,1) {$\cdots$};
    \draw[dashed, knot, -] (-2,0) .. controls (-2,.25) and (-1,.25) .. (-1,0);
    \draw[knot, -] (-2,0) .. controls (-2, -0.25) and (-1,-0.25) .. (-1,0);
    \draw[knot, -] (-2,2) .. controls (-2,2.25) and (-1,2.25) .. (-1,2);
    \draw[knot, -] (-2,2) .. controls (-2, 1.75) and (-1,1.75) .. (-1,2);
    \draw[knot, -] (-2,0) -- (-2,2);
    \draw[knot, -] (-1,0) -- (-1,2);
    \draw[knot, -] (0,2) .. controls (0,1.75) and (1,1.75) .. (1,2);
    \draw[knot, -] (0,2) .. controls (0,2.25) and (1,2.25) .. (1,2);
    \draw[knot, -] (2,2) .. controls (2,1.75) and (3,1.75) .. (3,2);
    \draw[knot, -] (2,2) .. controls (2,2.25) and (3,2.25) .. (3,2);
    \draw[knot, -] (0,0) .. controls (0,-.25) and (1,-.25) .. (1,0);
    \draw[dashed, knot, -] (0,0) .. controls (0,.25) and (1,.25) .. (1,0);
    \draw[knot, -] (1,0) .. controls (1,0.65) and (3,0.65) .. (3,2);
    \draw[knot, -] (0,0) -- (0,2);
    \draw[knot, -] (1,2) .. controls (1,1.25) and (2,1.25) .. (2,2);
}
\hspace{3cm}
\tikz[baseline={([yshift=-.5ex]current bounding box.center)}, scale=.5]{
    \draw[dashed, knot, -] (-6,0) .. controls (-6,.25) and (-5,.25) .. (-5,0);
    \draw[knot, -] (-6,0) .. controls (-6, -0.25) and (-5,-0.25) .. (-5,0);
    \draw[knot, -] (-6,2) .. controls (-6,2.25) and (-5,2.25) .. (-5,2);
    \draw[knot, -] (-6,2) .. controls (-6, 1.75) and (-5,1.75) .. (-5,2);
    \draw[knot, -] (-6,0) -- (-6,2);
    \draw[knot, -] (-5,0) -- (-5,2);
    \node at (-3.5,1) {$\cdots$};
    \draw[dashed, knot, -] (-2,0) .. controls (-2,.25) and (-1,.25) .. (-1,0);
    \draw[knot, -] (-2,0) .. controls (-2, -0.25) and (-1,-0.25) .. (-1,0);
    \draw[knot, -] (-2,2) .. controls (-2,2.25) and (-1,2.25) .. (-1,2);
    \draw[knot, -] (-2,2) .. controls (-2, 1.75) and (-1,1.75) .. (-1,2);
    \draw[knot, -] (-2,0) -- (-2,2);
    \draw[knot, -] (-1,0) -- (-1,2);
    \draw[knot, -] (0,2) .. controls (0,1.75) and (1,1.75) .. (1,2);
    \draw[knot, -] (0,2) .. controls (0,2.25) and (1,2.25) .. (1,2);
    \draw[knot, -] (2,2) .. controls (2,1.75) and (3,1.75) .. (3,2);
    \draw[knot, -] (2,2) .. controls (2,2.25) and (3,2.25) .. (3,2);
    \draw[knot, -] (0,0) .. controls (0,-.25) and (1,-.25) .. (1,0);
    \draw[dashed, knot, -] (0,0) .. controls (0,.25) and (1,.25) .. (1,0);
    \draw[knot,-] (0,2) .. controls (0,1.25) and (1,1.25) .. (1,2);
    \draw[knot, -] (1,0) .. controls (1,0.65) and (3,0.65) .. (3,2);
    \draw[knot, -] (0,0) .. controls (0,1.35) and (2,1.35) .. (2,2);
}
\]
The corresponding merge and death cobordisms, denoted by $\overline{P}_n: U_{n+1} \to U_n$ and $\overline{C}_n: U_{n+1} \to U_n$ respectively, are obtained by flipping $P_n$ and $C_n$. We have the following proposition from \cite{daemi2015abelian}.

\begin{proposition}[\cite{daemi2015abelian}, Proposition 4.2]
\label{4.2}
There is an appropriate choice of homology orientations on $C_n,P_n:U_n\to U_{n+1}$ such that the induced cobordism maps on plane Floer homology are as follows:
\begin{align*}
\widetilde{\mathrm{PFH}}(\Sigma(C_n))(v_{i_1}\wedge\cdots\wedge v_{i_k}) & = v_{i_1}\wedge\cdots\wedge v_{i_k},\\
\widetilde{\mathrm{PFH}}(\Sigma(P_n))(v_{i_1}\wedge\cdots\wedge v_{i_k}) & =v_n\wedge v_{i_1}\wedge\cdots\wedge v_{i_k}.
\end{align*}
Similarly, there is an appropriate choice of homology orientations on $\overline{C}_n, \overline{P}_n:U_{n+1}\to U_{n}$ such that the induced cobordism maps on plane Floer homology are as follows:
\begin{align*}
\widetilde{\mathrm{PFH}}(\Sigma(\overline{C}_n))(v_{i_1}\wedge\cdots\wedge v_{i_k}) & = 0, \hspace{3cm}\widetilde{\mathrm{PFH}}(\Sigma(\overline{P}_n))(v_{i_1}\wedge\cdots\wedge v_{i_k})=v_{i_1}\wedge\cdots\wedge v_{i_k},\\
\widetilde{\mathrm{PFH}}(\Sigma(\overline{C}_n))(v_n\wedge v_{i_1}\wedge\cdots\wedge v_{i_k}) & = v_{i_1}\wedge\cdots\wedge v_{i_k}, \hspace{1.2cm}\widetilde{\mathrm{PFH}}(\Sigma(\overline{P}_n))(v_n\wedge v_{i_1}\wedge\cdots\wedge v_{i_k}) =0,
\end{align*}
where $1\leq i_l\leq n-1$.
\end{proposition}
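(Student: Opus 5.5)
Every map will come from Proposition \ref{prop:daemihammer}: compute the topology of the branched double covers $\Sigma(C_n)$, $\Sigma(P_n)$ and their flips, then evaluate the pull-up-push-down maps $F_{J(W)}$ on the Jacobian tori. Using the basis $\gamma_1,\dots,\gamma_{n-1}$ of $H_1(\Sigma(U_n))$ (lifts of the arcs $p_i$ from $K_i$ to $K_n$), identify $J(\Sigma(U_n))$ with $S_1\times\cdots\times S_{n-1}$. The identification $L_n$ then turns cup products of the classes $x_i$ into wedge products of the $v_i$, up to the fixed sign $(-1)^{k(n-1)}$.

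First I will identify the covers. For the birth $C_n$, the new component bounds a disk disjoint from everything else, and the cover of a trivial disk is a 1-handle. So $\Sigma(C_n)\cong (\Sigma(U_n)\times I)\natural(S^1\times D^3)$, which gives $H_2=0$, $b^+=0$ and $H^1(\Sigma(C_n))\cong H^1(\Sigma(U_{n+1}))$, the extra class being dual to the new loop $\gamma_n$. For the split $P_n$, the band is a 2-dimensional 1-handle in $S^3\times I$, and its cover is a 4-dimensional 2-handle attached along a null-homologous framed curve (the lift of the band core). Attaching it kills nothing in $H_1$, and its cocore produces the new loop, which is dual to $\gamma_n$. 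Both covers have $b^+=0$. All the $\mathrm{Spin}^c$ structures restricting to the (unique) torsion structures at the ends have $c_1$ torsion, so the energy factor $u^{-c_1^2}$ is $u^0=1$.

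Next I will compute $F_{J(W)}$ from the maps $i_0,i_1$ on $H^1$. For $C_n$: $i_0:J(W)\to J(Y_0)$ is the projection forgetting the $n$-th circle, and $i_1$ is an isomorphism. Pulling back along a projection and pushing forward along an isomorphism gives the inclusion $x_I\mapsto x_I$. For $P_n$: $i_0$ is an isomorphism onto $J(Y_0)=T^{n-1}$, while $i_1:J(W)\cong T^{n-1}\hookrightarrow T^{n}$ is the subtorus $\{\theta_n=0\}$. Its push-forward of a Poincar\'e dual class is wedging with $x_n$, so $x_I\mapsto x_n\wedge x_I$. The flipped cobordisms $\overline C_n$ and $\overline P_n$ are the reversed correspondences, whose pull-up-push-down maps are adjoint to the previous ones. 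This gives $\overline C_n: x_n\wedge x_I\mapsto x_I$ and $x_I\mapsto 0$ (integration along the fibre), and $\overline P_n: x_I\mapsto x_I$ and $x_n\wedge x_I\mapsto 0$ (restriction to the subtorus). Finally I will transport these formulas through $L_n$ and $L_{n+1}$. The basis change on $\widetilde H^0$, in which $K_n$ plays the special role, is accounted for by the convention that $v_n$ corresponds to the new component.

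The main obstacle is signs. One must choose homology orientations on each triple $(W,Y_0,Y_1)$, equivalently orientations of the Jacobian tori, so that the maps hold on the nose, including the factors $(-1)^{k(n-1)}$ and $(-1)^{kn}$ hidden in $L_n$ and $L_{n+1}$. Since the statement only asserts the existence of suitable orientations, I will define the orientation of $J(W)$ to be the one making $\mathrm{PD}$ compatible with the ordering $\theta_1,\dots,\theta_n$ on the target. I will then check that the discrepancy between the $L_n$ and $L_{n+1}$ signs, namely $(-1)^{k}$, is a global sign depending only on the degree shift. This is where care is needed: the discrepancy varies with $k$, so I will absorb it by orienting the $n$-th circle factor appropriately, at least for $C_n$ and $\overline C_n$. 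For $P_n$ and $\overline P_n$ the $\pm$ may only hold degreewise.
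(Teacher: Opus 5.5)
Your topology is correct: $\Sigma(C_n)$ is $\Sigma(U_n)\times I$ with a $1$-handle attached, and $\Sigma(P_n)$ is $\Sigma(U_n)\times I$ with a $2$-handle attached along a $0$-framed unknot. The correspondences are (projection, isomorphism) and (isomorphism, inclusion of $\{\theta_n=0\}$), and the reversed cobordisms give the adjoint correspondences. So you do get all four formulas \emph{up to sign}.

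The gap is the signs, which are the actual content of the statement. It asserts that one homology orientation on each of $C_n,P_n,\overline C_n,\overline P_n$ makes the formulas hold exactly in every degree $k$, with the fixed identification $L_n$. As you note, conjugating by $L_n$ and $L_{n+1}$ produces $(-1)^{k(n-1)}(-1)^{kn}=(-1)^k$ (times a $k$-independent sign for $P_n,\overline C_n$). Neither of your remedies can cancel a $k$-dependent sign:
\begin{itemize}
\item Changing the homology orientation of $(W,Y_0,Y_1)$, i.e.\ the orientation of $J(W)$, multiplies $F_{J(W)}$ by $-1$ on all of $\Lambda^*$.
\item Reorienting the $n$-th circle sends $x_n\mapsto -x_n$ and changes $\mathrm{PD}$ on that torus by a global sign. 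For $C_n$ it does not even touch the classes $x_I$ with $I\subseteq\{1,\dots,n-1\}$.
\end{itemize}
Moreover, for $C_n$ and $\overline P_n$ the map $i_1$ is a diffeomorphism. Taking Proposition~\ref{prop:daemihammer} literally, $F_{J(W)}=\pm(i_1^{-1})^*i_0^*$ is then a ring homomorphism up to a global sign. Any compensating $(-1)^k$ must therefore come from how the loops $\gamma_i$ at the two levels are oriented relative to each other, not from $J(W)$. Finally, settling for formulas that ``hold degreewise'' for $P_n,\overline P_n$ proves a strictly weaker statement than the one claimed.

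What is missing is a fixed sign convention carried through all four cases at once. You need to specify:
\begin{itemize}
\item the cap-product/Poincar\'e-duality convention on $T^{n-1}$ and $T^n$. For $P_n$, the push-forward along $\{\theta_n=0\}\subset T^n$ gives $x_I\wedge x_n$ or $x_n\wedge x_I$ depending on this convention, and these differ by exactly $(-1)^k$.
\item the orientations of the lifted loops $\gamma_i$ at each level, which are not canonical. For example, reversing all of $\gamma_1,\dots,\gamma_{n-1}$ at alternate levels inserts $(-1)^k$, up to a global sign, into each of the four maps in the $x$-basis.
\item how Daemi's Morse-theoretic $\widetilde{\mathrm{PFH}}$ is identified with $H^*(J)$.
\end{itemize}
Only then can you check that one coherent choice gives every map exactly the $(-1)^k$ that cancels the discrepancy from $L_n$ and $L_{n+1}$. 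The factor $(-1)^{k(n-1)}$ in $L_n$ exists for exactly this purpose; this paper imports the result from Daemi rather than deriving it.

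Two smaller points:
\begin{itemize}
\item $H_2(\Sigma(C_n))\cong H_2(\#^{n-1}S^1\times S^2)$ is nonzero for $n\ge 2$. What is true, and all you need, is that the intersection form vanishes, so $b^+=0$.
\item You should check that exactly one $\mathrm{Spin}^c$ structure on $W$ restricts to the torsion ones at the ends; otherwise Proposition~\ref{prop:daemihammer} returns a multiple of $F_{J(W)}$. This holds because $H^2(W)$ injects into $H^2(Y_0)$ for $C_n$ and into $H^2(Y_1)$ for $P_n$.
\end{itemize}
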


Let $\mathcal{U}$ denote an arbitrary unlink. In order to calculate the induced cobordism maps on plane Floer homology, we must identify the generators of $\widetilde{\text{PFH}}(\Sigma(\mathcal{U}))$ with those of $\Lambda^*(\widetilde{H}^0(\mathcal{U}))$ and make an appropriate choice of homology orientations for the elementary cobordisms. 

\begin{proposition}[\cite{daemi2015abelian}, Corollary 4.5]
\label{prop:unlinkisom}
If $\mathcal{U}$ is an arbitrary unlink, then $\widetilde{\mathrm{PFH}}(\Sigma(\mathcal{U}))$ is canonically isomorphic to $\Lambda^*(\widetilde{H}^0(\mathcal{U}))$.
\end{proposition}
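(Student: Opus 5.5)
We will prove that $\widetilde{\mathrm{PFH}}(\Sigma(\mathcal{U}))$ is canonically isomorphic to $\Lambda^*(\widetilde{H}^0(\mathcal{U}))$ by transporting the model isomorphism $L_n$ along diffeomorphisms, and showing that the result does not depend on the diffeomorphism chosen.

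\textbf{Step 1: define a candidate isomorphism.} Let $\mathcal{U}$ have $n$ components. Choose an isotopy of $\mathbb{R}^3$ (or $S^3$) carrying $\mathcal{U}$ to the standard unlink $U_n$. Such an isotopy induces:
\begin{itemize}
\item a diffeomorphism of branched double covers $\Sigma(\mathcal{U}) \cong \Sigma(U_n)$, and hence, by functoriality of plane Floer homology under the product cobordism, an isomorphism $\widetilde{\mathrm{PFH}}(\Sigma(\mathcal{U})) \cong \widetilde{\mathrm{PFH}}(\Sigma(U_n))$;
\item a bijection of components, and hence an isomorphism $\Lambda^*(\widetilde{H}^0(\mathcal{U})) \cong \Lambda^*(\widetilde{H}^0(U_n))$.
\end{itemize}
Composing the first with $L_n$ and the inverse of the second gives a candidate isomorphism $\widetilde{\mathrm{PFH}}(\Sigma(\mathcal{U})) \to \Lambda^*(\widetilde{H}^0(\mathcal{U}))$.

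\textbf{Step 2: reduce canonicity to a statement about $U_n$.} Two choices of isotopy differ by a loop of unlinks based at $U_n$. It therefore suffices to check that for every self-isotopy of $U_n$, the square formed by the induced action on $\widetilde{\mathrm{PFH}}(\Sigma(U_n))$, the induced action on $\Lambda^*(\widetilde{H}^0(U_n))$, and $L_n$ commutes.

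\textbf{Step 3: identify the generators.} The motion group of the unlink is generated by permutations of components, flips (rotating a component by $\pi$), and pass-through moves. Each induces a diffeomorphism of $\Sigma(U_n) = \#^{n-1} S^1\times S^2$. Its action on $H^*(J(\Sigma(U_n)))$ is the exterior power of its action on $H^1(\Sigma(U_n);\mathbb{R}) \cong \widetilde{H}^0(U_n)\otimes\mathbb{R}$: by Proposition~\ref{prop:daemihammer}, the map is pull-up-push-down along a graph, i.e.\ the pullback. One must also track the homology orientation of the mapping cylinder, which could contribute a global sign.

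\textbf{Step 4: check each generator.}
\begin{itemize}
\item \emph{Permutations.} The lifted path classes $\gamma_i$ transform exactly as the $v_i$ do under the relabeling. In particular, moving $K_n$ sends $\gamma_i \mapsto \gamma_i - \gamma_j$, matching $v_i \mapsto v_i - v_j$ in $\widetilde{H}^0$.
\item \emph{Flips and pass-throughs.} These act trivially on $H_0(U_n)$. One must check they act trivially on $H_1(\Sigma(U_n))$. A flip may reverse $\gamma_i$, but it also reverses the orientation of the corresponding $S^1\times S^2$ summand; I expect the covering involution to supply the compatibility here.
\end{itemize}

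The main obstacle is the sign bookkeeping. Specifically, one must show that the homology orientation of the mapping-cylinder cobordism can be chosen coherently, so that the induced map is exactly the exterior power and not its negative. The $(-1)^{k(n-1)}$ factor in $L_n$ is presumably designed to absorb this. I would first attempt this using the description of homology orientations on $(W,Y_0,Y_1)$ as orientations of $J(W)$, which for a mapping cylinder is canonically $J(Y_0)$.

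If canonicity up to a global unit is all that is needed downstream, I would state the result in that form, since Proposition~\ref{prop:step1} is only asserted up to a unit of $\widetilde{\Lambda}$.
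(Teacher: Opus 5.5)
Your Step~1 is exactly the composite the paper writes down. The paper does not prove independence of $J$; it cites Daemi's Proposition~4.4. Your Step~3 is also fine: with the paper's convention that $\Sigma(J)$ carries the product homology orientation of its outgoing end, the pull-up-push-down for the mapping cylinder of $f$ is exactly $(f^{-1})^*$, so there is no residual sign to chase.

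The gap is in Step~4, and it is not about signs: pass-through loops do \emph{not} act trivially on $H_1(\Sigma(U_n))$.
\begin{itemize}
\item \emph{The case $n=2$.} Let $P$ be the loop in which a small $K_1$ travels once around a loop linking $K_2$ (through $D_2$) and returns, with the lift normalized by a basepoint far from the motion. On $\pi_1(S^3\setminus U_2)=F(x_1,x_2)$, $P$ acts by $x_1\mapsto x_2x_1x_2^{-1}$, $x_2\mapsto x_2$. Now $\pi_1(\Sigma(U_2))$ is the even-length subgroup of $\langle x_1,x_2\mid x_1^2,x_2^2\rangle$, infinite cyclic on $x_1x_2$, and the lift sends $x_1x_2\mapsto x_2x_1=(x_1x_2)^{-1}$.
\item \emph{Geometric check.} The lift is the covering involution over a ball containing $K_1\cup D_1$, since that ball was dragged around a loop linking $K_2$ once. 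So it reverses the sphere $\pi^{-1}(D_1)$, which generates $H_2(S^1\times S^2)$. Hence $\widetilde{\mathrm{PFH}}(\Sigma(P))=\mathrm{diag}(1,-1)$ on $\Lambda^0\oplus\Lambda^1$, while $i_P=\mathrm{id}$.
\item \emph{The case $n\ge 3$.} Identify $H_1(\Sigma(U_n))$ with $\{\sum_k c_ke_k:\sum_k c_k=0\}$ via $x_ax_b\mapsto e_a-e_b$. Passing $K_i$ through $K_j$ acts by $e_i\mapsto 2e_j-e_i$. For $i,j<n$ this gives $\gamma_i\mapsto 2\gamma_j-\gamma_i$, which is not diagonal in the basis $\gamma_k$.
\end{itemize}
Renormalizing the lift only changes this by $-1$. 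Neither that, nor any sign convention, nor the factor $(-1)^{k(n-1)}$ in $L_n$, nor weakening to ``up to a global unit'' makes the square of your Step~2 commute. Independence of $J$ is false for arbitrary isotopies in $S^3$.

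What is missing is a restriction on the allowed isotopies, or equivalently extra structure that makes the identification canonical. One option is to allow only motions generated by permutations and flips, i.e.\ never pass one component through another's spanning disk. Another, suited to where the identification is actually needed (resolutions of planar diagrams), is to define $H_1(\Sigma(\mathcal U))\cong\widetilde H_0(\mathcal U)$ directly via closed lifts of arcs through a fixed half-space. For such a class your generator check does go through:
\begin{itemize}
\item A permutation acts on the $\gamma_k$ exactly as on the $v_k$.
\item A flip is supported in a ball $B\supset K_i$. Its normalized lift is the identity near $\partial\pi^{-1}(B)$, and $\pi^{-1}(B)\cong S^2\times I$ is simply connected, so it acts trivially on $H_1$. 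Your concern that a flip might reverse $\gamma_i$ does not arise.
\end{itemize}
So the statement can only be proved, and the independence the paper imports from Daemi can only hold, relative to such a restricted class.
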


For any isotopy $J$ between unlinks $\mathcal{U}_0$ and $\mathcal{U}_1$, fix the homology orientation of $\Sigma(J)$ to be the product homology orientation of $[0,1] \times \Sigma(\mathcal{U}_1)$. Choose an isotopy $J: \mathcal{U} \to U_n$, which should be thought of as an identification of components of $\mathcal{U}$ with the standard components of $U_n$. The isomorphism of Proposition \ref{prop:unlinkisom} is given as the following composition.
\[
\begin{tikzcd}
\widetilde{\mathrm{PFH}}(\Sigma(\mathcal{U}) \arrow[d, "\widetilde{\mathrm{PFH}}(\Sigma(J))"'] \arrow[r, dashed] & \Lambda^*(\widetilde{H}^0(\mathcal{U}))
\\
\widetilde{\mathrm{PFH}}(\Sigma(U_n) \arrow[r, "L_n"] & \Lambda^*(\widetilde{H}^0(U_n)) \arrow[u, "i_J"']
\end{tikzcd}
\]
Here, $i_J$ is the isomorphism $\Lambda^*(\widetilde{H}^0(U_n)) \cong \Lambda^*(\widetilde{H}^0(\mathcal{U}))$ induced by $J$. One can check that this isomorphism is independent of choice of $J$; see, for example \cite[Proposition 4.4]{daemi2015abelian}.

Similarly, to define the maps induced by arbitrary elementary cobordisms between arbitrary unlinks, we factor through isotopies between the standard unlinks. Let $\mathcal{F}$ be an elementary cobordism between unlinks $\mathcal{U}$ and $\mathcal{U}'$ and let $J: \mathcal{U}\to U$ and $J': \mathcal{U}'\to U'$ be isotopies of unlinks. Let $F$ denote a standard elementary cobordism between $U$ and $U'$. We say that $J$ and $J'$ are \emph{compatible} with $\mathcal{F}$ if $F\circ J$ and $J' \circ \mathcal{F}$ are isotopic to each other through an isotopy that fixes the ends.
\[
\begin{tikzcd}
\mathcal{U} \arrow[r, "J"] \arrow[d, "\mathcal{F}"'] & U \arrow[d, "F"] \\
\mathcal{U}' \arrow[r, "J'"] & U'
\end{tikzcd}
\]
Since there is a fixed homology orientation on $\Sigma(F)$ coming from Proposition \ref{4.2}, we can define a homology orientation on $\Sigma(\mathcal{F})$ by requiring that the homology orientations of the compositions $\Sigma(F)\circ\Sigma(J)$ and $\Sigma (J')\circ\Sigma(\mathcal{F})$ are equal. In this way, $J$ and $J'$ determine a homology orientation on $\Sigma(\mathcal{F})$.

Now that we have determined a set of generators for $\widetilde{\text{PFH}}(\Sigma(\mathcal{U}))$ and fixed homology orientations for cobordisms between arbitrary unlinks, we can describe the maps on plane Floer homology induced by elementary cobordisms. Let ${\mathcal{P}}$ denote a split cobordism between arbitrary unlinks $\mathcal{U}$ and $\mathcal{U'}$. Choose compatible isotopies of unlinks $J:\mathcal{U}\to U_{n}$ and $J':\mathcal{U}'\to U_{n+1}$. Compatibility implies that $J'$ is determined by $J$ on $n-1$ components of $\mathcal{U}'$. However, on the remaining two components $K_1$ and $K_2$ of $\mathcal{U}'$, we have two choices for $J'$ which differ by the permutation isotopy of $K_1$ and $K_2$. This gives rise to two possible homology orientations for the split cobordism $\Sigma(\mathcal{P})$. The induced map on plane Floer homology is given by
\begin{align*}
F_{{\mathcal{P}}}:\Lambda^*(\widetilde{H}^0(\mathcal{U})) &\to \Lambda^*(\widetilde{H}^0(\mathcal{U'})) \\
 x &\mapsto v\wedge x.
\end{align*}
Here $v\in \widetilde{H^0}(\mathcal{U'})$ is non-zero only on $K_1$ and $K_2$. Depending on the choice of $J'$, either $v(K_1)=1$ and $v(K_2)=-1$, or $v(K_1)=-1$ and $v(K_2)=1$. We can use the same decorations offered in the definition of odd Khovanov homology to distinguish these choices; see Figure \ref{fig:decorations}. Note that we have decorations on both the cobordism and the unlinks. These decorations serve as an identification of the unlinks with the boundary of the split cobordism.

\begin{figure}[h]
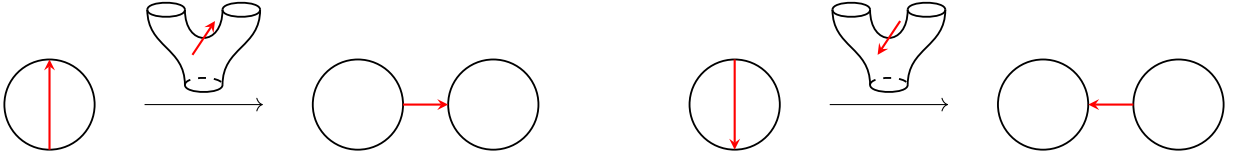

    \centering
\[
\tikz[]{
    \node(A) at (0,0) {$
    \tikz[scale=0.6]{
    \draw[knot] (0,0) circle (1cm);
    \draw[stealth-, red, thick] (0,1) -- (0,-1);
    }
    $};
    \node(B) at (5,0) {$
    \tikz[scale=0.6]{
    \draw[knot] (0,0) circle (1cm);
    \draw[knot] (3,0) circle (1cm);
    \draw[-stealth, red, thick] (1,0) -- (2,0);
    }
    $};
    \draw[->, white] (A) -- node[above, pos=0.5]{$
    \tikz[baseline={([yshift=-.5ex]current bounding box.center)}, scale=.5]{
	\draw[black, knot, -]   (1,2) .. controls (1,3) and (0,3) .. (0,4);
	\draw[black, knot, -]   (2,2) .. controls (2,3) and (3,3) .. (3,4);
	\draw[black, knot, -]  (1,4) .. controls (1,3) and (2,3) .. (2,4);
	\draw[black, knot, -]  (0,4) .. controls (0,3.75) and (1,3.75) .. (1,4);
	\draw[black, knot, -]  (0,4) .. controls (0,4.25) and (1,4.25) .. (1,4);
	\draw[black, knot, -]  (2,4) .. controls (2,3.75) and (3,3.75) .. (3,4);
	\draw[black, knot, -]  (2,4) .. controls (2,4.25) and (3,4.25) .. (3,4);
	\draw[black, knot, -]  (1,2) .. controls (1,1.75) and (2,1.75) .. (2,2);
	\draw[black, knot, dashed, -] (1,2) .. controls (1,2.25) and (2,2.25) .. (2,2);
    \draw[stealth-, red, thick](1.8,3.7) -- (1.2,2.8);
}
    $} (B);
    \draw[->, shorten >=15pt, shorten <=15pt] (A) -- (B);
}
\hspace{1.75cm}
\tikz[]{
    \node(A) at (0,0) {$
    \tikz[scale=0.6]{
    \draw[knot] (0,0) circle (1cm);
    \draw[-stealth, red, thick] (0,1) -- (0,-1);
    }
    $};
    \node(B) at (5,0) {$
    \tikz[scale=0.6]{
    \draw[knot] (0,0) circle (1cm);
    \draw[knot] (3,0) circle (1cm);
    \draw[stealth-, red, thick] (1,0) -- (2,0);
    }
    $};
    \draw[->, white] (A) -- node[above, pos=0.5]{$
    \tikz[baseline={([yshift=-.5ex]current bounding box.center)}, scale=.5]{
	\draw[black, knot, -]   (1,2) .. controls (1,3) and (0,3) .. (0,4);
	\draw[black, knot, -]   (2,2) .. controls (2,3) and (3,3) .. (3,4);
	\draw[black, knot, -]  (1,4) .. controls (1,3) and (2,3) .. (2,4);
	\draw[black, knot, -]  (0,4) .. controls (0,3.75) and (1,3.75) .. (1,4);
	\draw[black, knot, -]  (0,4) .. controls (0,4.25) and (1,4.25) .. (1,4);
	\draw[black, knot, -]  (2,4) .. controls (2,3.75) and (3,3.75) .. (3,4);
	\draw[black, knot, -]  (2,4) .. controls (2,4.25) and (3,4.25) .. (3,4);
	\draw[black, knot, -]  (1,2) .. controls (1,1.75) and (2,1.75) .. (2,2);
	\draw[black, knot, dashed, -] (1,2) .. controls (1,2.25) and (2,2.25) .. (2,2);
    \draw[-stealth, red, thick](1.8,3.7) -- (1.2,2.8);
}
    $} (B);
    \draw[->, shorten >=15pt, shorten <=15pt] (A) -- (B);
}
\]
    \caption{Decorations indicating the choice of homology orientation on the split cobordism.}
   \label{fig:decorations}
\end{figure}

Let $\overline{\mathcal{P}}$ denote a merge cobordism from $\mathcal{U}$ to $\mathcal{U'}$. Choose compatible isotopies of unlinks $J:\mathcal{U}\to U_{n+1}$ and $J':\mathcal{U}\to U_n$. Compatibility implies that $J'$ is determined by $J$. Since we have fixed a choice of homology orientation on each of $\Sigma(\overline{P}_n)$, $\Sigma(J)$, and $\Sigma(J')$, the homology orientation on $\Sigma(\overline{\mathcal{P}})$ is also fixed. Each element of $H^0(\mathcal{U})$ can be thought of as the Poincar\'e dual of the fundamental class of a connected component $K\subseteq \mathcal{U}$. $K$ is connected to exactly one component $K'\subseteq \mathcal{U}'$ by the cobordism $\overline{\mathcal{P}}$. Let $j:H^0(\mathcal{U})\to H^0(\mathcal{U'})$ be the map that sends $\mathrm{PD}[K]$ to $\mathrm{PD}[K']$. The induced map on plane Floer homology is given by
\begin{align*}
F_{\overline{\mathcal{P}}}:\Lambda^*(\widetilde{H}^0(\mathcal{U})) &\to \Lambda^*(\widetilde{H}^0(\mathcal{U}')), \\
 x &\mapsto j_*(x).
\end{align*}

Let $\mathcal{C}:\mathcal{U}\to \mathcal{U}'$ denote a birth cobordism. Just as for the merge cobordisms, a choice of compatible isotopies of unlinks determines a unique choice of homology orientation on $\Sigma({\mathcal{C})}$. Let $i:H^0(\mathcal{U})\to H^0(\mathcal{U'})$ be the map which extends $\phi\in H^0(\mathcal{U})$ to $H^0(\mathcal{U}')$ by declaring that $i(\phi) = 0$ on the extra component. The induced map on plane Floer homology is given by
\begin{align*}
F_{\mathcal{C}}:\Lambda^*(\widetilde{H}^0(\mathcal{U})) &\to \Lambda^*(\widetilde{H}^0(\mathcal{U'})), \\
 x &\mapsto i_*(x).
\end{align*}

Let $\overline{\mathcal{C}}:\mathcal{U}\to \mathcal{U}'$ denote a death cobordism. For a final time, a choice of compatible isotopies between unlinks is seen to determine a unique choice of homology orientation on $\Sigma(\overline{\mathcal{C}})$. Let $\iota_K:H^0(\mathcal{U})\to H^0(\mathcal{U'})$ be the contraction map with respect to the component of $\mathcal{U}$ that is dropped. The induced map on plane Floer homology is given by
\begin{align*}
F_{\overline{\mathcal{C}}}:\Lambda^*(\widetilde{H}^0(\mathcal{U})) &\to \Lambda^*(\widetilde{H}^0(\mathcal{U'})), \\
 x &\mapsto \iota_K(x).
\end{align*}

\subsubsection{A cobordism inducing the zero map}

Consider the pair $(V,Z)$, where $V$ is the 4-manifold $(I\times (\#^n S^1\times S^2)) \# \overline{\mathbb{C}P^2}$, and $Z$ is the embedded sphere in $\overline{\mathbb{C}P^2}$ with self-intersection $-1$. We view it as a cobordism from $(\#^n S^1\times S^2,\emptyset)$ to $(\#^n S^1\times S^2,\emptyset)$. The pull-up-push-down map is determined by the following correspondence.
\[
\begin{tikzcd}[row sep = large]
& J((I\times (\#^n S^1\times S^2)) \# \overline{\mathbb{C}P^2}) \arrow[dl, "i_0"'] \arrow[dr, "i_1"] & \\
J(\#^{n} S^1\times S^2) & & J(\#^{n} S^1\times S^2)
\end{tikzcd}
\]

We will compute the associated map on plane Floer homology in two steps. First, we study the map induced by $(\overline{\mathbb{C}P^2},Z)$ viewed as a cobordism from the empty set to the empty set. The $\mathrm{Spin}^c$ structures on $\overline{\mathbb{C}P^2}$, which are in bijection with the odd integers, are enumerated by $\mathfrak{s}_{2k+1}$ where $k\in \mathbb{Z}$. Since $b^+(\overline{\mathbb{C}P^2})=0$, we can take our perturbation term to be zero \cite[Lemma 1.6]{daemi2015abelian}. The moduli space $M(\overline{\mathbb{C}P^2}, \mathfrak{s}_{2k+1})$ contains a unique $\mathrm{Spin}^c$ connection $A_{2k+1}$ for each $\mathrm{Spin}^c$ structure $\mathfrak{s}_{2k+1}$. We have the following correspondence for each $\mathfrak{s}_{2k+1}$.
\[
\begin{tikzcd}[row sep = large]
& J(\overline{\mathbb{C}P^2}) \arrow[dl, "i_0"'] \arrow[dr, "i_1"] & \\
* & & *
\end{tikzcd}
\]
Therefore, the cobordism induces the identity map $D_{2k+1}:\tilde{\Lambda}_0 \to \tilde{\Lambda}_0$. The sign of the map associated to the conjugate $\mathrm{Spin}^c$ structure $\overline{\mathfrak{s}}_{2k+1} = \mathfrak{s}_{-2k-1}$ is opposite, since $\langle c_1(\mathfrak{s}_{2k+1}),Z\rangle=-\langle c_1(\mathfrak{s}_{-2k-1}),Z\rangle$. Since the energies of the conjugate $\mathrm{Spin}^c$ structures are equal, we conclude that the induced maps cancel and sum to the zero map.

Since $S^1 \times S^2$ has a unique torsion $\mathrm{Spin}^c$ structure, we can extend the argument above to the connected sum $V=(I\times (\#^n S^1\times S^2)) \# \overline{\mathbb{C}P^2}$. Therefore, $\widetilde{\mathrm{PFH}}(V,Z) = 0$.

\subsection{Cobordism maps defined over a family of metrics}\label{family}
In general, the moduli space $M(W, \mathfrak{s})$ is defined over a family of metrics corresponding to ``neck stretching'' along different ``cuts'' embedded in the 4-dimensional cobordism. One usually performs a ``fattening'' operation on the moduli space to ensure compactness. For a thorough description, see \cite[\S 1.3]{daemi2015abelian} and \cite{kronheimer2011khovanov}. In this article, we only discuss the following case, which is Example 1.13 in \cite{daemi2015abelian}. This example is essential to our proof of Proposition \ref{prop:step1} in Section \ref{s:rmoves}.

Consider the $4$ manifold $U= D^2 \times S^2 \# \overline{\mathbb{C}P^2}\setminus D^4$. Notice that after applying a handle slide, $U$ is diffeomorphic to the plumbing of two disk bundles over $S^2$ with Euler number  $-1$. Fix two spheres $S$ and $S'$ in $U$ which have self-intersection $-1$ and intersect each other exactly once. In addition, $S$ is required to be disjoint from the disk $D=D^{2}\times\{x\}\subset D^2\times S^2$, where $x$ is an arbitrary point in $S^2$, and the intersection number between $S'$ and $D$ is one. We consider the pair $(U,S)$, which is viewed as a cobordism from $(S^1 \times S^2,\emptyset)$ to $(S^3,\emptyset)$. The family of metrics on $U$ is defined via stretching as follows. Denote the tubular neighborhoods of $S$ and $S'$ by $T$ and $T'$ respectively. Now, stretch along $T$ and $T'$ to get a family of metrics parameterized by $G = [-\infty,\infty]$. Here, $-\infty$ (respectively, $\infty$) corresponds to a broken metric along $T$ (respectively, $T'$). This family of metrics will be denoted by $\mathbb{U}$.

Let $\mathfrak{t}_0$ be the unique torsion $\mathrm{Spin}^c$ structure on $S^1 \times S^2$ and $\mathfrak{t}_1$ be the unique $\mathrm{Spin}^c$ structure on $S^3$. Let $\mathfrak{s}$ be a $\mathrm{Spin}^c$ structure on $U$ which restricts to $\mathfrak{t}_0$ and $\mathfrak{t}_1$ on the boundary. Recall that $c_1(\mathfrak{s})$ is characteristic, \textit{i.e.}, $\langle c_1(\mathfrak{s}),\alpha\rangle\equiv \alpha^2 \mod 2$ for all $\alpha \in H_2(U;\mathbb{Z})$. Since $H_2(U;\mathbb{Z})$ has no 2-torsion, $\mathfrak{s}$ is determined by the evaluation $\langle c_1(\mathfrak{s}),[S]\rangle \equiv [S]^2\equiv 1 \mod 2$. We will denote the $\mathrm{Spin}^c$ structures by $\mathfrak{s}_{2k+1}$, where $\langle c_1(\mathfrak{s}),[S]\rangle=2k+1$.

Fix a $\mathrm{Spin}^c$ structure $\mathfrak{s}_{2k+1}$. Since $b^+(U)=0$, any perturbation will give rise to a regular moduli space \cite[Lemma 1.6]{daemi2015abelian}. In particular, we can take  $\nu$ to be the zero perturbation. We have the following correspondence.
\[
\begin{tikzcd}[row sep = large]
& M(\mathbb{U}, \mathfrak{s}_{2k+1})=\bigcup_{g\in G}M(U^g,\mathfrak{s}_{2k+1}) \arrow[dl, "r_0"'] \arrow[dr, "r_1"] & \\
\mathcal{R}(S^1 \times S^2, \mathfrak{t}_0)& & \mathcal{R}(S^3, \mathfrak{t}_1)
\end{tikzcd}
\]
The Jacobian torus $J(S^3)$ is a single point and $J(S^1\times S^2)=S^1$. For a fixed $g\in G$, the space $M(U^g,\mathfrak{s}_{2k+1})$ contains a single element $A^g$ which satisfies $F^+(A^g)=0$. Hence, as we vary $g\in [-\infty,\infty]$, we see that $M(\mathbb{U}, \mathfrak{s}_{2k+1})$ can be identified with an interval $\mathbb{I}$. Thus, the previous correspondence is identified with the following one.
\[
\begin{tikzcd}[row sep = large]
& \mathbb{I} \arrow[dl, "r_0"'] \arrow[dr, "r_1"] & \\
J(S^1 \times S^2)= S^1 & & J(S^3)=\{*\}
\end{tikzcd}
\]

Now we can compute the cobordism map. Given two critical points $\alpha$ and $\beta$, we can compute the dimension of the space as follows (also, see \cite[Remark 1.2]{daemi2015abelian}):
\begin{align*}
\dim (M_\nu(\mathbb{U},\alpha,\beta,\mathfrak{s}_{2k+1})) &=\dim(M_\nu(\mathbb{U}))+\dim(U_\alpha \times S_\beta)-\dim(\mathcal{R}(Y_0) \times \mathcal{R}(Y_1))\\
				&=\dim(M_\nu(\mathbb{U}))-b_1(Y_0)+\ind(\alpha)-\ind(\beta)\\
				&=\dim(G)-\frac{\chi(U)+\sigma(U)}{2} +\frac{b_1(Y_0)+b_1(Y_1)}{2}-b_1(Y_0)+\ind(\alpha)-\ind(\beta)\\
				&=\dim(G)-1+\ind(\alpha)-\ind(\beta)\\
				&=\ind(\alpha)-\ind(\beta),
\end{align*}
where $Y_0 = S^1 \times S^2$ and $Y_1 = S^3$. Therefore, the moduli space is 0-dimensional only when $\ind(\alpha)=\ind(\beta)$. 

Since $\mathcal{R}(S^3,\mathfrak{t}_1)$ is a point, it has a single critical point of index 0. Choose a Morse function on $S^1$ with two critical points: $\alpha$ of index 0 and $\alpha'$ of index 1. The map is non-trivial only on the index $0$ critical point, \textit{i.e.}, it is non-trivial on $\alpha$ and trivial on $\alpha'$. By \cite[Equation 27]{daemi2015abelian}, we have
\[
f_{\mathbb{U}}(\alpha)=\sum_{k\in \mathbb{Z}} \sum_{\beta \in Crit(f_1)}\left(\sum_{p \in M(\mathbb{U},\alpha,\beta,\mathfrak{s}_{2k+1})} \epsilon_p u^{e(p)}\right)\beta.
\]
In addition, the energy of the $\mathrm{Spin}^c$ structure $\mathfrak{s}_{2k+1}$ is
\begin{align*}
e(A^g)&=-c_1(\mathfrak{s}_{2k+1}) \cdot c_1(\mathfrak{s}_{2k+1})\\
&=(2k+1)^2.
\end{align*}

We conclude by studying the restriction map $r_0$. The image of the restriction map can be understood by identifying $\mathcal{R}(S^1\times S^2,\mathfrak{t}_0)$ with the determinant of the holonomy around the boundary of the disk $D$. By Gauss-Bonnet, we would like to understand
\[
\det(\hol(\underline{A}^g,\partial D))=\exp\left( \displaystyle \int_D F(A^g)\right).
\]

Consider $\tfrac{1}{2\pi i }F(A^g)$, which is a $g$-harmonic form representing $c_1(\mathfrak{s}_{2k+1})$. As $g$ goes to $\infty$ (respectively, $-\infty$), this form converges to $(2k+1)\mathrm{PD}(S_1)$ (respectively, $(2k+1)\mathrm{PD}(S_1')$). Now, we calculate the holonomies for $g=\pm \infty$. First, for $g=\infty$ (\textit{i.e.}, the broken metric along $T'$), 
\[
\int_D F(A^\infty)=0
\]
since $A^\infty$ is flat on $D$. Therefore, $\det(\hol(\underline{A}^{\infty},\partial D))=\exp\left( \displaystyle \int_D F(A^\infty)\right)=\exp(0)$. Next for $g=-\infty$ (\textit{i.e.}, the broken metric along $T$), we have that
\[
\int_D F(A^{-\infty})=2\pi i\langle c_1(\mathfrak{s}_{2k+1}),D\rangle =2\pi i\langle (2k+1)\PD(S_1'),D\rangle=2\pi i (2k+1)
\]
so $\det(\hol(\underline{A}^{-\infty},\partial D))=\exp\left(\displaystyle \int_D F(A^{-\infty})\right)=\exp(2\pi i(2k+1))$.

Under the identification with the Jacobian torus, the image of $r_0:M(\mathbb{U},\mathfrak{s}_{2k+1})\to \mathcal{R}(S^1\times S^2,\mathfrak{t}_0)$ consists of $2k+1$ half-circles. Observe that the image of the conjugate $\mathrm{Spin}^c$ structure $\overline{\mathfrak{s}}_{2k+1}=\mathfrak{s}_{-2k-1}$ by $r_0:M(\mathbb{U},\overline{\mathfrak{s}}_{2k+1})\to \mathcal{R}(S^1\times S^2,\mathfrak{t}_0)$ will consist of the complex conjugates of the previous half-circles. We conclude that the image of
$r_0:M(\mathbb{U},\mathfrak{s}_{2k+1})\cup M(\mathbb{U},\overline{\mathfrak{s}}_{2k+1}) \to \mathcal{R}(S^1\times S^2,\mathfrak{t}_0)$ is a circle that winds around $2k+1$ times.

Consider the critical point $\alpha$ of index 0. The unstable manifold at $\alpha$ is $\alpha$ itself; we compute
\begin{align*}
	M(\mathbb{U},\alpha,*,\mathfrak{s}_{2k+1})&=\{z\in M(\mathbb{U},\mathfrak{s}_{2k+1}):r_0(z)\in \alpha \}\\
	&=\{z\in M(\mathbb{U},\mathfrak{s}_{2k+1}):r_0(z)= \alpha \}\\
	&=r_0^{-1}(\alpha).
\end{align*}
Note that while the preimages of $r_0$ with respect to conjugate $\mathrm{Spin}^c$ structures have opposite signs, there is an additional sign arising from the cobordism of the local system $S$ which ensures that the sign counts on these preimages are consistent. Thus, we see that the map is given by 
\[
f_{\mathbb{U}}(\alpha)=\sum_{k\in \mathbb{Z}}\pm(2k+1 )u^{(2k+1)^2}.
\]
This element is invertible in $\tilde{\Lambda}$---we will refer to it as $c$. Hence, the cobordism map coincides with the birth map of Subsection \ref{single} up to multiplication by $c$.

Now, we will briefly describe the map associated to the cobordism $U$ turned around. The pull-up-push-down correspondence is as follows.
\[
\begin{tikzcd}[row sep = large]
& J(\mathbb{U})=\mathbb{I} \arrow[dl, "r_1"'] \arrow[dr, "r_0"] & \\
J(S^3)=\{*\}& & J(S^1 \times S^2)= S^1 
\end{tikzcd}
\]
Counting the dimension of the moduli space using the formula above, we see that it is dimension 0 only for the index $1$ critical point $\alpha'$ on $S^1$. The stable manifold corresponding to $\alpha'$ is, again, $\alpha'$ itself, and
 \begin{align*}
	M(\mathbb{U},*,\alpha',\mathfrak{s}_{2k+1})&=\{z\in M(\mathbb{U},\mathfrak{s}_{2k+1}):r_0(z)\in \alpha' \}\\
	&=\{z\in M(\mathbb{U},\mathfrak{s}_{2k+1}):r_0(z)= \alpha' \}\\
	&=r_0^{-1}(\alpha').
\end{align*}
Hence, we can describe the cobordism map as 
\[
f_{\mathbb{U}}(1)=\sum_{k\in \mathbb{Z}}\pm(2k+1 )u^{(2k+1)^2}\alpha'.
\]
Note that this map coincides with the death map of Subsection \ref{single} up to multiplication by $c$.

\section{Spectral sequence from odd Khovanov homology to plane Floer homology}
\label{s:ss}

The spectral sequence from the odd Khovanov homology of the mirror image of a knot to the plane Floer homology of the branched double cover of the knot fits into the framework of a Khovanov-Floer theory \cite{baldwin2019functoriality}. By iteratively applying the surgery exact triangle, one shows that the plane Floer homology of the branched double cover is chain homotopic to the corresponding surgery cube, and the spectral sequence is induced by the cube filtration. Throughout this section, we assume all links are oriented.

\subsection{Surgery exact triangle}
\label{ss:surgeryexacttriangle}
Given a pair $(Y,\zeta)$, where $\zeta$ is a link in $Y$, let $(L,\Lambda)=\bigcup_{1\leq i\leq n}(K_{i},\lambda_{i})$ be an oriented framed $n$-component link in $Y\backslash\zeta$. For each element $\m=(m_1,\ldots,m_n)$ in the lattice $\mathbb{Z}^{n}$, one can assign a pair $(Y_{\m},\zeta_{\m})$ as follows. Let $Y^L$ denote the complement of $L$ in $Y$, and (abusing notation slightly) let $\mu_i$ and $\lambda_i$ denote the meridian and longitude of the torus boundary corresponding to $K_i$. The orientation is chosen so that the outward-normal-first convention agrees with the orientation determined by the ordered pair $(\mu_i,\lambda_{i})$. Let $\gamma_{i}^0,\, \gamma_{i}^1,$ and $\gamma_{i}^2$ be geometric representatives of the homology classes of $\lambda_{i},-(\lambda_{i}+\mu_i),$ and $\mu_i$ respectively. We extend these curves to the family $\{\gamma_{i}^j\}_{j\in\mathbb{Z}}$ by requiring 3-periodicity in $j$. $Y_\m$ is defined to be the Dehn filling of $Y^L$ along the curves $\gamma_{i}^{m_i}$. Note that these Dehn fillings correspond to the $0$-, $1$-, and $\infty$-surgeries on $(K_i, \lambda_i) \subset (L, \Lambda)$; such a triple is called a \emph{surgery triad}. For each link component, we assign an embedded closed 1-manifold $\zeta_{m_i}$, where $\zeta_{m_i}$ is 
\begin{itemize}
\item the core of the filling solid torus when $m_i \equiv 2 \mod 3$, and
\item empty otherwise.
\end{itemize}
Denote the core of the filling solid torus by $D_{m_i}$. We define $\zeta_\m = \zeta \cup \bigcup_{1 \le i \le n} \zeta_{m_i}$.

To simply notation, we use the $L_1$ and $L_\infty$ norms:
\[|\m|_1:=\sum_{i=1}^{n}m_i\qquad \qquad |\m|_\infty:=\mathrm{max}\{m_1,\ldots,m_n\}.\]
For each pair $(\m,\n)$, with $\m\geq\n$, there is a cobordism of the pairs $(W_\m^\n,Z_\m^\n):(Y_\m,\zeta_{\m})\rightarrow(Y_\n,\zeta_{\n})$ defined as follows. First, when $|\m-\n|_1=1$, there exists an $i$ such that $m_i-n_i=1$ and $m_j-n_j=0$, when $j\neq i$. Then, there is a natural surgery cobordism between $Y_\m$ and $Y_\n$ given by the $2$-handle attachment along $D_{m_i}$. Denote this cobordism by $(W_\m^\n)^\circ$. The surface cobordism $(Z_\m^\n)^\circ$ is the union of $\zeta \times [0,1]$ with
\begin{itemize}
\item the core of the $2$-handle when $m_i\equiv 2$ mod $3$,
\item nothing when $m_i\equiv 1$ mod $3$, or
\item the cocore of the $2$-handle when $m_i\equiv 0$ mod $3$.
\end{itemize}
Now, glue cylindrical ends onto $((W_\m^\n)^\circ, (Z_\m^\n)^\circ)$ to obtain $(W_\m^\n, Z_\m^\n)$. For a general pair $(\m,\n)$, we can find a sequence 
\[
\m=\bfk_0>\bfk_1>\cdots > \bfk_{l-1}>\bfk_l=\n
\]
such that $|\bfk_i-\bfk_{i+1}|_1=1$. In this case we define 
\[
W_\m^\n=W_{\bfk_0}^{\bfk_1}\circ W_{\bfk_1}^{\bfk_2}\circ\cdots\circ W_{\bfk_{l-1}}^{\bfk_l} \qquad \text{and} \qquad Z_\m^\n=Z_{\bfk_0}^{\bfk_1}\circ Z_{\bfk_1}^{\bfk_2}\circ\cdots\circ Z_{\bfk_{l-1}}^{\bfk_l}.
\]
It is clear that the diffeomorphism type of $(W_\m^\n,Z_\m^\n)$ is independent on the choice of $\bfk_0, \ldots, \bfk_l$.

In \cite[\S 2.1]{daemi2015abelian}, Daemi assigned three different types of families of metrics to $(W_\m^\n,Z_\m^\n)$ when $\left| \m - \n \right|_\infty = 1, 2, 3$. These families of metrics are denoted by $G_\m^\n$, $N_\m^\n$, and $K_\m^\n$, with corresponding cobordism maps $\tilde{f}_{G_\m^\n}$, $\tilde{f}_{N_\m^\n}$, and $\tilde{f}_{K_\m^\n}$. As before, we omit the general definitions and describe only the cobordisms $(W_\m^\n, Z_\m^\n)$ used in this paper.

\textbf{Case 1}: $|\m-\n|_1=2$ and $|\m-\n|_\infty=1$. In this case, $(W_\m^\n, Z_\m^\n)$ is obtained by performing 2-handle attachments on two components $(K_1,\lambda_{1})$ and $(K_2,\lambda_{2})$. The corresponding family of metrics is defined over $[-\infty,\infty]$ such that when $t\rightarrow-\infty$, it corresponds to a neck stretching along $Y_{\lambda_1}(K_1)$, and when  $t\rightarrow \infty$, it corresponds to a neck stretching along $Y_{\lambda_2}(K_2)$.

\textbf{Case 2}: $|\m-\n|_1=2$ and $|\m-\n|_\infty=2$. In this case, $(W_\m^\n, Z_\m^\n)$ is obtained by attaching a 2-handle along a component $(K,\lambda)$ and then attaching a second 2-handle along the dual knot of $K$. Note that, by our choice of framing, there exists a $2$-sphere $S$ with self-intersection $-1$ which is the union of the cocore of the first 2-handle and the core the second 2-handle. The family of metrics is again defined over $[-\infty,\infty]$ such that when $t\rightarrow-\infty$, it corresponds to a neck stretching along $Y_{\lambda}(K)$, and when  $t\rightarrow \infty$, it corresponds to a neck stretching along the boundary of a tubular neighborhood of $S$.

With this setup, the surgery cube for a pair $(\m,\n)$, with $|\m-\n|_\infty=1$ is defined as
\[\left(C_\m^\n:=\bigoplus_{\m\geq\bfk\geq \n}\widetilde{\mathrm{PFC}}(Y_{\bfk},\zeta_\bfk),d_\m^\n:=(\tilde{f}_{G_{\bfk}^{{\bfk}^{'}}})_{\m\geq\bfk\geq{\bfk}^{'}\geq \n}\right).\]

Let $\m(i):=(i,m_1,\ldots,m_{n-1})\in\mathbb{Z}^{n}$. Given $|\m-\n|_\infty=1$, we view $C_{\m(i)}^{\n(i)}$ as an $(n-1)$-dimensional surgery cube. Define $g_{\m(i)}^{\n(i)}:C_{\m(i)}^{\n(i)}\rightarrow C_{\m(i-1)}^{\n(i-1)}$ by the following matrix presentation:
\[g_{\m(i)}^{\n(i)}:=\left(\tilde{f}_{G_{\bfk(i)}^{{\bfk}^{'}(i-1)}}\right)_{\m\geq\bfk\geq{\bfk}^{'}\geq \n}.\]
Using the $g_{\m(i)}^{\n(i)}$ map, we can define the mapping cone complex as
\[\mathrm{Cone}(g_{\m(i)}^{\n(i)}):=\left(C_{\m(i)}^{\n(i)}\oplus C_{\m(i-1)}^{\n(i-1)},\begin{pmatrix}
	d_{\m(i)}^{\n(i)} & 0\\
	g_{\m(i)}^{\n(i)} & d_{\m(i-1)}^{\n(i-1)}
\end{pmatrix}\right).\]

\begin{remark}
	$d_\m^\n$ and $ g_\m^\n$ are defined over $\Z$. Their signs are determined by a homology orientation on $W_\m^\n$ together with an orientation on $G_\m^\n$. It is proven in \cite[\S 2.1]{daemi2015abelian} that there exists a choice of orientation for which $d_\m^\n$ defines a differential and $g_\m^\n$ is an anti-chain map. Consequently, $(C_{\m}^{\n},d_{\m}^{\n})$ and $\mathrm{Cone}(g_{\m}^{\n})$ are well-defined chain complexes. We will not review the orientation conventions here; rather, we use only the fact that such a compatible choice exists.
\end{remark}
\begin{theorem}[\cite{daemi2015abelian}, Theorem 2.9]\label{thm:sec}
\label{thm:surgerytriangle}
	The chain complex $(C_{\m(i)}^{\n(i)},d_{\m(i)}^{\n(i)})$ and $\mathrm{Cone}(g_{\m(i-1)}^{\n(i-1)})$ have the same chain homotopy type.
\end{theorem}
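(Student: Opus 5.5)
The plan is the standard exact-triangle argument from instanton and monopole Floer theory (as in Kronheimer--Mrowka and Ozsv\'ath--Szab\'o), run in the abelian setting with the families $G$ and $N$ from Case 1 and Case 2. We write $C_i := C_{\m(i)}^{\n(i)}$ with differential $d_i$, and $g_i := g_{\m(i)}^{\n(i)}: C_i \to C_{i-1}$. We will build maps
\[
\Phi: C_i \to \mathrm{Cone}(g_{i-1}), \qquad \Phi = \begin{pmatrix} g_i \\ h_i \end{pmatrix},
\]
where $h_i: C_i \to C_{i-2}$ is assembled from the maps $\tilde f_{N_{\bfk(i)}^{\bfk'(i-2)}}$ counted over the two-step families. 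We will also build a map $\Psi:\mathrm{Cone}(g_{i-1}) \to C_i$ using the maps $C_{i-2}\to C_{i-3}=C_i$ coming from 3-periodicity, together with the $N$-type maps $C_{i-1}\to C_{i-3}$. The claim is that $\Phi$ is a chain homotopy equivalence.

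The first step is to check that $\Phi$ is a chain map. This reduces to two identities. The first is $g_{i-1}g_i + h_i d_i + d_{i-2}h_i = 0$ (with appropriate signs). We prove it by looking at the boundary of the 1-dimensional moduli spaces over the family $N$, which is an interval $[-\infty,\infty]$. The end at $-\infty$ breaks along $Y_\lambda(K)$ and gives the composite $g_{i-1}g_i$. The ends where trajectories break on the boundary give the terms $h\,d$ and $d\,h$. The end at $+\infty$ breaks along the boundary of the neighborhood of the $(-1)$-sphere $S$. Its contribution factors through the pair $(\overline{\mathbb{C}P^2},S)$, and by the computation in the subsection ``A cobordism inducing the zero map'' the conjugate $\mathrm{Spin}^c$ structures cancel, so this term vanishes. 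The second identity, $d_{i-1}g_i = g_i d_i$ up to sign, is already given by the Remark: $g$ is an anti-chain map.

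The second step is to show that $\Phi$ is a quasi-isomorphism. We filter both complexes by the cube coordinates $(m_2,\dots,m_n)$. The associated graded of $\Phi$ is then the same construction for a single knot, so we are reduced to the case $n=1$. There we must show that the composite $C_i\to C_{i-3}=C_i$, which runs over the 3-step family $K$, is chain homotopic to multiplication by a unit of $\tilde\Lambda$. The family $K$ is a pentagon, i.e. a 2-dimensional family whose boundary faces are the $G\times G$, $N$ and broken faces. Counting the ends of its 1-dimensional moduli spaces shows that the composite equals, up to homotopy, the contribution of the face where the metric stretches along the two $(-1)$-spheres $S$ and $S'$. That face is exactly the model $(U,S)$ studied in Subsection \ref{family}, whose map is multiplication by the invertible element $c=\sum_k \pm(2k+1)u^{(2k+1)^2}$ composed with a birth map. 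The same computation in the other direction gives the death map times $c$. Since $c$ is invertible, the standard triangle-detection lemma applies: a sequence of complexes with $\Phi_{i+1}\circ\Phi_i$ null-homotopic via $h_i$, and with $\Phi_{i+2}h_i+h_{i+1}\Phi_i$ a quasi-isomorphism, yields the quasi-isomorphism with the cone. A filtration spectral-sequence comparison then upgrades the $n=1$ case to the full cube. Over the Novikov-type ring $\tilde\Lambda$ the complexes are finitely generated and free, so the quasi-isomorphism is a chain homotopy equivalence.

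The main obstacle is the $K$-family analysis. We must identify the codimension-one faces of the 2-parameter family of metrics, check compactness after fattening, and verify that the only surviving contribution is the local $(U,S)$ model with coefficient $c$. Keeping track of homology orientations and local-system signs along the way, so that the contributions of $\mathfrak{s}_{2k+1}$ and $\overline{\mathfrak{s}}_{2k+1}$ add rather than cancel, is the delicate part. Here we would lean on the orientation conventions of \cite[\S 2.1]{daemi2015abelian} and the sign discussion in Subsection \ref{family}.
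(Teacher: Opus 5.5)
Your outline follows the argument behind the cited result, which is Daemi's adaptation of the Kronheimer--Mrowka and Ozsv\'ath--Szab\'o triangle proof; the paper records it only through the maps $\phi_i=(g,n)$ and $\psi_i$. In that argument, $\Phi=(g_i,h_i)$ is a chain map because the end of the $N$-family that splits off $\overline{\mathbb{C}P^2}$ contributes zero. The triangle-detection lemma then needs $h_{i-1}g_i+g_{i-2}h_i\colon C_i\to C_{i-3}=C_i$ to be homotopic to an isomorphism. Reducing that requirement to a single knot by filtering on the other cube coordinates is fine. So is passing from quasi-isomorphism to homotopy equivalence for bounded complexes of finitely generated free $\tilde\Lambda$-modules. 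For several components, though, the $N$-families in the chain-map identity are higher-dimensional, not intervals.

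The gap is in the step that is supposed to produce the isomorphism.

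\emph{The surviving face is not the model $(U,S)$.} It is the whole triple composite $W_{\m(i)}^{\m(i-3)}$ with the metric broken along $R=\partial\nu(S\cup S')\cong S^1\times S^2$. The manifold $U=\nu(S\cup S')\setminus B^4$ is only one of the two pieces.

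\emph{What the local computation actually gives.} Subsection~\ref{family} shows that this piece, carrying the family $\mathbb{U}$, acts as $c$ times the map induced by $S^1\times D^3$; this is the map the paper calls a birth (or death) map. A unit times a birth map is not an endomorphism of $C_i$, so invertibility of $c$ alone proves nothing.

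\emph{The missing topological input.} You need that the complement $X'=W_{\m(i)}^{\m(i-3)}\setminus\nu(S\cup S')$ satisfies $X'\cup_{S^1\times S^2}(S^1\times D^3)\cong Y_{\m(i)}\times I$. Equivalently, the triple composite is $Y_{\m(i)}\times I$ with a neighborhood of a circle replaced by the plumbing $\nu(S\cup S')$. Gluing along $S^1\times S^2$ then shows that the face contributes $c$ times the product map, namely $c\cdot\mathrm{id}$.

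\emph{The list of faces also needs repair.} The codimension-one faces of the pentagon are:
\begin{itemize}
\item $G\times N$ and $N\times G$, giving $h_{i-1}g_i$ and $g_{i-2}h_i$;
\item the two faces where only $\partial\nu(S)$ or only $\partial\nu(S')$ is stretched;
\item the $S^1\times S^2$ face.
\end{itemize}
A product $G\times G\times G$ occurs only as a vertex. The two $\overline{\mathbb{C}P^2}$ faces must be shown to vanish by the same conjugation argument as in your first step, and you do not address them.
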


Define $n_{\m(i)}^{\n(i)}:C_{\m(i)}^{\n(i)}\rightarrow C_{\m(i-2)}^{\n(i-2)}$ by the following matrix presentation:
\[n_{\m(i)}^{\n(i)}:=\left(\tilde{f}_{N_{\bfk(i)}^{{\bfk}^{'}(i-2)}}\right)_{\m\geq\bfk\geq{\bfk}^{'}\geq \n}.\]
The chain homotopy and its homotopy inverse are given by 
\begin{equation}
\label{eq:chainhomotopies}
\phi_i:=(g_{\m(i)}^{\n(i)},n_{\m(i)}^{\n(i)}):(C_{\m(i)}^{\n(i)},d_{\m(i)}^{\n(i)})\rightarrow\mathrm{Cone}(g_{\m(i-1)}^{\n(i-1)}), \quad  \psi_i:=\begin{pmatrix}
	n_{\m(i-1)}^{\n(i-1)}\\
	g_{\m(i-2)}^{\n(i-2)}
\end{pmatrix}:\mathrm{Cone}(g_{\m(i-1)}^{\n(i-1)})\rightarrow C_{\m(i)}^{\n(i)}.
\end{equation}

Applying Theorem \ref{thm:sec} iteratively, we have the following corollary:
\begin{corollary}\label{cor:sec}
	Let $\mathbf{p}=(2,\ldots, 2)$, $\m=(1,\ldots,1)$, and $\n=(0,\ldots,0)$. Then the chain complexes $(\widetilde{\mathrm{PFC}}(Y_\mathbf{p},\zeta_{\mathbf{p}}),d_{\mathbf{p}})$ and $(C_\m^\n,d_\m^\n)$ have the same chain homotopy type.
\end{corollary}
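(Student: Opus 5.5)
We argue by induction on the number $n$ of link components, applying Theorem \ref{thm:sec} once per coordinate to collapse one coordinate of the lattice at a time. The statement we prove for $0\le j\le n$ is the following. Let $\m_j$ and $\n_j$ be the vectors whose first $j$ coordinates equal $1$ and $0$ respectively, and whose remaining $n-j$ coordinates equal $2$. Then $\widetilde{\mathrm{PFC}}(Y_{\mathbf p},\zeta_{\mathbf p})$ is chain homotopy equivalent to the surgery cube $(C_{\m_j}^{\n_j},d_{\m_j}^{\n_j})$. For $j=0$ we have $\m_0=\n_0=\mathbf p$, so $C_{\m_0}^{\n_0}$ is the single summand $\widetilde{\mathrm{PFC}}(Y_{\mathbf p},\zeta_{\mathbf p})$ and $d_{\m_0}^{\n_0}=d_{\mathbf p}$; the claim is a tautology. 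For $j=n$ we get $\m_n=\m$ and $\n_n=\n$, which is the corollary.

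For the inductive step $j-1\to j$, reorder coordinates so that the coordinate being collapsed sits in the first slot, matching the notation $\m(i)=(i,m_1,\dots,m_{n-1})$. The remaining $(n-1)$-tuples $\m',\n'$ have entries $1,0$ in the first $j-1$ slots and $2$ elsewhere, so $|\m'-\n'|_\infty\le1$ and Theorem \ref{thm:sec} applies. With $i=2$, it gives
\[
C_{\m'(2)}^{\n'(2)}\simeq \mathrm{Cone}\bigl(g_{\m'(1)}^{\n'(1)}:C_{\m'(1)}^{\n'(1)}\to C_{\m'(0)}^{\n'(0)}\bigr).
\]
The left side is $C_{\m_{j-1}}^{\n_{j-1}}$. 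It remains to identify this mapping cone with the cube $C_{(1,\m')}^{(0,\n')}=C_{\m_j}^{\n_j}$.

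As modules both are $\bigoplus_{\bfk}\widetilde{\mathrm{PFC}}(Y_\bfk,\zeta_\bfk)$ over $(1,\m')\ge\bfk\ge(0,\n')$. For the differentials, the cube differential $d_{\m_j}^{\n_j}$ is the matrix of maps $\tilde f_{G_\bfk^{\bfk'}}$. Split it by the first coordinate of $\bfk$ and $\bfk'$:
\begin{itemize}
\item if the first coordinate is constant, the blocks are $d_{\m'(1)}^{\n'(1)}$ and $d_{\m'(0)}^{\n'(0)}$;
\item if the first coordinate drops from $1$ to $0$, the block is exactly $g_{\m'(1)}^{\n'(1)}$ by its definition;
\item there is no block increasing the first coordinate.
\end{itemize}
So the cube differential equals the cone matrix. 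Composing the equivalences then proves the corollary.

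The main obstacle is sign bookkeeping. The cone differential has $g$ appearing with a fixed sign, and $g$ is an anti-chain map, whereas the cube differential uses homology orientations and orientations of the families $G_\bfk^{\bfk'}$. I would first check that the orientation conventions of \cite[\S 2.1]{daemi2015abelian} make the identification hold on the nose. Failing that, I would insert the standard sign-twist isomorphism, multiplying the summand with first coordinate $0$ by $\pm1$ according to grading. That isomorphism intertwines the two differentials, and since it is a chain isomorphism it does not affect the homotopy type. A minor secondary point is that $\zeta_\bfk$ (the core when $m_i\equiv2$) and the chosen families of metrics are compatible with reordering coordinates, which is immediate from the definitions.
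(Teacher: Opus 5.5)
Your proposal is correct and is the paper's argument: the paper obtains the corollary by applying Theorem \ref{thm:sec} iteratively. Your induction on coordinates, together with the observation that $\mathrm{Cone}(g_{\m'(1)}^{\n'(1)})$ is exactly the block decomposition (by first coordinate) of the cube differential of $C_{(1,\m')}^{(0,\n')}$, spells out what the paper leaves implicit. Your sign-twist fallback is not needed, because the paper's cone places $g$ in the lower-left block with no sign and $g$ is an anti-chain map, so the cone agrees with the cube differential exactly.
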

\subsection{Topology of branched double covers}\label{ss:topology}
A crossing $c$ for a link $L\in S^{3}$ is an orientation preserving embedding $c: D^3\rightarrow S^3$ such that $c(T)=c(D^3)\cap L$, where $T$ is the tangle in Figure \ref{subfig:skeinmovesA}. Let $L_0$ and $L_1$ be the corresponding 0- and 1-resolutions. Then $(\Sigma(L), \Sigma(L_0), \Sigma(L_1))$ forms a surgery triad. To see this, start by noticing that, away from the crossing region, the branched double covers agree. Additionally, the branched double cover of the 4-ended tangle in Figure \ref{subfig:skeinmovesA} is a solid torus where the core corresponds to the lift of the solid red path connecting the two arcs of $T$. Let $T_0$ and $T_1$ denote the tangles in Figure \ref{subfig:skeinmovesB}. Note that $T_0$ and $T_1$ are isotopic to $T$ via diffeomorphisms $f_0$ and $f_1$ of $D^3$. Therefore, $\Sigma(L_i)$ is obtained from $\Sigma(L)$ by cutting out $\Sigma(D^3, T)$ and gluing back $\Sigma(D^3, L_i)$ along a lift of $f_i$ for $i=0, 1$. This process can be interpreted as surgery on the same knot with different framings. More precisely, let $l_p$ denote the union of the solid and dotted red paths in Figure \ref{subfig:skeinmovesA}. It lifts to a framed knot in $\Sigma(L)$. Then $\Sigma(L)$ is the $\infty$-surgery, and $\Sigma(L_0)$ and $\Sigma(L_1)$ are the 0- and 1-surgeries. 

Denote resolutions of $L$ at the crossing $c$ by $L_{(i)}$ for $i \in \mathbb{Z}$, where $L_{(i)} = L_{i \,(\mathrm{mod}~ 3)}$ and $L_2 = L$. We also denote their branched double covers by $\Sigma(L)_{(i)} := \Sigma(L_{(i)})$. The cobordism map $W^{(i-1)}_{(i)}:\Sigma(L)_{(i)}\rightarrow \Sigma(L)_{(i-1)}$, defined in Subsection \ref{ss:surgeryexacttriangle}, is the branched double cover of the surface cobordism $S^{(i-1)}_{(i)}:L_{(i)}\rightarrow L_{(i-1)}$ given by the band attachments described in Figure \ref{fig:skeinmoves}. In general, $W^{(j)}_{(i)}$ and $S^{(j)}_{(i)}$ are given by the compositions of these elementary cobordisms. Let $\bar{S}^{(j)}_{(i)}:L_{(i)}\rightarrow L_{(j)}$ be the reverse of the cobordism  $S^{(j)}_{(i)}$. Then $S^{(i-2)}_{(i)}$, as a cobordism embedded in $I\times S^3$, is diffeomorphic to the connected sum of $\bar{S}^{(i)}_{(i+1)}$ and $S_0$, the standard embedding of $\mathbb{R}P^2$ in $S^4$ with self intersection 2 \cite{kronheimer2011khovanov}.

\begin{figure}[ht]
\begin{subfigure}[t]{.4\linewidth}
\centering
\tikz[scale=2]{
	\draw[<-, knot] (-0.5,1) -- node(e1)[pos=0.15]{} node(e2)[pos=0.25]{} (0.5,0);
	\draw[knot, overcross] (-0.5,0) -- (0.5,1);
    \draw[knot, ->] (-0.5,0) -- node(f1)[pos=0.85]{} node(f2)[pos=0.75]{}  (0.5,1);
	\node at (0, -0.5) {$L$};
    \draw[knot, red] (e1.center) to[out=45, in=135] (f1.center);
    \draw[knot, red, densely dotted] (e2.center) to[out=45, in=135] (f2.center);
    \shade[ball color = gray!10, opacity = 0.2] (0,0.5) circle (0.7071cm);
	\draw (0,0.5) circle (0.7071cm);
	\draw[very thin] (-0.7071cm,0.5) arc (180:360:0.7071 and 0.2);
	\draw[very thin, dashed] (0.7071cm,0.5) arc (0:180:0.7071 and 0.2);
}
\caption{A crossing of a link.}
\label{subfig:skeinmovesA}
\end{subfigure}%
\begin{subfigure}[t]{.6\linewidth}
\centering
\tikz[scale=2]{
	\draw[knot] (2,0) to[out=45, in=-45] node(a1)[pos=0.4]{} node(b1)[pos=0.6]{} (2,1);
	\draw[knot] (3,0) to[out=135, in=-135] node(a2)[pos=0.4]{} node(b2)[pos=0.6]{} (3,1);
    \draw[knot, red] (b1.center) to [out=0, in=180](a2.center);
    \node[white] at (2.5,0.5) {$\bullet$};
    \draw[knot, red, densely dotted] (a1.center) to[out=0, in=180] (b2.center);
	\node at (2.5, -0.5) {$L_0$};
    \shade[ball color = gray!10, opacity = 0.2] (2.5,0.5) circle (0.7071cm);
	\draw (2.5,0.5) circle (0.7071cm);
	\draw[very thin] (1.7929cm,0.5) arc (180:360:0.7071 and 0.2);
	\draw[very thin, dashed] (3.2071cm,0.5) arc (0:180:0.7071 and 0.2);
	\draw[knot] (4.5,0) to[out=45, in=135] node(c1)[pos=0.4]{} node(c2)[pos=0.6]{} (5.5,0);
	\draw[knot] (4.5,1) to[out=-45, in=-135] node(d1)[pos=0.4]{} node(d2)[pos=0.6]{} (5.5,1);
    \draw[knot, red] (c2.center) to[out=90, in=-90] (d2.center);
    \draw[knot, red, densely dotted] (c1.center) to[out=90, in=-90] (d1.center);
	\node at (5, -0.5) {$L_1$};
    \shade[ball color = gray!10, opacity = 0.2] (5,0.5) circle (0.7071cm);
	\draw (5,0.5) circle (0.7071cm);
	\draw[very thin] (4.2929cm,0.5) arc (180:360:0.7071 and 0.2);
	\draw[very thin, dashed] (5.7071cm,0.5) arc (0:180:0.7071 and 0.2);
}
\caption{The 0- and 1-resolutions of a crossing of a link.}
\label{subfig:skeinmovesB}
\end{subfigure}
\caption{}
\label{fig:skeinmoves}
\end{figure}

\subsection{Pseudo-diagrams and the plane knot complex}
\label{ss:pseudodiagrams}

In this section, we will describe Daemi's \emph{plane knot complex}, denoted $\widetilde{\mathrm{PKC}}(L)$, which is obtained from the plane Floer complex of the branched double cover of a link. Its filtered chain homotopy type is an invariant of $L$, but the invariance of the filtered chain homotopy type of $\widetilde{\mathrm{PKC}}(L)$ is understood in terms of \emph{pseudo-diagram} moves, rather than Reidemeister moves. Pseudo-diagrams were introduced by Kronheimer-Mrowka \cite{KMfiltrations}, and they offer flexibility which is useful for proving invariance for both instanton and plane Floer homology.

\begin{definition}
A \emph{pseudo-diagram} is a pair $(L, N)$ of a link $L$ and an ordered collection $N$ of crossings in a diagram of $L$ such that for all resolutions $v: N \to \{0, 1\}$, the link $L_v\subset \mathbb{R}^3$ is an unlink.
\end{definition}

For example, all planar diagrams are pseudo-diagrams, where $N$ is the collection of all crossings in the planar diagram; we denote this pseudo-diagram by $(L, D)$, for $D$ the planar diagram. Additionally, any choice of two crossings in a 3-crossing diagram for the trefoil is a pseudo-diagram. However,
\[
\tikz[baseline={([yshift=-.5ex]current bounding box.center)}, scale=.6]{
	\draw[knot] (1,2) to[out=90, in=-90] (0,3);
	\draw[knot, overcross] (0,2) to[out=90, in=-90] (1,3);
	\draw[knot] (1,1) to[out=90, in=-90] (0,2);
	\draw[knot, overcross] (0,1) to[out=90, in=-90] (1,2);
	\draw[knot] (1,0) to[out=90, in=-90] (0,1);
	\draw[knot, overcross] (0,0) to[out=90, in=-90] (1,1);
	\draw[knot] (0,0) to[out=-90, in=-90] (-1,0) -- (-1,3) to[out=90, in=90] (0,3);
	\draw[knot] (1,0) to[out=-90, in=-90] (2,0) -- (2,3) to[out=90, in=90] (1,3);
	\fill[red, fill opacity=0.5] (0.5, 1.5) circle(0.3);
}
\]
is \emph{not} a pseudo-diagram; one resolution at the chosen crossing is the unknot, but the other resolution is the Hopf link. If a crossing of a planar diagram belongs to the set $N$, we highlight it in red as pictured above.

To prove invariance, one makes a further restriction:

\begin{definition}
A \emph{planar pseudo-diagram} is a pseudo-diagram $(L, N)$ that can be obtained by a planar diagram $(L, D)$ in any one of the following ways:
\begin{enumerate}
	\item $N = D$; \textit{i.e.}, $(L, N)$ is a planar diagram;
	\item $N$ is obtained by removing a crossing from $D$;
	\item $N$ is obtained by removing two consecutive under- or over-crossings from $D$.
\end{enumerate}
\end{definition}

Kronheimer and Mrowka \cite{KMfiltrations} proved the following topological statement.

\begin{proposition}
Any two planar pseudo-diagrams can be related to each other by a series of pseudo-diagram isotopies and adding and dropping crossings.
\end{proposition}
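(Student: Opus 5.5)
The plan is to reduce everything to planar diagrams and then realize each Reidemeister move by dropping crossings, isotoping, and adding crossings back. Throughout, I use one simple observation. If $(L,N)$ is a pseudo-diagram and $\phi_t$ is an ambient isotopy of $\mathbb{R}^3$ that carries the crossing balls of $N$ along with it, then $(\phi_1(L),\phi_1(N))$ is again a pseudo-diagram. This holds because every resolution $\phi_1(L)_v$ is isotopic to $L_v$, so it is still an unlink.

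\emph{Step 1: each planar pseudo-diagram connects to its planar diagram.} Let $(L,N)$ be obtained from the planar diagram $(L,D)$ by type (2) or (3). I add the missing crossings back one at a time, and at each stage I must check the pseudo-diagram condition.
\begin{itemize}
\item For $N\cup\{c\}$ with $|D\setminus (N\cup\{c\})|\le 1$, a resolution at $N\cup\{c\}$ is a planar diagram with at most one crossing. Such a diagram is an unlink by a Reidemeister I move.
\item For type (3), every resolution at $N$ is a diagram whose only crossings are two consecutive over- (or under-) crossings on a single strand. That arc can be lifted above the plane, so the resolution is again an unlink.
\end{itemize}
Reordering crossings within $N$ is handled the same way: drop a crossing and re-add it in a new position. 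It therefore suffices to connect any two planar diagrams $(L,D)$ and $(L',D')$ of isotopic links.

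\emph{Step 2: realize each Reidemeister move.} By Reidemeister's theorem, $D$ and $D'$ differ by planar isotopies and R1, R2, R3 moves. Planar isotopies are pseudo-diagram isotopies. For each local move I proceed as follows.
\begin{itemize}
\item \textbf{R1.} Drop the kink crossing, giving a type (2) planar pseudo-diagram. Then undo the kink by an isotopy supported away from the other crossings.
\item \textbf{R2.} The two crossings of the bigon are consecutive over-crossings of the upper strand. Drop both, giving type (3), and pull the strands apart by an isotopy supported in a ball that misses $N$.
\item \textbf{R3.} Drop the two crossings lying on the top strand. These are consecutive over-crossings, so the result is type (3), and the remaining crossing between the middle and bottom strands stays in $N$. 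The R3 move is then an isotopy that slides the top strand across that remaining crossing ball without meeting it, since it passes above. This gives a pseudo-diagram isotopy. Afterwards, re-add the two new crossings of the top strand, which are again consecutive over-crossings.
\end{itemize}
In every case the intermediate objects are pseudo-diagrams by the lifting argument of Step 1.

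\emph{Main obstacle.} The delicate point is the R3 step and, more generally, certifying at every intermediate stage that all $2^{|N|}$ resolutions are unlinks, rather than only the endpoint diagrams. I would handle this uniformly with the following fact: a diagram whose non-resolved crossings all lie, consecutively, on one strand as over-crossings (or all as under-crossings) represents the same link as the diagram with that strand pushed off. Every intermediate configuration above has this form after resolving $N$, so the verification reduces to that single fact.
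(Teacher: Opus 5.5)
Your argument is correct and takes essentially the same route the paper relies on: the proposition is quoted from Kronheimer--Mrowka, and Section~4 realizes Reidemeister I, II and III by exactly your decompositions. These are an isotopy plus one crossing for I, the two bigon crossings for II, and for III dropping two consecutive crossings on the moving strand, isotoping it past the central crossing ball, and re-adding them. The only difference is cosmetic: for Reidemeister III the paper drops the two under-crossings of the bottom strand, whereas you drop the two over-crossings of the top strand, which is the mirror-symmetric choice.
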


Let $N$ be a set of $n$ crossings for a link $L$. For any $\m = (m_1,\ldots, m_n) \in \mathbb{Z}^n$, the \emph{$\m$-resolution of $L$ relative $N$}, denoted $L_\m$, is the link obtained by performing an $m_i$-resolution of the crossing corresponding to the $i$th element in $N$ for each $i=1,\ldots, n$. For each crossing, one assigns a framed knot in $\Sigma(L)$ via the topological construction described in Section \ref{ss:topology}. Then, the branched double cover of $S^3$ branched along $L_\m$ is equivalent to surgery along the associated framed link in $\Sigma(L)$. To a pair $(\m, \n)$ with $\m \ge \n$, notice that the cobordism $W_\m^\n: \Sigma(L)_\m \to \Sigma(L)_\n$ is diffeomorphic to the branched double cover of a cobordism $S_\m^\n: L_\m \to L_\n$, described as follows.
\begin{enumerate}[label=(\roman*)]
	\item If $\left| \m - \n \right|_1 = 1$, then we assign the surface cobordism described in Subsection \ref{ss:topology}.
	\item Otherwise, $S_\m^\n$ is defined as the composition of $\left| \m - \n \right|_1$-many cobordisms of the previous type.
\end{enumerate} 
If $N$ is a pseudo-diagram for $L$, then for any $\m, \n \in \mathbb{Z}^n$ we may define $\sigma(\m, \n) \in 2\mathbb{Z}$ as the self-intersection number of the cobordism $S_\m^\n: L_\m \to L_\n$; this number is well-defined since $L_\m$ and $L_\n$ are both unlinks.

Finally, let $L$ be a link presented via a planar pseudo-diagram $N$ with $n$ crossings. For each crossing, one assigns a framed knot in $\Sigma(L)$ as described above. The collection of these framed knots is an $n$-component framed link which fits into the framework of Subsection \ref{ss:surgeryexacttriangle}. Define the \emph{plane knot complex} as
\[
\widetilde{\mathrm{PKC}}(L, N) := \bigoplus_{\mathbf{m}\in \{0,1\}^n} \widetilde{\mathrm{PFC}}(\Sigma(L_\mathbf{m})),
\]
\emph{i.e.}, the direct sum of Morse complexes associated to $\mathcal{R}(\Sigma(L_{\mathbf{m}}))$, with differential $d_p$ defined in Subsection \ref{ss:surgeryexacttriangle}.

Let $\mathbf{o} \in \{0,1\}^n$ denote the vertex such that $L_\mathbf{o}$ is the oriented resolution of $L$. The pseudo-diagram $N$ can be used to define a function
\begin{align*}
&h_N: \{0,1\}^n \to \mathbb{Z}\\
&h_N(\m):=-|\m|_1+\frac{1}{2}\sigma(\m,\mathbf{o})
\end{align*}
which, in turn, can be used to define a grading, called the \emph{homological degree}, on the chain complex $\mathrm{\widetilde{PKC}}(L, N)$:
\[\mathrm{deg}_{h}|_{\widetilde{\mathrm{PFC}}(\Sigma(L_m))}= h(\m)+n_{-}(N).
\]
Daemi also defines another grading on $(\widetilde{\mathrm{PKC}}(L, N), d_p)$, called the \emph{$\delta$-grading}, by
\[
\delta|_{\widetilde{\mathrm{PFC}}(\Sigma(L_m))} = -\frac{1}{2} \deg_p - \frac{1}{2} \left|m\right|_1 - \frac{1}{4} \sigma(\m, \mathbf{o}) + \frac{1}{2} n_+(N).
\]
One can verify that $\deg_\delta \in \mathbb{Z}\left[\tfrac{1}{2}\right]$, and that $2\deg_\delta \equiv b_0(L) - 1~\left(\mathrm{mod}~2\right)$. Therefore, $\deg_\delta$ is integrally supported whenever $L$ is a knot.

$\widetilde{\mathrm{PKC}}(L, N)$ is a filtered chain complex with respect to $\mathrm{deg}_h$. Recall that a \emph{filtered chain complex} $(C,d)$ is a chain complex with a filtration,
\[
C = \mathcal{F}_0 C \supseteq \cdots \supseteq \mathcal{F}_{i-1}C \supseteq \mathcal{F}_i C \supseteq \mathcal{F}_{i+1} \supseteq \cdots \supseteq \{0\},
\]
of the module $C$ such that $d$ maps each filtration level $\mathcal{F}_i C$ to itself (that is, the differential does not decrease filtration level). Thus, for each $i\ge 0$, $(\mathcal{F}_i C, d)$ is a chain complex. Moreover, suppose that there exists an $r \in \mathbb{R}$ such that each of the chain complexes $(\mathcal{F}_i, d)$ are graded with grading set $\{r\} + \mathbb{Z}$, and that the differential increases this grading by 1. In this case, we call $(C,d)$ a \emph{filtered $\mathbb{Z}$-graded chain complex}.

A \emph{map $f: (C, d) \to (C', d')$ of filtered $\mathbb{Z}$-graded chain complexes of degree $(a,b) \in \mathbb{Z} \times \mathbb{R}$} is a module homomorphism of degree $b$ which sends $\mathcal{F}_i C$ to $\mathcal{F}_{i+a} C'$. Thus, a map of degree $(a, b)$ has degree $(a', b)$ for any $a' \le a$. We define a morphism in the category of filtered $\mathbb{Z}$-graded chain complexes to be a chain map or an anti-chain map of degree $(0,0)$. We declare two morphisms to be \emph{filtered chain homotopy equivalent} if there is a chain homotopy between them of degree $(-1,-1)$. 

In the case that $N$ is a planar diagram, the self-intersection $\sigma(\m,\bfo)$ is trivial, and $\mathrm{deg}_h$ is reduced to
\[
\mathrm{deg}_h=-|\m|_{1}+n_{-}(N),
\]
which is equal to the cube filtration up to a global shift. Daemi proved the following.

\begin{lemma}[\cite{daemi2015abelian}, Corollary 3.8]
	If $\mathbf{m}, \mathbf{n}\in \{0,1\}^n$ satisfy $\mathbf{m} \ge \mathbf{n}$, then $h(\mathbf{n}) \ge h(\mathbf{m})$.
\end{lemma}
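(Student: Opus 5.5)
The plan is to reduce the inequality to a bound on the self-intersection of a single elementary band cobordism, and then prove that bound using the branched double cover.

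\textbf{Reduction.} By definition,
\[
h(\mathbf{n}) - h(\mathbf{m}) = |\mathbf{m}|_1 - |\mathbf{n}|_1 + \tfrac{1}{2}\bigl(\sigma(\mathbf{n},\mathbf{o}) - \sigma(\mathbf{m},\mathbf{o})\bigr).
\]
So it suffices to prove two things:
\[
\sigma(\mathbf{m},\mathbf{o}) = \sigma(\mathbf{m},\mathbf{n}) + \sigma(\mathbf{n},\mathbf{o})
\qquad\text{and}\qquad
\tfrac{1}{2}\sigma(\mathbf{m},\mathbf{n}) \le |\mathbf{m}-\mathbf{n}|_1 .
\]

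\textbf{Step 1: additivity.} Self-intersection numbers of cobordisms between unlinks add under composition. The self-intersection of a surface $S\subset I\times S^3$ whose ends are unlinks is computed relative to the canonical framings of the boundary unlinks, namely the Seifert framings of the split components. Gluing along an unlink with these framings makes the relative Euler numbers add, because the pushoffs match along the gluing unlink.

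This applies here because $N$ is a pseudo-diagram. Every intermediate resolution $L_{\mathbf{k}}$ with $\mathbf{k}\in\{0,1\}^n$ is an unlink. Moreover $S_{\mathbf{m}}^{\mathbf{o}}$ is isotopic rel boundary to $S_{\mathbf{m}}^{\mathbf{n}}\circ S_{\mathbf{n}}^{\mathbf{o}}$, since the diffeomorphism type is independent of the chain of lattice points chosen. Here I would route through $\mathbf{n}$ using a chain inside $\{0,1\}^n$, and if needed reorder band attachments, which are disjoint in distinct crossing balls.

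Iterating the same additivity along a chain $\mathbf{m}=\mathbf{k}_0>\cdots>\mathbf{k}_l=\mathbf{n}$ inside the cube writes $\sigma(\mathbf{m},\mathbf{n})$ as a sum of $l=|\mathbf{m}-\mathbf{n}|_1$ terms. Each term is the self-intersection of a single band cobordism $S_{\mathbf{k}_i}^{\mathbf{k}_{i+1}}$ between unlinks. So it remains to show that each such term is at most $2$.

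\textbf{Step 2: one band.} For a single saddle $S:\mathcal{U}\to\mathcal{U}'$ between unlinks, I would pass to the branched double cover $\Sigma(S)$. By Subsection \ref{ss:topology}, this is a single 2-handle cobordism between connected sums of $S^1\times S^2$. Hence $b_2^{\pm}(\Sigma(S))\le 1$, and the signature of $\Sigma(S)$ lies in $\{-1,0,1\}$.

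Then I would invoke the Gordon--Litherland / Viro type relation for double branched covers,
\[
\operatorname{sign}\Sigma(S) = 2\operatorname{sign}(I\times S^3) - \tfrac{1}{2}e(S) = -\tfrac{1}{2}e(S).
\]
This is valid for surfaces with unlink boundary once both ends are capped off by trivial disks, which changes neither side. It gives $|e(S)|\le 2$, so each term is at most $2$ and Step 1 finishes the proof.

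\textbf{Main obstacle.} I expect the hard part to be Step 2, specifically justifying the signature formula in the relative setting with the correct sign conventions. Since only an absolute bound is needed, the sign conventions can be sidestepped.

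If the formula is awkward to cite, I would instead cap each end unlink with standard disks. This gives a closed surface $\hat S\subset S^4$ with $\chi=\chi(S)+b_0(\mathcal{U})+b_0(\mathcal{U}')$. I would then apply Massey's inequality
\[
|e(\hat S)|\le 4-2\chi(\hat S)
\]
when $\hat S$ is nonorientable, or use $e=0$ when it is orientable. A single saddle between unlinks caps to a union of spheres and one $\mathbb{RP}^2$ or torus component, so this gives $|e|\le 2$ directly.
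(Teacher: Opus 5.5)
The paper gives no proof of this lemma; it is quoted from Daemi. So your argument has to stand on its own, and in substance it does. The reduction to $\tfrac12\sigma(\mathbf{m},\mathbf{n})\le|\mathbf{m}-\mathbf{n}|_1$ is the right one. Additivity along comparable chains inside the cube is correctly justified: all intermediate resolutions are unlinks, and bands at different crossings can be reordered by isotopy. Both single-band bounds are valid, since $\operatorname{sign}\Sigma(S)\in\{-1,0,1\}$ for a one-2-handle cobordism and $\operatorname{sign}\Sigma=-\tfrac12 e$.

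\emph{The step that fails as written.} This is the identity $\sigma(\mathbf{m},\mathbf{o})=\sigma(\mathbf{m},\mathbf{n})+\sigma(\mathbf{n},\mathbf{o})$.
\begin{itemize}
\item Your justification routes $S_{\mathbf{m}}^{\mathbf{o}}$ through $\mathbf{n}$, which presupposes $\mathbf{m}\ge\mathbf{n}\ge\mathbf{o}$.
\item But $\mathbf{o}$ is a fixed vertex (the oriented resolution). It is typically incomparable with $\mathbf{n}$, and even with $\mathbf{m}$.
\item In that case there is no band cobordism $S_{\mathbf{n}}^{\mathbf{o}}$ at all, since $S_{\mathbf{a}}^{\mathbf{b}}$ exists only for $\mathbf{a}\ge\mathbf{b}$.
\end{itemize}
You have to say what $\sigma$ means on incomparable pairs. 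Set $\sigma(\mathbf{a},\mathbf{b}):=\sigma(\mathbf{a},\mathbf{w})-\sigma(\mathbf{b},\mathbf{w})$ for any $\mathbf{w}\in\{0,1\}^n$ with $\mathbf{w}\le\mathbf{a},\mathbf{b}$. This is independent of $\mathbf{w}$ exactly because of the comparable-chain additivity you proved. Taking $\mathbf{w}=\mathbf{b}$ shows it agrees with the geometric definition when $\mathbf{a}\ge\mathbf{b}$. Then, with $\mathbf{w}=\mathbf{0}$,
\[
\sigma(\mathbf{m},\mathbf{o})-\sigma(\mathbf{n},\mathbf{o})=\sigma(\mathbf{m},\mathbf{0})-\sigma(\mathbf{n},\mathbf{0})=\sigma(\mathbf{m},\mathbf{n}),
\]
by additivity along $\mathbf{m}\ge\mathbf{n}\ge\mathbf{0}$. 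That is all your reduction uses, and with this repair the proof is complete.

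\emph{Two small slips in the fallback.}
\begin{itemize}
\item A single saddle never produces a torus: the saddle component caps to a sphere or an $\mathbb{RP}^2$.
\item Massey's bound must be applied to the $\mathbb{RP}^2$ component alone. For the disconnected $\hat S$, the quantity $4-2\chi(\hat S)$ is negative as soon as a sphere component is present.
\end{itemize}
Your two Step 2 routes are also not really different, since Massey's theorem is itself proved via the signature of the double branched cover.

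\emph{The band decomposition is unnecessary.} Apply $\operatorname{sign}\Sigma=-\tfrac12 e$ once to $S_{\mathbf{m}}^{\mathbf{n}}$. Its cover $W_{\mathbf{m}}^{\mathbf{n}}$ is built from $k=|\mathbf{m}-\mathbf{n}|_1$ two-handles, so $b^++b^-\le k$, and this gives
\[
h(\mathbf{n})-h(\mathbf{m})=k+\operatorname{sign}(W_{\mathbf{m}}^{\mathbf{n}})\ge 2b^+(W_{\mathbf{m}}^{\mathbf{n}})\ge 0.
\]
This is a sharper statement. It ties the filtration jump to $b^+$, which is the quantity that controls whether the plane Floer cobordism maps vanish.
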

Hence, the plane knot complex is a filtered chain complex with respect to $\mathrm{deg}_h$.

\begin{proposition}[\cite{daemi2015abelian}, Proposition 3.11]
Suppose that a planar pseudo-diagram $N$ of a link $L$ is fixed. Then the filtered chain homotopy type of $(\widetilde{\mathrm{PKC}}(L, N), d_p)$ does not depend on auxiliary choices; \textit{i.e.}, it doesn't depend on the choice of family of metrics on $W_\m^\n$, Morse-Smale functions on the relevant representation variety, perturbations of the ASD equation, or homology orientations.
\end{proposition}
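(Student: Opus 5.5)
The plan is the standard continuation-map argument, carried out so that every map and homotopy is upper triangular with respect to the cube order on $\{0,1\}^n$. Fix two sets of auxiliary data $\mathfrak{a}$ and $\mathfrak{a}'$. These consist of families of metrics $G_\m^\n$ on the $W_\m^\n$, Morse--Smale functions on the $\mathcal{R}(\Sigma(L_\m))$, perturbations $\nu$, and homology orientations. I will build a map
\[
\Phi=(\Phi_\m^\n)_{\m\ge\n}:\widetilde{\mathrm{PKC}}(L,N)_{\mathfrak{a}}\to\widetilde{\mathrm{PKC}}(L,N)_{\mathfrak{a}'}.
\]
Since $\Phi_\m^\n$ is nonzero only when $\m\ge\n$, the Corollary 3.8 inequality $h(\n)\ge h(\m)$ makes $\Phi$ non-decreasing in $\deg_h$.

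\textbf{Diagonal terms.} For $\m=\n$, I take $\Phi_\m^\m$ to be the Morse continuation map on the torus $\mathcal{R}(\Sigma(L_\m))$ between the two Morse--Smale functions. If the homology orientations also differ, I compose with the $\pm1$ sign change. This is a quasi-isomorphism of degree $0$.

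\textbf{Off-diagonal terms.} For $\m>\n$, I choose a family of metrics and perturbations on $W_\m^\n$ parametrized by $G_\m^\n\times[0,1]$ that interpolates between $\mathfrak{a}$ and $\mathfrak{a}'$. On the faces where $W_\m^\n$ breaks as $W_\m^\bfk\circ W_\bfk^\n$, this family should restrict to the products of the already-chosen families. I define $\Phi_\m^\n$ by counting $0$-dimensional moduli spaces $M(\alpha,\beta,\mathfrak{s})$ over this family, weighted by $\epsilon_p u^{e(p)}$. The index formula is that of Section \ref{family} with $\dim G$ raised by one. Hence the nonzero counts shift $\deg_p$ exactly as the differential does, so $\Phi$ preserves $\deg_\delta$ and has degree $(0,0)$.

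\textbf{Chain map and homotopy inverse.} To show $\Phi$ is a chain (or anti-chain) map, I examine the boundary of the $1$-dimensional moduli spaces over $G_\m^\n\times[0,1]$. Its ends are of three kinds:
\begin{itemize}
\item the two ends $t=0,1$, which give the terms $d_{\mathfrak{a}'}\Phi$ and $\Phi d_{\mathfrak{a}}$ once the breaking faces are included;
\item broken trajectories at Morse critical points;
\item the boundary faces of $G_\m^\n$, which contribute $\Phi_\bfk^\n d_\m^\bfk$ and $d_\m^\bfk\Phi_\m^\bfk$.
\end{itemize}
The reverse interpolation gives $\Psi$. Two-parameter families over $G_\m^\n\times[0,1]^2$, obtained by concatenating the interpolations and deforming to the constant family, give a homotopy $H$ between $\Psi\Phi$ and the identity. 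This $H$ again has components only for $\m\ge\n$, and it shifts $\deg_p$ so as to have degree $(-1,-1)$. So $\Phi$ is a filtered chain homotopy equivalence.

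\textbf{Main obstacle.} The hard step is compactness and gluing along the broken ends of the parameter families. The moduli spaces over $G_\m^\n$ must be fattened as in \cite[\S 1.3]{daemi2015abelian}, and the fattening has to be compatible with the interpolation parameter. Reducible flat limits on the necks must be handled as in the abelian setting. There, solutions over each $\mathfrak{s}$ form a torus, and $b^+=0$ gives regularity for every perturbation (Lemma 1.6), so transversality is essentially free. I would first reduce the problem to the abelian torus picture via Proposition \ref{prop:daemihammer} on each stratum, and then check the gluing on the corners of $G_\m^\n\times[0,1]$.

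\textbf{Convergence in $\widetilde{\Lambda}$.} The energies $-c_1(\mathfrak{s})^2$ are bounded below. For each fixed energy only finitely many $\mathrm{Spin}^c$ structures contribute, since the relevant $H_2$-lattice is negative definite. Hence all the counts converge in $\widetilde{\Lambda}$.
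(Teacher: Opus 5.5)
Your outline is the standard continuation-map argument, which is what the paper relies on: it gives no proof of its own and simply cites Daemi, who follows Kronheimer--Mrowka. The argument uses maps and homotopies with components only for $\m\ge\n$, counted over families one and two dimensions larger than $G_\m^\n$, and filtered by the monotonicity $h(\n)\ge h(\m)$.

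There are two slips to fix.

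\begin{itemize}
\item \textbf{Degree of $\Phi$.} The extra family parameter raises $\ind\beta-\ind\alpha$ by one, so $\Phi$ shifts $\deg_p$ by $2$ more than the differential does, not by the same amount. That extra shift is exactly why $\Phi$ has $\delta$-degree $0$ while $d$ has $\delta$-degree $+1$; as you wrote it, the conclusion does not follow.
\item \textbf{Meaning of the $[0,1]$ coordinate.} For the $t=0,1$ ends to produce $d_{\mathfrak{a}'}\Phi$ and $\Phi d_{\mathfrak{a}}$, $t$ must record where along the cobordism the data switches from $\mathfrak{a}$ to $\mathfrak{a}'$, not a homotopy applied to the whole family. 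Each face of $G_\m^\n$ then splits into a piece where the switch lies on the $W_\m^\bfk$ side and a piece where it lies on the $W_\bfk^\n$ side, which is what produces the two face terms $\Phi_\bfk^\n d_\m^\bfk$ and $d_\m^\bfk\Phi_\m^\bfk$.
\end{itemize}
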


Thus, to obtain a well-defined plane knot complex, we must prove that the filtered chain homotopy type of $(\widetilde{\mathrm{PKC}}(L, N), d_p)$ is independent of choice of planar pseudo-diagram. Assume that $N$ and $N'$ are two planar pseudo-diagrams for $L$. Invariance is clear if the planar pseudo-diagrams differ only by pseudo-diagram isotopy. Assume that $N'$ is obtained from $N$ by adding a crossing. Following Daemi, we order the crossings of $N'$ so that the crossing in $N' \setminus N$ is the first one in $N'$. Then, using the notation of Subsection \ref{ss:surgeryexacttriangle}, we have that
\[
(\widetilde{\mathrm{PKC}}(L, N), d_p) = (C_{\m(2)}^{\n(2)}, d_{\m(2)}^{\n(2)})
\qquad \text{and} \qquad
(\widetilde{\mathrm{PKC}}(L, N'), d_p) = (C_{\m(1)}^{\n(0)}, d_{\m(1)}^{\n(0)}),
\]
where $\m = (1, \ldots, 1)$ and $\n = (0, \ldots, 0)$. The homological and $\delta$-gradings on the two complexes differ in the following way.

\begin{proposition}[\cite{daemi2015abelian}, Proposition 3.12]
\label{prop:homdelta}
Upon adding a crossing, the homological and $\delta$-gradings on $(\widetilde{\mathrm{PKC}}(L, N'), d_p)$ are obtained from those on $(\widetilde{\mathrm{PKC}}(L, N), d_p)$ by 
\[
\deg_{\delta'} = \deg_\delta - 1
\qquad \text{and} \qquad
\deg_{h'} = \deg_h - 1.
\]
Thus, $C_{\m(2)}^{\n(2)}$ is equal to $\widetilde{\mathrm{PKC}}(N) [1,1]$.
\end{proposition}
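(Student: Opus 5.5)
The statement to prove is Proposition \ref{prop:homdelta}: upon adding a crossing, both $\deg_h$ and $\deg_\delta$ drop by exactly one, so that $C_{\m(2)}^{\n(2)}$ is $\widetilde{\mathrm{PKC}}(N)[1,1]$. The proof is a direct bookkeeping comparison of the defining formulas for $\deg_h$ and $\deg_\delta$ on the two complexes.

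\textbf{Setup.} Both $N$ and $N'$ are planar pseudo-diagrams of the same link. Write a vertex of the cube for $N'$ as $(m_0,\m)$, with $m_0$ the resolution of the new crossing $c$, which is ordered first. The subcomplex $C_{\m(2)}^{\n(2)}$ consists of the vertices with $m_0=2$, i.e.\ $c$ left unresolved. The summand $\widetilde{\mathrm{PFC}}(\Sigma(L_{(2,\m)}))=\widetilde{\mathrm{PFC}}(\Sigma(L_\m))$ carries two sets of gradings:
\begin{itemize}
\item the $N$-gradings, computed with $|\m|_1$, $\sigma(\m,\bfo)$ and $n_\pm(N)$;
\item the $N'$-gradings, computed with $|(2,\m)|_1=|\m|_1+2$, $\sigma((2,\m),\bfo')$ and $n_\pm(N')$.
\end{itemize}
The Morse degree $\deg_p$ is the same under both, since it only sees $\mathcal{R}(\Sigma(L_\m))$.

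\textbf{Key steps.}
\begin{enumerate}
\item \emph{Locate the new oriented resolution.} Say $c$ is a positive crossing, so that its oriented resolution is the $0$-resolution. Then $\bfo'=(0,\bfo)$ and $n_+(N')=n_+(N)+1$. For a negative crossing, $\bfo'=(1,\bfo)$ and $n_-(N')=n_-(N)+1$.
\item \emph{Decompose the self-intersection.} Use additivity of self-intersection under composing cobordisms between unlinks:
\[
\sigma((2,\m),(0,\bfo)) = \sigma((2,\m),(0,\m)) + \sigma((0,\m),(0,\bfo)).
\]
The second term equals $\sigma(\m,\bfo)$: away from $c$ the cobordism is the same band-move cobordism as for $N$, and its self-intersection is a local count relative to the unlink ends.
\item \emph{The local term.} By Subsection \ref{ss:topology}, $S_{(2)}^{(0)}$ is the connected sum of the reversed band $\bar S_{(0)}^{(2)}$ with the standard $\mathbb{R}P^2$ summand $S_0$. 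Hence $\sigma((2,\m),(0,\m))$ is $\pm2$, with the sign fixed by the $\mathbb{R}P^2$ self-intersection convention. For a negative crossing, go $(2,\m)\to(1,\m)\to(0,\m)$ and compare with $(1,\bfo)$ instead; the local term is then $0$.
\item \emph{Compute $\deg_h$.} For a positive crossing,
\[
h_{N'}(2,\m) = -|\m|_1 - 2 + \tfrac12\sigma(\m,\bfo) + \tfrac12\sigma_{\mathrm{loc}},
\qquad n_-(N')=n_-(N).
\]
Getting $\deg_{h'}=\deg_h-1$ forces $\sigma_{\mathrm{loc}}=+2$. For a negative crossing, $-|(2,\m)|_1$ contributes $-2$ relative to $-|\m|_1$ and $n_-$ contributes $+1$. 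This also gives a net shift of $-1$, provided the local self-intersection relative to $(1,\bfo)$ is $0$; equivalently, the composite through the $1$-band has the appropriate $\sigma$.
\item \emph{Compute $\deg_\delta$.} Plug the same data into the $\delta$ formula. The shift is
\[
-\tfrac12\cdot 2 \;-\; \tfrac14\sigma_{\mathrm{loc}} \;+\; \tfrac12\,\Delta n_+ .
\]
For a positive crossing this is $-1-\tfrac12+\tfrac12=-1$. For a negative crossing, with $\sigma_{\mathrm{loc}}=0$ and $\Delta n_+=0$, it is $-1$. Both cases agree with the claim.
\end{enumerate}

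\textbf{Main obstacle.} The delicate step is pinning down the local self-intersection contributions and their signs: the orientation convention for the $\mathbb{R}P^2$ summand, and checking that the local term is genuinely additive with the term $\sigma(\m,\bfo)$. I would settle this by computing explicitly in the simplest case, a one-crossing diagram of the unknot versus the crossingless diagram. There the plane Floer groups are known, so the shift is forced by comparing $\deg_\delta$ on the unique generator. Locality of the band moves then extends the computation to general $\m$.
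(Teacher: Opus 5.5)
Your overall strategy is right: compare the defining formulas at each vertex $(2,\m)$, note that $\deg_p$ is unchanged, and isolate one local self-intersection term at the new crossing $c$. Your arithmetic in Steps 4 and 5 is also correct once that term is known. The gap is in the one nontrivial step, the self-intersection bookkeeping, in two places.

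First, Step 2 routes through $(0,\m)$ and asserts $\sigma((0,\m),(0,\bfo))=\sigma(\m,\bfo)$ by ``locality''. This is false. Those bands are attached to $L_{(0)}$, not to $L$, and $\sigma$ is a global quantity (the normal Euler number of the surface capped off by disks in $S^4$), which the bands alone do not determine. Correspondingly, $\sigma((2,\m),(0,\m))$ depends on $\m$. Writing it as $\sigma(\bar S^{(2)}_{(3)})+\sigma(S_0)$, the first term vanishes only when the single band at $c$ toward the $0$-smoothing is orientable. For $\m\neq\bfo$, $L_\m$ does not inherit the orientation of $L$, so this band can be a M\"obius band contributing $\pm2$.

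Concretely, take the two-crossing diagram of the two-component unlink (two overlapping circles), with $N'$ both crossings and $N$ the negative one, so the added $c$ is positive. At the non-oriented resolution $\m$ of the negative crossing:
\begin{itemize}
\item $\sigma((2,\m),(0,\m))=0$, since it is a split followed by a planar merge;
\item $\sigma((0,\m),(0,\bfo))=0$, since $N'$ is planar;
\item $\sigma(\m,\bfo)=\pm2$, since it is a M\"obius band between unknots.
\end{itemize}
The two errors cancel, so your formula survives, but the argument does not prove it.

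Second, in Step 4 you get $\sigma_{\mathrm{loc}}=+2$ by demanding that the conclusion hold. For negative crossings you likewise only state the vanishing of the local term as a proviso. That is circular.

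The repair is to decompose through $(2,\bfo)$ instead:
\[
\sigma\bigl((2,\m),\bfo'\bigr)=\sigma\bigl((2,\m),(2,\bfo)\bigr)+\sigma\bigl((2,\bfo),\bfo'\bigr).
\]
The first term is literally $\sigma(\m,\bfo)$ for $N$, because $c$ is never resolved and $L_{(2,\bfk)}=L_\bfk$. The second term need only be computed at $\bfo$, where $L_\bfo$ is oriented by $L$.
\begin{itemize}
\item For $c$ positive, $\bfo'=(0,\bfo)$ and $S^{(0)}_{(2)}\cong\bar S^{(2)}_{(3)}\#S_0$. Here $\bar S^{(2)}_{(3)}$ is the oriented saddle, so its capped-off surface is orientable in $S^4$ and contributes $0$. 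The summand $S_0$ contributes $+2$, as stated in Subsection~\ref{ss:topology}.
\item For $c$ negative, $\bfo'=(1,\bfo)$ and the single band $(2,\bfo)\to(1,\bfo)$ is again the oriented saddle, contributing $0$.
\end{itemize}
With these values your Steps 4 and 5 go through verbatim.

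Also drop the fallback in your ``main obstacle'' paragraph. It rests on the same false locality principle. Reading the shift off the unknot's plane Floer homology would also be circular, since this proposition is what identifies the gradings in the invariance argument.
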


Theorem \ref{thm:surgerytriangle} states that $(C_{\m(2)}^{\n(2)}, d_{\m(2)}^{\n(2)})$ and $(C_{\m(1)}^{\n(0)}, d_{\m(1)}^{\n(0)})$ are chain homotopy equivalent with explicit chain homotopies 
\[
\Phi:= \phi_2 = \left(g_{\m(2)}^{\n(2)},n_{\m(2)}^{\n(2)}\right)
\qquad \text{and} \qquad
\Psi:= \psi_2 = 
\begin{pmatrix}
	n_{\m(1)}^{\n(1)}\\
	g_{\m(0)}^{\n(0)}
\end{pmatrix}
\]
appropriated from the family of chain homotopies stated in Equation (\ref{eq:chainhomotopies}).

\begin{theorem}[\cite{daemi2015abelian}, Theorem 2]
\label{thm:invariancedaemi}
The filtered chain homotopy type of $\widetilde{\mathrm{PKC}}(L, N)$, as a filtered $\mathbb{Z}$-graded chain complex, depends only on $L$.
\end{theorem}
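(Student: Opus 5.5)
To prove Theorem \ref{thm:invariancedaemi}, I would combine Daemi's Proposition 3.11 with the Kronheimer--Mrowka topological statement. By Proposition 3.11, for a fixed planar pseudo-diagram $N$ the filtered chain homotopy type of $(\widetilde{\mathrm{PKC}}(L,N),d_p)$ is independent of metrics, Morse--Smale functions, perturbations and homology orientations. By the Kronheimer--Mrowka proposition, any two planar pseudo-diagrams of $L$ are related by pseudo-diagram isotopies and by adding or dropping a single crossing. Since dropping a crossing is the inverse of adding one, it suffices to treat two moves: pseudo-diagram isotopy, and the addition of one crossing.

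For a pseudo-diagram isotopy, I would use the ambient isotopy of $S^3$ carrying $(L,N)$ to $(L',N')$. It lifts to a diffeomorphism of branched double covers that carries the framed surgery links to one another. It therefore identifies every $\Sigma(L_\m)$, every cobordism $W_\m^\n$ and every family of metrics. Hence the two cube complexes are isomorphic on the nose, with matching $\deg_h$ and $\delta$ once the self-intersection numbers $\sigma(\m,\bfo)$ are matched, which they are because they are diffeomorphism invariants of the surfaces $S_\m^\n$.

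For adding a crossing, order the crossings of $N'$ so that the new crossing comes first. Then $\widetilde{\mathrm{PKC}}(L,N)=C_{\m(2)}^{\n(2)}$ and $\widetilde{\mathrm{PKC}}(L,N')=C_{\m(1)}^{\n(0)}=\mathrm{Cone}(g_{\m(1)}^{\n(1)})$. By Theorem \ref{thm:surgerytriangle}, the maps $\Phi=\phi_2$ and $\Psi=\psi_2$ are mutually inverse chain homotopy equivalences. What remains is to show that they are \emph{filtered} maps of degree $(0,0)$ after the regrading $C_{\m(2)}^{\n(2)}=\widetilde{\mathrm{PKC}}(N)[1,1]$ of Proposition \ref{prop:homdelta}, and that the homotopies $\Phi\Psi\simeq \mathrm{id}$ and $\Psi\Phi\simeq\mathrm{id}$ have degree $(-1,-1)$.

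I expect this degree bookkeeping to be the main obstacle. Each component of $\Phi$, $\Psi$ and of the homotopies is a count of a $0$-dimensional moduli space $\tilde f_{G}$, $\tilde f_N$ or $\tilde f_K$ over a cobordism $W_\bfk^{\bfk'}$ with a family of metrics of dimension $|\bfk-\bfk'|_1-1$. I would compute the change in $\deg_p$ from the index formula, $\dim = \dim G - \tfrac{\chi+\sigma}{2}+\tfrac{b_1(Y_0)+b_1(Y_1)}{2}-b_1(Y_0)+\ind\alpha-\ind\beta$. I would combine this with the change in $-|\m|_1+\tfrac12\sigma(\m,\bfo)$, using additivity of $\sigma$ under composition and the identification of $S^{(i-2)}_{(i)}$ with $\bar S^{(i)}_{(i+1)}\#\mathbb{R}P^2$ of self-intersection $2$. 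The aim is to show that every component preserves $\delta$, and raises $h$ by at least $0$ for $\Phi$ and $\Psi$ and by at least $-1$ for the homotopies; the filtration statement would use Corollary 3.8. The $\mathbb{R}P^2$ term is exactly what makes the $\delta$-shift consistent on the $N$-type components, which is where Daemi's choice of $h_N$ via $\sigma$ is essential.
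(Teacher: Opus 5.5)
This is essentially the route the paper sketches around its citation of Daemi's result: independence of auxiliary data (Daemi's Proposition 3.11), the Kronheimer--Mrowka reduction to pseudo-diagram isotopies and single crossing additions, and, for an added crossing, the identifications $\widetilde{\mathrm{PKC}}(L,N)=C_{\m(2)}^{\n(2)}$ and $\widetilde{\mathrm{PKC}}(L,N')=C_{\m(1)}^{\n(0)}=\mathrm{Cone}(g_{\m(1)}^{\n(1)})$, with the explicit maps $\Phi=\phi_2$, $\Psi=\psi_2$ of Theorem~\ref{thm:surgerytriangle} and the $[1,1]$ shift of Proposition~\ref{prop:homdelta}; the filtered-degree check you isolate is exactly what Daemi carries out. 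One small correction to your bookkeeping target: the differential raises $\delta$ by one, so only the components of $\Phi$ and $\Psi$ should preserve $\delta$, while the components of the homotopies must lower $\delta$ by one, as required for degree $(-1,-1)$.
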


Following Theorem \ref{thm:invariancedaemi}, we write $\widetilde{\mathrm{PKC}}(L)$ for the plane knot complex associated to $L$, suppressing $N$. A filtered chain complex naturally induces a spectral sequence, and a filtered chain map induces a map between the spectral sequences. The surgery exact triangle (Theorem \ref{thm:sec}) implies that $\widetilde{\mathrm{PKC}}(L)\cong \widetilde{\mathrm{PFC}}(\Sigma(L),\zeta)$. One can show that $[\zeta]$ vanishes in this case. Applying the spectral sequence induced from the cube filtration, we have the following.

\begin{theorem}[\cite{daemi2015abelian}, Theorems 3 and 4]
\label{thm:maindaemi}
The homological filtration of $\widetilde{\mathrm{PKC}}(L)$ induces a spectral sequence $\{\widetilde{\mathrm{PKE}}_r(L)\}$ with coefficients in $\tilde{\Lambda}$. This spectral sequence $\{\widetilde{\mathrm{PKE}}_r(L)\}$ converges to $\widetilde{\mathrm{PFH}}(\Sigma(L))$. Moreover, $\widetilde{\mathrm{PKE}}_2(L)$ is chain homotopic to the reduced odd Khovanov homology of the mirror of $L$ with coefficients in $\tilde{\Lambda}$.
\end{theorem}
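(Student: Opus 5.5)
The statement to prove is Theorem \ref{thm:maindaemi}. I would follow the Kronheimer--Mrowka template for instanton Khovanov homology, with Daemi's plane Floer package in place of instantons.

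\textbf{Step 1: the homological filtration is a genuine filtration.} For a planar diagram $D$ with $n$ crossings, $\widetilde{\mathrm{PKC}}(L,D)=\bigoplus_{\m\in\{0,1\}^n}\widetilde{\mathrm{PFC}}(\Sigma(L_\m))$. Its differential has components $\tilde f_{G_\m^\n}$ only for $\m\ge\n$. By the lemma cited as \cite[Corollary 3.8]{daemi2015abelian}, $h(\n)\ge h(\m)$ whenever $\m\ge\n$, so $\deg_h$ is a filtration preserved by $d_p$. This gives a spectral sequence $\{\widetilde{\mathrm{PKE}}_r(L)\}$.

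\textbf{Step 2: convergence.} Apply Corollary \ref{cor:sec} with $\mathbf p=(2,\dots,2)$. Since $Y_{\mathbf p}=\Sigma(L)$, the cube complex is chain homotopy equivalent to $\widetilde{\mathrm{PFC}}(\Sigma(L),\zeta_{\mathbf p})$. The 1-manifold $\zeta_{\mathbf p}$ is the union of the cores of the surgery tori, i.e.\ the lifts of the crossing arcs. I would show that $[\zeta_{\mathbf p}]=0$ in $H_1(\Sigma(L);\Z/2\Z)$: each lifted arc bounds the lift of a disk in the complement of the branch set, so its class dies. Lemma \ref{lem:localtrivial} then identifies the homology with $\widetilde{\mathrm{PFH}}(\Sigma(L))$. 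The filtration is finite (length at most $n+1$), so the spectral sequence converges to this homology.

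\textbf{Step 3: identify the $E_1$ and $E_2$ pages.} The $E_0$ differential is the Morse differential on each vertex. Every $L_\m$ is an unlink, so by Proposition \ref{prop:unlinkisom},
\[
E_1\cong\bigoplus_\m\widetilde{\mathrm{PFH}}(\Sigma(L_\m))\cong\bigoplus_\m\Lambda^*(\widetilde H^0(L_\m)).
\]
This is exactly the reduced odd Khovanov chain group, using the identification $\widetilde H^0\cong$ the kernel of the augmentation $V\to\Z$. The $d_1$ differential consists only of edge maps $\tilde f_{G_\m^\n}$ with $|\m-\n|_1=1$. Each such map is a single-metric cobordism map for the branched double cover of a band move between unlinks, i.e.\ a merge or a split. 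By the computations after Proposition \ref{4.2}, merges act by $j_*$ and splits by $v\wedge-$. These agree with $\mathcal F$ up to sign; the energy term is trivial because all $\spinc$ structures here are torsion.

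The mirror appears because Daemi's cube maps go from $\m$ to $\n\le\m$, the reverse of the Khovanov direction, and the $0/1$ resolution conventions swap under this reversal. It remains to show that the resulting signed cube is isomorphic to the odd Khovanov cube. Each face of the cube anticommutes, since $d_p^2=0$ restricted to $E_1$. ORS showed that any sign assignment on the odd cube making faces anticommute gives an isomorphic complex, with type X/Y faces handled by the fact that $d_1^2=0$ is forced. So $E_2$ is the reduced odd Khovanov homology of the mirror, tensored with $\tilde\Lambda$.

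\textbf{Main obstacle.} I expect the sign bookkeeping in Step 3 to be hardest. Daemi's homology orientations must be matched with ORS's decorated-cobordism signs, including the faces with two splits/merges ("type X/Y" faces), where the odd theory allows commuting faces. I would first try to argue abstractly: any two sign assignments that make every square anticommute differ by a 1-cochain edge rescaling, and rescaling by such a cochain gives an isomorphism of complexes. Finally, invariance of the whole spectral sequence, and hence independence of the diagram, follows from Theorem \ref{thm:invariancedaemi}.
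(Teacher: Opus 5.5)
Your outline (filtration from Daemi's Corollary~3.8, Corollary~\ref{cor:sec} with Lemma~\ref{lem:localtrivial} for the abutment, and Proposition~\ref{prop:unlinkisom} plus the edge computations of Subsection~\ref{single} for the $E_2$ page) matches the paper's sketch, which otherwise defers to Daemi. However, two of the steps you argue yourself are not correct as stated.

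\textbf{The vanishing of $[\zeta_{\mathbf p}]$.} It is not true that each lifted crossing arc is zero in $H_1(\Sigma(L);\Z/2\Z)$. Write $\pi\colon\Sigma(L)\to S^3$ for the covering and $\tilde\alpha=\pi^{-1}(\alpha)$. Let $F_i$ be a surface bounded by a component $L_i$ and transverse to the rest of $L$. Then $\pi^{-1}(F_i)$ is a closed surface in $\Sigma(L)$. For any arc $\alpha$ with endpoints on $L$, $\tilde\alpha\cdot\pi^{-1}(F_i)\equiv\#(\partial\alpha\cap L_i)\pmod 2$: interior intersections lift in pairs, and each endpoint on $L_i$ gives one transverse point. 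So a crossing arc joining two different components always has nonzero class. For instance, in the standard Hopf diagram, $\Sigma=\mathbb{R}P^3$ and each lifted crossing arc meets the lifted disk (an $\mathbb{R}P^2$) an odd number of times. Only the sum vanishes, and for a global reason. Summed over all crossing arcs, the pairing with $\pi^{-1}(F_i)$ counts the crossings between $L_i$ and $L\setminus L_i$, which is even. These $|L|-1$ functionals are independent (test them on lifts of arcs joining $L_i$ to a fixed component), and $H^1(\Sigma(L);\Z/2\Z)$ has rank $|L|-1$. Hence $[\zeta_{\mathbf p}]=0$.

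\textbf{The signs in Step 3.} Consider the faces of ORS type X and Y (one circle with two interleaved chords). There both composites are a split followed by a merge back, $x\mapsto(a-a)\wedge x=0$. So $d_1^2=0$ gives no information about the plane Floer signs on those faces. The ratio of the Floer edge signs to an ORS sign assignment then need not be a $1$-cocycle there, and the argument ``they differ by a coboundary, so rescale the vertices'' breaks down. A priori the Floer signs need not even follow a single one of ORS's two conventions. What is missing is an argument that the signs from the decorations (which fix the homology orientations on splits), together with the orientations of the families $G_\m^\n$, form a genuine ORS sign assignment of a definite type. This is exactly the content the paper does not reprove and attributes to \cite[\S 2.1]{daemi2015abelian}.
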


The identification of $\widetilde{\mathrm{PKE}}_2(L)$ with the reduced odd Khovanov homology of the mirror (with coefficients in $\widetilde{\Lambda}$) can be described as follows. Fix a planar diagram with $n$ crossings for $L$. At each vertex $\m \in \{0,1\}^n$, the $E_2$ page of the spectral sequence is the homology of $\widetilde{\mathrm{PFC}}(\Sigma(L_\m))$. Since $L_\m$ is an unlink, Proposition \ref{prop:unlinkisom} allows us to identify it with the reduced odd Khovanov homology of $-L_\m$. This identifies the underlying $\widetilde{\Lambda}$-modules of the two chain complexes. By the computations provided in Subsection \ref{single}, the differentials also agree, at least up to sign. The signs on the edges of the surgery hypercube are determined by a choice of homology orientation and an orientation on $G_\m^\n$. We saw that the sign-assignments on births, deaths, and merges are independent of the choice of homology orientation. The only elementary cobordism which depends on the choice of homology orientations are splits. To fix this choice on \emph{both} reduced odd Khovanov and plane Floer homology, decorate the diagram for $L$ by placing arrows at each of its crossings. There are two choices at each crossing, and both induce decorations on band attachments.

\begin{figure}[ht]
\begin{subfigure}[t]{.5\linewidth}
\centering
\tikz[]{
    \node(A) at (3,0) {$
    \tikz[baseline={([yshift=-.5ex]current bounding box.center)}, rotate=180, scale=0.9]{
    \draw[knot] (0,0) -- (1,1);
    \draw[knot] (1,0) -- (.6,.4);
    \draw[knot] (.4,.6) -- (0,1);
    \draw[stealth-, red, thick] (0.15,0.5) -- (0.85, 0.5);
    } :
    $};
    \node(B) at (4.5,0) {$
    \tikz[rotate=90, scale=0.9]{
    \draw[knot] (0,1) to[out=-45, in=-135, looseness=1.25] node(X)[pos=0.5]{} (1,1);
    \draw[knot] (0,0) to[out=45, in=135, looseness=1.25] node(Y)[pos=0.5]{} (1,0);
    \draw[-stealth, red, thick] (X.center) -- (Y.center);
    }
    $};
    \node(C) at (8,0) {$
    \tikz[rotate=0, scale=0.9]{
    \draw[knot] (0,1) to[out=-45, in=-135, looseness=1.25] node(X)[pos=0.5]{} (1,1);
    \draw[knot] (0,0) to[out=45, in=135, looseness=1.25] node(Y)[pos=0.5]{} (1,0);
    \draw[-stealth, red, thick] (X.center) -- (Y.center);
    }
    $};
    \draw[->] (B) -- node[above, pos=0.5]{$
    \tikz[baseline={([yshift=-.5ex]current bounding box.center)}, scale=0.28]{
	\draw[knot, -] (0,0) .. controls (1.5,0) and (2,1) .. (.5,1); 
    \draw[knot, -] (4,0) .. controls (2.5,0) and (3,1) .. (4.5,1); 
	\fill[fill=white] (.5,.55)  rectangle  (4.5,2);
	\draw[dashed, knot, -] (0,0) .. controls (1.5,0) and (2,1) .. (.5,1);
	\draw[dashed, knot, -] (4,0) .. controls (2.5,0) and (3,1) .. (4.5,1); 
	\draw[knot, -] (0,0) -- (0,4);
	\draw[dashed, knot, -] (.5,1) -- (.5,5);
	\draw[knot, -] (.5,4) -- (.5,5) -- (4.5,5) -- (4.5,4) -- (4.5,1);
	\draw[knot, -] (1.375,.5) -- (1.375,2);
	\draw[knot, -] (3.125,.5) -- (3.125,2);
	\draw[knot, -] (1.375,2) .. controls (1.375,3) and (3.125,3) .. (3.125,2);
	\draw[knot, -] (0,2) -- (0,4) -- (4,4) -- (4,0);
    \draw[-stealth, red, thick] (2.75,3.35) -- (1.55,1.65);
}
    $} (C);
}
\subcaption{Resolution of a decorated crossing for Khovanov homology (obtain the other choice by reversing the red arrows).}
\label{subfig:OKHdecors}
\end{subfigure}\hfill%
\begin{subfigure}[t]{.5\linewidth}
\centering
\tikz[]{
    \node(A) at (3,0) {$
    \tikz[baseline={([yshift=-.5ex]current bounding box.center)}, rotate=-90, scale=0.9]{
    \draw[knot] (0,0) -- (1,1);
    \draw[knot] (1,0) -- (.6,.4);
    \draw[knot] (.4,.6) -- (0,1);
    \draw[stealth-, red, thick] (0.15,0.5) -- (0.85, 0.5);
    } :
    $};
    \node(B) at (4.5,0) {$
    \tikz[rotate=90, scale=0.9]{
    \draw[knot] (0,1) to[out=-45, in=-135, looseness=1.25] node(X)[pos=0.5]{} (1,1);
    \draw[knot] (0,0) to[out=45, in=135, looseness=1.25] node(Y)[pos=0.5]{} (1,0);
    \draw[-stealth, red, thick] (X.center) -- (Y.center);
    }
    $};
    \node(C) at (8,0) {$
    \tikz[rotate=180, scale=0.9]{
    \draw[knot] (0,1) to[out=-45, in=-135, looseness=1.25] node(X)[pos=0.5]{} (1,1);
    \draw[knot] (0,0) to[out=45, in=135, looseness=1.25] node(Y)[pos=0.5]{} (1,0);
    \draw[-stealth, red, thick] (X.center) -- (Y.center);
    }
    $};
    \draw[->] (B) -- node[above, pos=0.5]{$
    \tikz[baseline={([yshift=-.5ex]current bounding box.center)}, scale=0.28]{
	\draw[knot, -] (0,0) .. controls (1.5,0) and (2,1) .. (.5,1); 
    \draw[knot, -] (4,0) .. controls (2.5,0) and (3,1) .. (4.5,1); 
	\fill[fill=white] (.5,.55)  rectangle  (4.5,2);
	\draw[dashed, knot, -] (0,0) .. controls (1.5,0) and (2,1) .. (.5,1);
	\draw[dashed, knot, -] (4,0) .. controls (2.5,0) and (3,1) .. (4.5,1); 
	\draw[knot, -] (0,0) -- (0,4);
	\draw[dashed, knot, -] (.5,1) -- (.5,5);
	\draw[knot, -] (.5,4) -- (.5,5) -- (4.5,5) -- (4.5,4) -- (4.5,1);
	\draw[knot, -] (1.375,.5) -- (1.375,2);
	\draw[knot, -] (3.125,.5) -- (3.125,2);
	\draw[knot, -] (1.375,2) .. controls (1.375,3) and (3.125,3) .. (3.125,2);
	\draw[knot, -] (0,2) -- (0,4) -- (4,4) -- (4,0);
    \draw[-stealth, red, thick] (1,2.8) -- (3.5,2.8);
}
    $} (C);
}
\subcaption{Resolution of a decorated crossing for $\widetilde{\mathrm{PKE}}_2$ obtained as the mirror of \ref{subfig:OKHdecors}}
\label{subfig:PFHdecors}
\end{subfigure}
\label{fig:decoratedresolutions}
\end{figure}
\noindent For plane Floer homology, the decorations determine the choice of homology orientation, and for reduced odd Khovanov homology, they determine the choice of split map (if the cobordism involved is a split). To ensure that the differentials are well-defined, one must assign extra signs  determined by an orientation of $G_\m^\n$ to each edge map; see \cite[\S 2.1]{daemi2015abelian}. This should be thought of as the edge assignment required in the definition of odd Khovanov homology; see \cite[Section 2]{MR3071132}.

\section{Reidemeister moves}
\label{s:rmoves}

In this section, we prove Proposition \ref{prop:step1}. In the case of Morse moves, the proposition follows from the work presented in Subsection \ref{single}. For Reidemeister moves, the argument is more complicated, due to the fact that the invariance of the filtered chain homotopy type of $\widetilde{\mathrm{PKC}}(L)$ is understood in terms of \emph{pseudo-diagram} moves, rather than Reidemeister moves; see Subsection \ref{ss:pseudodiagrams}. Therefore, to verify Proposition \ref{prop:step1}, we must compute the $E_2$ page of the plane Floer cobordism maps associated to Reidemeister moves from pseudo-diagram moves. In particular, we must ensure that these maps align with a choice of chain maps lifting odd Khovanov homology to a projective functor \cite{migdail2024functoriality}, up to sign.

\subsection{Reidemeister I moves}
\label{ss:r1}

Since the spectral sequence is induced by the $\deg_{h,N}$ filtration, any map which induces a nontrivial map on the $E_2$ page must preserve the $\deg_{h,N}$ grading. Therefore, we start by performing a computation to determine which maps are potentially nontrivial on the $E_2$ page due to these grading constraints. The maps which remain can be computed explicitly. Recall that, for a pseudo-diagram $N$ of a link $L$,
\[
\deg_{h,N}|_{\widetilde{\mathrm{PFC}}(\Sigma(L_\m))}=h_N(\m)+n_-(N).
\]
Assume that $N'$ is a pseudo-diagram obtained from $N$ by removing a crossing. From Proposition \ref{prop:homdelta}, we have that $\deg_{h,N'}-\deg_{h,N}=1$.

A Reidemeister I move can be described in terms of pseudo-diagram moves as the composition of pseudo-isotopies with adding/removing a single crossing. For example, adding a negative Reidemeister I move can be described as follows.
\[
\tikz[]{
	\node(A) at (0,0) {$
	\tikz[baseline={([yshift=-.5ex]current bounding box.center)}, scale=1]
        {
            \draw[dotted] (0.5,0.5) circle(0.707);
            \draw[knot] (0,0) to[out=60, in=-90] (0.2, 0.5) to[out=90, in=-60] (0,1);
        }
	$};
	\node(B) at (5,0) {$
	\tikz[baseline={([yshift=-.5ex]current bounding box.center)}, scale=1]
        {
            \draw[dotted] (.5,.5) circle(0.707);
            \draw[knot] (0,0) to[out=60, in =90] (1, 0.5);
            \draw[knot, overcross] (1, 0.5) to[out=-90, in=-60] (0,1);
            \draw[knot] (0.01, 0.99) -- (0,1);
        }
	$};
	\node(C) at (10,0) {$
	\tikz[baseline={([yshift=-.5ex]current bounding box.center)}, scale=1]
        {
            \draw[dotted] (.5,.5) circle(0.707);
            \draw[knot] (0,0) to[out=60, in =90] (1, 0.5);
            \draw[knot, overcross] (1, 0.5) to[out=-90, in=-60] (0,1);
            \draw[knot] (0.01, 0.99) -- (0,1);
            \fill[red, fill opacity=0.5] (0.5, 0.5) circle(0.15);
        }
	$};
	\draw[->] (A) -- node[above]{pseudo-isotopy} (B);
	\draw[->] (B) -- node[above]{adding a crossing} (C);
}
\]
The associated chain homotopy between plane Floer complexes is pictured in Figure \ref{fig:RIaddneg}. Recall that the chain map is given by $\Phi=(g_{\m(2)}^{\n(2)},n_{\m(2)}^{\n(2)})$ where $g_{\m(2)}^{\n(2)}=\{\tilde{f}_{G_{\bfk(2)}^{\bfk'(1)}}\}_{\m \geq \bfk \geq \bfk' \geq \n}$ and $n_{\m(2)}^{\n(2)}=\{\tilde{f}_{N_{\bfk(2)}^{\bfk'(0)}}\}_{\m \geq \bfk \geq \bfk' \geq \n}$. 
\begin{figure}[h]
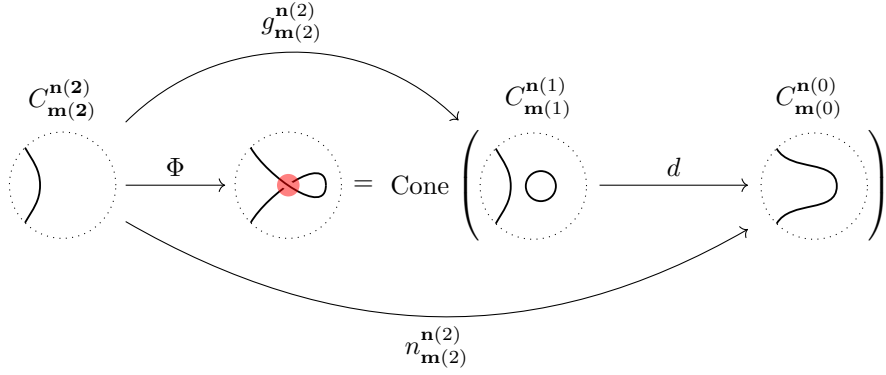

\centering
\tikz[baseline={([yshift=-.5ex]current bounding box.center)}]
{
    \node(A) at (0,0) {$
        \tikz[baseline={([yshift=-.5ex]current bounding box.center)}, scale=1]
        {
            \draw[dotted] (0.5,0.5) circle(0.707);
            \draw[knot] (0,0) to[out=60, in=-90] (0.2, 0.5) to[out=90, in=-60] (0,1);
        }
        $};
    \node(B) at (3,0) {$
        \tikz[baseline={([yshift=-.5ex]current bounding box.center)}, scale=1]
        {
            \draw[dotted] (.5,.5) circle(0.707);
            \draw[knot] (0,0) to[out=60, in =90] (1, 0.5);
            \draw[knot, overcross] (1, 0.5) to[out=-90, in=-60] (0,1);
            \draw[knot] (0.01, 0.99) -- (0,1);
            \fill[red, fill opacity=0.5] (0.5, 0.5) circle(0.15);
        }
        $};
    \node at (4,0) {$=$};
    \node at (4.75,0) {$\mathrm{Cone}$};
    \node(C) at (6.125,0) {$
        \left(
        \tikz[baseline={([yshift=-.5ex]current bounding box.center)}, scale=1]
        {
            \draw[dotted] (0.5,0.5) circle(0.707);
            \draw[knot] (0,0) to[out=60, in=-90] (0.2, 0.5) to[out=90, in=-60] (0,1);
            \draw[knot] (0.6, 0.5) circle(0.2);
        }
        \right.
    $};
    \node(D) at (10.125,0) {$
        \left.
        \tikz[baseline={([yshift=-.5ex]current bounding box.center)}, scale=1]
        {
            \draw[dotted] (.5,.5) circle(0.707);
            \draw[knot] (0,0) to[out=60, in=-90] (.8, .5) to[out=90, in=-60] (0,1);
        }
        \right)
    $};
    \node[above=-1mm] at (A.north) {$C_{\mathbf{m(2)}}^{\mathbf{n(2)}}$};
    \node[above=-1mm, xshift=2mm] at (C.north) {$C_{\mathbf{m}(1)}^{\mathbf{n}(1)}$};
    \node[above=-1mm, xshift=-2mm] at (D.north) {$C_{\mathbf{m}(0)}^{\mathbf{n}(0)}$};
    \draw[->] (A) to node[pos=0.5, above]{$\Phi$} (B);  
    \draw[->] (C) to node[pos=0.5, above]{$d$} (D);
    \draw[->] (A) to[out=45, in=135] node[pos=0.5, above]{$g_{\mathbf{m}(2)}^{\mathbf{n}(2)}$} (C);
    \draw[->] (A) to[out=-30, in=-150] node[pos=0.5, below]{$n_{\mathbf{m}(2)}^{\mathbf{n}(2)}$} (D);
}
\caption{Adding a crossing for the negative Reidemeister I move}
\label{fig:RIaddneg}
\end{figure}

We first calculate the grading change for $\tilde{f}_{N_{\mathbf{k}(2)}^{\mathbf{k'}(0)}}$; let $\mathbf{k} \geq \mathbf{k'}$. We compute
\begin{equation}
\label{eq:RIdegree}
\begin{aligned}
\deg_{h,N}(\mathbf{k}(2))-\deg_{h,N'}(\mathbf{k'}(0))&=\deg_{h,N'}(\bfk(2))+1-\deg_{h,N'}(\bfk'(0))\\
&=h_{N'}(\bfk(2))+1-h_{N'}(\bfk'(0))\\
&=-|\bfk(2)|_1+\frac{1}{2}\sigma(\bfk(2),0)+1-(-|\bfk'(0)|_1+\frac{1}{2}\sigma(\bfk'(0),0))\\
&=-|\bfk(2)-\bfk'(0)|_1+\frac{1}{2}\sigma(\bfk(2),\bfk'(0))+1\\
&=-|\bfk-\bfk'|_1-2+\frac{1}{2}\left(\sigma(\bfk(2),\bfk'(2))+\sigma(\bfk'(2),\bfk'(0))\right)+1\\
&=-|\mathbf{k}-\mathbf{k'}|_1-1+\frac{1}{2}(S_{\mathbf{k}(2)}^{\mathbf{k'}(2)}\cdot S_{\mathbf{k}(2)}^{\mathbf{k'}(2)}+S_{\mathbf{k'}(2)}^{\mathbf{k'}(0)}\cdot S_{\mathbf{k'}(2)}^{\mathbf{k'}(0)}).
\end{aligned}
\end{equation}
Since $N$ is a planar diagram, we have that $S_{\bfk(2)}^{\bfk'(2)} \cdot S_{\bfk(2)}^{\bfk'(2)}=0$. One can also verify that, since the crossing is negative, $S_{\bfk'(2)}^{\bfk'(0)} \cdot S_{\bfk'(2)}^{\bfk'(0)}=0$. Hence, $\deg_{h,N}(\bfk(2))-\deg_{h,N'}(\bfk'(0))<0$, so $\tilde{f}_{N_{\bfk(2)}^{\bfk'(0)}}$ cannot be grading preserving, and therefore does not survive to the $E_2$ page.

The grading change for $\tilde{f}_{G_{\bfk(2)}^{\bfk'(1)}}$ is given by

\begin{align*}
\deg_{h,N}(\bfk(2))-\deg_{h,N'}(\bfk'(1))&=\deg_{h,N'}(\bfk(2))+1-\deg_{h,N'}(\bfk'(1))\\
&=h_{N'}(\bfk(2))+1-h_{N'}(\bfk'(1))\\
&=-|\bfk(2)|_1+\frac{1}{2}\sigma(\bfk(2),0)+1-(-|\bfk'(1)|_1+\frac{1}{2}\sigma(\bfk'(1),0))\\
&=-|\bfk(2)-\bfk'(1)|_1+\frac{1}{2}\sigma(\bfk(2),\bfk'(1))+1\\
&=-|\bfk-\bfk'|_1-1+0+1\\
&=-|\bfk-\bfk'|_1.
\end{align*}
Hence, $\tilde{f}_{G_{\bfk(2)}^{\bfk'(1)}}$ is grading preserving only when $\bfk=\bfk'$. Denote the corresponding map by $\mathcal{E}_2(\tilde{f}_{G_{\bfk(2)}^{\bfk(1)}})$. Since $\bfk=\bfk'$, $\tilde{f}_{G_{\bfk(2)}^{\bfk'(1)}}$ is defined locally as the cobordism map from the 2-resolution to the 1-resolution of this negative crossing. The surface cobordism, described as a band-attachment in Subsection \ref{ss:topology}, is a split. The corresponding map is $F_\mathcal{P}$, which we computed explicitly in Subsection \ref{single}. The reader may verify that this map is exactly the homotopy equivalence for the positive (taking mirrors) Reidemeister I move on the odd Khovanov homology of $L$, up to sign.

The above argument can be used as an outline for all Reidemeister I moves. That is, we first show that one of $\mathcal{E}_2(g)$ or $\mathcal{E}_2(n)$ vanishes due to grading considerations, and then we compute $\mathcal{E}_2$ of the surviving map, up to sign. The following tables summarize the cases for Reidemeister I moves.

\begin{table}[h!]
\begin{minipage}{.45\linewidth}
\centering
\begin{tabular}{|c|c|c|}
\hline
 & $\mathcal{E}_2(g)$ & $\mathcal{E}_2(n)$ \\ \hline
 $\mathsf{RI}^+$ & 0 & 
$
\tikz[baseline={([yshift=-.5ex]current bounding box.center)}, scale=.35]{
    \draw[white] (-0.5,-1) rectangle (3.5,3);
    \draw[knot, -] (0,2) .. controls (0,1.75) and (1,1.75) .. (1,2);
    \draw[knot, -] (0,2) .. controls (0,2.25) and (1,2.25) .. (1,2);
    \draw[knot, -] (2,2) .. controls (2,1.75) and (3,1.75) .. (3,2);
    \draw[knot, -] (2,2) .. controls (2,2.25) and (3,2.25) .. (3,2);
    \draw[knot, -] (0,0) .. controls (0,-.25) and (1,-.25) .. (1,0);
    \draw[knot, -] (1,2) -- (1,0);
    \draw[knot, -] (2,2) .. controls (2,1) and (3,1) .. (3,2);
    \filldraw[white] (-0.1,-0.2) rectangle (0.46,2);
    \filldraw[white] (-0.1,2) rectangle (0.21, 2.2);
    \draw[knot] (0.5,-0.2) -- (0.5, 1.8);
    \draw[knot] (0.25,0.15) -- (0.25,2.15);
    \draw[knot] (0.5,0.2) to[out=180,in=15] (0.25,0.17);
}
$
\\ \hline
$\mathsf{RI}^-$ & 
$
\tikz[baseline={([yshift=-.5ex]current bounding box.center)}, scale=.35]{
    \draw[white] (-0.5,-1) rectangle (3.5,3);
    \draw[knot, -] (0,2) .. controls (0,1.75) and (1,1.75) .. (1,2);
    \draw[knot, -] (0,2) .. controls (0,2.25) and (1,2.25) .. (1,2);
    \draw[knot, -] (2,2) .. controls (2,1.75) and (3,1.75) .. (3,2);
    \draw[knot, -] (2,2) .. controls (2,2.25) and (3,2.25) .. (3,2);
    \draw[knot, -] (0,0) .. controls (0,-.25) and (1,-.25) .. (1,0);
    \draw[knot, -] (1,0) .. controls (1,0.65) and (3,0.65) .. (3,2);
    \draw[knot, -] (0,0) -- (0,2);
    \draw[knot, -] (1,2) .. controls (1,1.25) and (2,1.25) .. (2,2);
    \filldraw[white] (-0.1,-0.2) rectangle (0.46,2);
    \filldraw[white] (-0.1,2) rectangle (0.21, 2.2);
    \draw[knot] (0.5,-0.2) -- (0.5, 1.8);
    \draw[knot] (0.25,0.15) -- (0.25,2.15);
    \draw[knot] (0.5,0.2) to[out=180,in=15] (0.25,0.17);
}
$
& 0  \\ \hline
\end{tabular}
\caption{Adding a crossing}
\label{tab:RIadd}
\end{minipage}
\begin{minipage}{.45\linewidth}
\centering
\begin{tabular}{|c|c|c|}
\hline
 & $\mathcal{E}_2(g^{-1})$ & $\mathcal{E}_2(n^{-1})$ \\ \hline
 $\mathsf{RI}^+$ & 
$
\tikz[baseline={([yshift=-.5ex]current bounding box.center)}, scale=.35]{
    \draw[white] (-0.5,-1) rectangle (3.5,3);
    \draw[knot, -] (0,2) .. controls (0,1.75) and (1,1.75) .. (1,2);
    \draw[knot, -] (0,2) .. controls (0,2.25) and (1,2.25) .. (1,2);
    \draw[knot, -] (2,0) .. controls (2,-0.25) and (3,-0.25) .. (3,0);
    \draw[knot, -, dashed] (2,0) .. controls (2,0.25) and (3,0.25) .. (3,0);
    \draw[knot, -] (0,0) .. controls (0,-.25) and (1,-.25) .. (1,0);
    %
    \draw[knot, -] (0,0) -- (0,2);
    \draw[knot, -] (1,0).. controls (1, 0.75) and (2, 0.75) .. (2,0);
    \draw[knot, -] (1,2) .. controls (1,1.35) and (3,1.35) .. (3,0);
    \filldraw[white] (-0.1,-0.2) rectangle (0.46,2);
    \filldraw[white] (-0.1,2) rectangle (0.21, 2.2);
    \draw[knot] (0.5,-0.2) -- (0.5, 1.8);
    \draw[knot] (0.25,0.15) -- (0.25,2.15);
    \draw[knot] (0.5,0.2) to[out=180,in=15] (0.25,0.17);
}
$
 & 0 \\ \hline
 $\mathsf{RI}^-$ & 0 &
 $
\tikz[baseline={([yshift=-.5ex]current bounding box.center)}, scale=.35]{
    \draw[white] (-0.5,-1) rectangle (3.5,3);
    \draw[knot, -] (0,2) .. controls (0,1.75) and (1,1.75) .. (1,2);
    \draw[knot, -] (0,2) .. controls (0,2.25) and (1,2.25) .. (1,2);
    \draw[knot, -] (2,0) .. controls (2,-0.25) and (3,-0.25) .. (3,0);
    \draw[knot, -, dashed] (2,0) .. controls (2,0.25) and (3,0.25) .. (3,0);
    \draw[knot, -] (0,0) .. controls (0,-.25) and (1,-.25) .. (1,0);
    \draw[knot, -] (1,2) -- (1,0);
    \draw[knot, -] (2,0) .. controls (2,1) and (3,1) .. (3,0);
    \filldraw[white] (-0.1,-0.2) rectangle (0.46,2);
    \filldraw[white] (-0.1,2) rectangle (0.21, 2.2);
    \draw[knot] (0.5,-0.2) -- (0.5, 1.8);
    \draw[knot] (0.25,0.15) -- (0.25,2.15);
    \draw[knot] (0.5,0.2) to[out=180,in=15] (0.25,0.17);
}
$
   \\ \hline
\end{tabular}
\caption{Removing a crossing}
\label{tab:RIrem}
\end{minipage}
\end{table}

The reader can verify that these maps are, up to sign, exactly the chain homotopy equivalences presented in the definition of the odd Khovanov 2-functor; see, for example, diagrams (38) and (39) in \cite{migdail2024functoriality}. Note that the parity of the crossings in each Reidemeister I move is opposite Migdail and Wehrli's; this is because the spectral sequences requires that we take the mirror image of the link on the $E_\infty$-page. The remaining vanishing cases are left to the reader (they are exactly as in Equation (\ref{eq:RIdegree})). We conclude by calculating the non-vanishing entries of Tables \ref{tab:RIadd} and \ref{tab:RIrem}.

Consider the case of adding a positive Reidemeister I crossing (see the first row of Table \ref{tab:RIadd}). By grading considerations, the only map which survives to the $E_2$ page is $\tilde{f}_{N_{\bfk(2)}^{\bfk(0)}}$. The cobordism from $\Sigma(L)_{\bfk(2)}$ to $\Sigma(L)_{\bfk(0)}$ is described in Case 2 of Subsection \ref{ss:surgeryexacttriangle}. Explicitly, on the surface level, we first attach a band to $L_{\bfk(2)}$, as described in Section \ref{ss:topology}.  One can verify that the branched double cover is given by a 2-handle attachment along a $-1$ framed unknot $\mathcal{U}$. Then, glue a $-1$ framed 2-handle along the dual of $\mathcal{U}$. The composition of these cobordisms is the 4-manifold $U = D^2 \times S^2 \# \overline{\mathbb{C}P^2} \setminus D^4$ studied in Subsection \ref{family}. One can also verify that the family of metrics assigned to it in Subsection \ref{ss:surgeryexacttriangle} is $\mathbb{U}$. We showed that the induced map on plane Floer homology was the birth map of Subsection \ref{single}, up to the unit $c$ of $\widetilde{\Lambda}$.

Now we consider the cases where we are removing a Reidemeister I crossing. The homotopy equivalence on plane Floer homology is given by $\Psi=\begin{pmatrix}
	n_{\m(1)}^{\n(1)}\\
	g_{\m(0)}^{\n(0)}
\end{pmatrix}$; consider Figure \ref{fig:RIrempos}. Removing a positive Reidemeister I crossing (see the first row of Table \ref{tab:RIrem}) is the other case when $g$ is the surviving map.
\begin{figure}[ht]
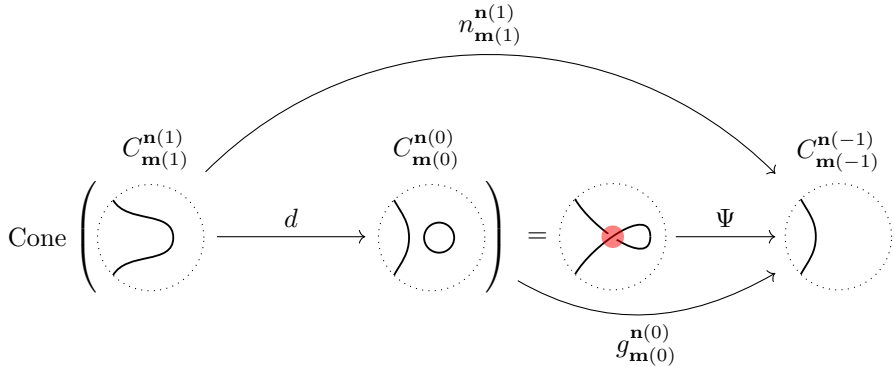

\centering
\tikz[baseline={([yshift=-.5ex]current bounding box.center)}]
{
    \node(A) at (10.65,0) {$
        \tikz[baseline={([yshift=-.5ex]current bounding box.center)}, scale=1]
        {
            \draw[dotted] (0.5,0.5) circle(0.707);
            \draw[knot] (0,0) to[out=60, in=-90] (0.2, 0.5) to[out=90, in=-60] (0,1);
        }
        $};
    \node(B) at (7.65,0) {$
        \tikz[baseline={([yshift=-.5ex]current bounding box.center)}, scale=1]
        {
            \draw[dotted] (.5,.5) circle(0.707);
            \draw[knot] (1, 0.5) to[out=-90, in=-60] (0,1);
            \draw[knot, overcross] (0,0) to[out=60, in =90] (1, 0.5);
            \draw[knot] (0.01, 0.99) -- (0,1);
            \fill[red, fill opacity=0.5] (0.5, 0.5) circle(0.15);
        }
        $};
    \node at (6.65,0) {$=$};
    \node at (0,0) {$\mathrm{Cone}$};
    \node(C) at (1.375,0) {$
        \left(
        \tikz[baseline={([yshift=-.5ex]current bounding box.center)}, scale=1]
        {
            \draw[dotted] (.5,.5) circle(0.707);
            \draw[knot] (0,0) to[out=60, in=-90] (.8, .5) to[out=90, in=-60] (0,1);
        }
        \right.
    $};
    \node(D) at (5.375,0) {$
        \left.
        \tikz[baseline={([yshift=-.5ex]current bounding box.center)}, scale=1]
        {
            \draw[dotted] (0.5,0.5) circle(0.707);
            \draw[knot] (0,0) to[out=60, in=-90] (0.2, 0.5) to[out=90, in=-60] (0,1);
            \draw[knot] (0.6, 0.5) circle(0.2);
        }
        \right)
    $};
    \node[above=-1mm] at (A.north) {$C_{\mathbf{m}(-1)}^{\mathbf{n}(-1)}$};
    \node[above=-1mm, xshift=2mm] at (C.north) {$C_{\mathbf{m}(1)}^{\mathbf{n}(1)}$};
    \node[above=-1mm, xshift=-2mm] at (D.north) {$C_{\mathbf{m}(0)}^{\mathbf{n}(0)}$};
    \draw[<-] (A) to node[pos=0.5, above]{$\Psi$} (B);  
    \draw[->] (C) to node[pos=0.5, above]{$d$} (D);
    \draw[<-] (A) to[out=135, in=45] node[pos=0.5, above]{$n_{\m(1)}^{\n(1)}$} (C);
    \draw[<-] (A) to[out=-150, in=-30] node[pos=0.5, below]{$g_{\m(0)}^{\n(0)}$} (D);
}
\caption{Removing a crossing for the positive Reidemeister I move}
\label{fig:RIrempos}
\end{figure}
It is straight forward to check that the band attachment inducing the surface cobordism from $L_{\bfk(0)}$ to $L_{\bfk(-1)}$ is a merge. We computed the corresponding map $F_{\overline{\mathcal{P}}}$ on plane Floer homology in Subsection \ref{single}.

Finally, we consider the case of removing a negative Reidemeister I crossing (see the second row of Table \ref{tab:RIrem}). By grading considerations, the only map which survives to the $E_2$ page is $\widetilde{f}_{N_{\bfk(1)}^{\bfk(-1)}}$. As before, we see that the cobordism is the 4-manifold $U$ from Subsection \ref{family}, this time turned around.  The induced map on plane Floer homology was the death map of Subsection \ref{single}, up to unit of $\widetilde{\Lambda}$.

\subsection{Reidemeister II moves}
\label{ss:r2}
In terms of pseudo-diagram moves, a Reidemeister II move can be described as consecutively adding or removing two crossings. The local picture for adding and removing crossings is given in Figures \ref{fig:RII+l} and \ref{fig:RII-l}.

\begin{figure}[h]
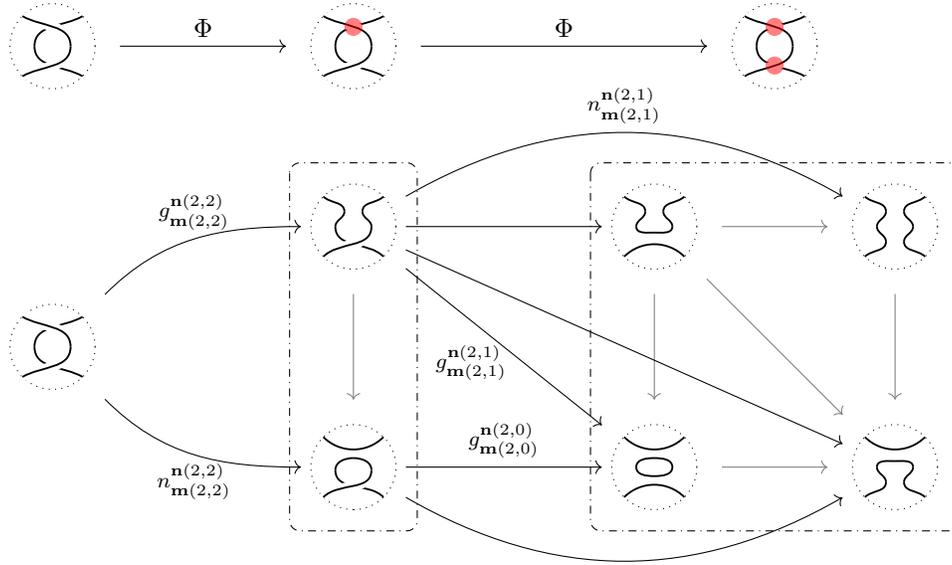

	\centering
	\tikz[baseline={([yshift=-.5ex]current bounding box.center)}, scale=0.8]
	{
		\node(22) at (0,0) {$
			\tikz[baseline={([yshift=-.5ex]current bounding box.center)}, scale=0.8]
			{
				\draw[dotted] (.5,.5) circle(0.707);
				\draw[knot] (1,0) to[out=150, in=-90] (0.2, 0.5);
				\draw[knot] (0.2, 0.5) to[out=90, in=210] (1,1);
				\draw[knot, overcross] (0,0) to[out=30, in=-90] (0.8, 0.5);
				\draw[knot, overcross] (0.8, 0.5) to[out=90, in=-30] (0,1);
				\draw[knot] (0.8, 0.499) -- (0.8, 0.501);
			}
			$};
		\node(12) at (5,2) {$
			\tikz[baseline={([yshift=-.5ex]current bounding box.center)}, scale=0.8]
			{
				\draw[dotted] (0.5, 0.5) circle (0.707);
				\draw[knot] (0,1) to[out=-45, in=90] (0.35, 0.75);
				\draw[knot] (0.35, 0.75) to[out=-90, in=90] (0.2, 0.5);
				\draw[knot] (0.2, 0.5) to[out=-90, in=135] (1,0);
				\draw[knot, overcross] (0.8, 0.5) to[out=-90, in=45] (0,0);
				\draw[knot] (0.8, 0.5) to[out=90, in=-90] (0.65, 0.75);
				\draw[knot] (0.65, 0.75) to[out=90, in=225] (1,1);
			}
			$};
		\node(02) at (5,-2) {$
			\tikz[baseline={([yshift=-.5ex]current bounding box.center)}, scale=0.8, rotate=-90]
			{
				\draw[dotted] (.5,.5) circle(0.707);
				\draw[knot] (0,0) to[out=45, in=-45] (0,1);
				\draw[knot] (0.55, 0.8) to[out=180, in=90] (0.35, 0.5);
				\draw[knot] (0.35, 0.5) to[out=-90, in=180] (0.55, 0.2);
				\draw[knot] (0.55, 0.2) to[out=0, in=225] (1, 1);
				\draw[knot, overcross] (1, 0) to[out=135, in=0] (0.55, 0.8);
			}
			$};
		\node(11) at (10,2) {$
			\tikz[baseline={([yshift=-.5ex]current bounding box.center)}, scale=0.8]
			{
				\draw[dotted] (0.5, 0.5) circle (0.707);
				\draw[knot] (0,0) to[out=45, in=135] (1,0);
				\draw[knot] (0,1) to[out=-45, in=90] (0.35, 0.75);
				\draw[knot] (0.35, 0.75) to[out=-90, in=90] (0.2, 0.5);
				\draw[knot] (0.2, 0.5) to[out=-90, in=180] (0.5, 0.4);
				\draw[knot] (0.5, 0.4) to[out=0, in=-90] (0.8, 0.5);
				\draw[knot] (0.8, 0.5) to[out=90, in=-90] (0.65, 0.75);
				\draw[knot] (0.65, 0.75) to[out=90, in=225] (1,1);
			}
			$};
		\node(10) at (14,2) {$
			\tikz[baseline={([yshift=-.5ex]current bounding box.center)}, scale=0.8]
			{
				\draw[dotted] (0.5, 0.5) circle (0.707);
				\draw[knot] (0,1) to[out=-45, in=90] (0.35, 0.75);
				\draw[knot] (0.35, 0.75) to[out=-90, in=90] (0.2, 0.5);
				\draw[knot] (0.2, 0.5) to[out=-90, in=90] (0.35, 0.25);
				\draw[knot] (0.35, 0.25) to[out=-90, in=45] (0,0);
				\draw[knot] (1,1) to[out=225, in=90] (0.65, 0.75);
				\draw[knot] (0.65, 0.75) to[out=-90, in=90] (0.8, 0.5);
				\draw[knot] (0.8, 0.5) to[out=-90, in=90] (0.65, 0.25);
				\draw[knot] (0.65, 0.25) to[out=-90, in=135] (1,0);
			}
			$};
		\node(01) at (10,-2) {$
			\tikz[baseline={([yshift=-.5ex]current bounding box.center)}, scale=0.8]
			{
				\draw[dotted] (0.5, 0.5) circle (0.707);
				\draw[knot] (0,0) to[out=45, in=135] (1,0);
				\draw[knot] (0,1) to[out=-45, in=-135] (1,1);
				\draw[knot] (0.5, 0.65) to[out=0, in=90] (0.8, 0.5) to[out=-90, in=0] (0.5, 0.35) to[out=180, in=-90] (0.2, 0.5) to[out=90, in=180] (0.5, 0.65);
			}
			$};
		\node(00) at (14,-2) {$
			\tikz[baseline={([yshift=-.5ex]current bounding box.center)}, scale=0.8]
			{
				\draw[dotted] (0.5, 0.5) circle (0.707);
				\draw[knot] (0,1) to[out=-45, in=-135] (1,1);
				\draw[knot] (0,0) to[out=45, in=-90] (0.35, 0.25);
				\draw[knot] (0.35, 0.25) to[out=90, in=-90] (0.2, 0.5);
				\draw[knot] (0.2, 0.5) to[out=90, in=180] (0.5, 0.6);
				\draw[knot] (0.5, 0.6) to[out=0, in=90] (0.8, 0.5);
				\draw[knot] (0.8, 0.5) to[out=-90, in=90] (0.65, 0.25);
				\draw[knot] (0.65, 0.25) to[out=-90, in=135] (1,0);
			}
			$};
		\node(1) at (0,5) {$
			\tikz[baseline={([yshift=-.5ex]current bounding box.center)}, scale=0.8]
			{
				\draw[dotted] (.5,.5) circle(0.707);
				\draw[knot] (1,0) to[out=150, in=-90] (0.2, 0.5);
				\draw[knot] (0.2, 0.5) to[out=90, in=210] (1,1);
				\draw[knot, overcross] (0,0) to[out=30, in=-90] (0.8, 0.5);
				\draw[knot, overcross] (0.8, 0.5) to[out=90, in=-30] (0,1);
				\draw[knot] (0.8, 0.499) -- (0.8, 0.501);	
			}
			$};
		\draw[dashdotted, rounded corners] (10-1.5*0.707,2+1.5*0.707) rectangle (14+1.5*0.707,-2-1.5*0.707);
		\node(2) at (5,5) {$
			\tikz[baseline={([yshift=-.5ex]current bounding box.center)}, scale=0.8]
			{
				\draw[dotted] (.5,.5) circle(0.707);
				\draw[knot] (1,0) to[out=150, in=-90] (0.2, 0.5);
				\draw[knot] (0.2, 0.5) to[out=90, in=210] (1,1);
				\draw[knot, overcross] (0,0) to[out=30, in=-90] (0.8, 0.5);
				\draw[knot, overcross] (0.8, 0.5) to[out=90, in=-30] (0,1);
				\draw[knot] (0.8, 0.499) -- (0.8, 0.501);
				\fill[red, fill opacity=0.5] (0.5, 0.8215) circle(0.15);
			}
			$};
		\draw[dashdotted, rounded corners] (5-1.5*0.707,2+1.5*0.707) rectangle (5+1.5*0.707,-2-1.5*0.707);
		\node(3) at (12,5) {$
			\tikz[baseline={([yshift=-.5ex]current bounding box.center)}, scale=0.8]
			{
				\draw[dotted] (.5,.5) circle(0.707);
				\draw[knot] (1,0) to[out=150, in=-90] (0.2, 0.5);
				\draw[knot] (0.2, 0.5) to[out=90, in=210] (1,1);
				\draw[knot, overcross] (0,0) to[out=30, in=-90] (0.8, 0.5);
				\draw[knot, overcross] (0.8, 0.5) to[out=90, in=-30] (0,1);
				\draw[knot] (0.8, 0.499) -- (0.8, 0.501);
				\fill[red, fill opacity=0.5] (0.5, 0.8215) circle(0.15);
				\fill[red, fill opacity=0.5] (0.5, 0.1785) circle(0.15);
			}
			$};
		\draw[->, shorten >=2mm, shorten <=2mm] (1) --node[above]{$\Phi$} (2);
		\draw[->, shorten >=2mm, shorten <=2mm] (2) --node[above]{$\Phi$} (3);
		\draw[->, shorten >=2mm, shorten <=2mm, gray] (12) -- (02);
		\draw[->, shorten >=2mm, shorten <=2mm, gray] (11) -- (10);
		\draw[->, shorten >=2mm, shorten <=2mm, gray] (11) -- (01);
		\draw[->, shorten >=2mm, shorten <=2mm, gray] (10) -- (00);
		\draw[->, shorten >=2mm, shorten <=2mm, gray] (01) -- (00);
		\draw[->, gray] (11) -- (00);
		\draw[->] (22) to[out=45, in=180] node[above]{\footnotesize$g_{\m(2,2)}^{\n(2,2)}$} (12);
		\draw[->] (22) to[out=-45, in=180] node[below]{\footnotesize$n_{\m(2,2)}^{\n(2,2)}$} (02);
		\draw[->] (12) -- (11);
		\draw[->] (12) to[out=30, in=150] node[above]{\footnotesize$n_{\m(2,1)}^{\n(2,1)}$} (10);
		\draw[->] (12) -- node[left=1mm, pos=0.6]{\footnotesize$g_{\m(2,1)}^{\n(2,1)}$} (01);
		\draw[->] (12) -- (00);
		\draw[->] (02) -- node[above]{\footnotesize$g_{\m(2,0)}^{\n(2,0)}$} (01);
		\draw[->] (02) to[out=-30, in=-150] (00);
	}
	\caption{Local picture of the plane knot complex for adding two crossings.}
	\label{fig:RII+l}
\end{figure}

\begin{figure}[h!]
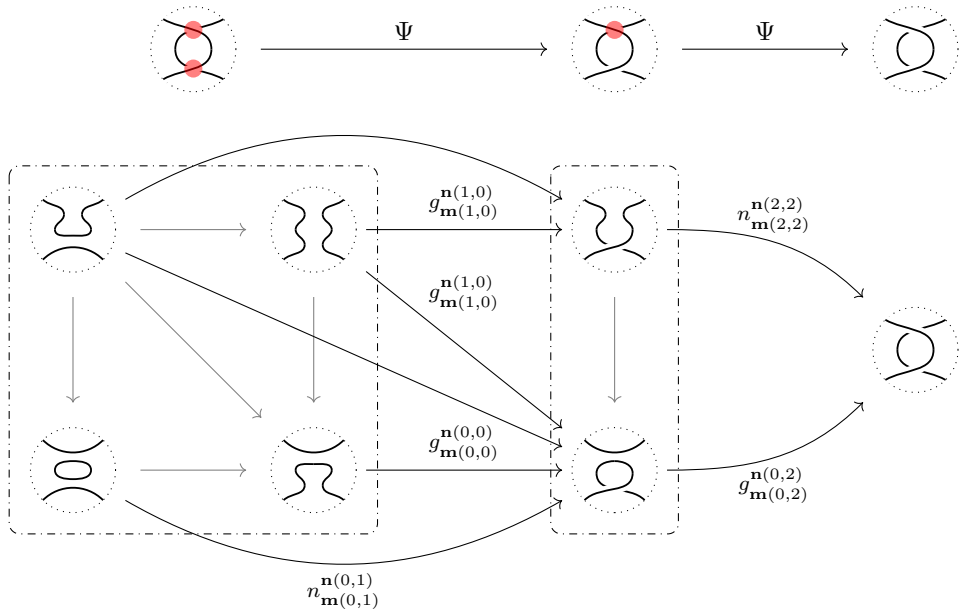

	\centering
	\tikz[baseline={([yshift=-.5ex]current bounding box.center)}, scale=0.8]
	{
		\node(22) at (14,0) {$
			\tikz[baseline={([yshift=-.5ex]current bounding box.center)}, scale=0.8]
			{
				\draw[dotted] (.5,.5) circle(0.707);
				\draw[knot] (1,0) to[out=150, in=-90] (0.2, 0.5);
				\draw[knot] (0.2, 0.5) to[out=90, in=210] (1,1);
				\draw[knot, overcross] (0,0) to[out=30, in=-90] (0.8, 0.5);
				\draw[knot, overcross] (0.8, 0.5) to[out=90, in=-30] (0,1);
				\draw[knot] (0.8, 0.499) -- (0.8, 0.501);
			}
			$};
		\node(12) at (9,2) {$
			\tikz[baseline={([yshift=-.5ex]current bounding box.center)}, scale=0.8]
			{
				\draw[dotted] (0.5, 0.5) circle (0.707);
				\draw[knot] (0,1) to[out=-45, in=90] (0.35, 0.75);
				\draw[knot] (0.35, 0.75) to[out=-90, in=90] (0.2, 0.5);
				\draw[knot] (0.2, 0.5) to[out=-90, in=135] (1,0);
				\draw[knot, overcross] (0.8, 0.5) to[out=-90, in=45] (0,0);
				\draw[knot] (0.8, 0.5) to[out=90, in=-90] (0.65, 0.75);
				\draw[knot] (0.65, 0.75) to[out=90, in=225] (1,1);
			}
			$};
		\node(02) at (9,-2) {$
			\tikz[baseline={([yshift=-.5ex]current bounding box.center)}, scale=0.8, rotate=-90]
			{
				\draw[dotted] (.5,.5) circle(0.707);
				\draw[knot] (0,0) to[out=45, in=-45] (0,1);
				\draw[knot] (0.55, 0.8) to[out=180, in=90] (0.35, 0.5);
				\draw[knot] (0.35, 0.5) to[out=-90, in=180] (0.55, 0.2);
				\draw[knot] (0.55, 0.2) to[out=0, in=225] (1, 1);
				\draw[knot, overcross] (1, 0) to[out=135, in=0] (0.55, 0.8);
			}
			$};
		\node(11) at (0,2) {$
			\tikz[baseline={([yshift=-.5ex]current bounding box.center)}, scale=0.8]
			{
				\draw[dotted] (0.5, 0.5) circle (0.707);
				\draw[knot] (0,0) to[out=45, in=135] (1,0);
				\draw[knot] (0,1) to[out=-45, in=90] (0.35, 0.75);
				\draw[knot] (0.35, 0.75) to[out=-90, in=90] (0.2, 0.5);
				\draw[knot] (0.2, 0.5) to[out=-90, in=180] (0.5, 0.4);
				\draw[knot] (0.5, 0.4) to[out=0, in=-90] (0.8, 0.5);
				\draw[knot] (0.8, 0.5) to[out=90, in=-90] (0.65, 0.75);
				\draw[knot] (0.65, 0.75) to[out=90, in=225] (1,1);
			}
			$};
		\node(10) at (4,2) {$
			\tikz[baseline={([yshift=-.5ex]current bounding box.center)}, scale=0.8]
			{
				\draw[dotted] (0.5, 0.5) circle (0.707);
				\draw[knot] (0,1) to[out=-45, in=90] (0.35, 0.75);
				\draw[knot] (0.35, 0.75) to[out=-90, in=90] (0.2, 0.5);
				\draw[knot] (0.2, 0.5) to[out=-90, in=90] (0.35, 0.25);
				\draw[knot] (0.35, 0.25) to[out=-90, in=45] (0,0);
				\draw[knot] (1,1) to[out=225, in=90] (0.65, 0.75);
				\draw[knot] (0.65, 0.75) to[out=-90, in=90] (0.8, 0.5);
				\draw[knot] (0.8, 0.5) to[out=-90, in=90] (0.65, 0.25);
				\draw[knot] (0.65, 0.25) to[out=-90, in=135] (1,0);
			}
			$};
		\node(01) at (0,-2) {$
			\tikz[baseline={([yshift=-.5ex]current bounding box.center)}, scale=0.8]
			{
				\draw[dotted] (0.5, 0.5) circle (0.707);
				\draw[knot] (0,0) to[out=45, in=135] (1,0);
				\draw[knot] (0,1) to[out=-45, in=-135] (1,1);
				\draw[knot] (0.5, 0.65) to[out=0, in=90] (0.8, 0.5) to[out=-90, in=0] (0.5, 0.35) to[out=180, in=-90] (0.2, 0.5) to[out=90, in=180] (0.5, 0.65);
			}
			$};
		\node(00) at (4,-2) {$
			\tikz[baseline={([yshift=-.5ex]current bounding box.center)}, scale=0.8]
			{
				\draw[dotted] (0.5, 0.5) circle (0.707);
				\draw[knot] (0,1) to[out=-45, in=-135] (1,1);
				\draw[knot] (0,0) to[out=45, in=-90] (0.35, 0.25);
				\draw[knot] (0.35, 0.25) to[out=90, in=-90] (0.2, 0.5);
				\draw[knot] (0.2, 0.5) to[out=90, in=180] (0.5, 0.6);
				\draw[knot] (0.5, 0.6) to[out=0, in=90] (0.8, 0.5);
				\draw[knot] (0.8, 0.5) to[out=-90, in=90] (0.65, 0.25);
				\draw[knot] (0.65, 0.25) to[out=-90, in=135] (1,0);
			}
			$};
		\node(1) at (2,5) {$
			\tikz[baseline={([yshift=-.5ex]current bounding box.center)}, scale=0.8]
			{
				\draw[dotted] (.5,.5) circle(0.707);
				\draw[knot] (1,0) to[out=150, in=-90] (0.2, 0.5);
				\draw[knot] (0.2, 0.5) to[out=90, in=210] (1,1);
				\draw[knot, overcross] (0,0) to[out=30, in=-90] (0.8, 0.5);
				\draw[knot, overcross] (0.8, 0.5) to[out=90, in=-30] (0,1);
				\draw[knot] (0.8, 0.499) -- (0.8, 0.501);	
				\fill[red, fill opacity=0.5] (0.5, 0.8215) circle(0.15);
				\fill[red, fill opacity=0.5] (0.5, 0.1785) circle(0.15);
			}
			$};
		\draw[dashdotted, rounded corners] (-1.5*0.707,2+1.5*0.707) rectangle (4+1.5*0.707,-2-1.5*0.707);
		\node(2) at (9,5) {$
			\tikz[baseline={([yshift=-.5ex]current bounding box.center)}, scale=0.8]
			{
				\draw[dotted] (.5,.5) circle(0.707);
				\draw[knot] (1,0) to[out=150, in=-90] (0.2, 0.5);
				\draw[knot] (0.2, 0.5) to[out=90, in=210] (1,1);
				\draw[knot, overcross] (0,0) to[out=30, in=-90] (0.8, 0.5);
				\draw[knot, overcross] (0.8, 0.5) to[out=90, in=-30] (0,1);
				\draw[knot] (0.8, 0.499) -- (0.8, 0.501);
				\fill[red, fill opacity=0.5] (0.5, 0.8215) circle(0.15);
			}
			$};
		\draw[dashdotted, rounded corners] (9-1.5*0.707,2+1.5*0.707) rectangle (9+1.5*0.707,-2-1.5*0.707);
		\node(3) at (14,5) {$
			\tikz[baseline={([yshift=-.5ex]current bounding box.center)}, scale=0.8]
			{
				\draw[dotted] (.5,.5) circle(0.707);
				\draw[knot] (1,0) to[out=150, in=-90] (0.2, 0.5);
				\draw[knot] (0.2, 0.5) to[out=90, in=210] (1,1);
				\draw[knot, overcross] (0,0) to[out=30, in=-90] (0.8, 0.5);
				\draw[knot, overcross] (0.8, 0.5) to[out=90, in=-30] (0,1);
				\draw[knot] (0.8, 0.499) -- (0.8, 0.501);
			}
			$};
		\draw[->, shorten >=2mm, shorten <=2mm] (1) --node[above]{$\Psi$} (2);
		\draw[->, shorten >=2mm, shorten <=2mm] (2) --node[above]{$\Psi$} (3);
		\draw[->, shorten >=2mm, shorten <=2mm, gray] (12) -- (02);
		\draw[->, shorten >=2mm, shorten <=2mm, gray] (11) -- (10);
		\draw[->, shorten >=2mm, shorten <=2mm, gray] (11) -- (01);
		\draw[->, shorten >=2mm, shorten <=2mm, gray] (10) -- (00);
		\draw[->, shorten >=2mm, shorten <=2mm, gray] (01) -- (00);
		\draw[->, gray] (11) -- (00);
        \draw[->] (12) to[out=0, in=135] node[above]{\footnotesize$n_{\m(2,2)}^{\n(2,2)}$} (22);
        \draw[->] (02) to[out=0, in=-135] node[below]{\footnotesize$g_{\m(0,2)}^{\n(0,2)}$} (22);
        \draw[->] (10) -- node[above]{\footnotesize$g_{\m(1,0)}^{\n(1,0)}$} (12);
        \draw[->] (10) -- node[above=0.4cm]{\footnotesize$g_{\m(1,0)}^{\n(1,0)}$} (02);
        \draw[->] (00) -- node[above]{\footnotesize$g_{\m(0,0)}^{\n(0,0)}$} (02);
        \draw[->] (01) to[out=-30, in=210] node[below]{\footnotesize$n_{\m(0,1)}^{\n(0,1)}$} (02);
        \draw[->] (11) to[out=30, in=150] (12);
        \draw[->] (11) -- (02);
	}
	\caption{Local picture of the plane knot complex for removing two crossings.}
	\label{fig:RII-l}
\end{figure}

We call crossings in the RII region \emph{local} and crossing outside of the RII region \emph{external}. We will first verify the claim for simple closures with no external crossings and then extend the argument to the general case of a knot diagram containing external crossings. Locally, the maps induced by a Reidemeister II move have either source or target equal to a surgery square coming from the RII diagram with two crossing activated. Therefore, the map may (and will) have more than one nontrivial component. Although we can perform a global sign change, we still need to determine the relative signs of these maps. We will first compute the maps up to overall sign, and then use algebraic constraints to fix the signs consistently. Assume that the crossings in an RII region always take one of the following forms:
\[
\tikz[baseline={([yshift=-.5ex]current bounding box.center)}, scale=0.8]
	{
	\draw[dotted] (.5,.5) circle(0.707);
	\draw[knot] (1,0) to[out=150, in=-90] (0.2, 0.5);
	\draw[knot] (0.2, 0.5) to[out=90, in=210] (1,1);
	\draw[knot, overcross] (0,0) to[out=30, in=-90] (0.8, 0.5);
	\draw[knot, overcross] (0.8, 0.5) to[out=90, in=-30] (0,1);
	\draw[knot] (0.8, 0.499) -- (0.8, 0.501);	
	\draw[-{Triangle[scale=0.5]}, knot, red, opacity=0.9] (0.5,0.6) -- (0.5,1.1);
	\draw[-{Triangle[scale=0.5]}, knot, red, opacity=0.9] (0.2,0.175) -- (0.8, 0.175);
	}
\qquad \text{or} \qquad
\tikz[baseline={([yshift=-.5ex]current bounding box.center)}, scale=0.8]
	{
	\draw[dotted] (.5,.5) circle(0.707);
	\draw[knot] (1,0) to[out=150, in=-90] (0.2, 0.5);
	\draw[knot] (0.2, 0.5) to[out=90, in=210] (1,1);
	\draw[knot, overcross] (0,0) to[out=30, in=-90] (0.8, 0.5);
	\draw[knot, overcross] (0.8, 0.5) to[out=90, in=-30] (0,1);
	\draw[knot] (0.8, 0.499) -- (0.8, 0.501);	
	\draw[-{Triangle[scale=0.5]}, knot, red] (0.5,0.6) -- (0.5,1.1);
	\draw[{Triangle[scale=0.5]}-, knot, red] (0.2,0.175) -- (0.8, 0.175);
	}
\,.
\]
Then, fix an additional parameter $a\in\{1,-1\}$ where $a=1$ if the decorations of the RII diagram are as in the lefthand diagram, and $a=-1$ if decorations are as the righthand diagram.

We start our analysis by noticing that, regardless of the closure, the grading changes are as follows:
\begin{itemize}
	\item $\mathrm{deg}_{h}(\bfk(2,2))-\mathrm{deg}_{h}(\bfk'(1,1))=-|\bfk-\bfk'|_1+1$
	\item $\mathrm{deg}_{h}(\bfk(2,2))-\mathrm{deg}_{h}(\bfk'(1,0))=-|\bfk-\bfk'|_1$
	\item $\mathrm{deg}_{h}(\bfk(2,2))-\mathrm{deg}_{h}(\bfk'(0,1))=-|\bfk-\bfk'|_1$
	\item $\mathrm{deg}_{h}(\bfk(2,2))-\mathrm{deg}_{h}(\bfk'(0,0))=-|\bfk-\bfk'|_1-1$
\end{itemize}

\begin{remark}
\label{rmk:mattmagic}
Matt Stoffregen made us aware of an alternative, purely algebraic proof. The maps we compute for simple closures are filtered maps between complexes associated to unlinks, which have cube filtration concentrated at zero. There is a unique filtered map up to chain homotopy for such complexes. Hence, instead of performing explicit computations, one can pick a homotopy representative. This places strong algebraic constraints on the cases with external crossings, which can be used to prove Proposition \ref{lem:trivial}.
\end{remark}

\subsubsection{First simple closure} 

This closure yields the 2-component unlink; we convert Figure \ref{fig:RII+l} into Figure \ref{fig:RII+1}.
\vspace{-3mm}

\begin{figure}[htbp]
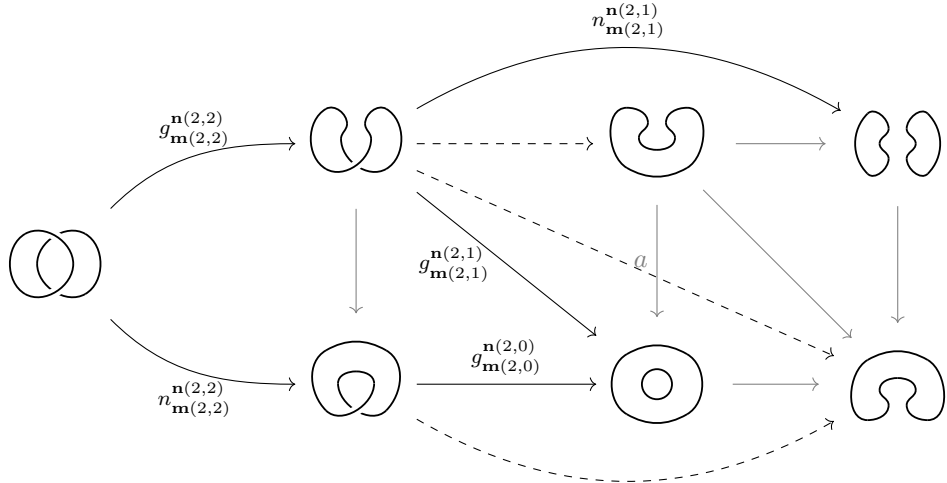

	\centering
\tikz[baseline={([yshift=-.5ex]current bounding box.center)}, scale=0.8]
	{
		\node(22) at (0,0) {$
			\tikz[baseline={([yshift=-.5ex]current bounding box.center)}, scale=0.8]
			{
				\draw[knot] (1,0) to[out=210, in=-90] (0.2, 0.5);
				\draw[knot] (0.2, 0.5) to[out=90, in=150] (1,1);
				\draw[knot, overcross] (0,0) to[out=-30, in=-90] (0.8, 0.5);
				\draw[knot, overcross] (0.8, 0.5) to[out=90, in=30] (0,1);
				\draw[knot] (0.8, 0.499) -- (0.8, 0.501);
                \draw[knot] (0,0) to[out=150, in=210] (0,1);
                \draw[knot] (1,0) to[out=30, in=-30] (1,1);
			}
			$};
		\node(12) at (5,2) {$
			\tikz[baseline={([yshift=-.5ex]current bounding box.center)}, scale=0.8]
			{
				\draw[knot] (0,1) to[out=30, in=90] (0.35, 0.75);
				\draw[knot] (0.35, 0.75) to[out=-90, in=90] (0.25, 0.5);
				\draw[knot] (0.25, 0.5) to[out=-90, in=210] (1,0);
				\draw[knot, overcross] (0.75, 0.5) to[out=-90, in=-30] (0,0);
				\draw[knot] (0.75, 0.5) to[out=90, in=-90] (0.65, 0.75);
				\draw[knot] (0.65, 0.75) to[out=90, in=150] (1,1);
                \draw[knot] (0,0) to[out=150, in=210] (0,1); 
                \draw[knot] (1,0) to[out=30, in=-30] (1,1);
			}
			$};
		\node(02) at (5,-2) {$
			\tikz[baseline={([yshift=-.5ex]current bounding box.center)}, scale=0.8, rotate=-90]
			{
				\draw[knot] (0,0) to[out=120, in=240] (0,1);
				\draw[knot] (0.55, 0.8) to[out=180, in=90] (0.35, 0.5);
				\draw[knot] (0.35, 0.5) to[out=-90, in=180] (0.55, 0.2);
				\draw[knot] (0.55, 0.2) to[out=0, in=-45] (1, 1);
				\draw[knot, overcross] (1, 0) to[out=45, in=0] (0.55, 0.8);
                \draw[knot] (0,0) to[out=300, in=-135] (1,0);
                \draw[knot] (0,1) to[out=60, in=135] (1,1);
			}
			$};
		\node(11) at (10,2) {$
			\tikz[baseline={([yshift=-.5ex]current bounding box.center)}, scale=0.8]
			{
				\draw[knot] (0,0) to[out=-30, in=-150] (1,0);
				\draw[knot] (0,1) to[out=0, in=90] (0.35, 0.75);
				\draw[knot] (0.35, 0.75) to[out=-90, in=90] (0.2, 0.5);
				\draw[knot] (0.2, 0.5) to[out=-90, in=180] (0.5, 0.3);
				\draw[knot] (0.5, 0.3) to[out=0, in=-90] (0.8, 0.5);
				\draw[knot] (0.8, 0.5) to[out=90, in=-90] (0.65, 0.75);
				\draw[knot] (0.65, 0.75) to[out=90, in=180] (1,1);
                \draw[knot] (0,0) to[out=-210, in=180] (0,1);
                \draw[knot] (1,0) to[out=-330, in=0] (1,1);
			}
			$};
		\node(10) at (14,2) {$
			\tikz[baseline={([yshift=-.5ex]current bounding box.center)}, scale=0.8]
			{
				\draw[knot] (0,1) to[out=45, in=90] (0.35, 0.75);
				\draw[knot] (0.35, 0.75) to[out=-90, in=90] (0.2, 0.5);
				\draw[knot] (0.2, 0.5) to[out=-90, in=90] (0.35, 0.25);
				\draw[knot] (0.35, 0.25) to[out=-90, in=-45] (0,0);
				\draw[knot] (1,1) to[out=135, in=90] (0.65, 0.75);
				\draw[knot] (0.65, 0.75) to[out=-90, in=90] (0.8, 0.5);
				\draw[knot] (0.8, 0.5) to[out=-90, in=90] (0.65, 0.25);
				\draw[knot] (0.65, 0.25) to[out=-90, in=-135] (1,0);
                \draw[knot] (0,0) to[out=135, in=-135] (0,1);
                \draw[knot] (1,0) to[out=45, in=-45] (1,1);
			}
			$};
		\node(01) at (10,-2) {$
			\tikz[baseline={([yshift=-.5ex]current bounding box.center)}, scale=0.8]
			{
				\draw[knot] (0,0) to[out=-30, in=-150] (1,0);
				\draw[knot] (0,1) to[out=30, in=150] (1,1);
                \draw[knot] (0.5,0.5) circle (0.25cm);
                \draw[knot] (0,0) to[out=-210, in=-150] (0,1);
                \draw[knot] (1,0) to[out=30, in=-30] (1,1);
			}
			$};
		\node(00) at (14,-2) {$
			\tikz[baseline={([yshift=-.5ex]current bounding box.center)}, scale=0.8, rotate=180]
			{
				\draw[knot] (0,0) to[out=-30, in=-150] (1,0);
				\draw[knot] (0,1) to[out=0, in=90] (0.35, 0.75);
				\draw[knot] (0.35, 0.75) to[out=-90, in=90] (0.2, 0.5);
				\draw[knot] (0.2, 0.5) to[out=-90, in=180] (0.5, 0.3);
				\draw[knot] (0.5, 0.3) to[out=0, in=-90] (0.8, 0.5);
				\draw[knot] (0.8, 0.5) to[out=90, in=-90] (0.65, 0.75);
				\draw[knot] (0.65, 0.75) to[out=90, in=180] (1,1);
                \draw[knot] (0,0) to[out=-210, in=180] (0,1);
                \draw[knot] (1,0) to[out=-330, in=0] (1,1);
			}
			$};
		\draw[->, shorten >=2mm, shorten <=2mm, gray] (12) -- (02);
		\draw[->, shorten >=2mm, shorten <=2mm, gray] (11) -- (10);
		\draw[->, shorten >=2mm, shorten <=2mm, gray] (11) -- (01) node[midway, left] {$a$};
		\draw[->, shorten >=2mm, shorten <=2mm, gray] (10) -- (00);
		\draw[->, shorten >=2mm, shorten <=2mm, gray] (01) -- (00);
		\draw[->, gray] (11) -- (00);
		\draw[->] (22) to[out=45, in=180] node[above]{\footnotesize$g_{\m(2,2)}^{\n(2,2)}$} (12);
		\draw[->] (22) to[out=-45, in=180] node[below]{\footnotesize$n_{\m(2,2)}^{\n(2,2)}$} (02);
		\draw[->, dashed] (12) -- (11);
		\draw[->] (12) to[out=30, in=150] node[above]{\footnotesize$n_{\m(2,1)}^{\n(2,1)}$} (10);
		\draw[->] (12) -- node[left=1mm]{\footnotesize$g_{\m(2,1)}^{\n(2,1)}$} (01);
		\draw[->, dashed] (12) -- (00);
		\draw[->] (02) -- node[above]{\footnotesize$g_{\m(2,0)}^{\n(2,0)}$} (01);
		\draw[->, dashed] (02) to[out=-30, in=-150] (00);
	}
	\caption{Chain maps on $\widetilde{\mathrm{PKC}}$ for a positive Reidemeister II move with the first closure.}
	\label{fig:RII+1}
\end{figure}

For grading reasons, the maps whose target is the $(0,0)$ resolution are trivial. Similarly, the map whose target is the $(1,1)$ resolution requires another change of resolution to preserve grading. Since we assume that there are no external crossings in this case, this map is also trivial. The remaining maps can be computed explicitly. Note that we have computed each of $g_{\m(2,2)}^{\n(2,2)},\, g_{\m(2,0)}^{\n(2,0)},\, n_{\m(2,2)}^{\n(2,2)}$, and $n_{\m(2,1)}^{\n(1,1)}$ in Subsection \ref{ss:r1}. For the diagonal map $g_{\m(2,1)}^{\n(2,1)}$, the cobordism $W_{\m(2,1)}^{\n(2,1)}$ is given by the branched double cover of the handle attachment in Figure \ref{fig:top1}. The family of metrics on $W_{\m(2,1)}^{\n(2,1)}$ is given by stretching along the two 2-spheres with self-intersection $-1$, which is identical the family of metrics $\mathbb{U}$ in Subsection \ref{family}. Hence, $g_{\m(2,1)}^{\n(2,1)}$ is also equal to a birth map up to a unit. The induced chain map is given in Figure \ref{fig:chainmapRII+1}. We equip these maps with appropriate signs so that the resulting map is a chain map. Note that the signs on the bottom surgery square depend on the decorations of the two crossings. However, once we fix this choice, the choice of signs on the chain maps is also fixed. Hence we omit the decorations on this figure. Also note that the cobordism maps require an identification of components; we identify the $(2,2)$ resolution with the bottoms of the outgoing cobordism maps from left-to-right.

\begin{figure}[htbp]
	\centering
\tikz[baseline={([yshift=-.5ex]current bounding box.center)}, scale=01]
    {
    \node (X) at (0,0) {
    \tikz[baseline={([yshift=-.5ex]current bounding box.center)}, scale=1.15*0.8]
    {
    \draw[line width=8pt] (1.5, 1) to[out=120, in=60] (-0.5, 1);
    \draw[line width=6.5pt, white] (1.5, 1) to[out=120, in=60] (-0.5, 1);
    \draw[line width=8pt] (1.75, 1) -- (1.75, 1.2) to[out=90, in=90] (-0.75, 1.2) -- (-0.75, 1);
    \draw[line width=6.5pt, white] (1.75, 1) -- (1.75, 1.2) to[out=90, in=90] (-0.75, 1.2) -- (-0.75, 1);
    \draw[knot, fill=white] (1,1) to[out=90, in=90] (2,1) -- (2,0) to[out=-90, in=-90] (1,0);
    \draw[knot, fill=white] (0,1) to[out=90, in=90] (-1,1) -- (-1,0) to[out=-90, in=-90] (0,0);
    \draw[knot] (1,0) to[out=90, in=-90] (0,1);
    \draw[knot, overcross] (0,0) to[out=90, in=-90] (1,1);
    }
    };
    \node (Y) at (5,0) {
    \tikz[baseline={([yshift=-.5ex]current bounding box.center)}, scale=0.6*0.95]{
    \begin{scope}[shift={(1,0)}]
    \draw[knot] (0.5,2.5) -- (0.5, 4.15) to[out=90, in=90] (3,4.15) -- (3,4) to[out=-90, in=90] (2.5,3.5) -- (2.5,2.5);
    \draw[knot, overcross] (1,2.5) -- (1,4) to[out=90, in=90] (2.5,4) to[out=-90, in=90] (3,3.5) -- (3,2.5);
    \end{scope}
    \coordinate (A) at (0.5, 4.15);
    \coordinate (B) at (1,4);
    \coordinate (C) at (3,4.15);
    \coordinate (D) at (2.5,4);
    \draw[fill=white, opacity=.9, draw=white, thick] (A) to [out=90, in=90] (C) -- (D) to[out=90, in=90] (B);
    \draw[knot, overcross] (0.5,2.5) -- (0.5, 4.15) to[out=90, in=90] (3,4.15) -- (3,4) to[out=-90, in=90] (2.5,3.5) -- (2.5,2.5);
    \draw[knot, overcross] (1,2.5) -- (1,4) to[out=90, in=90] (2.5,4) to[out=-90, in=90] (3,3.5) -- (3,2.5);
    \draw[knot, rounded corners=25pt, fill=white] (0,0) rectangle (4.5,3);
    }
    };
    \node (Z) at (12,0) {
    \tikz[baseline={([yshift=-.5ex]current bounding box.center)}, scale=0.8*0.8]
    {
    \draw[knot] (0,0) to[out=90, in=-90] (1,1);
    \draw[knot, overcross] (1,0) to[out=90, in=-90] (0,1);
    \draw[knot] (0,1) to[out=90, in=-90] (1,2);
    \draw[knot, overcross] (1,1) to[out=90, in=-90] (0,2);
    \draw[knot] (0,0) to[out=-90, in=-90] (-1,0);
    \draw[knot] (-1,0) -- (-1,2);
    \draw[knot] (-1,2) to[out=90, in=90] node[pos=0.5, above]{$-1$} (0,2);
    \draw[knot] (1,0) to[out=-90, in=-90] (2,0);
    \draw[knot] (2,0) -- (2,2);
    \draw[knot] (2,2) to[out=90, in=90] node[pos=0.5, above]{$-1$} (1,2);
    }
    };
    \draw[->] (X) to node[above]{\text{isotopy}} (Y);
    \draw[->] (Y) to node[above]{\text{2-fold branched cover}} (Z);
    }
	\caption{Cobordism for $g_{\m(2,1)}^{\n(2,1)}$.}
	\label{fig:top1}
\end{figure}

\begin{figure}[htbp]
	\centering
\tikz[baseline={([yshift=-.5ex]current bounding box.center)}, scale=1.5]
    {
    \node (A) at (0,0) {$
    \tikz[baseline={([yshift=-.5ex]current bounding box.center)}, scale=0.8]
			{
				\draw[knot] (0,0) to[out=-30, in=-150] (1,0);
				\draw[knot] (0,1) to[out=0, in=90] (0.35, 0.75);
				\draw[knot] (0.35, 0.75) to[out=-90, in=90] (0.2, 0.5);
				\draw[knot] (0.2, 0.5) to[out=-90, in=180] (0.5, 0.3);
				\draw[knot] (0.5, 0.3) to[out=0, in=-90] (0.8, 0.5);
				\draw[knot] (0.8, 0.5) to[out=90, in=-90] (0.65, 0.75);
				\draw[knot] (0.65, 0.75) to[out=90, in=180] (1,1);
                \draw[knot] (0,0) to[out=-210, in=180] (0,1);
                \draw[knot] (1,0) to[out=-330, in=0] (1,1);
			}
    $};
    \node (B) at (2.75,1) {$
    			\tikz[baseline={([yshift=-.5ex]current bounding box.center)}, scale=0.8]
			{
				\draw[knot] (0,1) to[out=45, in=90] (0.35, 0.75);
				\draw[knot] (0.35, 0.75) to[out=-90, in=90] (0.2, 0.5);
				\draw[knot] (0.2, 0.5) to[out=-90, in=90] (0.35, 0.25);
				\draw[knot] (0.35, 0.25) to[out=-90, in=-45] (0,0);
				\draw[knot] (1,1) to[out=135, in=90] (0.65, 0.75);
				\draw[knot] (0.65, 0.75) to[out=-90, in=90] (0.8, 0.5);
				\draw[knot] (0.8, 0.5) to[out=-90, in=90] (0.65, 0.25);
				\draw[knot] (0.65, 0.25) to[out=-90, in=-135] (1,0);
                \draw[knot] (0,0) to[out=135, in=-135] (0,1);
                \draw[knot] (1,0) to[out=45, in=-45] (1,1);
			}
    $};
    \node (C) at (4.25,-1) {$
    			\tikz[baseline={([yshift=-.5ex]current bounding box.center)}, scale=0.8]
			{
				\draw[knot] (0,0) to[out=-30, in=-150] (1,0);
				\draw[knot] (0,1) to[out=30, in=150] (1,1);
                \draw[knot] (0.5,0.5) circle (0.25cm);
                \draw[knot] (0,0) to[out=-210, in=-150] (0,1);
                \draw[knot] (1,0) to[out=30, in=-30] (1,1);
			}
    $};
    \node (D) at (7,0) {$
    			\tikz[baseline={([yshift=-.5ex]current bounding box.center)}, scale=0.8, rotate=180]
			{
				\draw[knot] (0,0) to[out=-30, in=-150] (1,0);
				\draw[knot] (0,1) to[out=0, in=90] (0.35, 0.75);
				\draw[knot] (0.35, 0.75) to[out=-90, in=90] (0.2, 0.5);
				\draw[knot] (0.2, 0.5) to[out=-90, in=180] (0.5, 0.3);
				\draw[knot] (0.5, 0.3) to[out=0, in=-90] (0.8, 0.5);
				\draw[knot] (0.8, 0.5) to[out=90, in=-90] (0.65, 0.75);
				\draw[knot] (0.65, 0.75) to[out=90, in=180] (1,1);
                \draw[knot] (0,0) to[out=-210, in=180] (0,1);
                \draw[knot] (1,0) to[out=-330, in=0] (1,1);
			}
    $};
    \node (E) at (3.5,3.15) {$
        			\tikz[baseline={([yshift=-.5ex]current bounding box.center)}, scale=0.8]
			{
				\draw[knot] (0,1) to[out=45, in=90] (0.35, 0.75);
				\draw[knot] (0.35, 0.75) to[out=-90, in=90] (0.2, 0.5);
				\draw[knot] (0.2, 0.5) to[out=-90, in=90] (0.35, 0.25);
				\draw[knot] (0.35, 0.25) to[out=-90, in=-45] (0,0);
				\draw[knot] (1,1) to[out=135, in=90] (0.65, 0.75);
				\draw[knot] (0.65, 0.75) to[out=-90, in=90] (0.8, 0.5);
				\draw[knot] (0.8, 0.5) to[out=-90, in=90] (0.65, 0.25);
				\draw[knot] (0.65, 0.25) to[out=-90, in=-135] (1,0);
                \draw[knot] (0,0) to[out=135, in=-135] (0,1);
                \draw[knot] (1,0) to[out=45, in=-45] (1,1);
			}
    $};
    \draw[->] (A) to node[above]{$\epsilon_{1\star}\,\tikz[baseline={([yshift=-.5ex]current bounding box.center)}, scale=.35]{
	\draw[knot, -]  (1,2) .. controls (1,3) and (0,3) .. (0,4);
	\draw[knot, -]  (2,2) .. controls (2,3) and (3,3) .. (3,4);
	\draw[knot, -] (1,4) .. controls (1,3) and (2,3) .. (2,4);
	\draw[knot, -] (0,4) .. controls (0,3.75) and (1,3.75) .. (1,4);
	\draw[knot, -] (0,4) .. controls (0,4.25) and (1,4.25) .. (1,4);
	\draw[knot, -] (2,4) .. controls (2,3.75) and (3,3.75) .. (3,4);
	\draw[knot, -] (2,4) .. controls (2,4.25) and (3,4.25) .. (3,4);
	\draw[knot, -] (1,2) .. controls (1,1.75) and (2,1.75) .. (2,2);
	\draw[knot, dashed, -] (1,2) .. controls (1,2.25) and (2,2.25) .. (2,2);
    }$} (B);
    \draw[->] (B) to node[above=5pt, pos=0.7] {$\epsilon_{\star0}\,
    \tikz[baseline={([yshift=-.5ex]current bounding box.center)}, scale=.35]{
	\draw[knot, -] (0,0) .. controls (0,1) and (1,1) .. (1,2);
	\draw[knot, -] (1,0) .. controls (1,1) and (2,1) .. (2,0);
	\draw[knot, -] (3,0) .. controls (3,1) and (2,1) .. (2,2);
	\draw[knot, -] (0,0) .. controls (0,-.25) and (1,-.25) .. (1,0);
	\draw[dashed, knot, -] (0,0) .. controls (0,.25) and (1,.25) .. (1,0);
	\draw[knot, -] (2,0) .. controls (2,-.25) and (3,-.25) .. (3,0);
	\draw[dashed, knot, -] (2,0) .. controls (2,.25) and (3,.25) .. (3,0);
	\draw[knot, -] (1,2) .. controls (1,1.75) and (2,1.75) .. (2,2);
	\draw[knot, -] (1,2) .. controls (1,2.25) and (2,2.25) .. (2,2);
}
    $} (D);
    \draw[->] (A) to node[below=5pt, pos=0.3]{$
    \epsilon_{\star1} \, \tikz[baseline={([yshift=-.5ex]current bounding box.center)}, scale=.35]{
	\draw[knot, -]  (1,2) .. controls (1,3) and (0,3) .. (0,4);
	\draw[knot, -]  (2,2) .. controls (2,3) and (3,3) .. (3,4);
	\draw[knot, -] (1,4) .. controls (1,3) and (2,3) .. (2,4);
	\draw[knot, -] (0,4) .. controls (0,3.75) and (1,3.75) .. (1,4);
	\draw[knot, -] (0,4) .. controls (0,4.25) and (1,4.25) .. (1,4);
	\draw[knot, -] (2,4) .. controls (2,3.75) and (3,3.75) .. (3,4);
	\draw[knot, -] (2,4) .. controls (2,4.25) and (3,4.25) .. (3,4);
	\draw[knot, -] (1,2) .. controls (1,1.75) and (2,1.75) .. (2,2);
	\draw[knot, dashed, -] (1,2) .. controls (1,2.25) and (2,2.25) .. (2,2);
    }
    $}  (C);
    \draw[->] (C) to node[below] {$\epsilon_{0\star}\,
    \tikz[baseline={([yshift=-.5ex]current bounding box.center)}, scale=.35]{
	\draw[knot, -] (0,0) .. controls (0,1) and (1,1) .. (1,2);
	\draw[knot, -] (1,0) .. controls (1,1) and (2,1) .. (2,0);
	\draw[knot, -] (3,0) .. controls (3,1) and (2,1) .. (2,2);
	\draw[knot, -] (0,0) .. controls (0,-.25) and (1,-.25) .. (1,0);
	\draw[dashed, knot, -] (0,0) .. controls (0,.25) and (1,.25) .. (1,0);
	\draw[knot, -] (2,0) .. controls (2,-.25) and (3,-.25) .. (3,0);
	\draw[dashed, knot, -] (2,0) .. controls (2,.25) and (3,.25) .. (3,0);
	\draw[knot, -] (1,2) .. controls (1,1.75) and (2,1.75) .. (2,2);
	\draw[knot, -] (1,2) .. controls (1,2.25) and (2,2.25) .. (2,2);
}
    $} (D);
    \draw[->] (E) to[bend right=15pt] node[left=5pt] {$ c\, 
    \tikz[baseline={([yshift=-.5ex]current bounding box.center)}, scale=.35]{
    \draw[knot, -] (0,2) .. controls (0,1.75) and (1,1.75) .. (1,2);
    \draw[knot, -] (0,2) .. controls (0,2.25) and (1,2.25) .. (1,2);
    \draw[knot, -] (2,2) .. controls (2,1.75) and (3,1.75) .. (3,2);
    \draw[knot, -] (2,2) .. controls (2,2.25) and (3,2.25) .. (3,2);
    \draw[knot, -] (0,0) .. controls (0,-.25) and (1,-.25) .. (1,0);
    \draw[dashed, knot, -] (0,0) .. controls (0,.25) and (1,.25) .. (1,0);
    \draw[knot, -] (2,0) .. controls (2,-.25) and (3,-.25) .. (3,0);
    \draw[dashed, knot, -] (2,0) .. controls (2,.25) and (3,.25) .. (3,0);
    \draw[knot, -] (2,2) .. controls (2,1.25) and (3,1.25) .. (3,2);
    \draw[knot, -] (3,0) .. controls (3,1.35) and (1,1.35) .. (1,2);
    \draw[knot, -] (0,0) -- (0,2);
    \draw[knot, -] (1,0) .. controls (1,0.75) and (2,0.75) .. (2,0);
}
    $} (B);
    \draw[->, white, line width=5pt] (A) -- (C);
    \draw[->] (A) -- (C);
    \draw[->, white, line width=5pt] (E) to[bend left=10pt] (C);
    \draw[->] (E) to[bend left=10pt] node[right=7.5pt, pos=0.15]{$
    -c\epsilon_{0\star}\epsilon_{\star0}\left(
\tikz[baseline={([yshift=-.5ex]current bounding box.center)}, scale=.35]{
    \draw[knot, -] (0,2) .. controls (0,1.75) and (1,1.75) .. (1,2);
    \draw[knot, -] (0,2) .. controls (0,2.25) and (1,2.25) .. (1,2);
    \draw[knot, -] (2,2) .. controls (2,1.75) and (3,1.75) .. (3,2);
    \draw[knot, -] (2,2) .. controls (2,2.25) and (3,2.25) .. (3,2);
    \draw[knot, -] (0,0) .. controls (0,-.25) and (1,-.25) .. (1,0);
    \draw[dashed, knot, -] (0,0) .. controls (0,.25) and (1,.25) .. (1,0);
    \draw[knot, -] (2,0) .. controls (2,-.25) and (3,-.25) .. (3,0);
    \draw[dashed, knot, -] (2,0) .. controls (2,.25) and (3,.25) .. (3,0);
    \draw[knot, -] (0,0) .. controls (0,0.75) and (1,0.75) .. (1,0);
    \draw[knot, -] (2,0) .. controls (2,0.65) and (0,0.65) .. (0,2);
    \draw[knot, -] (3,0) -- (3,2);
    \draw[knot, -] (1,2) .. controls (1,1.25) and (2,1.25) .. (2,2);
}
+
\tikz[baseline={([yshift=-.5ex]current bounding box.center)}, scale=.35]{
    \draw[knot, -] (0,2) .. controls (0,1.75) and (1,1.75) .. (1,2);
    \draw[knot, -] (0,2) .. controls (0,2.25) and (1,2.25) .. (1,2);
    \draw[knot, -] (2,2) .. controls (2,1.75) and (3,1.75) .. (3,2);
    \draw[knot, -] (2,2) .. controls (2,2.25) and (3,2.25) .. (3,2);
    \draw[knot, -] (0,0) .. controls (0,-.25) and (1,-.25) .. (1,0);
    \draw[dashed, knot, -] (0,0) .. controls (0,.25) and (1,.25) .. (1,0);
    \draw[knot, -] (2,0) .. controls (2,-.25) and (3,-.25) .. (3,0);
    \draw[dashed, knot, -] (2,0) .. controls (2,.25) and (3,.25) .. (3,0);
    \draw[knot, -] (2,2) .. controls (2,1.25) and (3,1.25) .. (3,2);
    \draw[knot, -] (3,0) .. controls (3,1.35) and (1,1.35) .. (1,2);
    \draw[knot, -] (0,0) -- (0,2);
    \draw[knot, -] (1,0) .. controls (1,0.75) and (2,0.75) .. (2,0);
}
\right)
    $} (C);
    \draw[->, red, thick] (E) to[bend right] node[left=20pt]{$c\epsilon_{1\star}\
    \tikz[baseline={([yshift=-.5ex]current bounding box.center)}, scale=.35]{
    \draw[knot, -] (0,0) .. controls (0,-0.25) and (1,-0.25) .. (1,0);
    \draw[knot, -, dashed] (0,0) .. controls (0,0.25) and (1,0.25) .. (1,0);
    \draw[knot, -] (2,2) .. controls (2,1.75) and (3,1.75) .. (3,2);
    \draw[knot, -] (2,2) .. controls (2,2.25) and (3,2.25) .. (3,2);
    \draw[knot, -] (2,0) .. controls (2,-.25) and (3,-.25) .. (3,0);
    \draw[dashed, knot, -] (2,0) .. controls (2,.25) and (3,.25) .. (3,0);
    \draw[knot, -] (0,0) .. controls (0,1.25) and (1,1.25) .. (1,0);
    \draw[knot, -] (2,0) -- (2,2);
    \draw[knot, -] (3,0) -- (3,2);
}
    $} (A);
    }
	\caption{Chain maps on $\widetilde{\mathrm{PKE}}_2$ for a positive Reidemeister II move with the first closure.}
	\label{fig:chainmapRII+1}
\end{figure}

\begin{figure}[htbp]
	\centering
\tikz[baseline={([yshift=-.5ex]current bounding box.center)}, scale=0.8]
	{
		\node(22) at (14,0) {$
			\tikz[baseline={([yshift=-.5ex]current bounding box.center)}, scale=0.8]
			{
				\draw[knot] (1,0) to[out=210, in=-90] (0.2, 0.5);
				\draw[knot] (0.2, 0.5) to[out=90, in=150] (1,1);
				\draw[knot, overcross] (0,0) to[out=-30, in=-90] (0.8, 0.5);
				\draw[knot, overcross] (0.8, 0.5) to[out=90, in=30] (0,1);
				\draw[knot] (0.8, 0.499) -- (0.8, 0.501);
                \draw[knot] (0,0) to[out=150, in=210] (0,1);
                \draw[knot] (1,0) to[out=30, in=-30] (1,1);
			}
			$};
		\node(12) at (9,2) {$
			\tikz[baseline={([yshift=-.5ex]current bounding box.center)}, scale=0.8]
			{
				\draw[knot] (0,1) to[out=30, in=90] (0.35, 0.75);
				\draw[knot] (0.35, 0.75) to[out=-90, in=90] (0.25, 0.5);
				\draw[knot] (0.25, 0.5) to[out=-90, in=210] (1,0);
				\draw[knot, overcross] (0.75, 0.5) to[out=-90, in=-30] (0,0);
				\draw[knot] (0.75, 0.5) to[out=90, in=-90] (0.65, 0.75);
				\draw[knot] (0.65, 0.75) to[out=90, in=150] (1,1);
                \draw[knot] (0,0) to[out=150, in=210] (0,1); 
                \draw[knot] (1,0) to[out=30, in=-30] (1,1);
			}
			$};
		\node(02) at (9,-2) {$
			\tikz[baseline={([yshift=-.5ex]current bounding box.center)}, scale=0.8, rotate=-90]
			{
				\draw[knot] (0,0) to[out=120, in=240] (0,1);
				\draw[knot] (0.55, 0.8) to[out=180, in=90] (0.35, 0.5);
				\draw[knot] (0.35, 0.5) to[out=-90, in=180] (0.55, 0.2);
				\draw[knot] (0.55, 0.2) to[out=0, in=-45] (1, 1);
				\draw[knot, overcross] (1, 0) to[out=45, in=0] (0.55, 0.8);
                \draw[knot] (0,0) to[out=300, in=-135] (1,0);
                \draw[knot] (0,1) to[out=60, in=135] (1,1);
			}
			$};
		\node(11) at (0,2) {$
			\tikz[baseline={([yshift=-.5ex]current bounding box.center)}, scale=0.8]
			{
				\draw[knot] (0,0) to[out=-30, in=-150] (1,0);
				\draw[knot] (0,1) to[out=0, in=90] (0.35, 0.75);
				\draw[knot] (0.35, 0.75) to[out=-90, in=90] (0.2, 0.5);
				\draw[knot] (0.2, 0.5) to[out=-90, in=180] (0.5, 0.3);
				\draw[knot] (0.5, 0.3) to[out=0, in=-90] (0.8, 0.5);
				\draw[knot] (0.8, 0.5) to[out=90, in=-90] (0.65, 0.75);
				\draw[knot] (0.65, 0.75) to[out=90, in=180] (1,1);
                \draw[knot] (0,0) to[out=-210, in=180] (0,1);
                \draw[knot] (1,0) to[out=-330, in=0] (1,1);
			}
			$};
		\node(10) at (4,2) {$
			\tikz[baseline={([yshift=-.5ex]current bounding box.center)}, scale=0.8]
			{
				\draw[knot] (0,1) to[out=45, in=90] (0.35, 0.75);
				\draw[knot] (0.35, 0.75) to[out=-90, in=90] (0.2, 0.5);
				\draw[knot] (0.2, 0.5) to[out=-90, in=90] (0.35, 0.25);
				\draw[knot] (0.35, 0.25) to[out=-90, in=-45] (0,0);
				\draw[knot] (1,1) to[out=135, in=90] (0.65, 0.75);
				\draw[knot] (0.65, 0.75) to[out=-90, in=90] (0.8, 0.5);
				\draw[knot] (0.8, 0.5) to[out=-90, in=90] (0.65, 0.25);
				\draw[knot] (0.65, 0.25) to[out=-90, in=-135] (1,0);
                \draw[knot] (0,0) to[out=135, in=-135] (0,1);
                \draw[knot] (1,0) to[out=45, in=-45] (1,1);
			}
			$};
		\node(01) at (0,-2) {$
			\tikz[baseline={([yshift=-.5ex]current bounding box.center)}, scale=0.8]
			{
				\draw[knot] (0,0) to[out=-30, in=-150] (1,0);
				\draw[knot] (0,1) to[out=30, in=150] (1,1);
                \draw[knot] (0.5,0.5) circle (0.25cm);
                \draw[knot] (0,0) to[out=-210, in=-150] (0,1);
                \draw[knot] (1,0) to[out=30, in=-30] (1,1);
			}
			$};
		\node(00) at (4,-2) {$
			\tikz[baseline={([yshift=-.5ex]current bounding box.center)}, scale=0.8, rotate=180]
			{
				\draw[knot] (0,0) to[out=-30, in=-150] (1,0);
				\draw[knot] (0,1) to[out=0, in=90] (0.35, 0.75);
				\draw[knot] (0.35, 0.75) to[out=-90, in=90] (0.2, 0.5);
				\draw[knot] (0.2, 0.5) to[out=-90, in=180] (0.5, 0.3);
				\draw[knot] (0.5, 0.3) to[out=0, in=-90] (0.8, 0.5);
				\draw[knot] (0.8, 0.5) to[out=90, in=-90] (0.65, 0.75);
				\draw[knot] (0.65, 0.75) to[out=90, in=180] (1,1);
                \draw[knot] (0,0) to[out=-210, in=180] (0,1);
                \draw[knot] (1,0) to[out=-330, in=0] (1,1);
			}
			$};
		\draw[->, shorten >=2mm, shorten <=2mm, gray] (12) -- (02);
		\draw[->, shorten >=2mm, shorten <=2mm, gray] (11) -- (10);
		\draw[->, shorten >=2mm, shorten <=2mm, gray] (11) -- (01);
		\draw[->, shorten >=2mm, shorten <=2mm, gray] (10) -- (00);
		\draw[->, shorten >=2mm, shorten <=2mm, gray] (01) -- (00);
		\draw[->, gray] (11) -- (00);
        \draw[->] (12) to[out=0, in=135] node[above]{\footnotesize$n_{\m(2,2)}^{\n(2,2)}$} (22);
        \draw[->] (02) to[out=0, in=-135] node[below]{\footnotesize$g_{\m(0,2)}^{\n(0,2)}$} (22);
        \draw[->] (10) -- node[above]{\footnotesize$g_{\m(1,0)}^{\n(1,0)}$} (12);
        \draw[->] (10) -- node[above=0.4cm]{\footnotesize$g_{\m(1,0)}^{\n(1,0)}$} (02);
        \draw[->, dashed] (00) -- (02);
        \draw[->] (01) to[out=-30, in=210] node[below]{\footnotesize$n_{\m(0,1)}^{\n(0,1)}$} (02);
        \draw[->, dashed] (11) to[out=30, in=150] (12);
        \draw[->, dashed] (11) -- (02);
	}
	\caption{Chain maps on $\widetilde{\mathrm{PKC}}$ for a negative Reidemeister II move with the first closure.}
	\label{fig:RII-1}
\end{figure}

For removing two crossings, Figure \ref{fig:RII-l} is rewritten as Figure \ref{fig:RII-1}. By a similar grading change computation, the dashed differentials are trivial on the $E_2$ page. We assign signs to these maps according to the decorations of the two crossings. The additional parameter $a$ ensures that the resulting map, pictured in Figure \ref{fig:chainmapRII-1}, is a chain map. The component identifications can be fixed as above.

\begin{figure}[htbp]
	\centering
\tikz[baseline={([yshift=-.5ex]current bounding box.center)}, scale=1.5]
    {
    \node (A) at (0,0) {$
    \tikz[baseline={([yshift=-.5ex]current bounding box.center)}, scale=0.8]
			{
				\draw[knot] (0,0) to[out=-30, in=-150] (1,0);
				\draw[knot] (0,1) to[out=0, in=90] (0.35, 0.75);
				\draw[knot] (0.35, 0.75) to[out=-90, in=90] (0.2, 0.5);
				\draw[knot] (0.2, 0.5) to[out=-90, in=180] (0.5, 0.3);
				\draw[knot] (0.5, 0.3) to[out=0, in=-90] (0.8, 0.5);
				\draw[knot] (0.8, 0.5) to[out=90, in=-90] (0.65, 0.75);
				\draw[knot] (0.65, 0.75) to[out=90, in=180] (1,1);
                \draw[knot] (0,0) to[out=-210, in=180] (0,1);
                \draw[knot] (1,0) to[out=-330, in=0] (1,1);
			}
    $};
    \node (B) at (2.75,1) {$
    			\tikz[baseline={([yshift=-.5ex]current bounding box.center)}, scale=0.8]
			{
				\draw[knot] (0,1) to[out=45, in=90] (0.35, 0.75);
				\draw[knot] (0.35, 0.75) to[out=-90, in=90] (0.2, 0.5);
				\draw[knot] (0.2, 0.5) to[out=-90, in=90] (0.35, 0.25);
				\draw[knot] (0.35, 0.25) to[out=-90, in=-45] (0,0);
				\draw[knot] (1,1) to[out=135, in=90] (0.65, 0.75);
				\draw[knot] (0.65, 0.75) to[out=-90, in=90] (0.8, 0.5);
				\draw[knot] (0.8, 0.5) to[out=-90, in=90] (0.65, 0.25);
				\draw[knot] (0.65, 0.25) to[out=-90, in=-135] (1,0);
                \draw[knot] (0,0) to[out=135, in=-135] (0,1);
                \draw[knot] (1,0) to[out=45, in=-45] (1,1);
			}
    $};
    \node (C) at (4.25,-1) {$
    			\tikz[baseline={([yshift=-.5ex]current bounding box.center)}, scale=0.8]
			{
				\draw[knot] (0,0) to[out=-30, in=-150] (1,0);
				\draw[knot] (0,1) to[out=30, in=150] (1,1);
                \draw[knot] (0.5,0.5) circle (0.25cm);
                \draw[knot] (0,0) to[out=-210, in=-150] (0,1);
                \draw[knot] (1,0) to[out=30, in=-30] (1,1);
			}
    $};
    \node (D) at (7,0) {$
    			\tikz[baseline={([yshift=-.5ex]current bounding box.center)}, scale=0.8, rotate=180]
			{
				\draw[knot] (0,0) to[out=-30, in=-150] (1,0);
				\draw[knot] (0,1) to[out=0, in=90] (0.35, 0.75);
				\draw[knot] (0.35, 0.75) to[out=-90, in=90] (0.2, 0.5);
				\draw[knot] (0.2, 0.5) to[out=-90, in=180] (0.5, 0.3);
				\draw[knot] (0.5, 0.3) to[out=0, in=-90] (0.8, 0.5);
				\draw[knot] (0.8, 0.5) to[out=90, in=-90] (0.65, 0.75);
				\draw[knot] (0.65, 0.75) to[out=90, in=180] (1,1);
                \draw[knot] (0,0) to[out=-210, in=180] (0,1);
                \draw[knot] (1,0) to[out=-330, in=0] (1,1);
			}
    $};
    \node (F) at (3.5,-3.15) {$
        			\tikz[baseline={([yshift=-.5ex]current bounding box.center)}, scale=0.8]
			{
				\draw[knot] (0,1) to[out=45, in=90] (0.35, 0.75);
				\draw[knot] (0.35, 0.75) to[out=-90, in=90] (0.2, 0.5);
				\draw[knot] (0.2, 0.5) to[out=-90, in=90] (0.35, 0.25);
				\draw[knot] (0.35, 0.25) to[out=-90, in=-45] (0,0);
				\draw[knot] (1,1) to[out=135, in=90] (0.65, 0.75);
				\draw[knot] (0.65, 0.75) to[out=-90, in=90] (0.8, 0.5);
				\draw[knot] (0.8, 0.5) to[out=-90, in=90] (0.65, 0.25);
				\draw[knot] (0.65, 0.25) to[out=-90, in=-135] (1,0);
                \draw[knot] (0,0) to[out=135, in=-135] (0,1);
                \draw[knot] (1,0) to[out=45, in=-45] (1,1);
			}
    $};
    \draw[->] (A) to node[above]{$\epsilon_{1\star}\,\tikz[baseline={([yshift=-.5ex]current bounding box.center)}, scale=.35]{
	\draw[knot, -]  (1,2) .. controls (1,3) and (0,3) .. (0,4);
	\draw[knot, -]  (2,2) .. controls (2,3) and (3,3) .. (3,4);
	\draw[knot, -] (1,4) .. controls (1,3) and (2,3) .. (2,4);
	\draw[knot, -] (0,4) .. controls (0,3.75) and (1,3.75) .. (1,4);
	\draw[knot, -] (0,4) .. controls (0,4.25) and (1,4.25) .. (1,4);
	\draw[knot, -] (2,4) .. controls (2,3.75) and (3,3.75) .. (3,4);
	\draw[knot, -] (2,4) .. controls (2,4.25) and (3,4.25) .. (3,4);
	\draw[knot, -] (1,2) .. controls (1,1.75) and (2,1.75) .. (2,2);
	\draw[knot, dashed, -] (1,2) .. controls (1,2.25) and (2,2.25) .. (2,2);
    }$} (B);
    \draw[->] (B) to node[above=5pt, pos=0.7] {$\epsilon_{\star0}\,
    \tikz[baseline={([yshift=-.5ex]current bounding box.center)}, scale=.35]{
	\draw[knot, -] (0,0) .. controls (0,1) and (1,1) .. (1,2);
	\draw[knot, -] (1,0) .. controls (1,1) and (2,1) .. (2,0);
	\draw[knot, -] (3,0) .. controls (3,1) and (2,1) .. (2,2);
	\draw[knot, -] (0,0) .. controls (0,-.25) and (1,-.25) .. (1,0);
	\draw[dashed, knot, -] (0,0) .. controls (0,.25) and (1,.25) .. (1,0);
	\draw[knot, -] (2,0) .. controls (2,-.25) and (3,-.25) .. (3,0);
	\draw[dashed, knot, -] (2,0) .. controls (2,.25) and (3,.25) .. (3,0);
	\draw[knot, -] (1,2) .. controls (1,1.75) and (2,1.75) .. (2,2);
	\draw[knot, -] (1,2) .. controls (1,2.25) and (2,2.25) .. (2,2);
}
    $} (D);
    \draw[->] (A) to node[below=5pt, pos=0.3]{$
    \epsilon_{\star1} \, \tikz[baseline={([yshift=-.5ex]current bounding box.center)}, scale=.35]{
	\draw[knot, -]  (1,2) .. controls (1,3) and (0,3) .. (0,4);
	\draw[knot, -]  (2,2) .. controls (2,3) and (3,3) .. (3,4);
	\draw[knot, -] (1,4) .. controls (1,3) and (2,3) .. (2,4);
	\draw[knot, -] (0,4) .. controls (0,3.75) and (1,3.75) .. (1,4);
	\draw[knot, -] (0,4) .. controls (0,4.25) and (1,4.25) .. (1,4);
	\draw[knot, -] (2,4) .. controls (2,3.75) and (3,3.75) .. (3,4);
	\draw[knot, -] (2,4) .. controls (2,4.25) and (3,4.25) .. (3,4);
	\draw[knot, -] (1,2) .. controls (1,1.75) and (2,1.75) .. (2,2);
	\draw[knot, dashed, -] (1,2) .. controls (1,2.25) and (2,2.25) .. (2,2);
    }
    $}  (C);
    \draw[->] (C) to node[below] {$\epsilon_{0\star}\,
    \tikz[baseline={([yshift=-.5ex]current bounding box.center)}, scale=.35]{
	\draw[knot, -] (0,0) .. controls (0,1) and (1,1) .. (1,2);
	\draw[knot, -] (1,0) .. controls (1,1) and (2,1) .. (2,0);
	\draw[knot, -] (3,0) .. controls (3,1) and (2,1) .. (2,2);
	\draw[knot, -] (0,0) .. controls (0,-.25) and (1,-.25) .. (1,0);
	\draw[dashed, knot, -] (0,0) .. controls (0,.25) and (1,.25) .. (1,0);
	\draw[knot, -] (2,0) .. controls (2,-.25) and (3,-.25) .. (3,0);
	\draw[dashed, knot, -] (2,0) .. controls (2,.25) and (3,.25) .. (3,0);
	\draw[knot, -] (1,2) .. controls (1,1.75) and (2,1.75) .. (2,2);
	\draw[knot, -] (1,2) .. controls (1,2.25) and (2,2.25) .. (2,2);
}
    $} (D);
    \draw[<-] (F) to[bend left=10pt] node[left=5pt,pos=0.2]{$
    c \left(
\tikz[baseline={([yshift=-.5ex]current bounding box.center)}, scale=.35]{
    \draw[knot, -] (0,2) .. controls (0,1.75) and (1,1.75) .. (1,2);
    \draw[knot, -] (0,2) .. controls (0,2.25) and (1,2.25) .. (1,2);
    \draw[knot, -] (2,2) .. controls (2,1.75) and (3,1.75) .. (3,2);
    \draw[knot, -] (2,2) .. controls (2,2.25) and (3,2.25) .. (3,2);
    \draw[knot, -] (0,0) .. controls (0,-.25) and (1,-.25) .. (1,0);
    \draw[dashed, knot, -] (0,0) .. controls (0,.25) and (1,.25) .. (1,0);
    \draw[knot, -] (2,0) .. controls (2,-.25) and (3,-.25) .. (3,0);
    \draw[dashed, knot, -] (2,0) .. controls (2,.25) and (3,.25) .. (3,0);
    \draw[knot, -] (0,0) .. controls (0,0.75) and (1,0.75) .. (1,0);
    \draw[knot, -] (2,0) .. controls (2,0.65) and (0,0.65) .. (0,2);
    \draw[knot, -] (3,0) -- (3,2);
    \draw[knot, -] (1,2) .. controls (1,1.25) and (2,1.25) .. (2,2);
}
+
\tikz[baseline={([yshift=-.5ex]current bounding box.center)}, scale=.35]{
    \draw[knot, -] (0,2) .. controls (0,1.75) and (1,1.75) .. (1,2);
    \draw[knot, -] (0,2) .. controls (0,2.25) and (1,2.25) .. (1,2);
    \draw[knot, -] (2,2) .. controls (2,1.75) and (3,1.75) .. (3,2);
    \draw[knot, -] (2,2) .. controls (2,2.25) and (3,2.25) .. (3,2);
    \draw[knot, -] (0,0) .. controls (0,-.25) and (1,-.25) .. (1,0);
    \draw[dashed, knot, -] (0,0) .. controls (0,.25) and (1,.25) .. (1,0);
    \draw[knot, -] (2,0) .. controls (2,-.25) and (3,-.25) .. (3,0);
    \draw[dashed, knot, -] (2,0) .. controls (2,.25) and (3,.25) .. (3,0);
    \draw[knot, -] (2,2) .. controls (2,1.25) and (3,1.25) .. (3,2);
    \draw[knot, -] (3,0) .. controls (3,1.35) and (1,1.35) .. (1,2);
    \draw[knot, -] (0,0) -- (0,2);
    \draw[knot, -] (1,0) .. controls (1,0.75) and (2,0.75) .. (2,0);
}
\right)
    $} (B);
    \draw[<-] (F) to[bend right=15pt] node[right]{$-a\epsilon_{1\star}\epsilon_{\star1}c
    \tikz[baseline={([yshift=-.5ex]current bounding box.center)}, scale=.35]{
    \draw[knot, -] (0,2) .. controls (0,1.75) and (1,1.75) .. (1,2);
    \draw[knot, -] (0,2) .. controls (0,2.25) and (1,2.25) .. (1,2);
    \draw[knot, -] (2,2) .. controls (2,1.75) and (3,1.75) .. (3,2);
    \draw[knot, -] (2,2) .. controls (2,2.25) and (3,2.25) .. (3,2);
    \draw[knot, -] (0,0) .. controls (0,-.25) and (1,-.25) .. (1,0);
    \draw[dashed, knot, -] (0,0) .. controls (0,.25) and (1,.25) .. (1,0);
    \draw[knot, -] (2,0) .. controls (2,-.25) and (3,-.25) .. (3,0);
    \draw[dashed, knot, -] (2,0) .. controls (2,.25) and (3,.25) .. (3,0);
    \draw[knot, -] (0,0) .. controls (0,0.75) and (1,0.75) .. (1,0);
    \draw[knot, -] (2,0) .. controls (2,0.65) and (0,0.65) .. (0,2);
    \draw[knot, -] (3,0) -- (3,2);
    \draw[knot, -] (1,2) .. controls (1,1.25) and (2,1.25) .. (2,2);
}
    $} (C);
    \draw[->, white, line width=5pt] (A) -- (C);
    \draw[->] (A) -- (C);    
    }
	\caption{Chain maps on $\widetilde{\mathrm{PKE}}_2$ for a negative Reidemeister II move with the first closure.}
	\label{fig:chainmapRII-1}
\end{figure}

We conclude this case by providing a chain homotopy equivalence between $\widetilde{\mathrm{PKE}}_2$ and odd Khovanov homology. The maps on odd Khovanov homology induced by a Reidemeister II move are given in Figure \ref{fig:RIIodd}. The signs of these maps also depend on a choice of decorations. We again use $a$ to distinguish these sign assignments. It is straightforward to check that the chain maps for removing two crossings are identical, and the chain maps for adding two crossings are chain homotopic by the homotopy pictured in red in Figure \ref{fig:chainmapRII+1}. Hence, we have proven that plane Floer homology induces the same map as odd Khovanov homology for the two component unlink.

\begin{figure}[htbp]
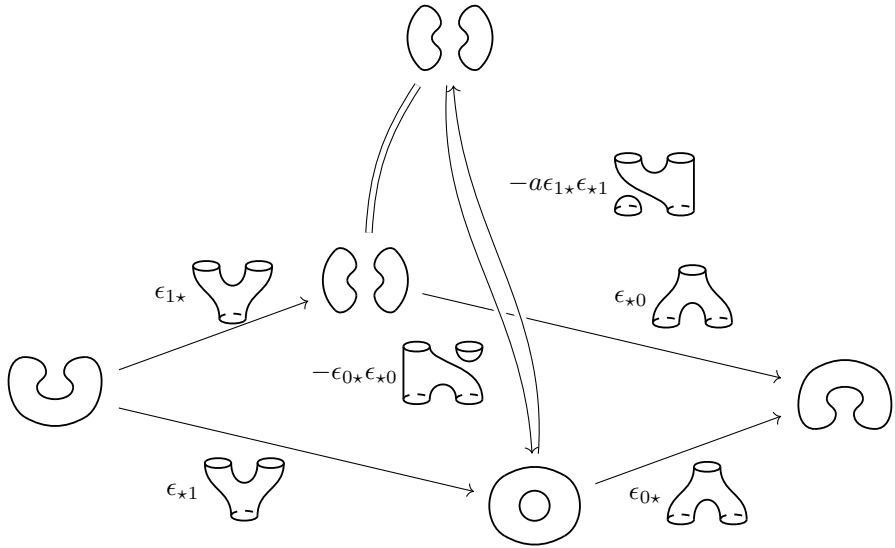

	\centering
\tikz[baseline={([yshift=-.5ex]current bounding box.center)}, scale=1.5]
    {
    \node (A) at (0,0) {$
    \tikz[baseline={([yshift=-.5ex]current bounding box.center)}, scale=0.8]
			{
				\draw[knot] (0,0) to[out=-30, in=-150] (1,0);
				\draw[knot] (0,1) to[out=0, in=90] (0.35, 0.75);
				\draw[knot] (0.35, 0.75) to[out=-90, in=90] (0.2, 0.5);
				\draw[knot] (0.2, 0.5) to[out=-90, in=180] (0.5, 0.3);
				\draw[knot] (0.5, 0.3) to[out=0, in=-90] (0.8, 0.5);
				\draw[knot] (0.8, 0.5) to[out=90, in=-90] (0.65, 0.75);
				\draw[knot] (0.65, 0.75) to[out=90, in=180] (1,1);
                \draw[knot] (0,0) to[out=-210, in=180] (0,1);
                \draw[knot] (1,0) to[out=-330, in=0] (1,1);
			}
    $};
    \node (B) at (2.75,1) {$
    			\tikz[baseline={([yshift=-.5ex]current bounding box.center)}, scale=0.8]
			{
				\draw[knot] (0,1) to[out=45, in=90] (0.35, 0.75);
				\draw[knot] (0.35, 0.75) to[out=-90, in=90] (0.2, 0.5);
				\draw[knot] (0.2, 0.5) to[out=-90, in=90] (0.35, 0.25);
				\draw[knot] (0.35, 0.25) to[out=-90, in=-45] (0,0);
				\draw[knot] (1,1) to[out=135, in=90] (0.65, 0.75);
				\draw[knot] (0.65, 0.75) to[out=-90, in=90] (0.8, 0.5);
				\draw[knot] (0.8, 0.5) to[out=-90, in=90] (0.65, 0.25);
				\draw[knot] (0.65, 0.25) to[out=-90, in=-135] (1,0);
                \draw[knot] (0,0) to[out=135, in=-135] (0,1);
                \draw[knot] (1,0) to[out=45, in=-45] (1,1);
			}
    $};
    \node (C) at (4.25,-1) {$
    			\tikz[baseline={([yshift=-.5ex]current bounding box.center)}, scale=0.8]
			{
				\draw[knot] (0,0) to[out=-30, in=-150] (1,0);
				\draw[knot] (0,1) to[out=30, in=150] (1,1);
                \draw[knot] (0.5,0.5) circle (0.25cm);
                \draw[knot] (0,0) to[out=-210, in=-150] (0,1);
                \draw[knot] (1,0) to[out=30, in=-30] (1,1);
			}
    $};
    \node (D) at (7,0) {$
    			\tikz[baseline={([yshift=-.5ex]current bounding box.center)}, scale=0.8, rotate=180]
			{
				\draw[knot] (0,0) to[out=-30, in=-150] (1,0);
				\draw[knot] (0,1) to[out=0, in=90] (0.35, 0.75);
				\draw[knot] (0.35, 0.75) to[out=-90, in=90] (0.2, 0.5);
				\draw[knot] (0.2, 0.5) to[out=-90, in=180] (0.5, 0.3);
				\draw[knot] (0.5, 0.3) to[out=0, in=-90] (0.8, 0.5);
				\draw[knot] (0.8, 0.5) to[out=90, in=-90] (0.65, 0.75);
				\draw[knot] (0.65, 0.75) to[out=90, in=180] (1,1);
                \draw[knot] (0,0) to[out=-210, in=180] (0,1);
                \draw[knot] (1,0) to[out=-330, in=0] (1,1);
			}
    $};
    \node (E) at (3.5,3.15) {$
        			\tikz[baseline={([yshift=-.5ex]current bounding box.center)}, scale=0.8]
			{
				\draw[knot] (0,1) to[out=45, in=90] (0.35, 0.75);
				\draw[knot] (0.35, 0.75) to[out=-90, in=90] (0.2, 0.5);
				\draw[knot] (0.2, 0.5) to[out=-90, in=90] (0.35, 0.25);
				\draw[knot] (0.35, 0.25) to[out=-90, in=-45] (0,0);
				\draw[knot] (1,1) to[out=135, in=90] (0.65, 0.75);
				\draw[knot] (0.65, 0.75) to[out=-90, in=90] (0.8, 0.5);
				\draw[knot] (0.8, 0.5) to[out=-90, in=90] (0.65, 0.25);
				\draw[knot] (0.65, 0.25) to[out=-90, in=-135] (1,0);
                \draw[knot] (0,0) to[out=135, in=-135] (0,1);
                \draw[knot] (1,0) to[out=45, in=-45] (1,1);
			}
    $};
    \draw[->] (A) to node[above]{$\epsilon_{1\star}\,\tikz[baseline={([yshift=-.5ex]current bounding box.center)}, scale=.35]{
	\draw[knot, -]  (1,2) .. controls (1,3) and (0,3) .. (0,4);
	\draw[knot, -]  (2,2) .. controls (2,3) and (3,3) .. (3,4);
	\draw[knot, -] (1,4) .. controls (1,3) and (2,3) .. (2,4);
	\draw[knot, -] (0,4) .. controls (0,3.75) and (1,3.75) .. (1,4);
	\draw[knot, -] (0,4) .. controls (0,4.25) and (1,4.25) .. (1,4);
	\draw[knot, -] (2,4) .. controls (2,3.75) and (3,3.75) .. (3,4);
	\draw[knot, -] (2,4) .. controls (2,4.25) and (3,4.25) .. (3,4);
	\draw[knot, -] (1,2) .. controls (1,1.75) and (2,1.75) .. (2,2);
	\draw[knot, dashed, -] (1,2) .. controls (1,2.25) and (2,2.25) .. (2,2);
    }$} (B);
    \draw[->] (B) to node[above=5pt, pos=0.7] {$\epsilon_{\star0}\,
    \tikz[baseline={([yshift=-.5ex]current bounding box.center)}, scale=.35]{
	\draw[knot, -] (0,0) .. controls (0,1) and (1,1) .. (1,2);
	\draw[knot, -] (1,0) .. controls (1,1) and (2,1) .. (2,0);
	\draw[knot, -] (3,0) .. controls (3,1) and (2,1) .. (2,2);
	\draw[knot, -] (0,0) .. controls (0,-.25) and (1,-.25) .. (1,0);
	\draw[dashed, knot, -] (0,0) .. controls (0,.25) and (1,.25) .. (1,0);
	\draw[knot, -] (2,0) .. controls (2,-.25) and (3,-.25) .. (3,0);
	\draw[dashed, knot, -] (2,0) .. controls (2,.25) and (3,.25) .. (3,0);
	\draw[knot, -] (1,2) .. controls (1,1.75) and (2,1.75) .. (2,2);
	\draw[knot, -] (1,2) .. controls (1,2.25) and (2,2.25) .. (2,2);
}
    $} (D);
    \draw[->] (A) to node[below=5pt, pos=0.3]{$
    \epsilon_{\star1} \, \tikz[baseline={([yshift=-.5ex]current bounding box.center)}, scale=.35]{
	\draw[knot, -]  (1,2) .. controls (1,3) and (0,3) .. (0,4);
	\draw[knot, -]  (2,2) .. controls (2,3) and (3,3) .. (3,4);
	\draw[knot, -] (1,4) .. controls (1,3) and (2,3) .. (2,4);
	\draw[knot, -] (0,4) .. controls (0,3.75) and (1,3.75) .. (1,4);
	\draw[knot, -] (0,4) .. controls (0,4.25) and (1,4.25) .. (1,4);
	\draw[knot, -] (2,4) .. controls (2,3.75) and (3,3.75) .. (3,4);
	\draw[knot, -] (2,4) .. controls (2,4.25) and (3,4.25) .. (3,4);
	\draw[knot, -] (1,2) .. controls (1,1.75) and (2,1.75) .. (2,2);
	\draw[knot, dashed, -] (1,2) .. controls (1,2.25) and (2,2.25) .. (2,2);
    }
    $}  (C);
    \draw[->] (C) to node[below] {$\epsilon_{0\star}\,
    \tikz[baseline={([yshift=-.5ex]current bounding box.center)}, scale=.35]{
	\draw[knot, -] (0,0) .. controls (0,1) and (1,1) .. (1,2);
	\draw[knot, -] (1,0) .. controls (1,1) and (2,1) .. (2,0);
	\draw[knot, -] (3,0) .. controls (3,1) and (2,1) .. (2,2);
	\draw[knot, -] (0,0) .. controls (0,-.25) and (1,-.25) .. (1,0);
	\draw[dashed, knot, -] (0,0) .. controls (0,.25) and (1,.25) .. (1,0);
	\draw[knot, -] (2,0) .. controls (2,-.25) and (3,-.25) .. (3,0);
	\draw[dashed, knot, -] (2,0) .. controls (2,.25) and (3,.25) .. (3,0);
	\draw[knot, -] (1,2) .. controls (1,1.75) and (2,1.75) .. (2,2);
	\draw[knot, -] (1,2) .. controls (1,2.25) and (2,2.25) .. (2,2);
}
    $} (D);
    \draw[double, double distance=2pt] (E) to[bend right=15pt] (B);
    \draw[->, white, line width=5pt] (A) -- (C);
    \draw[->] (A) -- (C);
    %
    %
    \draw[->, white, line width=4pt] (E) to[out=-94,in=93] (C);
    \draw[->, white, line width=4pt] (C) to[out=86,in=-86] (E);
    \draw[->] (E) to[out=-94,in=93] node[left=10pt, pos=0.8]{$
    -\epsilon_{0\star}\epsilon_{\star0}\,\tikz[baseline={([yshift=-.5ex]current bounding box.center)}, scale=.35]{
    \draw[knot, -] (0,2) .. controls (0,1.75) and (1,1.75) .. (1,2);
    \draw[knot, -] (0,2) .. controls (0,2.25) and (1,2.25) .. (1,2);
    \draw[knot, -] (2,2) .. controls (2,1.75) and (3,1.75) .. (3,2);
    \draw[knot, -] (2,2) .. controls (2,2.25) and (3,2.25) .. (3,2);
    \draw[knot, -] (0,0) .. controls (0,-.25) and (1,-.25) .. (1,0);
    \draw[dashed, knot, -] (0,0) .. controls (0,.25) and (1,.25) .. (1,0);
    \draw[knot, -] (2,0) .. controls (2,-.25) and (3,-.25) .. (3,0);
    \draw[dashed, knot, -] (2,0) .. controls (2,.25) and (3,.25) .. (3,0);
    \draw[knot, -] (2,2) .. controls (2,1.25) and (3,1.25) .. (3,2);
    \draw[knot, -] (3,0) .. controls (3,1.35) and (1,1.35) .. (1,2);
    \draw[knot, -] (0,0) -- (0,2);
    \draw[knot, -] (1,0) .. controls (1,0.75) and (2,0.75) .. (2,0);
}
    $}(C);
    \draw[->] (C) to[out=86,in=-86] node[right=10pt, pos=0.75]{$
    -a\epsilon_{1\star}\epsilon_{\star1}\,\tikz[baseline={([yshift=-.5ex]current bounding box.center)}, scale=.35]{
    \draw[knot, -] (0,2) .. controls (0,1.75) and (1,1.75) .. (1,2);
    \draw[knot, -] (0,2) .. controls (0,2.25) and (1,2.25) .. (1,2);
    \draw[knot, -] (2,2) .. controls (2,1.75) and (3,1.75) .. (3,2);
    \draw[knot, -] (2,2) .. controls (2,2.25) and (3,2.25) .. (3,2);
    \draw[knot, -] (0,0) .. controls (0,-.25) and (1,-.25) .. (1,0);
    \draw[dashed, knot, -] (0,0) .. controls (0,.25) and (1,.25) .. (1,0);
    \draw[knot, -] (2,0) .. controls (2,-.25) and (3,-.25) .. (3,0);
    \draw[dashed, knot, -] (2,0) .. controls (2,.25) and (3,.25) .. (3,0);
    \draw[knot, -] (0,0) .. controls (0,0.75) and (1,0.75) .. (1,0);
    \draw[knot, -] (2,0) .. controls (2,0.65) and (0,0.65) .. (0,2);
    \draw[knot, -] (3,0) -- (3,2);
    \draw[knot, -] (1,2) .. controls (1,1.25) and (2,1.25) .. (2,2);
}
    $} (E);
    }
	\caption{Chain maps for a Reidemeister II move on odd Khovanov homology.}
	\label{fig:RIIodd}
\end{figure}

\subsubsection{Second simple closure}
For the second closure, the chain map is given in Figure \ref{fig:RII2}. For adding two crossings, since the map $g_{\m(2,2)}^{\n(2,2)}$ is defined over $\overline{\mathbb{C}P^{2}}$, it is trivial. Hence, the only nontrivial map is given by $g_{\m(2,0)}^{\n(2,0)}\circ n_{\m(2,2)}^{\n(2,2)}$, which is equal to the map induced by the composition of two Reidemeister I moves. By naturality \cite{migdail2024functoriality}, the latter is chain homotopic to the map on odd Khovanov homology induced by a Reidemeister II move.

For removing two crossings, the map $n_{\m(1,2)}^{\n(1,2)}$ is defined over $\overline{\mathbb{C}P^{2}}$, so it's trivial. The reader can verify that the remaining maps are identical to the Reidemeister II chain homotopy maps on odd Khovanov homology. This concludes the argument for the second closure.

\begin{figure}[h!]
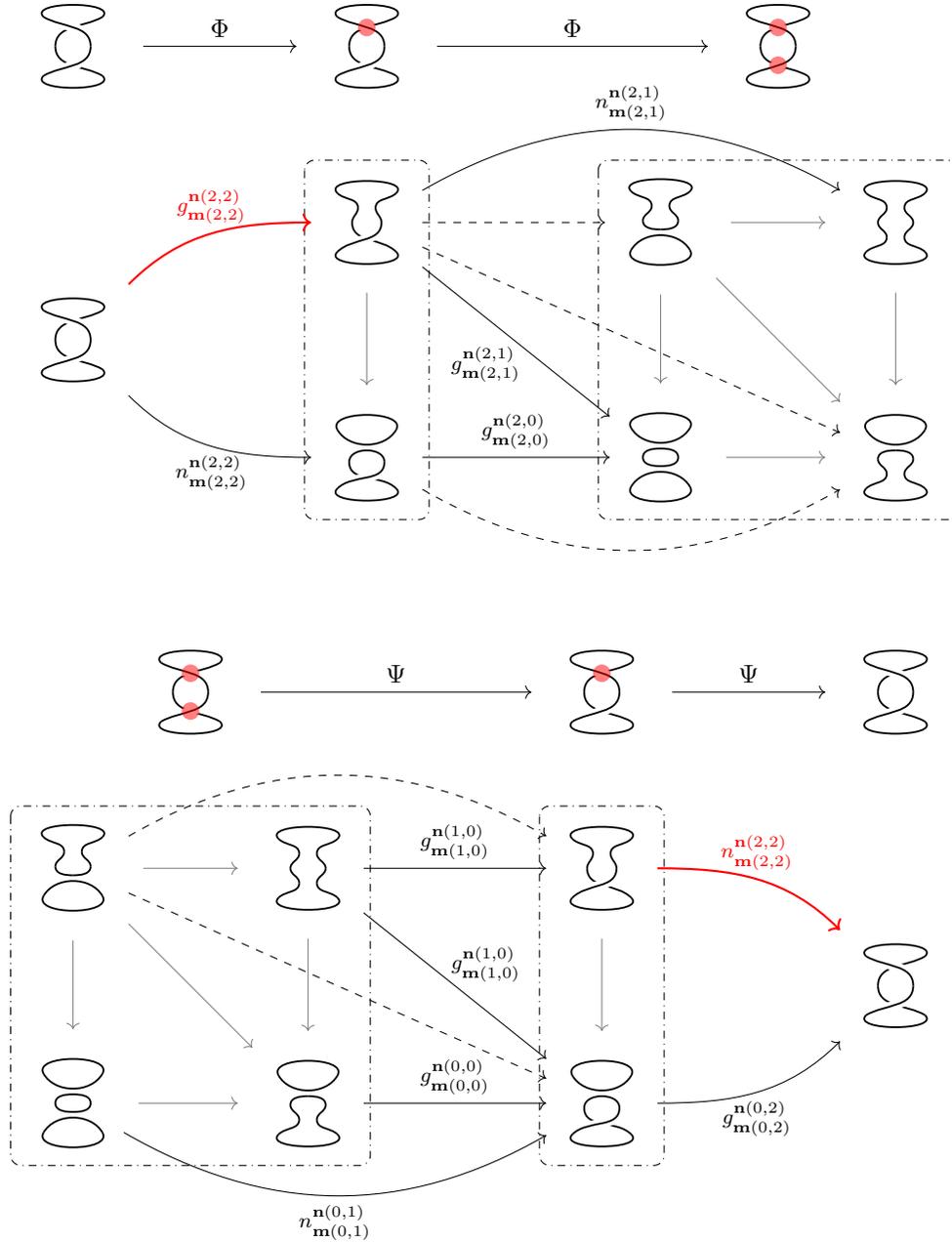

	\centering
\tikz[baseline={([yshift=-.5ex]current bounding box.center)}, scale=0.8]
	{
		\node(22) at (0,0) {$
			\tikz[baseline={([yshift=-.5ex]current bounding box.center)}, scale=0.8]
			{
				\draw[knot] (1,0) to[out=150, in=-90] (0.2, 0.5);
				\draw[knot] (0.2, 0.5) to[out=90, in=210] (1,1);
				\draw[knot, overcross] (0,0) to[out=30, in=-90] (0.8, 0.5);
				\draw[knot, overcross] (0.8, 0.5) to[out=90, in=-30] (0,1);
				\draw[knot] (0.8, 0.499) -- (0.8, 0.501);
                \draw[knot] (0,0) to[out=225, in=-45] (1,0);
                \draw[knot] (0,1) to[out=135, in=45] (1,1);
			}
			$};
		\node(12) at (5,2) {$
			\tikz[baseline={([yshift=-.5ex]current bounding box.center)}, scale=0.8]
			{
				\draw[knot] (0,1) to[out=-45, in=90] (0.35, 0.75);
				\draw[knot] (0.35, 0.75) to[out=-90, in=90] (0.25, 0.5);
				\draw[knot] (0.25, 0.5) to[out=-90, in=135] (1,0);
				\draw[knot, overcross] (0.75, 0.5) to[out=-90, in=45] (0,0);
				\draw[knot] (0.75, 0.5) to[out=90, in=-90] (0.65, 0.75);
				\draw[knot] (0.65, 0.75) to[out=90, in=225] (1,1);
                \draw[knot] (0,0) to[out=225, in=-45] (1,0);
                \draw[knot] (0,1) to[out=135, in=45] (1,1);
			}
			$};
		\node(02) at (5,-2) {$
			\tikz[baseline={([yshift=-.5ex]current bounding box.center)}, scale=0.8, rotate=-90]
			{
				\draw[knot] (0,0) to[out=30, in=-30] (0,1);
				\draw[knot] (0.55, 0.8) to[out=180, in=90] (0.35, 0.5);
				\draw[knot] (0.35, 0.5) to[out=-90, in=180] (0.55, 0.2);
				\draw[knot] (0.55, 0.2) to[out=0, in=225] (1, 1);
				\draw[knot, overcross] (1, 0) to[out=135, in=0] (0.55, 0.8);
                \draw[knot] (0,0) to[out=210, in=150] (0,1);
                \draw[knot] (1,0) to[out=-45, in=45] (1,1);
			}
			$};
		\node(11) at (10,2) {$
			\tikz[baseline={([yshift=-.5ex]current bounding box.center)}, scale=0.8]
			{
				\draw[knot] (0,0) to[out=60, in=120] (1,0);
				\draw[knot] (0,1) to[out=-45, in=90] (0.35, 0.75);
				\draw[knot] (0.35, 0.75) to[out=-90, in=90] (0.2, 0.5);
				\draw[knot] (0.2, 0.5) to[out=-90, in=180] (0.5, 0.35);
				\draw[knot] (0.5, 0.35) to[out=0, in=-90] (0.8, 0.5);
				\draw[knot] (0.8, 0.5) to[out=90, in=-90] (0.65, 0.75);
				\draw[knot] (0.65, 0.75) to[out=90, in=225] (1,1);
                \draw[knot] (0,0) to[out=240, in=-60] (1,0);
                \draw[knot] (0,1) to[out=135, in=45] (1,1);
			}
			$};
		\node(10) at (14,2) {$
			\tikz[baseline={([yshift=-.5ex]current bounding box.center)}, scale=0.8]
			{
				\draw[knot] (0,1) to[out=-45, in=90] (0.35, 0.75);
				\draw[knot] (0.35, 0.75) to[out=-90, in=90] (0.2, 0.5);
				\draw[knot] (0.2, 0.5) to[out=-90, in=90] (0.35, 0.25);
				\draw[knot] (0.35, 0.25) to[out=-90, in=45] (0,0);
				\draw[knot] (1,1) to[out=225, in=90] (0.65, 0.75);
				\draw[knot] (0.65, 0.75) to[out=-90, in=90] (0.8, 0.5);
				\draw[knot] (0.8, 0.5) to[out=-90, in=90] (0.65, 0.25);
				\draw[knot] (0.65, 0.25) to[out=-90, in=135] (1,0);
                \draw[knot] (0,0) to[out=225, in=-45] (1,0);
                \draw[knot] (0,1) to[out=135, in=45] (1,1);
			}
			$};
		\node(01) at (10,-2) {$
			\tikz[baseline={([yshift=-.5ex]current bounding box.center)}, scale=0.8]
			{
				\draw[knot] (0,0) to[out=60, in=120] (1,0);
				\draw[knot] (0,1) to[out=-60, in=-120] (1,1);
				\draw[knot] (0.5, 0.65) to[out=0, in=90] (0.8, 0.5) to[out=-90, in=0] (0.5, 0.35) to[out=180, in=-90] (0.2, 0.5) to[out=90, in=180] (0.5, 0.65);
                \draw[knot] (0,0) to[out=240, in=-60] (1,0);
                \draw[knot] (0,1) to[out=-240, in=60] (1,1);
			}
			$};
		\node(00) at (14,-2) {$
			\tikz[baseline={([yshift=-.5ex]current bounding box.center)}, scale=0.8, rotate=180]
			{
				\draw[knot] (0,0) to[out=60, in=120] (1,0);
				\draw[knot] (0,1) to[out=-45, in=90] (0.35, 0.75);
				\draw[knot] (0.35, 0.75) to[out=-90, in=90] (0.2, 0.5);
				\draw[knot] (0.2, 0.5) to[out=-90, in=180] (0.5, 0.35);
				\draw[knot] (0.5, 0.35) to[out=0, in=-90] (0.8, 0.5);
				\draw[knot] (0.8, 0.5) to[out=90, in=-90] (0.65, 0.75);
				\draw[knot] (0.65, 0.75) to[out=90, in=225] (1,1);
                \draw[knot] (0,0) to[out=240, in=-60] (1,0);
                \draw[knot] (0,1) to[out=135, in=45] (1,1);
			}
			$};
		\node(1) at (0,5) {$
			\tikz[baseline={([yshift=-.5ex]current bounding box.center)}, scale=0.8]
			{
				\draw[knot] (1,0) to[out=150, in=-90] (0.2, 0.5);
				\draw[knot] (0.2, 0.5) to[out=90, in=210] (1,1);
				\draw[knot, overcross] (0,0) to[out=30, in=-90] (0.8, 0.5);
				\draw[knot, overcross] (0.8, 0.5) to[out=90, in=-30] (0,1);
				\draw[knot] (0.8, 0.499) -- (0.8, 0.501);
                \draw[knot] (0,0) to[out=225, in=-45] (1,0);
                \draw[knot] (0,1) to[out=135, in=45] (1,1);
			}
			$};
		\draw[dashdotted, rounded corners] (10-1.5*0.707,2+1.5*0.707) rectangle (14+1.5*0.707,-2-1.5*0.707);
		\node(2) at (5,5) {$
			\tikz[baseline={([yshift=-.5ex]current bounding box.center)}, scale=0.8]
			{
				\draw[knot] (1,0) to[out=150, in=-90] (0.2, 0.5);
				\draw[knot] (0.2, 0.5) to[out=90, in=210] (1,1);
				\draw[knot, overcross] (0,0) to[out=30, in=-90] (0.8, 0.5);
				\draw[knot, overcross] (0.8, 0.5) to[out=90, in=-30] (0,1);
				\draw[knot] (0.8, 0.499) -- (0.8, 0.501);
                \draw[knot] (0,0) to[out=225, in=-45] (1,0);
                \draw[knot] (0,1) to[out=135, in=45] (1,1);
				\fill[red, fill opacity=0.5] (0.5, 0.8215) circle(0.15);
			}
			$};
		\draw[dashdotted, rounded corners] (5-1.5*0.707,2+1.5*0.707) rectangle (5+1.5*0.707,-2-1.5*0.707);
		\node(3) at (12,5) {$
			\tikz[baseline={([yshift=-.5ex]current bounding box.center)}, scale=0.8]
			{
				\draw[knot] (1,0) to[out=150, in=-90] (0.2, 0.5);
				\draw[knot] (0.2, 0.5) to[out=90, in=210] (1,1);
				\draw[knot, overcross] (0,0) to[out=30, in=-90] (0.8, 0.5);
				\draw[knot, overcross] (0.8, 0.5) to[out=90, in=-30] (0,1);
				\draw[knot] (0.8, 0.499) -- (0.8, 0.501);
                \draw[knot] (0,0) to[out=225, in=-45] (1,0);
                \draw[knot] (0,1) to[out=135, in=45] (1,1);
				\fill[red, fill opacity=0.5] (0.5, 0.8215) circle(0.15);
				\fill[red, fill opacity=0.5] (0.5, 0.1785) circle(0.15);
			}
			$};
		\draw[->, shorten >=2mm, shorten <=2mm] (1) --node[above]{$\Phi$} (2);
		\draw[->, shorten >=2mm, shorten <=2mm] (2) --node[above]{$\Phi$} (3);
		\draw[->, shorten >=2mm, shorten <=2mm, gray] (12) -- (02);
		\draw[->, shorten >=2mm, shorten <=2mm, gray] (11) -- (10);
		\draw[->, shorten >=2mm, shorten <=2mm, gray] (11) -- (01);
		\draw[->, shorten >=2mm, shorten <=2mm, gray] (10) -- (00);
		\draw[->, shorten >=2mm, shorten <=2mm, gray] (01) -- (00);
		\draw[->, gray] (11) -- (00);
		\draw[->, red, thick] (22) to[out=45, in=180] node[above]{\footnotesize$g_{\m(2,2)}^{\n(2,2)}$} (12);
		\draw[->] (22) to[out=-45, in=180] node[below]{\footnotesize$n_{\m(2,2)}^{\n(2,2)}$} (02);
		\draw[->, dashed] (12) -- (11);
		\draw[->] (12) to[out=30, in=150] node[above]{\footnotesize$n_{\m(2,1)}^{\n(2,1)}$} (10);
		\draw[->] (12) -- node[left=0.2cm, pos=0.65] {\footnotesize$g_{\m(2,1)}^{\n(2,1)}$} (01);
		\draw[->, dashed] (12) -- (00);
		\draw[->] (02) -- node[above]{\footnotesize$g_{\m(2,0)}^{\n(2,0)}$} (01);
		\draw[->, dashed] (02) to[out=-30, in=-150] (00);
\begin{scope}[shift={(0,-11)}]
		\node(22) at (14,0) {$
			\tikz[baseline={([yshift=-.5ex]current bounding box.center)}, scale=0.8]
			{
				\draw[knot] (1,0) to[out=150, in=-90] (0.2, 0.5);
				\draw[knot] (0.2, 0.5) to[out=90, in=210] (1,1);
				\draw[knot, overcross] (0,0) to[out=30, in=-90] (0.8, 0.5);
				\draw[knot, overcross] (0.8, 0.5) to[out=90, in=-30] (0,1);
				\draw[knot] (0.8, 0.499) -- (0.8, 0.501);
                \draw[knot] (0,0) to[out=225, in=-45] (1,0);
                \draw[knot] (0,1) to[out=135, in=45] (1,1);
			}
			$};
		\node(12) at (9,2) {$
			\tikz[baseline={([yshift=-.5ex]current bounding box.center)}, scale=0.8]
			{
				\draw[knot] (0,1) to[out=-45, in=90] (0.35, 0.75);
				\draw[knot] (0.35, 0.75) to[out=-90, in=90] (0.25, 0.5);
				\draw[knot] (0.25, 0.5) to[out=-90, in=135] (1,0);
				\draw[knot, overcross] (0.75, 0.5) to[out=-90, in=45] (0,0);
				\draw[knot] (0.75, 0.5) to[out=90, in=-90] (0.65, 0.75);
				\draw[knot] (0.65, 0.75) to[out=90, in=225] (1,1);
                \draw[knot] (0,0) to[out=225, in=-45] (1,0);
                \draw[knot] (0,1) to[out=135, in=45] (1,1);
			}
			$};
		\node(02) at (9,-2) {$
			\tikz[baseline={([yshift=-.5ex]current bounding box.center)}, scale=0.8, rotate=-90]
			{
				\draw[knot] (0,0) to[out=30, in=-30] (0,1);
				\draw[knot] (0.55, 0.8) to[out=180, in=90] (0.35, 0.5);
				\draw[knot] (0.35, 0.5) to[out=-90, in=180] (0.55, 0.2);
				\draw[knot] (0.55, 0.2) to[out=0, in=225] (1, 1);
				\draw[knot, overcross] (1, 0) to[out=135, in=0] (0.55, 0.8);
                \draw[knot] (0,0) to[out=210, in=150] (0,1);
                \draw[knot] (1,0) to[out=-45, in=45] (1,1);
            }
			$};
		\node(11) at (0,2) {$
			\tikz[baseline={([yshift=-.5ex]current bounding box.center)}, scale=0.8]
			{
				\draw[knot] (0,0) to[out=60, in=120] (1,0);
				\draw[knot] (0,1) to[out=-45, in=90] (0.35, 0.75);
				\draw[knot] (0.35, 0.75) to[out=-90, in=90] (0.2, 0.5);
				\draw[knot] (0.2, 0.5) to[out=-90, in=180] (0.5, 0.35);
				\draw[knot] (0.5, 0.35) to[out=0, in=-90] (0.8, 0.5);
				\draw[knot] (0.8, 0.5) to[out=90, in=-90] (0.65, 0.75);
				\draw[knot] (0.65, 0.75) to[out=90, in=225] (1,1);
                \draw[knot] (0,0) to[out=240, in=-60] (1,0);
                \draw[knot] (0,1) to[out=135, in=45] (1,1);
			}
			$};
		\node(10) at (4,2) {$
			\tikz[baseline={([yshift=-.5ex]current bounding box.center)}, scale=0.8]
			{
				\draw[knot] (0,1) to[out=-45, in=90] (0.35, 0.75);
				\draw[knot] (0.35, 0.75) to[out=-90, in=90] (0.2, 0.5);
				\draw[knot] (0.2, 0.5) to[out=-90, in=90] (0.35, 0.25);
				\draw[knot] (0.35, 0.25) to[out=-90, in=45] (0,0);
				\draw[knot] (1,1) to[out=225, in=90] (0.65, 0.75);
				\draw[knot] (0.65, 0.75) to[out=-90, in=90] (0.8, 0.5);
				\draw[knot] (0.8, 0.5) to[out=-90, in=90] (0.65, 0.25);
				\draw[knot] (0.65, 0.25) to[out=-90, in=135] (1,0);
                \draw[knot] (0,0) to[out=225, in=-45] (1,0);
                \draw[knot] (0,1) to[out=135, in=45] (1,1);
			}
			$};
		\node(01) at (0,-2) {$
			\tikz[baseline={([yshift=-.5ex]current bounding box.center)}, scale=0.8]
			{
				\draw[knot] (0,0) to[out=60, in=120] (1,0);
				\draw[knot] (0,1) to[out=-60, in=-120] (1,1);
				\draw[knot] (0.5, 0.65) to[out=0, in=90] (0.8, 0.5) to[out=-90, in=0] (0.5, 0.35) to[out=180, in=-90] (0.2, 0.5) to[out=90, in=180] (0.5, 0.65);
                \draw[knot] (0,0) to[out=240, in=-60] (1,0);
                \draw[knot] (0,1) to[out=-240, in=60] (1,1);
			}
			$};
		\node(00) at (4,-2) {$
			\tikz[baseline={([yshift=-.5ex]current bounding box.center)}, scale=0.8, rotate=180]
			{
				\draw[knot] (0,0) to[out=60, in=120] (1,0);
				\draw[knot] (0,1) to[out=-45, in=90] (0.35, 0.75);
				\draw[knot] (0.35, 0.75) to[out=-90, in=90] (0.2, 0.5);
				\draw[knot] (0.2, 0.5) to[out=-90, in=180] (0.5, 0.35);
				\draw[knot] (0.5, 0.35) to[out=0, in=-90] (0.8, 0.5);
				\draw[knot] (0.8, 0.5) to[out=90, in=-90] (0.65, 0.75);
				\draw[knot] (0.65, 0.75) to[out=90, in=225] (1,1);
                \draw[knot] (0,0) to[out=240, in=-60] (1,0);
                \draw[knot] (0,1) to[out=135, in=45] (1,1);
			}
			$};
		\node(1) at (2,5) {$
			\tikz[baseline={([yshift=-.5ex]current bounding box.center)}, scale=0.8]
			{
				\draw[knot] (1,0) to[out=150, in=-90] (0.2, 0.5);
				\draw[knot] (0.2, 0.5) to[out=90, in=210] (1,1);
				\draw[knot, overcross] (0,0) to[out=30, in=-90] (0.8, 0.5);
				\draw[knot, overcross] (0.8, 0.5) to[out=90, in=-30] (0,1);
				\draw[knot] (0.8, 0.499) -- (0.8, 0.501);
                \draw[knot] (0,0) to[out=225, in=-45] (1,0);
                \draw[knot] (0,1) to[out=135, in=45] (1,1);
				\fill[red, fill opacity=0.5] (0.5, 0.8215) circle(0.15);
				\fill[red, fill opacity=0.5] (0.5, 0.1785) circle(0.15);
			}
			$};
		\draw[dashdotted, rounded corners] (-1.5*0.707,2+1.5*0.707) rectangle (4+1.5*0.707,-2-1.5*0.707);
		\node(2) at (9,5) {$
			\tikz[baseline={([yshift=-.5ex]current bounding box.center)}, scale=0.8]
			{
				\draw[knot] (1,0) to[out=150, in=-90] (0.2, 0.5);
				\draw[knot] (0.2, 0.5) to[out=90, in=210] (1,1);
				\draw[knot, overcross] (0,0) to[out=30, in=-90] (0.8, 0.5);
				\draw[knot, overcross] (0.8, 0.5) to[out=90, in=-30] (0,1);
				\draw[knot] (0.8, 0.499) -- (0.8, 0.501);
                \draw[knot] (0,0) to[out=225, in=-45] (1,0);
                \draw[knot] (0,1) to[out=135, in=45] (1,1);
				\fill[red, fill opacity=0.5] (0.5, 0.8215) circle(0.15);
			}
			$};
		\draw[dashdotted, rounded corners] (9-1.5*0.707,2+1.5*0.707) rectangle (9+1.5*0.707,-2-1.5*0.707);
		\node(3) at (14,5) {$
			\tikz[baseline={([yshift=-.5ex]current bounding box.center)}, scale=0.8]
			{
				\draw[knot] (1,0) to[out=150, in=-90] (0.2, 0.5);
				\draw[knot] (0.2, 0.5) to[out=90, in=210] (1,1);
				\draw[knot, overcross] (0,0) to[out=30, in=-90] (0.8, 0.5);
				\draw[knot, overcross] (0.8, 0.5) to[out=90, in=-30] (0,1);
				\draw[knot] (0.8, 0.499) -- (0.8, 0.501);
                \draw[knot] (0,0) to[out=225, in=-45] (1,0);
                \draw[knot] (0,1) to[out=135, in=45] (1,1);
			}
			$};
		\draw[->, shorten >=2mm, shorten <=2mm] (1) --node[above]{$\Psi$} (2);
		\draw[->, shorten >=2mm, shorten <=2mm] (2) --node[above]{$\Psi$} (3);
		\draw[->, shorten >=2mm, shorten <=2mm, gray] (12) -- (02);
		\draw[->, shorten >=2mm, shorten <=2mm, gray] (11) -- (10);
		\draw[->, shorten >=2mm, shorten <=2mm, gray] (11) -- (01);
		\draw[->, shorten >=2mm, shorten <=2mm, gray] (10) -- (00);
		\draw[->, shorten >=2mm, shorten <=2mm, gray] (01) -- (00);
		\draw[->, gray] (11) -- (00);
        \draw[->, red, thick] (12) to[out=0, in=135] node[above]{\footnotesize$n_{\m(2,2)}^{\n(2,2)}$} (22);
        \draw[->] (02) to[out=0, in=-135] node[below]{\footnotesize$g_{\m(0,2)}^{\n(0,2)}$} (22);
        \draw[->] (10) -- node[above]{\footnotesize$g_{\m(1,0)}^{\n(1,0)}$} (12);
        \draw[->] (10) -- node[right=0.2cm, pos=0.35]{\footnotesize$g_{\m(1,0)}^{\n(1,0)}$} (02);
        \draw[->] (00) -- node[above]{\footnotesize$g_{\m(0,0)}^{\n(0,0)}$} (02);
        \draw[->] (01) to[out=-30, in=210] node[below]{\footnotesize$n_{\m(0,1)}^{\n(0,1)}$} (02);
        \draw[->, dashed] (11) to[out=30, in=150] (12);
        \draw[->, dashed] (11) -- (02);
\end{scope}
	}
	\caption{Plane knot complexes and chain maps for Reidemeister II moves of the second closure.}
	\label{fig:RII2}
\end{figure}

\subsubsection{External crossings}
Suppose there are $n$ external crossings. We may regard the chain complex as an $n$ dimensional hypercube, whose vertices are given by the chain complexes associated to a simple closure of a Reidemeister II diagram together with the external components, resolved with respect to the corresponding cube index. We have shown that the map induced by a Reidemeister II move is chain homotopic to the one induced on odd Khovanov homology when we restrict to the components whose domain and target have the same cube index. However, there are two types of maps whose target and source can have cube indexes which differ by 1. These are depicted in Figure \ref{fig:R2extra}. The red arrows consist of compositions of differentials (which change the cube index by $1$) and the chain homotopies described previously. The blue arrows consist of maps which were neglected in the simple closure cases, since they failed to preserve gradings in the absence of external crossings. When external crossings are included, however, these maps may become nontrivial. Hence, to generalize the chain homotopy to the case with external crossings, it remains to show that these additional maps sum to zero.
\vspace{-7.5mm}
\begin{figure}[h]
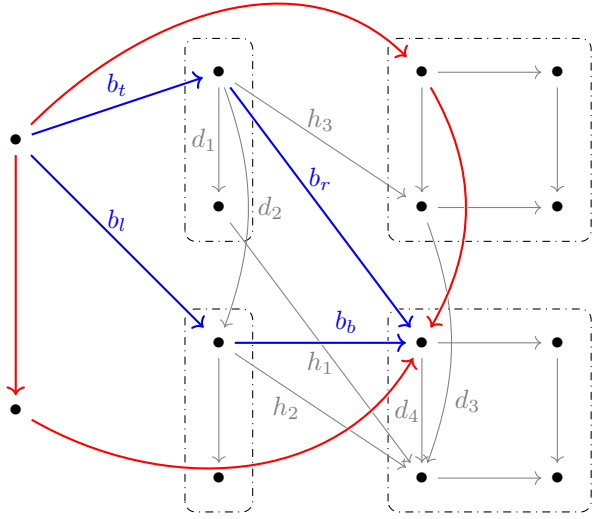

	\tikz[scale=0.9]{
		\draw[rounded corners, dashdotted] (2.5,-1.5) rectangle (3.5,1.5) node[pos=0.5] (rectA1){};
		\draw[rounded corners, dashdotted] (5.5, -1.5) rectangle (8.5,1.5) node[pos=0.5] (rectA2){};
		\node(A22) at (0,0) {$\bullet$};
		\node(A02) at (3,1) {$\bullet$};
		\node(A12) at (3,-1) {$\bullet$};
		\node(A00) at (6,1) {$\bullet$};
		\node(A10) at (6,-1) {$\bullet$};
		\node(A01) at (8,1) {$\bullet$};
		\node(A11) at (8,-1) {$\bullet$};
		\draw[->, gray] (A02) -- node[left=-1mm]{$d_1$}(A12); 
		\draw[->, gray] (A00) -- (A10);
		\draw[->, gray] (A00) -- (A01);
		\draw[->, gray] (A10) -- (A11);
		\draw[->, gray] (A01) -- (A11);
		\draw[rounded corners, dashdotted] (2.5,-5.5) rectangle (3.5,-2.5) node[pos=0.5] (rectB1){};
		\draw[rounded corners, dashdotted] (5.5, -5.5) rectangle (8.5,-2.5) node[pos=0.5] (rectB2){};
		\node(B22) at (0,-4) {$\bullet$};
		\node(B02) at (3,-3) {$\bullet$};
		\node(B12) at (3,-5) {$\bullet$};
		\node(B00) at (6,-3) {$\bullet$};
		\node(B10) at (6,-5) {$\bullet$};
		\node(B01) at (8,-3) {$\bullet$};
		\node(B11) at (8,-5) {$\bullet$};
		\draw[->, gray] (B02) -- (B12); 
		\draw[->, gray] (B00) --  node[left=-1mm]{$d_4$} (B10);
		\draw[->, gray] (B00) -- (B01);
		\draw[->, gray] (B10) -- (B11);
		\draw[->, gray] (B01) -- (B11);
		\draw[->, gray] (A02) to[bend left=20] node[right]{$d_2$} (B02);
		\draw[->, gray] (A12) -- node[below]{$h_1$} (B10);
		\draw[->, gray] (A02) -- node[above]{$h_3$} (A10);
		\draw[->, gray] (B02) -- node[below, pos=0.3]{$h_2$} (B10);
		\draw[->, gray] (A10) to[bend left=20] node[right, pos=0.75]{$d_3$} (B10);
		\draw[->, ultra thick, white] (A22) -- (A02);
		\draw[->, ultra thick, white] (A02) -- (B00);
		\draw[->, ultra thick, white] (A22) -- (B02);
		\draw[->, ultra thick, white] (B02) -- (B00);
		\draw[->, thick, blue] (A22) -- node[above]{$b_t$}(A02);
		\draw[->, thick, blue] (A02) -- node[above=1mm]{$b_r$}(B00);
		\draw[->, thick, blue] (A22) -- node[above]{$b_l$}(B02);
		\draw[->, thick, blue] (B02) -- node[above, pos=0.65]{$b_b$}(B00);
		\draw[->, ultra thick, white] (A22) to[out=45, in=135] (A00);
		\draw[->, ultra thick, white] (A00) to[out=-60, in=60] (B00);
		\draw[->, ultra thick, white] (A22) to[] (B22);
		\draw[->, ultra thick, white] (B22) to[out=-30, in=-120] (B00);
		\draw[->, thick, red] (A22) to[out=45, in=135] (A00);
		\draw[->, thick, red] (A00) to[out=-60, in=60] (B00);
		\draw[->, thick, red] (A22) to[] (B22);
		\draw[->, thick, red] (B22) to[out=-30, in=-120] (B00);
	}
	\caption{Extra maps which changes the cube index by $1$.}
	\label{fig:R2extra}
\end{figure}

\begin{figure}[p!]
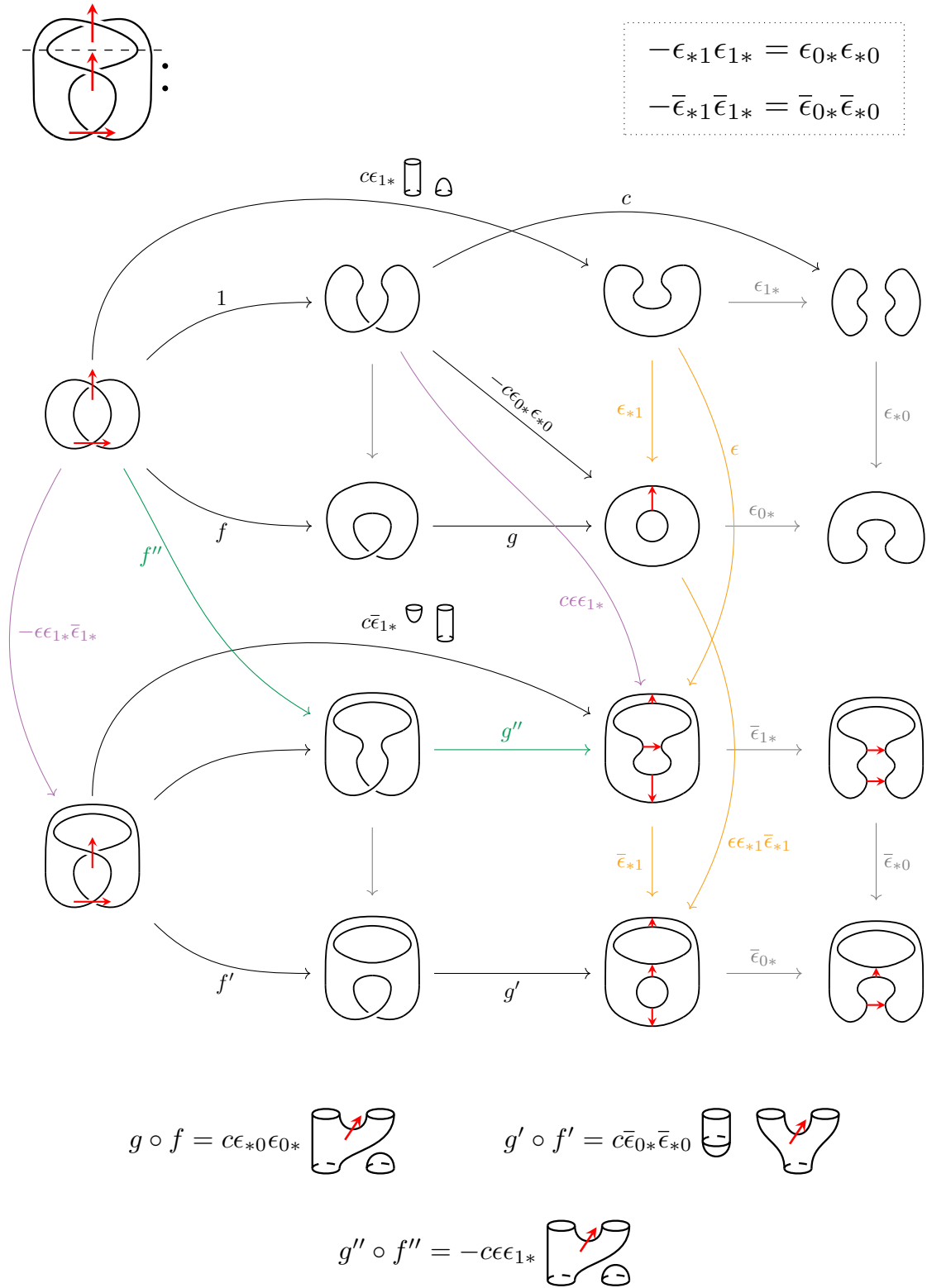

	\tikz[baseline={([yshift=-.5ex]current bounding box.center)}, scale=0.9]
	{
		\node(22) at (0,0) {$
			\tikz[baseline={([yshift=-.5ex]current bounding box.center)}, scale=1]
			{
				\draw[knot] (1,0) to[out=210, in=-90] (0.2, 0.5);
				\draw[knot] (0.2, 0.5) to[out=90, in=150] (1,1);
				\draw[knot, overcross] (0,0) to[out=-30, in=-90] (0.8, 0.5);
				\draw[knot, overcross] (0.8, 0.5) to[out=90, in=30] (0,1);
				\draw[knot] (0.8, 0.499) -- (0.8, 0.501);
				\draw[knot] (0,0) to[out=150, in=210] (0,1);
				\draw[knot] (1,0) to[out=30, in=-30] (1,1);
				\draw[-stealth, red, thick] (0.5,0.73) -- (0.5,1.23);
				\draw[-stealth, red, thick] (0.2,0.035) -- (0.8,0.035);
			}
			$};
		\node(12) at (5,2) {$
			\tikz[baseline={([yshift=-.5ex]current bounding box.center)}, scale=1]
			{
				\draw[knot] (0,1) to[out=30, in=90] (0.35, 0.75);
				\draw[knot] (0.35, 0.75) to[out=-90, in=90] (0.25, 0.5);
				\draw[knot] (0.25, 0.5) to[out=-90, in=210] (1,0);
				\draw[knot, overcross] (0.75, 0.5) to[out=-90, in=-30] (0,0);
				\draw[knot] (0.75, 0.5) to[out=90, in=-90] (0.65, 0.75);
				\draw[knot] (0.65, 0.75) to[out=90, in=150] (1,1);
				\draw[knot] (0,0) to[out=150, in=210] (0,1); 
				\draw[knot] (1,0) to[out=30, in=-30] (1,1);
			}
			$};
		\node(02) at (5,-2) {$
			\tikz[baseline={([yshift=-.5ex]current bounding box.center)}, scale=1, rotate=-90]
			{
				\draw[knot] (0,0) to[out=120, in=240] (0,1);
				\draw[knot] (0.55, 0.8) to[out=180, in=90] (0.35, 0.5);
				\draw[knot] (0.35, 0.5) to[out=-90, in=180] (0.55, 0.2);
				\draw[knot] (0.55, 0.2) to[out=0, in=-45] (1, 1);
				\draw[knot, overcross] (1, 0) to[out=45, in=0] (0.55, 0.8);
				\draw[knot] (0,0) to[out=300, in=-135] (1,0);
				\draw[knot] (0,1) to[out=60, in=135] (1,1);
			}
			$};
		\node(11) at (10,2) {$
			\tikz[baseline={([yshift=-.5ex]current bounding box.center)}, scale=1]
			{
				\draw[knot] (0,0) to[out=-30, in=-150] (1,0);
				\draw[knot] (0,1) to[out=0, in=90] (0.35, 0.75);
				\draw[knot] (0.35, 0.75) to[out=-90, in=90] (0.2, 0.5);
				\draw[knot] (0.2, 0.5) to[out=-90, in=180] (0.5, 0.3);
				\draw[knot] (0.5, 0.3) to[out=0, in=-90] (0.8, 0.5);
				\draw[knot] (0.8, 0.5) to[out=90, in=-90] (0.65, 0.75);
				\draw[knot] (0.65, 0.75) to[out=90, in=180] (1,1);
				\draw[knot] (0,0) to[out=-210, in=180] (0,1);
				\draw[knot] (1,0) to[out=-330, in=0] (1,1);
			}
			$};
		\node(10) at (14,2) {$
			\tikz[baseline={([yshift=-.5ex]current bounding box.center)}, scale=1]
			{
				\draw[knot] (0,1) to[out=45, in=90] (0.35, 0.75);
				\draw[knot] (0.35, 0.75) to[out=-90, in=90] (0.2, 0.5);
				\draw[knot] (0.2, 0.5) to[out=-90, in=90] (0.35, 0.25);
				\draw[knot] (0.35, 0.25) to[out=-90, in=-45] (0,0);
				\draw[knot] (1,1) to[out=135, in=90] (0.65, 0.75);
				\draw[knot] (0.65, 0.75) to[out=-90, in=90] (0.8, 0.5);
				\draw[knot] (0.8, 0.5) to[out=-90, in=90] (0.65, 0.25);
				\draw[knot] (0.65, 0.25) to[out=-90, in=-135] (1,0);
				\draw[knot] (0,0) to[out=135, in=-135] (0,1);
				\draw[knot] (1,0) to[out=45, in=-45] (1,1);
			}
			$};
		\node(01) at (10,-2) {$
			\tikz[baseline={([yshift=-.5ex]current bounding box.center)}, scale=1]
			{
				\draw[knot] (0,0) to[out=-30, in=-150] (1,0);
				\draw[knot] (0,1) to[out=30, in=150] (1,1);
				\draw[knot] (0.5,0.5) circle (0.25cm);
				\draw[knot] (0,0) to[out=-210, in=-150] (0,1);
				\draw[knot] (1,0) to[out=30, in=-30] (1,1);
				\draw[-stealth, red, thick] (0.5,0.75) -- (0.5,1.15);
			}
			$};
		\node(00) at (14,-2) {$
			\tikz[baseline={([yshift=-.5ex]current bounding box.center)}, scale=1, rotate=180]
			{
				\draw[knot] (0,0) to[out=-30, in=-150] (1,0);
				\draw[knot] (0,1) to[out=0, in=90] (0.35, 0.75);
				\draw[knot] (0.35, 0.75) to[out=-90, in=90] (0.2, 0.5);
				\draw[knot] (0.2, 0.5) to[out=-90, in=180] (0.5, 0.3);
				\draw[knot] (0.5, 0.3) to[out=0, in=-90] (0.8, 0.5);
				\draw[knot] (0.8, 0.5) to[out=90, in=-90] (0.65, 0.75);
				\draw[knot] (0.65, 0.75) to[out=90, in=180] (1,1);
				\draw[knot] (0,0) to[out=-210, in=180] (0,1);
				\draw[knot] (1,0) to[out=-330, in=0] (1,1);
			}
			$};
		\draw[->, shorten >=2mm, shorten <=2mm, gray] (12) -- (02);
		\draw[->, shorten >=2mm, shorten <=2mm, gray] (11) -- node[above]{$\epsilon_{1*}$} (10);
		\draw[->, shorten >=2mm, shorten <=2mm, YellowOrange] (11) -- (01) node[midway, left] {$\epsilon_{*1}$};
		\draw[->, shorten >=2mm, shorten <=2mm, gray] (10) -- node[right]{$\epsilon_{*0}$} (00);
		\draw[->, shorten >=2mm, shorten <=2mm, gray] (01) -- node[above]{$\epsilon_{0*}$} (00);
		\draw[->] (22) to[out=45, in=180] node[above]{$1$} (12);
		\draw[->] (22) to[out=-45, in=180]node[below]{$f$} (02);
		\draw[->] (12) to[out=30, in=150] node[above]{$c$} (10);
		\draw[->] (12) -- node [midway, above, sloped]{$-c \epsilon_{0*}\epsilon_{*0}$} (01);
		\draw[->] (02) -- node[below]{$g$} (01);
		\draw[->] (22) to[out=90, in=150] node[above, pos=0.7]{$c \epsilon_{1*} ~
			\tikz[baseline={([yshift=-.5ex]current bounding box.center)}, scale=.25]{
				\draw[knot, -, black] (0,0) .. controls (0,-0.25) and (1,-0.25) .. (1,0);
				\draw[knot, -, black, dashed] (0,0) .. controls (0,0.25) and (1,0.25) .. (1,0);
				\draw[knot, -, black] (0,2) .. controls (0,1.75) and (1,1.75) .. (1,2);
				\draw[knot, -, black] (0,2) .. controls (0,2.25) and (1,2.25) .. (1,2);
				\draw[knot, -, black] (2,0) .. controls (2,-.25) and (3,-.25) .. (3,0);
				\draw[dashed, knot, -, black] (2,0) .. controls (2,.25) and (3,.25) .. (3,0);
				\draw[knot, -, black] (2,0) .. controls (2,1.25) and (3,1.25) .. (3,0);
				\draw[knot, -, black] (0,0) -- (0,2);
				\draw[knot, -, black] (1,0) -- (1,2);
			}
			$} (11);
		\begin{scope}[yshift=-8cm]
			\node(B22) at (0,0) {$
				\tikz[baseline={([yshift=-.5ex]current bounding box.center)}, scale=1]
				{
					\draw[knot,white] (0,0) to[out=150, in=210]node(P)[pos=0.5, color=black]{} (0,1);
					\draw[knot, white] (1,0) to[out=30, in=-30]node(Q)[pos=0.5, color=black]{} (1,1);
					\draw[knot] (0,1) to[out=150, in=180, looseness=1.5] (0.5,1.45) to[out=0,in=30, looseness=1.5] (1,1);
					\draw[knot] (P.center) to[out=90, in=180, looseness=1.5] (0.5, 1.6) to[out=0, in=90, looseness=1.5] (Q.center);
					\draw[knot] (0,0) to[out=150, in=-90] (P.center);
					\draw[knot] (1,0) to[out=30, in=-90] (Q.center);
					\draw[knot] (1,0) to[out=210, in=-90] (0.2, 0.5);
					\draw[knot] (0.2, 0.5) to[out=90, in=210] (1,1);
					\draw[knot, overcross] (0,0) to[out=-30, in=-90] (0.8, 0.5);
					\draw[knot, overcross] (0.8, 0.5) to[out=90, in=-30] (0,1);
					\draw[knot] (0.8, 0.499) -- (0.8, 0.501);
					\draw[-stealth, red, thick] (0.5,0.575) -- (0.5,1.075);
					\draw[-stealth, red, thick] (0.2,0.035) -- (0.8,0.035);
				}
				$};
			\node(B12) at (5,2) {$
				\tikz[baseline={([yshift=-.5ex]current bounding box.center)}, scale=1]
				{
					\draw[knot] (0,1) to[out=-30, in=90] (0.35, 0.75);
					\draw[knot] (0.35, 0.75) to[out=-90, in=90] (0.25, 0.5);
					\draw[knot] (0.25, 0.5) to[out=-90, in=210] (1,0);
					\draw[knot, overcross] (0.75, 0.5) to[out=-90, in=-30] (0,0);
					\draw[knot] (0.75, 0.5) to[out=90, in=-90] (0.65, 0.75);
					\draw[knot] (0.65, 0.75) to[out=90, in=210] (1,1);
					\draw[knot, white] (0,0) to[out=150, in=210] node(P)[pos=0.5]{} (0,1); 
					\draw[knot, white] (1,0) to[out=30, in=-30] node(Q)[pos=0.5]{} (1,1);
					\draw[knot] (0,1) to[out=150, in=180, looseness=1.5] (0.5,1.45) to[out=0,in=30, looseness=1.5] (1,1);
					\draw[knot] (P.center) to[out=90, in=180, looseness=1.5] (0.5, 1.6) to[out=0, in=90, looseness=1.5] (Q.center);
					\draw[knot] (0,0) to[out=150, in=-90] (P.center);
					\draw[knot] (1,0) to[out=30, in=-90] (Q.center);
				}
				$};
			\node(B02) at (5,-2) {$
				\tikz[baseline={([yshift=-.5ex]current bounding box.center)}, scale=1, rotate=-90]
				{
					\begin{scope}[rotate=90,yshift=-1cm]
						\draw[knot, white] (0,0) to[out=150, in=210] node(P)[pos=0.5]{} (0,1); 
						\draw[knot, white] (1,0) to[out=30, in=-30] node(Q)[pos=0.5]{} (1,1);
						\draw[knot] (0,1) to[out=150, in=180, looseness=1.5] (0.5,1.45) to[out=0,in=30, looseness=1.5] (1,1);
						\draw[knot] (P.center) to[out=90, in=180, looseness=1.5] (0.5, 1.6) to[out=0, in=90, looseness=1.5] (Q.center);
						\draw[knot] (0,0) to[out=150, in=-90] (P.center);
						\draw[knot] (1,0) to[out=30, in=-90] (Q.center);
					\end{scope}
					\draw[knot] (0,0) to[out=60, in=-60] (0,1);
					\draw[knot] (0.55, 0.8) to[out=180, in=90] (0.35, 0.5);
					\draw[knot] (0.35, 0.5) to[out=-90, in=180] (0.55, 0.2);
					\draw[knot] (0.55, 0.2) to[out=0, in=-60] (1, 1);
					\draw[knot, overcross] (1, 0) to[out=60, in=0] (0.55, 0.8);
				}
				$};
			\node(B11) at (10,2) {$
				\tikz[baseline={([yshift=-.5ex]current bounding box.center)}, scale=1]
				{
					\draw[knot] (0,0) to[out=-30, in=-150] (1,0);
					\draw[knot, white] (0,0) to[out=150, in=210] node(P)[pos=0.5]{} (0,1); 
					\draw[knot, white] (1,0) to[out=30, in=-30] node(Q)[pos=0.5]{} (1,1);
					\draw[knot] (0,1) to[out=150, in=180, looseness=1.5] (0.5,1.45) to[out=0,in=30, looseness=1.5] (1,1);
					\draw[knot] (P.center) to[out=90, in=180, looseness=1.5] (0.5, 1.6) to[out=0, in=90, looseness=1.5] (Q.center);
					\draw[knot] (0,0) to[out=150, in=-90] (P.center);
					\draw[knot] (1,0) to[out=30, in=-90] (Q.center);
					\draw[knot] (0,1) to[out=-30, in=90] (0.35, 0.75);
					\draw[knot] (0.35, 0.75) to[out=-90, in=90] (0.2, 0.5);
					\draw[knot] (0.2, 0.5) to[out=-90, in=180] (0.5, 0.3);
					\draw[knot] (0.5, 0.3) to[out=0, in=-90] (0.8, 0.5);
					\draw[knot] (0.8, 0.5) to[out=90, in=-90] (0.65, 0.75);
					\draw[knot] (0.65, 0.75) to[out=90, in=210] (1,1);
					\draw[stealth-, red, thick] (0.5,-0.15) -- (0.5,0.3);
					\draw[-stealth, red, thick] (0.35, 0.75) -- (0.65,0.75);
					\draw[-stealth, red, thick] (0.5,1.45) -- (0.5,1.6);
				}
				$};
			\node(B10) at (14,2) {$
				\tikz[baseline={([yshift=-.5ex]current bounding box.center)}, scale=1]
				{
					\draw[knot, white] (0,0) to[out=150, in=210] node(P)[pos=0.5]{} (0,1); 
					\draw[knot, white] (1,0) to[out=30, in=-30] node(Q)[pos=0.5]{} (1,1);
					\draw[knot] (0,1) to[out=150, in=180, looseness=1.5] (0.5,1.45) to[out=0,in=30, looseness=1.5] (1,1);
					\draw[knot] (P.center) to[out=90, in=180, looseness=1.5] (0.5, 1.6) to[out=0, in=90, looseness=1.5] (Q.center);
					\draw[knot] (0,0) to[out=150, in=-90] (P.center);
					\draw[knot] (1,0) to[out=30, in=-90] (Q.center);
					\draw[knot] (0,1) to[out=-30, in=90] (0.35, 0.75);
					\draw[knot] (0.35, 0.75) to[out=-90, in=90] (0.2, 0.5);
					\draw[knot] (0.2, 0.5) to[out=-90, in=90] (0.35, 0.25);
					\draw[knot] (0.35, 0.25) to[out=-90, in=-30] (0,0);
					\draw[knot] (1,1) to[out=210, in=90] (0.65, 0.75);
					\draw[knot] (0.65, 0.75) to[out=-90, in=90] (0.8, 0.5);
					\draw[knot] (0.8, 0.5) to[out=-90, in=90] (0.65, 0.25);
					\draw[knot] (0.65, 0.25) to[out=-90, in=-150] (1,0);
					\draw[-stealth, red, thick] (0.35, 0.75) -- (0.65,0.75);
					\draw[-stealth, red, thick] (0.35, 0.25) -- (0.65,0.25);
				}
				$};
			\node(B01) at (10,-2) {$
				\tikz[baseline={([yshift=-.5ex]current bounding box.center)}, scale=1]
				{
					\draw[knot] (0.5,0.4) circle (0.25cm);
					\draw[knot, white] (0,0) to[out=150, in=210] node(P)[pos=0.5]{} (0,1); 
					\draw[knot, white] (1,0) to[out=30, in=-30] node(Q)[pos=0.5]{} (1,1);
					\draw[knot] (0,1) to[out=150, in=180, looseness=1.5] (0.5,1.45) to[out=0,in=30, looseness=1.5] (1,1);
					\draw[knot] (P.center) to[out=90, in=180, looseness=1.5] (0.5, 1.6) to[out=0, in=90, looseness=1.5] (Q.center);
					\draw[knot] (0,0) to[out=150, in=-90] (P.center);
					\draw[knot] (1,0) to[out=30, in=-90] (Q.center);
					\draw[knot] (0,0) to[out=-30, in=-150] (1,0);
					\draw[knot] (0,1) to[out=-30, in=210] (1,1);
					\draw[-stealth, red, thick] (0.5,1.45) -- (0.5,1.6);
					\draw[-stealth, red, thick] (0.5,0.65) -- (0.5,0.85);
					\draw[-stealth, red, thick] (0.5,0.15) -- (0.5,-0.15);
				}
				$};
			\node(B00) at (14,-2) {$
				\tikz[baseline={([yshift=-.5ex]current bounding box.center)}, scale=1, rotate=180]
				{
					\draw[knot, white] (0,0) to[out=150, in=210] node(P)[pos=0.5]{} (0,1); 
					\draw[knot, white] (1,0) to[out=30, in=-30] node(Q)[pos=0.5]{} (1,1);
					\draw[knot] (0,0) to[out=210, in=180, looseness=1.5] (0.5,-0.45) to[out=0, in=-30, looseness=1.5] (1,0);
					\draw[knot] (0,1) to[out=210, in=90] (P.center);
					\draw[knot] (1,1) to[out=-30, in=90] (Q.center);
					\draw[knot] (P.center) to[out=-90, in=180, looseness=1.5] (0.5, -0.6) to[out=0, in=-90, looseness=1.5] (Q.center);
					\draw[knot] (0,0) to[out=30, in=150] (1,0);
					\draw[knot] (0,1) to[out=30, in=90] (0.35, 0.75);
					\draw[knot] (0.35, 0.75) to[out=-90, in=90] (0.2, 0.5);
					\draw[knot] (0.2, 0.5) to[out=-90, in=180] (0.5, 0.3);
					\draw[knot] (0.5, 0.3) to[out=0, in=-90] (0.8, 0.5);
					\draw[knot] (0.8, 0.5) to[out=90, in=-90] (0.65, 0.75);
					\draw[knot] (0.65, 0.75) to[out=90, in=150] (1,1);
					\draw[stealth-, red, thick] (0.35, 0.75) -- (0.65,0.75);
					\draw[stealth-, red, thick] (0.5, 0.15) -- (0.5, 0.3);
				}
				$};
		\end{scope}
		\draw[->, shorten >=2mm, shorten <=2mm, gray] (B12) -- (B02);
		\draw[->, shorten >=2mm, shorten <=2mm, gray] (B11) -- node[above]{$\overline{\epsilon}_{1*}$} (B10);
		\draw[->, shorten >=2mm, shorten <=2mm, YellowOrange] (B11) -- (B01) node[midway, left] {$\overline{\epsilon}_{*1}$};
		\draw[->, shorten >=2mm, shorten <=2mm, gray] (B10) -- node[right]{$\overline{\epsilon}_{*0}$} (B00);
		\draw[->, shorten >=2mm, shorten <=2mm, gray] (B01) -- node[above]{$\overline{\epsilon}_{0*}$} (B00);
		\draw[->] (B22) to[out=45, in=180] node[above]{} (B12);
		\draw[->] (B22) to[out=-45, in=180] node[below]{$f'$} (B02);
		\draw[->, ForestGreen] (B12) --node[above]{$g''$} (B11);
		\draw[->] (B02) -- node[below]{$g'$} (B01);
		\draw[->] (B22) to[out=90, in=150] node[above, pos=0.7]{$c \overline{\epsilon}_{1*} ~
			\tikz[baseline={([yshift=-.5ex]current bounding box.center)}, scale=.25]{
				\draw[knot, -, black] (0,2) .. controls (0,1.75) and (1,1.75) .. (1,2);
				\draw[knot, -, black] (0,2) .. controls (0,2.25) and (1,2.25) .. (1,2);
				\draw[knot, -, black] (2,2) .. controls (2,1.75) and (3,1.75) .. (3,2);
				\draw[knot, -, black] (2,2) .. controls (2,2.25) and (3,2.25) .. (3,2);
				\draw[knot, -, black] (2,0) .. controls (2,-.25) and (3,-.25) .. (3,0);
				\draw[dashed, knot, -, black] (2,0) .. controls (2,.25) and (3,.25) .. (3,0);
				\draw[knot, -, black] (0,2) .. controls (0,0.75) and (1,0.75) .. (1,2);
				\draw[knot, -, black] (2,0) -- (2,2);
				\draw[knot, -, black] (3,0) -- (3,2);
			}
			$} (B11);
		\draw[->, Orchid] (22) to[out=-120, in=120] node[right]{$-\epsilon\epsilon_{1*}\overline{\epsilon}_{1*}$} (B22);
		\draw[->, Orchid] (12) to[out=-60, in=100] node[left, pos=0.8] {$c\epsilon\epsilon_{1*}$} (B11); 
		\draw[->, ForestGreen] (22) to[out=-60, in=150] node[left, pos=0.3]{$f''$} (B12);
		\draw[->, YellowOrange] (11) to[out=-60, in=60] node[right, pos=0.3]{$\epsilon$} (B11);
		\draw[->, YellowOrange] (01) to[out=-60, in=60] node[right, pos=0.8] {$\epsilon \epsilon_{*1} \overline{\epsilon}_{*1}$} (B01);
		\node[scale=1.25] at (7,-13) {$
			g \circ f = c \epsilon_{*0} \epsilon_{0*} ~\tikz[baseline={([yshift=-.5ex]current bounding box.center)}, scale=.35]{
				\draw[knot, -] (0,2) .. controls (0,1.75) and (1,1.75) .. (1,2);
				\draw[knot, -] (0,2) .. controls (0,2.25) and (1,2.25) .. (1,2);
				\draw[knot, -] (2,2) .. controls (2,1.75) and (3,1.75) .. (3,2);
				\draw[knot, -] (2,2) .. controls (2,2.25) and (3,2.25) .. (3,2);
				\draw[knot, -] (0,0) .. controls (0,-.25) and (1,-.25) .. (1,0);
				\draw[dashed, knot, -] (0,0) .. controls (0,.25) and (1,.25) .. (1,0);
				\draw[knot, -] (2,0) .. controls (2,-.25) and (3,-.25) .. (3,0);
				\draw[dashed, knot, -] (2,0) .. controls (2,.25) and (3,.25) .. (3,0);
				\draw[knot, -] (2,0) .. controls (2,0.75) and (3,0.75) .. (3,0);
				\draw[knot, -] (1,0) .. controls (1,0.65) and (3,0.65) .. (3,2);
				\draw[knot, -] (0,0) -- (0,2);
				\draw[knot, -] (1,2) .. controls (1,1.25) and (2,1.25) .. (2,2);
				\draw[stealth-, red, thick] (1.8,3.7-1.75) -- (1.2,2.8-1.75);
			}
			\hspace{1.4cm}
			g' \circ f' = c \overline{\epsilon}_{0*} \overline{\epsilon}_{*0} ~
			\tikz[baseline={([yshift=-.5ex]current bounding box.center)}, scale=.35]{
				\draw[knot, -]  (1,2) .. controls (1,3) and (0,3) .. (0,4);
				\draw[knot, -]  (2,2) .. controls (2,3) and (3,3) .. (3,4);
				\draw[knot, -] (1,4) .. controls (1,3) and (2,3) .. (2,4);
				\draw[knot, -] (0,4) .. controls (0,3.75) and (1,3.75) .. (1,4);
				\draw[knot, -] (0,4) .. controls (0,4.25) and (1,4.25) .. (1,4);
				\draw[knot, -] (2,4) .. controls (2,3.75) and (3,3.75) .. (3,4);
				\draw[knot, -] (2,4) .. controls (2,4.25) and (3,4.25) .. (3,4);
				\draw[knot, -] (1,2) .. controls (1,1.75) and (2,1.75) .. (2,2);
				\draw[knot, dashed, -] (1,2) .. controls (1,2.25) and (2,2.25) .. (2,2);
				\draw[knot] (-2,4) .. controls (-2,4.25) and (-1,4.25) .. (-1,4);
				\draw[knot] (-2,4) .. controls (-2,3.75) and (-1,3.75) .. (-1,4);
				\draw[knot] (-2,4) -- (-2,3);
				\draw[knot, dashed] (-2,3) .. controls (-2,3.25) and (-1,3.25) .. (-1,3);
				\draw[knot] (-2,3) .. controls (-2,2.75) and (-1,2.75) .. (-1,3);
				\draw[knot] (-1,4) -- (-1,3);
				\draw[knot] (-2,3) .. controls (-2,2.25) and (-1,2.25) .. (-1,3);
				\draw[stealth-, red, thick] (1.8,3.7+.1) -- (1.2,2.8+.1);
			}
			$}; 
		\node[scale=1.25] at (7,-15) {$g'' \circ f''=
			-c \epsilon \epsilon_{1*}
			~\tikz[baseline={([yshift=-.5ex]current bounding box.center)}, scale=.35]{
				\draw[knot, -] (0,2) .. controls (0,1.75) and (1,1.75) .. (1,2);
				\draw[knot, -] (0,2) .. controls (0,2.25) and (1,2.25) .. (1,2);
				\draw[knot, -] (2,2) .. controls (2,1.75) and (3,1.75) .. (3,2);
				\draw[knot, -] (2,2) .. controls (2,2.25) and (3,2.25) .. (3,2);
				\draw[knot, -] (0,0) .. controls (0,-.25) and (1,-.25) .. (1,0);
				\draw[dashed, knot, -] (0,0) .. controls (0,.25) and (1,.25) .. (1,0);
				\draw[knot, -] (2,0) .. controls (2,-.25) and (3,-.25) .. (3,0);
				\draw[dashed, knot, -] (2,0) .. controls (2,.25) and (3,.25) .. (3,0);
				\draw[knot, -] (2,0) .. controls (2,0.75) and (3,0.75) .. (3,0);
				\draw[knot, -] (1,0) .. controls (1,0.65) and (3,0.65) .. (3,2);
				\draw[knot, -] (0,0) -- (0,2);
				\draw[knot, -] (1,2) .. controls (1,1.25) and (2,1.25) .. (2,2);
				\draw[stealth-, red, thick] (1.8,3.7-1.75) -- (1.2,2.8-1.75);
			}
			$};
		\node[scale=1.25] at (0,6) {$
			\tikz[baseline={([yshift=-.5ex]current bounding box.center)}, scale=1]
			{
				\draw[knot,white] (0,0) to[out=150, in=210]node(P)[pos=0.5, color=black]{} (0,1);
				\draw[knot, white] (1,0) to[out=30, in=-30]node(Q)[pos=0.5, color=black]{} (1,1);
				\draw[knot] (P.center) to[out=90, in=-90] (-0.26,1);
				\draw[knot] (Q.center) to[out=90, in=-90] (1.26,1);
				\draw[knot] (0,1) to[out=150, in=180] (0.85,1.5) to[out=0, in=90] (1.26,1);
				\draw[knot, overcross] (1,1) to[out=30, in=0] (0.15,1.5) to[out=180, in=90] (-0.26,1);
				\draw[dashed] (-0.4,1.1) -- (1.4,1.1);
				\draw[knot] (0,0) to[out=150, in=-90] (P.center);
				\draw[knot] (1,0) to[out=30, in=-90] (Q.center);
				\draw[knot] (1,0) to[out=210, in=-90] (0.2, 0.5);
				\draw[knot] (0.2, 0.5) to[out=90, in=210] (1,1);
				\draw[knot, overcross] (0,0) to[out=-30, in=-90] (0.8, 0.5);
				\draw[knot, overcross] (0.8, 0.5) to[out=90, in=-30] (0,1);
				\draw[knot] (0.8, 0.499) -- (0.8, 0.501);
				\draw[-stealth, red, thick] (0.5,1.2) -- (0.5, 1.7);
				\draw[-stealth, red, thick] (0.5,0.575) -- (0.5,1.075);
				\draw[-stealth, red, thick] (0.2,0.035) -- (0.8,0.035);
			}
			$};
		\node[scale=2.5] at (1.3,6.2) {$\cdot$};
		\node[scale=2.5] at (1.3,5.8) {$\cdot$};
		\draw[dotted] (9.5,5) rectangle (14.5,7);
		\node[scale=1.5] at (12,6.5) {$-\epsilon_{*1}\epsilon_{1*} = \epsilon_{0*}\epsilon_{*0}$};
		\node[scale=1.5] at (12,5.5) {$-\overline{\epsilon}_{*1}\overline{\epsilon}_{1*} = \overline{\epsilon}_{0*}\overline{\epsilon}_{*0}$};
	}
	\caption{Extra crossing which changes first simple closure to the second.}
	\label{fig:R2extra1}
\end{figure}

\begin{proposition}\label{lem:trivial}
	The maps in Figure \ref{fig:R2extra} whose target and source have cube indexes which differ by 1 sum to zero.
\end{proposition}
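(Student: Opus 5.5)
The plan is to reduce the statement to a purely local computation across a single external crossing, and then verify that the components in question cancel on the nose, term by term. Every arrow in Figure \ref{fig:R2extra} whose source and target differ in cube index by $1$ crosses exactly one edge of the external hypercube. Compositions across two or more external edges change the index by at least $2$. So it suffices to fix one external crossing and the two simple-closure squares at its ends.

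By planarity of the RII region, changing the resolution of one external crossing either leaves the closure type unchanged or switches the first simple closure to the second. The case that carries content is the switch, drawn in Figure \ref{fig:R2extra1}. When the closure type is unchanged, the external edge map commutes with all local maps. In that case the red compositions $d\circ h$ and $h\circ d$ cancel because the external differential anticommutes with the local edge maps. The blue maps vanish there for the same grading reasons as in the simple-closure analysis.

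For the switching case I will write every component explicitly on $\widetilde{\mathrm{PKE}}_2$, where all vertices are unlinks, using the dictionary of Subsection \ref{single} and Subsection \ref{family}.

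\emph{Red arrows.} These are composites of a local chain-homotopy component from Figures \ref{fig:chainmapRII+1} and \ref{fig:RII2} with an external edge map. Each is a product of a birth, split or merge with signs $\epsilon_{\bullet}$, $\overline{\epsilon}_{\bullet}$, $\epsilon$ and a power of the unit $c$.

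\emph{Blue arrows.} These are the families $g$, $n$ parametrized over one local and one external surgery. They are the maps $f''$, $g''$ and the Orchid maps in Figure \ref{fig:R2extra1}. The underlying $4$-manifold is again $U$ or a cylinder summed with $\overline{\mathbb{C}P^2}$. I will therefore identify them, via the computation of $\mathbb{U}$, with a birth or split multiplied by $c$. Where $\overline{\mathbb{C}P^2}$ occurs, the map vanishes by the zero-map computation of Subsection \ref{single}. This produces the stated formulas $g\circ f$, $g'\circ f'$ and $g''\circ f''$.

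The remaining task is an identity in $\Lambda^*$. I will group the terms by target vertex, namely $(0,0)$ of the lower square and $(1,1)$ and $(0,1)$ in Figure \ref{fig:R2extra1}. Each group is rewritten using two ingredients. The first is the odd TQFT relations: split followed by merge is zero, and birth followed by split is a split with reordered decoration. The second is the square relations $-\epsilon_{*1}\epsilon_{1*}=\epsilon_{0*}\epsilon_{*0}$ and $-\overline{\epsilon}_{*1}\overline{\epsilon}_{1*}=\overline{\epsilon}_{0*}\overline{\epsilon}_{*0}$, together with the external-edge sign $\epsilon$ required for the total differential to square to zero. For example, $g''\circ f''=-c\epsilon\epsilon_{1*}(\cdots)$ should cancel the composite of the Orchid map $c\epsilon\epsilon_{1*}$ with the local homotopy, once the decorated split pictures are identified.

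I expect the main obstacle to be sign bookkeeping: fixing the relative sign of the blue family maps, that is, the orientation of the $2$-parameter family of metrics, consistently with $\epsilon$. If the direct orientation argument is unwieldy, I will instead use Remark \ref{rmk:mattmagic}. On unlink complexes, filtered maps are determined up to homotopy. This forces the blue maps to equal, up to a single global sign, the negative of the red composites. The remaining global sign is then fixed by evaluating both sides on $1\in\Lambda^*$.
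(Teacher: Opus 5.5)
Your reduction to a single external edge matches the paper. So do your split into the two categories (the external resolution change switches the simple closure, or leaves it unchanged) and your treatment of the red composites.

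\textbf{Main gap: the unchanged-closure case.} You claim that there the blue maps ``vanish for the same grading reasons as in the simple-closure analysis.'' That is exactly what fails, and it is why the paper isolates these maps at all. In Figure~\ref{fig:RII+1}, the component from the $(1,2)$ resolution to the $(1,1)$ resolution was discarded only because it needs one more change of resolution to preserve $\deg_h$. The external edge supplies exactly that change.

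In the labels of Figure~\ref{fig:R2extra}, grading does kill $b_b$, so $b_b\circ b_l=0$. But $b_r$ goes from $(1,2)$ at one external resolution to $(1,1)$ at the other, so it is grading-preserving, and $b_r\circ b_t$ survives a priori.

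The missing idea is algebraic. The map for adding the second crossing is a chain map. Comparing its component from the common source of $d_1,d_2$ to the common target of $h_1,h_2,d_3,d_4$ gives
\[
h_1\circ d_1+h_2\circ d_2=d_3\circ h_3+d_4\circ b_r .
\]
Since the closure is unchanged, $h_2$ and $h_3$ are the same local diagonal map on either side of the edge, so $h_2\circ d_2=d_3\circ h_3$ with the chosen signs. Also, $h_1$ changes the external index and so vanishes on $E_2$ by grading. Hence $d_4\circ b_r=0$. Since $d_4$ is a split, which is injective, $b_r=0$. Without this step your argument does not dispose of the surviving blue composite.

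\textbf{Secondary gap: signs in the switching case.} Your plan is close to the paper's explicit computation in Figure~\ref{fig:R2extra1}. However, the sign problem you flag for the blue maps is left open, and the proposed fallback does not close it. The paper fixes the yellow-face signs by anticommutativity, as you do. It fixes the signs on the purple and green arrows by requiring that the map induced by forgetting two crossings be a chain map; you need that constraint or an equivalent one.

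Stoffregen's uniqueness observation (the remark in Subsection~\ref{ss:r2}) does not substitute for it, for two reasons:
\begin{enumerate}
\item It concerns complexes whose cube filtration is concentrated in a single degree, which is no longer the case once an external crossing is present.
\item Uniqueness up to chain homotopy cannot force specific components to cancel on the nose, which is what the proposition asserts.
\end{enumerate}
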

\begin{proof}
	There are several cases to consider, but they naturally fall into two categories. In the first category, the two resolutions of the external crossing yield different simple closures. In the second category, the two resolutions produce the same simple closure. The arguments in each category are analogous, so we present the proof for one representative case in each category and leave the remaining cases to the reader.
	
	We provide an example calculation falling into the first category. See Figure \ref{fig:R2extra1}. In this case, we can compute all the maps explicitly. Note that we have provided decorations on both of the diagrams and the cobordisms; these can be used to make the proper component identifications. We pin down the signs as follows. The signs on the face with yellow edges are fixed since this face anti-commutes. The signs on the purple and green arrows are fixed using the fact that the map induced by forgetting two crossings is a chain map. With these signs, the extra maps cancel with each other.
	
	As a sample computation falling into the second category, we consider the case of adding two crossings on the first simple closure (Figure \ref{fig:chainmapRII+1}). Notice that the red chain homotopy maps are equal to each other. Hence, compositions of these maps with the differential associated to the external crossing either commute or anti-commute. As before, we can fix signs which make the red arrows sum to zero. We label the remaining maps as in Figure \ref{fig:R2extra}. First, note that $b_b\circ b_l=0$, since $b_b=0$. We can also show that $b_r=0$. Since the map induced from adding the second crossing is a chain map, we have
	\[h_1\circ d_1+h_2\circ d_2=d_3\circ h_3+d_4\circ b_r.\]
Since $h_1\circ d_1= d_3\circ h_3$, it follows that $d_4\circ b_r=h_1\circ d_1=0$. Note that $d_4$ is nontrivial (it is the split map on odd Khovanov homology), so we conclude that $b_r=0$.
\end{proof}

\subsection{Reidemeister III moves}
When two planar diagrams $N_1$, $N_2$ differ from each other by a Reidemeister III move, we use the following pseudo-diagram moves to pass from $N_1$ to $N_2$; refer to Figure \ref{R3picture}.
\begin{enumerate}[label=(\arabic*)]
\item First, remove the two under-crossing $c_1$ and $c_2$ from the crossing set $N_1$ to obtain a smaller set of crossings $N'$.
\item Second, apply a pseudo-diagram isotopy to move the strand past the central crossing.
\item Third, add two new crossings to $N'$ to obtain the crossing-set $N_2$.
\end{enumerate}

\begin{figure}[h!]
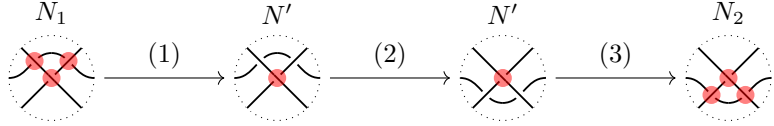

	\centering
	\tikz[baseline={([yshift=-.5ex]current bounding box.center)}]
	{
		\node(A) at (0,0) {$
		\tikz[baseline={([yshift=-.5ex]current bounding box.center)}, scale=0.8]
                {
        \draw[dotted] (.5,.5) circle(0.707);
        \draw[knot] (-0.207, 0.5) to[out=0, in=180] (0.5, 0.915);
        \draw[knot] (1.207, 0.5) to[out=180, in=0] (0.5, 0.915);
        \draw[knot, overcross] (0,0) -- (0.45, 0.45);
        \draw[knot] (0.45, 0.45) -- (0.55, 0.55);
        \draw[knot, overcross] (0.55, 0.55) -- (1,1);
        \draw[knot, overcross] (0,1) -- (0.45, 0.55);
        \draw[knot] (0,1) -- (0.01, 0.99);
        \draw[knot, overcross] (0.45, 0.55) -- (0.55, 0.45);
        \draw[knot] (0.55, 0.45) -- (1,0);
        \fill[red, fill opacity=0.5] (0.5, 0.5) circle(0.15);
        \fill[red, fill opacity=0.5] (0.782, 0.782) circle(0.15);
        \fill[red, fill opacity=0.5] (0.5-0.282, 0.782) circle(0.15);
            }
			$};
		\node(B) at (3,0) {$
		\tikz[baseline={([yshift=-.5ex]current bounding box.center)}, scale=0.8]
			{
        \draw[dotted] (.5,.5) circle(0.707);
        \draw[knot] (-0.207, 0.5) to[out=0, in=180] (0.5, 0.915);
        \draw[knot] (1.207, 0.5) to[out=180, in=0] (0.5, 0.915);
        \draw[knot, overcross] (0,0) -- (0.45, 0.45);
        \draw[knot] (0.45, 0.45) -- (0.55, 0.55);
        \draw[knot, overcross] (0.55, 0.55) -- (1,1);
        \draw[knot, overcross] (0,1) -- (0.45, 0.55);
        \draw[knot] (0,1) -- (0.01, 0.99);
        \draw[knot, overcross] (0.45, 0.55) -- (0.55, 0.45);
        \draw[knot] (0.55, 0.45) -- (1,0);
        \fill[red, fill opacity=0.5] (0.5, 0.5) circle(0.15);
			}
			$};
		\node(C) at (6,0) {$
		\tikz[baseline={([yshift=-.5ex]current bounding box.center)}, scale=0.8]
			{
        \draw[dotted] (.5,.5) circle(0.707);
        \draw[knot] (-0.207, 0.5) to[out=0, in=180] (0.5, 0.085);
        \draw[knot] (1.207, 0.5) to[out=180, in=0] (0.5, 0.085);
        \draw[knot, overcross] (0,0) -- (0.45, 0.45);
        \draw[knot] (0.45, 0.45) -- (0.55, 0.55);
        \draw[knot, overcross] (0.55, 0.55) -- (1,1);
        \draw[knot, overcross] (0,1) -- (0.45, 0.55);
        \draw[knot] (0,1) -- (0.01, 0.99);
        \draw[knot, overcross] (0.45, 0.55) -- (0.55, 0.45);
        \draw[knot, overcross] (0.55, 0.45) -- (1,0);
        \fill[red, fill opacity=0.5] (0.5, 0.5) circle(0.15);			}
			$};
		\node(D) at (9,0) {$
		\tikz[baseline={([yshift=-.5ex]current bounding box.center)}, scale=0.8]
			{
        \draw[dotted] (.5,.5) circle(0.707);
        \draw[knot] (-0.207, 0.5) to[out=0, in=180] (0.5, 0.085);
        \draw[knot] (1.207, 0.5) to[out=180, in=0] (0.5, 0.085);
        \draw[knot, overcross] (0,0) -- (0.45, 0.45);
        \draw[knot] (0.45, 0.45) -- (0.55, 0.55);
        \draw[knot, overcross] (0.55, 0.55) -- (1,1);
        \draw[knot, overcross] (0,1) -- (0.45, 0.55);
        \draw[knot] (0,1) -- (0.01, 0.99);
        \draw[knot, overcross] (0.45, 0.55) -- (0.55, 0.45);
        \draw[knot, overcross] (0.55, 0.45) -- (1,0);
        \fill[red, fill opacity=0.5] (0.5, 0.5) circle(0.15);
        \fill[red, fill opacity=0.5] (0.218, 0.218) circle(0.15);
        \fill[red, fill opacity=0.5] (0.782, 0.218) circle(0.15);
            }
			$};
		\node[above=-1mm] at (A.north) {$N_1$};
		\node[above=-1mm] at (B.north) {$N'$};
		\node[above=-1mm] at (C.north) {$N'$};
		\node[above=-1mm] at (D.north) {$N_2$};
		\draw[->] (A) to node[pos=0.5, above]{(1)} (B);  
		\draw[->] (B) to node[pos=0.5, above]{(2)} (C);
		\draw[->] (C) to node[pos=0.5, above]{(3)} (D);
	}
	\caption{Sequence of pseudo-moves for Reidemeister III.}
	\label{R3picture}
\end{figure}

To compare with the chain homotopy of odd Khovanov homology in \cite[\S 3.7]{migdail2024functoriality}, we treat the cube of resolutions as a mapping cone of the map associated to the resolution at the central crossing. To avoid confusion, we use $d$ (respectively, $d'$) to denote the differential associated to the central crossing before (respectively, after) the Reidemeister III move, and $D$ for the internal differentials.

\[
\tikz[baseline={([yshift=-.5ex]current bounding box.center)}, scale=0.8]
			{
        \draw[dotted] (.5,.5) circle(0.707);
        \draw[knot] (-0.207, 0.5) to[out=0, in=180] (0.5, 0.915);
        \draw[knot] (1.207, 0.5) to[out=180, in=0] (0.5, 0.915);
        \draw[knot, overcross] (0,0) -- (0.45, 0.45);
        \draw[knot] (0.45, 0.45) -- (0.55, 0.55);
        \draw[knot, overcross] (0.55, 0.55) -- (1,1);
        \draw[knot, overcross] (0,1) -- (0.45, 0.55);
        \draw[knot] (0,1) -- (0.01, 0.99);
        \draw[knot, overcross] (0.45, 0.55) -- (0.55, 0.45);
        \draw[knot] (0.55, 0.45) -- (1,0);
        \fill[red, fill opacity=0.5] (0.5, 0.5) circle(0.15);			
        }
= \mathrm{Cone} \left(
\begin{tikzcd}
\tikz[baseline={([yshift=-.5ex]current bounding box.center)}, scale=0.8]
			{
        \draw[dotted] (.5,.5) circle(0.707);
        \draw[knot] (-0.207, 0.5) to[out=0, in=180] (0.5, 0.915);
        \draw[knot] (1.207, 0.5) to[out=180, in=0] (0.5, 0.915);
        \draw[knot, overcross] (0,0) to[out=45, in=-90] (0.375,0.5) to[out=90, in=-45] (0,1);
        \draw[knot, overcross] (1,0) to[out=135, in=-90] (0.625,0.5) to[out=90, in=-135] (1,1);
            }
\arrow[r, "d"] & \tikz[baseline={([yshift=-.5ex]current bounding box.center)}, scale=0.8]
			{
        \draw[dotted] (.5,.5) circle(0.707);
        \draw[knot] (-0.207, 0.5) to[out=0, in=180] (0.5, 0.915);
        \draw[knot] (1.207, 0.5) to[out=180, in=0] (0.5, 0.915);
        \draw[knot, overcross] (0,0) to[out=45, in=180] (0.5,0.375) to[out=0, in=135] (1,0);
        \draw[knot, overcross] (0,1) to[out=-45, in=180] (0.5,0.625) to[out=0, in=-135] (1,1);
            }
\end{tikzcd}
\right)
\qquad
\tikz[baseline={([yshift=-.5ex]current bounding box.center)}, scale=0.8]
			{
        \draw[dotted] (.5,.5) circle(0.707);
        \draw[knot] (-0.207, 0.5) to[out=0, in=180] (0.5, 0.085);
        \draw[knot] (1.207, 0.5) to[out=180, in=0] (0.5, 0.085);
        \draw[knot, overcross] (0,0) -- (0.45, 0.45);
        \draw[knot] (0.45, 0.45) -- (0.55, 0.55);
        \draw[knot, overcross] (0.55, 0.55) -- (1,1);
        \draw[knot, overcross] (0,1) -- (0.45, 0.55);
        \draw[knot] (0,1) -- (0.01, 0.99);
        \draw[knot, overcross] (0.45, 0.55) -- (0.55, 0.45);
        \draw[knot, overcross] (0.55, 0.45) -- (1,0);
        \fill[red, fill opacity=0.5] (0.5, 0.5) circle(0.15);
            } = \mathrm{Cone} \left(
\begin{tikzcd}
\tikz[baseline={([yshift=-.5ex]current bounding box.center)}, scale=0.8]
			{
        \draw[dotted] (.5,.5) circle(0.707);
        \draw[knot] (-0.207, 0.5) to[out=0, in=180] (0.5, 0.085);
        \draw[knot] (1.207, 0.5) to[out=180, in=0] (0.5, 0.085);
        \draw[knot, overcross] (0,0) to[out=45, in=-90] (0.375,0.5) to[out=90, in=-45] (0,1);
        \draw[knot, overcross] (1,0) to[out=135, in=-90] (0.625,0.5) to[out=90, in=-135] (1,1);
            }
 \arrow[r, "d'"] &
 \tikz[baseline={([yshift=-.5ex]current bounding box.center)}, scale=0.8]
			{
        \draw[dotted] (.5,.5) circle(0.707);
        \draw[knot] (-0.207, 0.5) to[out=0, in=180] (0.5, 0.085);
        \draw[knot] (1.207, 0.5) to[out=180, in=0] (0.5, 0.085);
        \draw[knot, overcross] (0,0) to[out=45, in=180] (0.5,0.375) to[out=0, in=135] (1,0);
        \draw[knot, overcross] (0,1) to[out=-45, in=180] (0.5,0.625) to[out=0, in=-135] (1,1);
            }
\end{tikzcd}
\right)
\]

The chain homotopy on the plane knot complex induced by the sequence of pseudo-diagram moves is given by the composition of $\Phi_{(3)}\circ\Phi_{(2)}\circ\Phi_{(1)}$, which is depicted in Figure \ref{fig:R3CH1}. For simplicity, let $f$ denote the homotopy equivalence induced by adding two crossings to subcomplex corresponding to the 1-resolution of the central crossing. Similarly, let $e$ and $e'$ denote the homotopy equivalence induced by adding two crossings to the subcomplexes corresponding to the 0-resolution of the central crossing, as in Figure \ref{fig:R3CH1}. Finally, we let $f^{-1}$, $e^{-1}$, and $(e')^{-1}$ denote their homotopy inverses; \emph{i.e.}, the corresponding maps for removing two crossings. The diagonal maps will be denoted by $h$ with varying subscripts. We provide a compressed version of this sequence of chain homotopy equivalences on the righthand side of Figure \ref{fig:R3CH1}.

\begin{figure}[h]
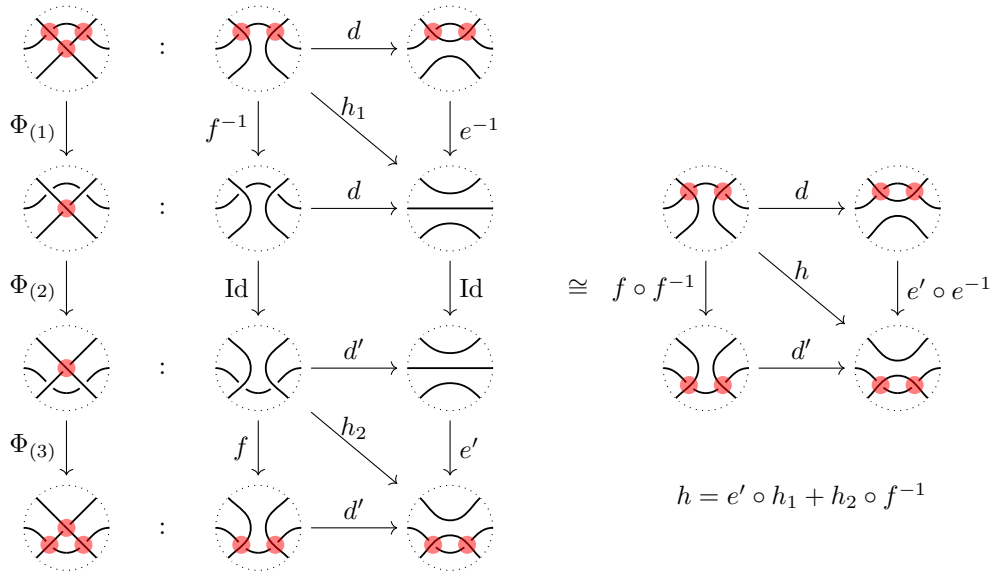

	\centering
	\tikz[baseline={([yshift=-.5ex]current bounding box.center)}, scale=0.85]
	{
		\node(A) at (3,0) {$
			\tikz[baseline={([yshift=-.5ex]current bounding box.center)}, scale=0.8]
			{
	\draw[dotted] (.5,.5) circle(0.707);
        \draw[knot] (-0.207, 0.5) to[out=0, in=180] (0.5, 0.085);
        \draw[knot] (1.207, 0.5) to[out=180, in=0] (0.5, 0.085);
        \draw[knot, overcross] (0,0) to[out=45, in=-90] (0.375,0.5) to[out=90, in=-45] (0,1);
        \draw[knot, overcross] (1,0) to[out=135, in=-90] (0.625,0.5) to[out=90, in=-135] (1,1);
        \fill[red, fill opacity=0.5] (0.218, 0.218) circle(0.15);
        \fill[red, fill opacity=0.5] (0.782, 0.218) circle(0.15);
        			}
			$};
		\node(B) at (6,0) {$
			\tikz[baseline={([yshift=-.5ex]current bounding box.center)}, scale=0.8]
			{
	\draw[dotted] (.5,.5) circle(0.707);
        \draw[knot] (-0.207, 0.5) to[out=0, in=180] (0.5, 0.085);
        \draw[knot] (1.207, 0.5) to[out=180, in=0] (0.5, 0.085);
        \draw[knot, overcross] (0,0) to[out=45, in=180] (0.5,0.375) to[out=0, in=135] (1,0);
        \draw[knot, overcross] (0,1) to[out=-45, in=180] (0.5,0.625) to[out=0, in=-135] (1,1);
        \fill[red, fill opacity=0.5] (0.218, 0.218) circle(0.15);
        \fill[red, fill opacity=0.5] (0.782, 0.218) circle(0.15);
        			}
			$};
		\node(C) at (3,2.5) {$
			\tikz[baseline={([yshift=-.5ex]current bounding box.center)}, scale=0.8]
			{
	\draw[dotted] (.5,.5) circle(0.707);
        \draw[knot] (-0.207, 0.5) to[out=0, in=180] (0.5, 0.085);
        \draw[knot] (1.207, 0.5) to[out=180, in=0] (0.5, 0.085);
        \draw[knot, overcross] (0,0) to[out=45, in=-90] (0.375,0.5) to[out=90, in=-45] (0,1);
        \draw[knot, overcross] (1,0) to[out=135, in=-90] (0.625,0.5) to[out=90, in=-135] (1,1);
        			}
			$};
		\node(D) at (6,2.5) {$
			\tikz[baseline={([yshift=-.5ex]current bounding box.center)}, scale=0.8]
			{
	\draw[dotted] (.5,.5) circle(0.707);
        \draw[knot] (-0.207, 0.5) -- (1.207, 0.5);
        \draw[knot, overcross] (0,0) to[out=45, in=180] (0.5,0.25) to[out=0, in=135] (1,0);
        \draw[knot, overcross] (0,1) to[out=-45, in=180] (0.5,0.75) to[out=0, in=-135] (1,1);
        			}
			$};
		\node(E) at (3,5) {$
			\tikz[baseline={([yshift=-.5ex]current bounding box.center)}, scale=0.8]
			{
	\draw[dotted] (.5,.5) circle(0.707);
        \draw[knot] (-0.207, 0.5) to[out=0, in=180] (0.5, 0.915);
        \draw[knot] (1.207, 0.5) to[out=180, in=0] (0.5, 0.915);
        \draw[knot, overcross] (0,0) to[out=45, in=-90] (0.375,0.5) to[out=90, in=-45] (0,1);
        \draw[knot, overcross] (1,0) to[out=135, in=-90] (0.625,0.5) to[out=90, in=-135] (1,1);
        			}
			$};
		\node(F) at (6,5) {$
			\tikz[baseline={([yshift=-.5ex]current bounding box.center)}, scale=0.8]
			{
	\draw[dotted] (.5,.5) circle(0.707);
        \draw[knot] (-0.207, 0.5) -- (1.207, 0.5);
        \draw[knot, overcross] (0,0) to[out=45, in=180] (0.5,0.25) to[out=0, in=135] (1,0);
        \draw[knot, overcross] (0,1) to[out=-45, in=180] (0.5,0.75) to[out=0, in=-135] (1,1);
        			}
			$};
		\node(G) at (3,7.5) {$
			\tikz[baseline={([yshift=-.5ex]current bounding box.center)}, scale=0.8]
			{
	\draw[dotted] (.5,.5) circle(0.707);
        \draw[knot] (-0.207, 0.5) to[out=0, in=180] (0.5, 0.915);
        \draw[knot] (1.207, 0.5) to[out=180, in=0] (0.5, 0.915);
        \draw[knot, overcross] (0,0) to[out=45, in=-90] (0.375,0.5) to[out=90, in=-45] (0,1);
        \draw[knot, overcross] (1,0) to[out=135, in=-90] (0.625,0.5) to[out=90, in=-135] (1,1);
        \fill[red, fill opacity=0.5] (0.782, 0.782) circle(0.15);
        \fill[red, fill opacity=0.5] (0.5-0.282, 0.782) circle(0.15);
        			}
			$};
		\node(H) at (6,7.5) {$
			\tikz[baseline={([yshift=-.5ex]current bounding box.center)}, scale=0.8]
			{
	\draw[dotted] (.5,.5) circle(0.707);
        \draw[knot] (-0.207, 0.5) to[out=0, in=180] (0.5, 0.915);
        \draw[knot] (1.207, 0.5) to[out=180, in=0] (0.5, 0.915);
        \draw[knot, overcross] (0,0) to[out=45, in=180] (0.5,0.375) to[out=0, in=135] (1,0);
        \draw[knot, overcross] (0,1) to[out=-45, in=180] (0.5,0.625) to[out=0, in=-135] (1,1);
        \fill[red, fill opacity=0.5] (0.782, 0.782) circle(0.15);
        \fill[red, fill opacity=0.5] (0.5-0.282, 0.782) circle(0.15);
        			}
			$};
		\node(I) at (0,0) {$
			\tikz[baseline={([yshift=-.5ex]current bounding box.center)}, scale=0.8]
			{
	\draw[dotted] (.5,.5) circle(0.707);
        \draw[knot] (-0.207, 0.5) to[out=0, in=180] (0.5, 0.085);
        \draw[knot] (1.207, 0.5) to[out=180, in=0] (0.5, 0.085);
        \draw[knot, overcross] (0,0) -- (0.45, 0.45);
        \draw[knot] (0.45, 0.45) -- (0.55, 0.55);
        \draw[knot, overcross] (0.55, 0.55) -- (1,1);
        \draw[knot, overcross] (0,1) -- (0.45, 0.55);
        \draw[knot] (0,1) -- (0.01, 0.99);
        \draw[knot, overcross] (0.45, 0.55) -- (0.55, 0.45);
        \draw[knot, overcross] (0.55, 0.45) -- (1,0);	
        \fill[red, fill opacity=0.5] (0.5, 0.5) circle(0.15);
        \fill[red, fill opacity=0.5] (0.218, 0.218) circle(0.15);
        \fill[red, fill opacity=0.5] (0.782, 0.218) circle(0.15);
        		}
			$};
		\node at (1.5,0) {$:$};
		\node(J) at (0,2.5) {$
			\tikz[baseline={([yshift=-.5ex]current bounding box.center)}, scale=0.8]
			{
	\draw[dotted] (.5,.5) circle(0.707);
        \draw[knot] (-0.207, 0.5) to[out=0, in=180] (0.5, 0.085);
        \draw[knot] (1.207, 0.5) to[out=180, in=0] (0.5, 0.085);
        \draw[knot, overcross] (0,0) -- (0.45, 0.45);
        \draw[knot] (0.45, 0.45) -- (0.55, 0.55);
        \draw[knot, overcross] (0.55, 0.55) -- (1,1);
        \draw[knot, overcross] (0,1) -- (0.45, 0.55);
        \draw[knot] (0,1) -- (0.01, 0.99);
        \draw[knot, overcross] (0.45, 0.55) -- (0.55, 0.45);
        \draw[knot, overcross] (0.55, 0.45) -- (1,0);
        \fill[red, fill opacity=0.5] (0.5, 0.5) circle(0.15);
        			}
			$};
		\node at (1.5,2.5) {$:$};
		\node(K) at (0,5) {$
			\tikz[baseline={([yshift=-.5ex]current bounding box.center)}, scale=0.8]
			{
 	\draw[dotted] (.5,.5) circle(0.707);
        \draw[knot] (-0.207, 0.5) to[out=0, in=180] (0.5, 0.915);
        \draw[knot] (1.207, 0.5) to[out=180, in=0] (0.5, 0.915);
        \draw[knot, overcross] (0,0) -- (0.45, 0.45);
        \draw[knot] (0.45, 0.45) -- (0.55, 0.55);
        \draw[knot, overcross] (0.55, 0.55) -- (1,1);
        \draw[knot, overcross] (0,1) -- (0.45, 0.55);
        \draw[knot] (0,1) -- (0.01, 0.99);
        \draw[knot, overcross] (0.45, 0.55) -- (0.55, 0.45);
        \draw[knot] (0.55, 0.45) -- (1,0);
        \fill[red, fill opacity=0.5] (0.5, 0.5) circle(0.15);			
        }
			$};
		\node at (1.5,5) {$:$};
		\node(L) at (0,7.5) {$
			\tikz[baseline={([yshift=-.5ex]current bounding box.center)}, scale=0.8]
			{
	 \draw[dotted] (.5,.5) circle(0.707);
        \draw[knot] (-0.207, 0.5) to[out=0, in=180] (0.5, 0.915);
        \draw[knot] (1.207, 0.5) to[out=180, in=0] (0.5, 0.915);
        \draw[knot, overcross] (0,0) -- (0.45, 0.45);
        \draw[knot] (0.45, 0.45) -- (0.55, 0.55);
        \draw[knot, overcross] (0.55, 0.55) -- (1,1);
        \draw[knot, overcross] (0,1) -- (0.45, 0.55);
        \draw[knot] (0,1) -- (0.01, 0.99);
        \draw[knot, overcross] (0.45, 0.55) -- (0.55, 0.45);
        \draw[knot] (0.55, 0.45) -- (1,0);
        \fill[red, fill opacity=0.5] (0.5, 0.5) circle(0.15);
        \fill[red, fill opacity=0.5] (0.782, 0.782) circle(0.15);
        \fill[red, fill opacity=0.5] (0.5-0.282, 0.782) circle(0.15);
			}
			$};
		\node at (1.5,7.5) {$:$};
		
		\node(A1) at (10,2.5) {$
			\tikz[baseline={([yshift=-.5ex]current bounding box.center)}, scale=0.8]
			{
	\draw[dotted] (.5,.5) circle(0.707);
        \draw[knot] (-0.207, 0.5) to[out=0, in=180] (0.5, 0.085);
        \draw[knot] (1.207, 0.5) to[out=180, in=0] (0.5, 0.085);
        \draw[knot, overcross] (0,0) to[out=45, in=-90] (0.375,0.5) to[out=90, in=-45] (0,1);
        \draw[knot, overcross] (1,0) to[out=135, in=-90] (0.625,0.5) to[out=90, in=-135] (1,1);
        \fill[red, fill opacity=0.5] (0.218, 0.218) circle(0.15);
        \fill[red, fill opacity=0.5] (0.782, 0.218) circle(0.15);
        			}
			$};
		\node(B1) at (13,2.5) {$
			\tikz[baseline={([yshift=-.5ex]current bounding box.center)}, scale=0.8]
			{
	\draw[dotted] (.5,.5) circle(0.707);
        \draw[knot] (-0.207, 0.5) to[out=0, in=180] (0.5, 0.085);
        \draw[knot] (1.207, 0.5) to[out=180, in=0] (0.5, 0.085);
        \draw[knot, overcross] (0,0) to[out=45, in=180] (0.5,0.375) to[out=0, in=135] (1,0);
        \draw[knot, overcross] (0,1) to[out=-45, in=180] (0.5,0.625) to[out=0, in=-135] (1,1);
        \fill[red, fill opacity=0.5] (0.218, 0.218) circle(0.15);
        \fill[red, fill opacity=0.5] (0.782, 0.218) circle(0.15);
        			}
			$};
		\node(C1) at (10,5) {$
			\tikz[baseline={([yshift=-.5ex]current bounding box.center)}, scale=0.8]
			{
	\draw[dotted] (.5,.5) circle(0.707);
        \draw[knot] (-0.207, 0.5) to[out=0, in=180] (0.5, 0.915);
        \draw[knot] (1.207, 0.5) to[out=180, in=0] (0.5, 0.915);
        \draw[knot, overcross] (0,0) to[out=45, in=-90] (0.375,0.5) to[out=90, in=-45] (0,1);
        \draw[knot, overcross] (1,0) to[out=135, in=-90] (0.625,0.5) to[out=90, in=-135] (1,1);
        \fill[red, fill opacity=0.5] (0.782, 0.782) circle(0.15);
        \fill[red, fill opacity=0.5] (0.5-0.282, 0.782) circle(0.15);			
        }
			$};
		\node(D1) at (13,5) {$
			\tikz[baseline={([yshift=-.5ex]current bounding box.center)}, scale=0.8]
			{
	\draw[dotted] (.5,.5) circle(0.707);
        \draw[knot] (-0.207, 0.5) to[out=0, in=180] (0.5, 0.915);
        \draw[knot] (1.207, 0.5) to[out=180, in=0] (0.5, 0.915);
        \draw[knot, overcross] (0,0) to[out=45, in=180] (0.5,0.375) to[out=0, in=135] (1,0);
        \draw[knot, overcross] (0,1) to[out=-45, in=180] (0.5,0.625) to[out=0, in=-135] (1,1);
        \fill[red, fill opacity=0.5] (0.782, 0.782) circle(0.15);
        \fill[red, fill opacity=0.5] (0.5-0.282, 0.782) circle(0.15);
        			}
			$};
		\node at (8,3.75) {$\cong$};
		\node at (11.5,0.5) {$h=e'\circ h_{1}+h_2\circ f^{-1}$};
		\draw[->] (A) to node[pos=0.5, above]{$d'$} (B);
		\draw[->] (C) to node[pos=0.5, above]{$d'$} (D);
		\draw[->] (E) to node[pos=0.5, above]{$d$} (F);
		\draw[->] (G) to node[pos=0.5, above]{$d$} (H);
		\draw[->] (C) to node[pos=0.5, left]{$f$} (A);
		\draw[->] (D) to node[pos=0.5, right]{$e'$} (B);
		\draw[->] (E) to node[pos=0.5, left]{$\Id$} (C);
		\draw[->] (F) to node[pos=0.5, right]{$\Id$} (D);
		\draw[->] (G) to node[pos=0.5, left]{$f^{-1}$} (E);
		\draw[->] (H) to node[pos=0.5, right]{$e^{-1}$} (F);
		\draw[->] (C) to node[pos=0.5, above]{$h_2$} (B);
		\draw[->] (G) to node[pos=0.5, above]{$h_1$} (F);
		\draw[->] (J) to node[pos=0.5, left]{$\Phi_{(3)}$} (I);
		\draw[->] (K) to node[pos=0.5, left]{$\Phi_{(2)}$} (J);
		\draw[->] (L) to node[pos=0.5, left]{$\Phi_{(1)}$} (K);
		\draw[->] (A1) to node[pos=0.5, above]{$d'$} (B1);
		\draw[->] (C1) to node[pos=0.5, above]{$d$} (D1);
		\draw[->] (C1) to node[pos=0.5, left]{$f\circ f^{-1}$} (A1);
		\draw[->] (D1) to node[pos=0.5, right]{$e'\circ e^{-1}$} (B1);
		\draw[->] (C1) to node[pos=0.5, above]{$h$} (B1);	
	}
	\caption{Chain maps for the sequence of pseudo-moves from Figure \ref{R3picture}.}
	\label{fig:R3CH1}
\end{figure}

The chain homotopy on odd Khovanov homology corresponding to the Reidemeister III move is given as follows. First, take the mapping cone of odd Khovanov complexes associated to resolutions of the central crossing. Let $\bar{e}$ and $\bar{e}'$ denote the chain homotopy associated to Reidemeister II moves. These chain maps give rise to homotopy equivalent mapping cones, denoted by $C_1$ and $C_2$ in Figure \ref{decomphi1}. It can be shown that,

\begin{lemma}[\cite{migdail2024functoriality}, Lemma 8]
	$C_1$ and $C_2$ are identical, up to sign.
\end{lemma}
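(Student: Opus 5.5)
The plan is to treat $C_1$ and $C_2$ as mapping cones built on the same underlying pieces, and to show that their diagonal components agree up to a global sign after a Gaussian elimination. Recall that $C_1$ is obtained from the cone of $d$ (before the move) by replacing the 0-resolution subcomplex via the Reidemeister II equivalence $\bar e$. Likewise, $C_2$ is obtained from the cone of $d'$ (after the move) using $\bar e'$. Concretely, $C_1 = \mathrm{Cone}(\bar e\circ d)$ together with the homotopy-transferred components, and similarly for $C_2$. The first step is to write each cone explicitly as a cube of resolutions of the three local crossings. Then we contract the acyclic pieces coming from the Reidemeister II simplification, namely the merge–birth and split–death cancellations. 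These are the standard Gaussian eliminations that define $\bar e$ and $\bar e'$, as in Figure \ref{fig:RIIodd}.

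After elimination, both cones have the same vertex modules. These are the odd Khovanov complexes of the crossingless resolutions of the Reidemeister III tangle that survive, and they are planar isotopic in the two pictures. So the second step is to fix identifications of the surviving vertices, using the decorations to identify components. Then I would compare the edge maps. The edges internal to the Reidemeister II parts are merges and splits between isotopic resolutions, so they coincide up to the sign conventions on splits. The new components are the zigzag maps produced by elimination, each of the form $\bar e\circ d$ composed with a homotopy. The key observation is that each such zigzag is a composite of elementary merges and splits realizing the same band cobordism on both sides, by planar isotopy of the bands. The projective TQFT $\mathcal{F}$ assigns equal maps up to sign to isotopic cobordisms, since merges commute and splits commute up to sign, so the components agree up to sign.

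The final step, which I expect to be the main obstacle, is showing the signs agree \emph{globally}: the discrepancy must be a single overall sign rather than an independent sign on each edge. Here I would use that both $C_1$ and $C_2$ are chain complexes whose faces anticommute. In odd Khovanov homology, any two sign assignments on a cube making every face anticommute differ by a gauge transformation, i.e.\ rescaling vertices by $\pm1$; this is the type-X/type-Y uniqueness of \cite{MR3071132}. So I would first check that every face of each eliminated cone anticommutes, which follows from $d^2=0$. Then I would absorb the relative signs into vertex rescalings, which changes the identification only up to isomorphism, and hence "up to sign."

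One subtle point is the possible face of type where the relevant composite vanishes (a ladybug-like configuration). There, anticommutation does not constrain the signs. I would handle this configuration by a direct check of the two split decorations.
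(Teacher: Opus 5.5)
The paper does not prove this statement; it imports it from \cite[Lemma 8]{migdail2024functoriality}. Judged on its own, your outline is a reasonable Bar-Natan-style route. You identify the vertex modules by planar isotopy, observe that the components come from isotopic cobordisms and so agree up to sign under $\mathcal{F}$, and then remove the discrepancy by a vertexwise gauge. The gauge step does work on every face whose two composites are nonzero; the outer square and the face through the crossingless vertex span all cycles.

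Some corrections to the setup:
\begin{itemize}
\item The Reidemeister II simplification is applied to the \emph{target} of $d$, the resolution of the central crossing in which the third strand crosses one arc twice.
\item $C_1$ is exactly $\mathrm{Cone}((\bar e)^{-1}\circ d)$, with no homotopy-transferred terms. Its only new components run from the two middle vertices of the outer square to the crossingless vertex.
\item These components do not match by a planar isotopy of bands. Under the identification of the two $0$-sides, the middle vertex where $(\bar e)^{-1}$ contributes its identity component before the move is the one where $(\bar e')^{-1}$ contributes its circle component after the move, and vice versa. That circle component is a death plus a saddle, carrying the sign $-a\epsilon_{1\star}\epsilon_{\star1}$ of Figure~\ref{fig:RIIodd}. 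The two cobordisms agree only after cancelling a split--death pair.
\end{itemize}

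The genuine gap is the step you yourself flag as the main obstacle. Uniqueness of sign assignments up to gauge holds only relative to fixed face types, and $d^2=0$ fixes the type only of faces with nonzero composites.

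A degenerate outer square (a ladybug, which occurs for one closure) is harmless. It is an honest resolution face whose decorations, and hence its ORS type, are carried along by the planar isotopy.

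Degeneracy also occurs on the face through the crossingless vertex, and that case is not harmless. For one of the closures in Table~\ref{tab:Etable}, the initial vertex of the outer square and the crossingless vertex each close up to a single circle. Both paths into the crossingless vertex are then a split followed by a merge of the same two circles, so they vanish under $\mathcal{F}$. Meanwhile all six edge maps are nonzero and the outer square is not degenerate. So $d^2=0$ leaves the relative sign of the two diagonal components completely free. Flipping that sign yields a complex not related to the original by any vertexwise $\pm1$ rescaling. Hence the $\pm\mathrm{Id}$ identification in Figure~\ref{decomphi1} holds exactly when these relative signs agree in $C_1$ and $C_2$.

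A ``direct check of the split decorations'' does not settle this. The two diagonal edges are composites with no decoration of their own. Their signs are inherited from two sources:
\begin{itemize}
\item the sign assignments on the two three-crossing cubes, whose ladybug faces (by the swap above) are not in corresponding positions before and after the move;
\item the signs in the Reidemeister II maps.
\end{itemize}
To finish, you must propagate these signs through the simplification and compare them. That comparison is the actual content of the lemma.
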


 Hence, by composing these chain maps, we obtain the chain homotopy equivalence corresponding to the Reidemeister III move. This is illustrated on the righthand side of Figure \ref{decomphi1}, where $\bar{E}$ is a chain homotopy between $\bar{e}'\circ (\bar{e}')^{-1}$ and identity map.
\begin{figure}[H]
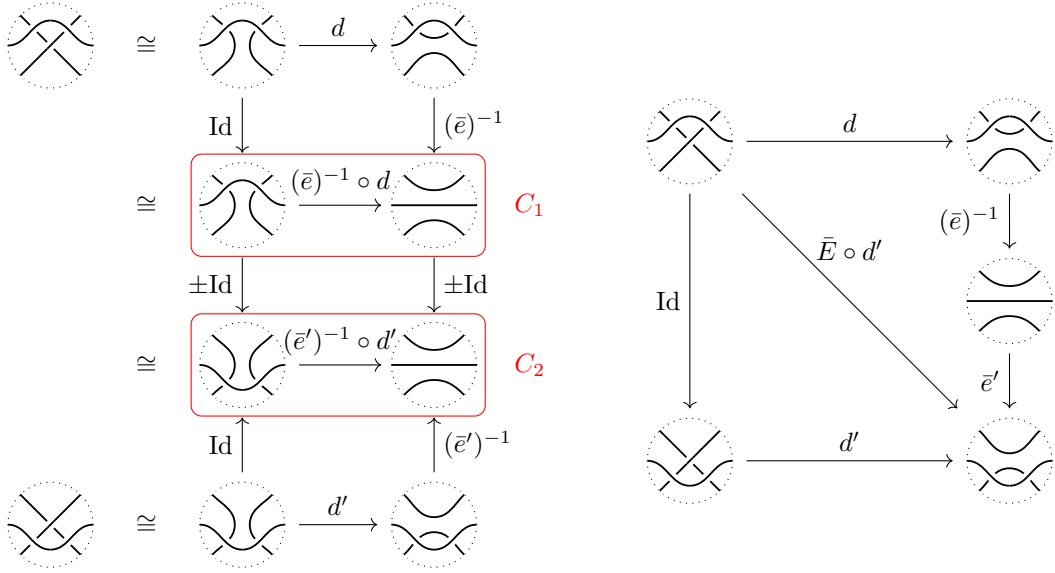

	\centering
	\tikz[baseline={([yshift=-.5ex]current bounding box.center)}, scale=0.85]
	{
		\node(A1) at (3,6) {$
			\tikz[baseline={([yshift=-.5ex]current bounding box.center)}, scale=0.8]
			{
	\draw[dotted] (.5,.5) circle(0.707);
        \draw[knot] (-0.207, 0.5) to[out=0, in=180] (0.5, 0.915);
        \draw[knot] (1.207, 0.5) to[out=180, in=0] (0.5, 0.915);
        \draw[knot, overcross] (0,0) to[out=45, in=-90] (0.375,0.5) to[out=90, in=-45] (0,1);
        \draw[knot, overcross] (1,0) to[out=135, in=-90] (0.625,0.5) to[out=90, in=-135] (1,1);
        \draw[knot, overcross] (-0.207, 0.5) to[out=0, in=180] (0.5, 0.915);
        \draw[knot, overcross] (1.207, 0.5) to[out=180, in=0] (0.5, 0.915);
        			}
			$};
		\node(B1) at (6,6) {$
			\tikz[baseline={([yshift=-.5ex]current bounding box.center)}, scale=0.8]
			{
	\draw[dotted] (.5,.5) circle(0.707);
        \draw[knot] (-0.207, 0.5) -- (1.207, 0.5);
        \draw[knot, overcross] (0,0) to[out=45, in=180] (0.5,0.25) to[out=0, in=135] (1,0);
        \draw[knot, overcross] (0,1) to[out=-45, in=180] (0.5,0.75) to[out=0, in=-135] (1,1);
        			}
			$};
		\node(C1) at (3,8.5) {$
			\tikz[baseline={([yshift=-.5ex]current bounding box.center)}, scale=0.8]
			{
	\draw[dotted] (.5,.5) circle(0.707);
        \draw[knot] (-0.207, 0.5) to[out=0, in=180] (0.5, 0.915);
        \draw[knot] (1.207, 0.5) to[out=180, in=0] (0.5, 0.915);
        \draw[knot, overcross] (0,0) to[out=45, in=-90] (0.375,0.5) to[out=90, in=-45] (0,1);
        \draw[knot, overcross] (1,0) to[out=135, in=-90] (0.625,0.5) to[out=90, in=-135] (1,1);
        \draw[knot, overcross] (-0.207, 0.5) to[out=0, in=180] (0.5, 0.915);
        \draw[knot, overcross] (1.207, 0.5) to[out=180, in=0] (0.5, 0.915);
        			}
			$};
		\node(D1) at (6,8.5) {$
			\tikz[baseline={([yshift=-.5ex]current bounding box.center)}, scale=0.8]
			{
	\draw[dotted] (.5,.5) circle(0.707);
        \draw[knot, overcross] (0,0) to[out=45, in=180] (0.5,0.375) to[out=0, in=135] (1,0);
        \draw[knot, overcross] (0,1) to[out=-45, in=180] (0.5,0.625) to[out=0, in=-135] (1,1);\
        \draw[knot, overcross] (-0.207, 0.5) to[out=0, in=180] (0.5, 0.915);
        \draw[knot, overcross] (1.207, 0.5) to[out=180, in=0] (0.5, 0.915);
        			}
			$};
		\node(E1) at (0,8.5) {$
		\tikz[baseline={([yshift=-.5ex]current bounding box.center)}, scale=0.8]
		{
	\draw[dotted] (.5,.5) circle(0.707);
        \draw[knot] (0,1) -- (0.45, 0.55);
        \draw[knot] (0,1) -- (0.01, 0.99);
        \draw[knot] (0.45, 0.55) -- (0.55, 0.45);
        \draw[knot] (0.55, 0.45) -- (1,0);
        \draw[knot, overcross] (0,0) -- (0.45, 0.45);
        \draw[knot, overcross] (0.45, 0.45) -- (0.55, 0.55);
        \draw[knot, overcross] (0.55, 0.55) -- (1,1);
        \draw[knot, overcross] (-0.207, 0.5) to[out=0, in=180] (0.5, 0.915);
        \draw[knot, overcross] (1.207, 0.5) to[out=180, in=0] (0.5, 0.915);
        		}
		$};
		\node at (1.5,6){$\cong$};
		\node at (1.5,8.5){$\cong$};
		
		\node(Cone5)[red] at (7.5,6) {$C_1$};
		
		\node(A) at (3,1) {$
			\tikz[baseline={([yshift=-.5ex]current bounding box.center)}, scale=0.8]
			{
	\draw[dotted] (.5,.5) circle(0.707);
        \draw[knot, overcross] (0,0) to[out=45, in=-90] (0.375,0.5) to[out=90, in=-45] (0,1);
        \draw[knot, overcross] (1,0) to[out=135, in=-90] (0.625,0.5) to[out=90, in=-135] (1,1);
        \draw[knot, overcross] (-0.207, 0.5) to[out=0, in=180] (0.5, 0.085);
        \draw[knot, overcross] (1.207, 0.5) to[out=180, in=0] (0.5, 0.085);
        			}
			$};
		\node(B) at (6,1) {$
			\tikz[baseline={([yshift=-.5ex]current bounding box.center)}, scale=0.8]
			{
	\draw[dotted] (.5,.5) circle(0.707);
        \draw[knot, overcross] (0,0) to[out=45, in=180] (0.5,0.375) to[out=0, in=135] (1,0);
        \draw[knot, overcross] (0,1) to[out=-45, in=180] (0.5,0.625) to[out=0, in=-135] (1,1);
        \draw[knot, overcross] (-0.207, 0.5) to[out=0, in=180] (0.5, 0.085);
        \draw[knot, overcross] (1.207, 0.5) to[out=180, in=0] (0.5, 0.085);
        			}
			$};
		\node(C) at (3,3.5) {$
			\tikz[baseline={([yshift=-.5ex]current bounding box.center)}, scale=0.8]
			{
	\draw[dotted] (.5,.5) circle(0.707);
        \draw[knot, overcross] (0,0) to[out=45, in=-90] (0.375,0.5) to[out=90, in=-45] (0,1);
        \draw[knot, overcross] (1,0) to[out=135, in=-90] (0.625,0.5) to[out=90, in=-135] (1,1);
        \draw[knot, overcross] (-0.207, 0.5) to[out=0, in=180] (0.5, 0.085);
        \draw[knot, overcross] (1.207, 0.5) to[out=180, in=0] (0.5, 0.085);
        			}
			$};
		\node(D) at (6,3.5) {$
			\tikz[baseline={([yshift=-.5ex]current bounding box.center)}, scale=0.8]
			{
	\draw[dotted] (.5,.5) circle(0.707);
        \draw[knot] (-0.207, 0.5) -- (1.207, 0.5);
        \draw[knot, overcross] (0,0) to[out=45, in=180] (0.5,0.25) to[out=0, in=135] (1,0);
        \draw[knot, overcross] (0,1) to[out=-45, in=180] (0.5,0.75) to[out=0, in=-135] (1,1);
        			}
			$};
		\node(E) at (0,1) {$
      		\tikz[baseline={([yshift=-.5ex]current bounding box.center)}, scale=0.8]
			{
	\draw[dotted] (.5,.5) circle(0.707);
        \draw[knot] (0,1) -- (0.45, 0.55);
        \draw[knot] (0,1) -- (0.01, 0.99);
        \draw[knot] (0.45, 0.55) -- (0.55, 0.45);
        \draw[knot] (0.55, 0.45) -- (1,0);
        \draw[knot, overcross] (0,0) -- (0.45, 0.45);
        \draw[knot, overcross] (0.45, 0.45) -- (0.55, 0.55);
        \draw[knot, overcross] (0.55, 0.55) -- (1,1);
        \draw[knot, overcross] (-0.207, 0.5) to[out=0, in=180] (0.5, 0.085);
        \draw[knot, overcross] (1.207, 0.5) to[out=180, in=0] (0.5, 0.085);
        			}
			$};
		\node at (1.5,3.5){$\cong$};
		\node at (1.5,1){$\cong$};
		
		\node(Cone2)[red] at (7.5,3.5) {$C_2$};
		
		\node(A2) at (10,2) {$
			\tikz[baseline={([yshift=-.5ex]current bounding box.center)}, scale=0.8]
			{
	\draw[dotted] (.5,.5) circle(0.707);
        \draw[knot] (0,1) -- (0.45, 0.55);
        \draw[knot] (0,1) -- (0.01, 0.99);
        \draw[knot] (0.45, 0.55) -- (0.55, 0.45);
        \draw[knot] (0.55, 0.45) -- (1,0);
        \draw[knot, overcross] (0,0) -- (0.45, 0.45);
        \draw[knot, overcross] (0.45, 0.45) -- (0.55, 0.55);
        \draw[knot, overcross] (0.55, 0.55) -- (1,1);
        \draw[knot, overcross] (-0.207, 0.5) to[out=0, in=180] (0.5, 0.085);
        \draw[knot, overcross] (1.207, 0.5) to[out=180, in=0] (0.5, 0.085);
        			}
			$};
		\node(B2) at (10,7) {$
			\tikz[baseline={([yshift=-.5ex]current bounding box.center)}, scale=0.8]
			{
	\draw[dotted] (.5,.5) circle(0.707);
        \draw[knot] (0,1) -- (0.45, 0.55);
        \draw[knot] (0,1) -- (0.01, 0.99);
        \draw[knot] (0.45, 0.55) -- (0.55, 0.45);
        \draw[knot] (0.55, 0.45) -- (1,0);
        \draw[knot, overcross] (0,0) -- (0.45, 0.45);
        \draw[knot, overcross] (0.45, 0.45) -- (0.55, 0.55);
        \draw[knot, overcross] (0.55, 0.55) -- (1,1);
        \draw[knot, overcross] (-0.207, 0.5) to[out=0, in=180] (0.5, 0.915);
        \draw[knot, overcross] (1.207, 0.5) to[out=180, in=0] (0.5, 0.915);
        			}
			$};
		\node(C2) at (15,2) {$
			\tikz[baseline={([yshift=-.5ex]current bounding box.center)}, scale=0.8]
			{
	\draw[dotted] (.5,.5) circle(0.707);
        \draw[knot, overcross] (0,0) to[out=45, in=180] (0.5,0.375) to[out=0, in=135] (1,0);
        \draw[knot, overcross] (0,1) to[out=-45, in=180] (0.5,0.625) to[out=0, in=-135] (1,1);
        \draw[knot, overcross] (-0.207, 0.5) to[out=0, in=180] (0.5, 0.085);
        \draw[knot, overcross] (1.207, 0.5) to[out=180, in=0] (0.5, 0.085);
			}
			$};
		\node(D2) at (15,4.5) {$
			\tikz[baseline={([yshift=-.5ex]current bounding box.center)}, scale=0.8]
			{
	\draw[dotted] (.5,.5) circle(0.707);
        \draw[knot] (-0.207, 0.5) -- (1.207, 0.5);
        \draw[knot, overcross] (0,0) to[out=45, in=180] (0.5,0.25) to[out=0, in=135] (1,0);
        \draw[knot, overcross] (0,1) to[out=-45, in=180] (0.5,0.75) to[out=0, in=-135] (1,1);
        			}
			$};
		\node(E2) at (15,7) {$
			\tikz[baseline={([yshift=-.5ex]current bounding box.center)}, scale=0.8]
			{
	\draw[dotted] (.5,.5) circle(0.707);
        \draw[knot, overcross] (0,0) to[out=45, in=180] (0.5,0.375) to[out=0, in=135] (1,0);
        \draw[knot, overcross] (0,1) to[out=-45, in=180] (0.5,0.625) to[out=0, in=-135] (1,1);\
        \draw[knot, overcross] (-0.207, 0.5) to[out=0, in=180] (0.5, 0.915);
        \draw[knot, overcross] (1.207, 0.5) to[out=180, in=0] (0.5, 0.915);
        			}
			$};

		\draw[->] (A1) to  node[pos=0.5, above]{$(\bar{e})^{-1}\circ d$} (B1);
		\draw[->] (C1) to  node[pos=0.5, above]{$d$} (D1);
		\draw[->] (C1) to  node[pos=0.5, left]{Id} (A1);
		\draw[->] (D1) to  node[pos=0.5, right]{$(\bar{e})^{-1}$} (B1);
		\begin{scope}[on background layer]
			\draw[red,rounded corners]
			($(A1)+(-0.8,-0.8)$) rectangle ($(B1)+(0.8,0.8)$);
		\end{scope}
		\draw[->] (A) to  node[pos=0.5, above]{$d'$} (B);
		\draw[->] (C) to  node[pos=0.5, above]{$(\bar{e}')^{-1}\circ d'$} (D);
		\draw[->] (A) to  node[pos=0.5, left]{Id} (C);
		\draw[->] (B) to  node[pos=0.5, right]{$(\bar{e}')^{-1}$} (D);
		\begin{scope}[on background layer]
			\draw[red,rounded corners]
			($(C)+(-0.8,-0.8)$) rectangle ($(D)+(0.8,0.8)$);
		\end{scope}
		
		\draw[->] (A1) to  node[pos=0.5, left]{$\pm\Id$} (C);
		\draw[->] (B1) to  node[pos=0.5, right]{$\pm\Id$} (D);
		
		\draw[->] (B2) to  node[pos=0.5, left]{Id} (A2);
		\draw[->] (A2) to  node[pos=0.5, above]{$d'$} (C2);
		\draw[->] (B2) to  node[pos=0.5, above]{$d$} (E2);
		\draw[->] (B2) to  node[pos=0.5, above=1.2em]{$\bar{E}\circ d'$} (C2);
		\draw[->] (E2) to  node[pos=0.5, left]{$(\bar{e})^{-1}$} (D2);
		\draw[->] (D2) to  node[pos=0.5, left]{$\bar{e}'$} (C2);
		
	}
	\caption{Chain homotopy on the odd Khovanov complex for RIII.}
	\label{decomphi1}
\end{figure}

To construct a chain homotopy between the maps on $\widetilde{\mathrm{PKE}}_2$ and odd Khovanov homology, we first compare the chain map on $\widetilde{\mathrm{PKE}}_2$ with the chain map in Figure \ref{R3comparison}. Since the comparison map in Figure \ref{R3comparison} is the composition of forgetting two crossing and adding them back, it is chain homotopic to the identity map. Hence, we can find a triple of maps $(F,H,G)$ such that
\begin{align*}
	&D\circ F+F\circ D=f\circ f^{-1}-\Id,\\
	&D\circ G+G\circ D=e'\circ (e')^{-1}-\Id,\\
	&D\circ H+H\circ D+d'\circ F+G\circ d=\tilde{h}.
\end{align*}

\begin{figure}[h]
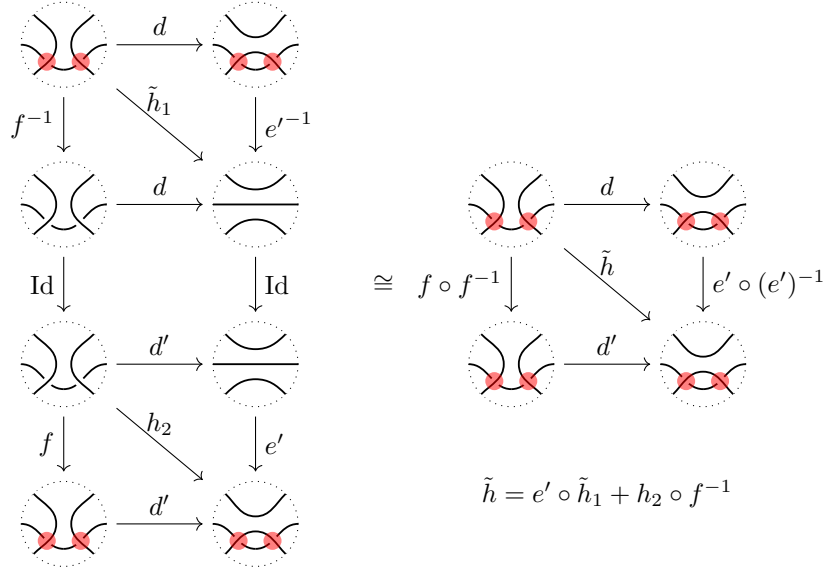

	\centering
	\tikz[baseline={([yshift=-.5ex]current bounding box.center)}, scale=0.85]
	{
		\node(A) at (3,0) {$
			\tikz[baseline={([yshift=-.5ex]current bounding box.center)}, scale=0.8]
			{
	\draw[dotted] (.5,.5) circle(0.707);
        \draw[knot] (-0.207, 0.5) to[out=0, in=180] (0.5, 0.085);
        \draw[knot] (1.207, 0.5) to[out=180, in=0] (0.5, 0.085);
        \draw[knot, overcross] (0,0) to[out=45, in=-90] (0.375,0.5) to[out=90, in=-45] (0,1);
        \draw[knot, overcross] (1,0) to[out=135, in=-90] (0.625,0.5) to[out=90, in=-135] (1,1);
        \fill[red, fill opacity=0.5] (0.218, 0.218) circle(0.15);
        \fill[red, fill opacity=0.5] (0.782, 0.218) circle(0.15);
			}
			$};
		\node(B) at (6,0) {$
			\tikz[baseline={([yshift=-.5ex]current bounding box.center)}, scale=0.8]
			{
	\draw[dotted] (.5,.5) circle(0.707);
        \draw[knot] (-0.207, 0.5) to[out=0, in=180] (0.5, 0.085);
        \draw[knot] (1.207, 0.5) to[out=180, in=0] (0.5, 0.085);
        \draw[knot, overcross] (0,0) to[out=45, in=180] (0.5,0.375) to[out=0, in=135] (1,0);
        \draw[knot, overcross] (0,1) to[out=-45, in=180] (0.5,0.625) to[out=0, in=-135] (1,1);
        \fill[red, fill opacity=0.5] (0.218, 0.218) circle(0.15);
        \fill[red, fill opacity=0.5] (0.782, 0.218) circle(0.15);
			}
			$};
		\node(C) at (3,2.5) {$
			\tikz[baseline={([yshift=-.5ex]current bounding box.center)}, scale=0.8]
			{
	\draw[dotted] (.5,.5) circle(0.707);
        \draw[knot] (-0.207, 0.5) to[out=0, in=180] (0.5, 0.085);
        \draw[knot] (1.207, 0.5) to[out=180, in=0] (0.5, 0.085);
        \draw[knot, overcross] (0,0) to[out=45, in=-90] (0.375,0.5) to[out=90, in=-45] (0,1);
        \draw[knot, overcross] (1,0) to[out=135, in=-90] (0.625,0.5) to[out=90, in=-135] (1,1);
			}
			$};
		\node(D) at (6,2.5) {$
			\tikz[baseline={([yshift=-.5ex]current bounding box.center)}, scale=0.8]
			{
	\draw[dotted] (.5,.5) circle(0.707);
        \draw[knot] (-0.207, 0.5) -- (1.207, 0.5);
        \draw[knot, overcross] (0,0) to[out=45, in=180] (0.5,0.25) to[out=0, in=135] (1,0);
        \draw[knot, overcross] (0,1) to[out=-45, in=180] (0.5,0.75) to[out=0, in=-135] (1,1);
			}
			$};
		\node(E) at (3,5) {$
			\tikz[baseline={([yshift=-.5ex]current bounding box.center)}, scale=0.8]
			{
	\draw[dotted] (.5,.5) circle(0.707);
        \draw[knot] (-0.207, 0.5) to[out=0, in=180] (0.5, 0.085);
        \draw[knot] (1.207, 0.5) to[out=180, in=0] (0.5, 0.085);
        \draw[knot, overcross] (0,0) to[out=45, in=-90] (0.375,0.5) to[out=90, in=-45] (0,1);
        \draw[knot, overcross] (1,0) to[out=135, in=-90] (0.625,0.5) to[out=90, in=-135] (1,1);
			}
			$};
		\node(F) at (6,5) {$
			\tikz[baseline={([yshift=-.5ex]current bounding box.center)}, scale=0.8]
			{
	\draw[dotted] (.5,.5) circle(0.707);
        \draw[knot] (-0.207, 0.5) -- (1.207, 0.5);
        \draw[knot, overcross] (0,0) to[out=45, in=180] (0.5,0.25) to[out=0, in=135] (1,0);
        \draw[knot, overcross] (0,1) to[out=-45, in=180] (0.5,0.75) to[out=0, in=-135] (1,1);
			}
			$};
		\node(G) at (3,7.5) {$
			\tikz[baseline={([yshift=-.5ex]current bounding box.center)}, scale=0.8]
			{
	\draw[dotted] (.5,.5) circle(0.707);
        \draw[knot] (-0.207, 0.5) to[out=0, in=180] (0.5, 0.085);
        \draw[knot] (1.207, 0.5) to[out=180, in=0] (0.5, 0.085);
        \draw[knot, overcross] (0,0) to[out=45, in=-90] (0.375,0.5) to[out=90, in=-45] (0,1);
        \draw[knot, overcross] (1,0) to[out=135, in=-90] (0.625,0.5) to[out=90, in=-135] (1,1);
        \fill[red, fill opacity=0.5] (0.218, 0.218) circle(0.15);
        \fill[red, fill opacity=0.5] (0.782, 0.218) circle(0.15);
			}
			$};
		\node(H) at (6,7.5) {$
			\tikz[baseline={([yshift=-.5ex]current bounding box.center)}, scale=0.8]
			{
	\draw[dotted] (.5,.5) circle(0.707);
        \draw[knot] (-0.207, 0.5) to[out=0, in=180] (0.5, 0.085);
        \draw[knot] (1.207, 0.5) to[out=180, in=0] (0.5, 0.085);
        \draw[knot, overcross] (0,0) to[out=45, in=180] (0.5,0.375) to[out=0, in=135] (1,0);
        \draw[knot, overcross] (0,1) to[out=-45, in=180] (0.5,0.625) to[out=0, in=-135] (1,1);
        \fill[red, fill opacity=0.5] (0.218, 0.218) circle(0.15);
        \fill[red, fill opacity=0.5] (0.782, 0.218) circle(0.15);
			}
			$};
		
		\node(A1) at (10,2.5) {$
			\tikz[baseline={([yshift=-.5ex]current bounding box.center)}, scale=0.8]
			{
	\draw[dotted] (.5,.5) circle(0.707);
        \draw[knot] (-0.207, 0.5) to[out=0, in=180] (0.5, 0.085);
        \draw[knot] (1.207, 0.5) to[out=180, in=0] (0.5, 0.085);
        \draw[knot, overcross] (0,0) to[out=45, in=-90] (0.375,0.5) to[out=90, in=-45] (0,1);
        \draw[knot, overcross] (1,0) to[out=135, in=-90] (0.625,0.5) to[out=90, in=-135] (1,1);
        \fill[red, fill opacity=0.5] (0.218, 0.218) circle(0.15);
        \fill[red, fill opacity=0.5] (0.782, 0.218) circle(0.15);	
			}
			$};
		\node(B1) at (13,2.5) {$
			\tikz[baseline={([yshift=-.5ex]current bounding box.center)}, scale=0.8]
			{
	\draw[dotted] (.5,.5) circle(0.707);
        \draw[knot] (-0.207, 0.5) to[out=0, in=180] (0.5, 0.085);
        \draw[knot] (1.207, 0.5) to[out=180, in=0] (0.5, 0.085);
        \draw[knot, overcross] (0,0) to[out=45, in=180] (0.5,0.375) to[out=0, in=135] (1,0);
        \draw[knot, overcross] (0,1) to[out=-45, in=180] (0.5,0.625) to[out=0, in=-135] (1,1);
        \fill[red, fill opacity=0.5] (0.218, 0.218) circle(0.15);
        \fill[red, fill opacity=0.5] (0.782, 0.218) circle(0.15);
			}
			$};
		\node(C1) at (10,5) {$
			\tikz[baseline={([yshift=-.5ex]current bounding box.center)}, scale=0.8]
			{
	\draw[dotted] (.5,.5) circle(0.707);
        \draw[knot] (-0.207, 0.5) to[out=0, in=180] (0.5, 0.085);
        \draw[knot] (1.207, 0.5) to[out=180, in=0] (0.5, 0.085);
        \draw[knot, overcross] (0,0) to[out=45, in=-90] (0.375,0.5) to[out=90, in=-45] (0,1);
        \draw[knot, overcross] (1,0) to[out=135, in=-90] (0.625,0.5) to[out=90, in=-135] (1,1);
        \fill[red, fill opacity=0.5] (0.218, 0.218) circle(0.15);
        \fill[red, fill opacity=0.5] (0.782, 0.218) circle(0.15);
			}
			$};
		\node(D1) at (13,5) {$
			\tikz[baseline={([yshift=-.5ex]current bounding box.center)}, scale=0.8]
			{
	\draw[dotted] (.5,.5) circle(0.707);
        \draw[knot] (-0.207, 0.5) to[out=0, in=180] (0.5, 0.085);
        \draw[knot] (1.207, 0.5) to[out=180, in=0] (0.5, 0.085);
        \draw[knot, overcross] (0,0) to[out=45, in=180] (0.5,0.375) to[out=0, in=135] (1,0);
        \draw[knot, overcross] (0,1) to[out=-45, in=180] (0.5,0.625) to[out=0, in=-135] (1,1);
        \fill[red, fill opacity=0.5] (0.218, 0.218) circle(0.15);
        \fill[red, fill opacity=0.5] (0.782, 0.218) circle(0.15);
			}
			$};
		\node at (8,3.75) {$\cong$};
		\node at (11.5,0.5) {$\tilde{h}=e'\circ \tilde{h}_{1}+h_2\circ f^{-1}$};
		\draw[->] (A) to node[pos=0.5, above]{$d'$} (B);
		\draw[->] (C) to node[pos=0.5, above]{$d'$} (D);
		\draw[->] (E) to node[pos=0.5, above]{$d$} (F);
		\draw[->] (G) to node[pos=0.5, above]{$d$} (H);
		\draw[->] (C) to node[pos=0.5, left]{$f$} (A);
		\draw[->] (D) to node[pos=0.5, right]{$e'$} (B);
		\draw[->] (E) to node[pos=0.5, left]{Id} (C);
		\draw[->] (F) to node[pos=0.5, right]{Id} (D);
		\draw[->] (G) to node[pos=0.5, left]{$f^{-1}$} (E);
		\draw[->] (H) to node[pos=0.5, right]{${e'}^{-1}$} (F);
		\draw[->] (C) to node[pos=0.5, above]{$h_2$} (B);
		\draw[->] (G) to node[pos=0.5, above]{$\tilde{h}_1$} (F);
		\draw[->] (A1) to node[pos=0.5, above]{$d'$} (B1);
		\draw[->] (C1) to node[pos=0.5, above]{$d$} (D1);
		\draw[->] (C1) to node[pos=0.5, left]{$f\circ f^{-1}$} (A1);
		\draw[->] (D1) to node[pos=0.5, right]{$e'\circ (e')^{-1}$} (B1);
		\draw[->] (C1) to node[pos=0.5, above]{$\tilde{h}$} (B1);	
	}
	\caption{Comparison map.}
	\label{R3comparison}
\end{figure}

Since the chain maps on $\widetilde{\mathrm{PKE}}_2$ induced by Reidemeister II moves are homotopic to the chain maps on odd Khovanov homology induced by Reidemeister II moves (see Subsection \ref{ss:r2}), we can find corresponding chain homotopies $E_{-1}'$, $E'$, and $E_{-1}$ such that
\begin{align*}
	&(e')^{-1}-(\bar{e}')^{-1}=D\circ E_{-1}'+E_{-1}'\circ D,\\
	&e'-\bar{e}'=D\circ E'+E'\circ D,\\
	&(e)^{-1}-(\bar{e})^{-1}=D\circ E_{-1}+E_{-1}\circ D.
\end{align*}

Define $g'=e'\circ E_{-1}'+E'\circ(\bar{e}')^{-1}$. It is straightforward to verify that $G-g'$ is a chain homotopy between the identity map and $\bar{e}'\circ(\bar{e}')^{-1}$.

\begin{figure}[h]
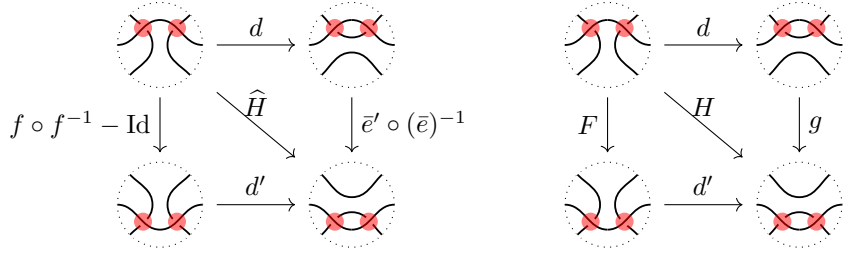

	\centering
	\tikz[baseline={([yshift=-.5ex]current bounding box.center)}, scale=0.85]
	{
		\node(C) at (3,2.5) {$
			\tikz[baseline={([yshift=-.5ex]current bounding box.center)}, scale=0.8]
			{
				\draw[dotted] (.5,.5) circle(0.707);
				\draw[knot, overcross] (0,0) to[out=45, in=-90] (0.375,0.5) to[out=90, in=-45] (0,1);
				\draw[knot, overcross] (1,0) to[out=135, in=-90] (0.625,0.5) to[out=90, in=-135] (1,1);
				\draw[knot, overcross] (-0.207, 0.5) to[out=0, in=180] (0.5, 0.085);
				\draw[knot, overcross] (1.207, 0.5) to[out=180, in=0] (0.5, 0.085);
				\fill[red, fill opacity=0.5] (0.218, 0.218) circle(0.15);
				\fill[red, fill opacity=0.5] (0.782, 0.218) circle(0.15);
			}
			$};
		\node(D) at (6,2.5) {$
			\tikz[baseline={([yshift=-.5ex]current bounding box.center)}, scale=0.8]
			{
				\draw[dotted] (.5,.5) circle(0.707);
				\draw[knot, overcross] (0,0) to[out=45, in=180] (0.5,0.375) to[out=0, in=135] (1,0);
				\draw[knot, overcross] (0,1) to[out=-45, in=180] (0.5,0.625) to[out=0, in=-135] (1,1);
				\draw[knot, overcross] (-0.207, 0.5) to[out=0, in=180] (0.5, 0.085);
				\draw[knot, overcross] (1.207, 0.5) to[out=180, in=0] (0.5, 0.085);
				\fill[red, fill opacity=0.5] (0.218, 0.218) circle(0.15);
				\fill[red, fill opacity=0.5] (0.782, 0.218) circle(0.15);
			}
			$};
		\node(E) at (3,5) {$
			\tikz[baseline={([yshift=-.5ex]current bounding box.center)}, scale=0.8]
			{
				\draw[dotted] (.5,.5) circle(0.707);
				\draw[knot] (-0.207, 0.5) to[out=0, in=180] (0.5, 0.915);
				\draw[knot] (1.207, 0.5) to[out=180, in=0] (0.5, 0.915);
				\draw[knot, overcross] (0,0) to[out=45, in=-90] (0.375,0.5) to[out=90, in=-45] (0,1);
				\draw[knot, overcross] (1,0) to[out=135, in=-90] (0.625,0.5) to[out=90, in=-135] (1,1);
				\draw[knot, overcross] (-0.207, 0.5) to[out=0, in=180] (0.5, 0.915);
				\draw[knot, overcross] (1.207, 0.5) to[out=180, in=0] (0.5, 0.915);
				\fill[red, fill opacity=0.5] (0.782, 0.782) circle(0.15);
				\fill[red, fill opacity=0.5] (0.5-0.282, 0.782) circle(0.15);
			}
			$};
		\node(F) at (6,5) {$
			\tikz[baseline={([yshift=-.5ex]current bounding box.center)}, scale=0.8]
			{
				\draw[dotted] (.5,.5) circle(0.707);
				\draw[knot, overcross] (0,0) to[out=45, in=180] (0.5,0.375) to[out=0, in=135] (1,0);
				\draw[knot, overcross] (0,1) to[out=-45, in=180] (0.5,0.625) to[out=0, in=-135] (1,1);\
				\draw[knot, overcross] (-0.207, 0.5) to[out=0, in=180] (0.5, 0.915);
				\draw[knot, overcross] (1.207, 0.5) to[out=180, in=0] (0.5, 0.915);
				\fill[red, fill opacity=0.5] (0.782, 0.782) circle(0.15);
				\fill[red, fill opacity=0.5] (0.5-0.282, 0.782) circle(0.15);
			}
			$};
		
		\node(A1) at (10,2.5) {$
			\tikz[baseline={([yshift=-.5ex]current bounding box.center)}, scale=0.8]
			{
				\draw[dotted] (.5,.5) circle(0.707);
				\draw[knot, overcross] (0,0) to[out=45, in=-90] (0.375,0.5) to[out=90, in=-45] (0,1);
				\draw[knot, overcross] (1,0) to[out=135, in=-90] (0.625,0.5) to[out=90, in=-135] (1,1);
				\draw[knot, overcross] (-0.207, 0.5) to[out=0, in=180] (0.5, 0.085);
				\draw[knot, overcross] (1.207, 0.5) to[out=180, in=0] (0.5, 0.085);
				\fill[red, fill opacity=0.5] (0.218, 0.218) circle(0.15);
				\fill[red, fill opacity=0.5] (0.782, 0.218) circle(0.15);
			}
			$};
		\node(B1) at (13,2.5) {$
			\tikz[baseline={([yshift=-.5ex]current bounding box.center)}, scale=0.8]
			{
				\draw[dotted] (.5,.5) circle(0.707);
				\draw[knot, overcross] (0,0) to[out=45, in=180] (0.5,0.375) to[out=0, in=135] (1,0);
				\draw[knot, overcross] (0,1) to[out=-45, in=180] (0.5,0.625) to[out=0, in=-135] (1,1);
				\draw[knot, overcross] (-0.207, 0.5) to[out=0, in=180] (0.5, 0.085);
				\draw[knot, overcross] (1.207, 0.5) to[out=180, in=0] (0.5, 0.085);
				\fill[red, fill opacity=0.5] (0.218, 0.218) circle(0.15);
				\fill[red, fill opacity=0.5] (0.782, 0.218) circle(0.15);
			}
			$};
		\node(C1) at (10,5) {$
			\tikz[baseline={([yshift=-.5ex]current bounding box.center)}, scale=0.8]
			{
				\draw[dotted] (.5,.5) circle(0.707);
				\draw[knot] (-0.207, 0.5) to[out=0, in=180] (0.5, 0.915);
				\draw[knot] (1.207, 0.5) to[out=180, in=0] (0.5, 0.915);
				\draw[knot, overcross] (0,0) to[out=45, in=-90] (0.375,0.5) to[out=90, in=-45] (0,1);
				\draw[knot, overcross] (1,0) to[out=135, in=-90] (0.625,0.5) to[out=90, in=-135] (1,1);
				\draw[knot, overcross] (-0.207, 0.5) to[out=0, in=180] (0.5, 0.915);
				\draw[knot, overcross] (1.207, 0.5) to[out=180, in=0] (0.5, 0.915);
				\fill[red, fill opacity=0.5] (0.782, 0.782) circle(0.15);
				\fill[red, fill opacity=0.5] (0.5-0.282, 0.782) circle(0.15);
			}
			$};
		\node(D1) at (13,5) {$
			\tikz[baseline={([yshift=-.5ex]current bounding box.center)}, scale=0.8]
			{
				\draw[dotted] (.5,.5) circle(0.707);
				\draw[knot, overcross] (0,0) to[out=45, in=180] (0.5,0.375) to[out=0, in=135] (1,0);
				\draw[knot, overcross] (0,1) to[out=-45, in=180] (0.5,0.625) to[out=0, in=-135] (1,1);\
				\draw[knot, overcross] (-0.207, 0.5) to[out=0, in=180] (0.5, 0.915);
				\draw[knot, overcross] (1.207, 0.5) to[out=180, in=0] (0.5, 0.915);
				\fill[red, fill opacity=0.5] (0.782, 0.782) circle(0.15);
				\fill[red, fill opacity=0.5] (0.5-0.282, 0.782) circle(0.15);
			}
			$};
		\draw[->] (F) to node[pos=0.5, right]{$\bar{e}'\circ (\bar{e})^{-1}$} (D);
		\draw[->] (C) to node[pos=0.5, above]{$d'$} (D);
		\draw[->] (E) to node[pos=0.5, above]{$d$} (F);
		\draw[->] (E) to node[pos=0.5, left]{$f\circ f^{-1}-\text{Id}$} (C);
		\draw[->] (E) to node[pos=0.5, above]{$\widehat{H}$} (D);
		\draw[->] (A1) to node[pos=0.5, above]{$d'$} (B1);
		\draw[->] (C1) to node[pos=0.5, above]{$d$} (D1);
		\draw[->] (C1) to node[pos=0.5, left]{$F$} (A1);
		\draw[->] (D1) to node[pos=0.5, right]{$g$} (B1);
		\draw[->] (C1) to node[pos=0.5, above]{$H$} (B1);	
	}
	\caption{Chain homotopy.}
	\label{fig:R3chainhomotopy}
\end{figure}

With this set up, we construct a chain homotopy between the Reidemeister III maps on odd Khovanov homology and the maps induced from plane Floer homology. The chain homotopy is illustrated on the righthand side of Figure \ref{fig:R3chainhomotopy}, where $g:=e'\circ E_{-1}+E'\circ (\bar{e})^{-1}$. Applying this chain homotopy to the Reidemeister III map on $\widetilde{\mathrm{PKE}}_2$ from Figure \ref{fig:R3CH1}, we get the chain map on the lefthand side of Figure \ref{fig:R3chainhomotopy}, where
\begin{align*}
	\widehat{H}&=h-(D\circ H+H\circ D+d'\circ F+g\circ d)\\
	       &=h-(h'-G\circ d'+g\circ d)\\
	       &=(h-h')+(G\circ d'-g'\circ d')+(g'\circ d'-g\circ d)\\
	       &=e'\circ(h_1-\tilde{h}_1)-e'\circ(E_{-1}\circ d-E_{-1}'\circ d')+(G-g')\circ d'.
\end{align*}

We use the following lemma to prove that the diagonal term $\bar{E}\circ d'$ is chain homotopic to $\widehat{H}$.

\begin{lemma}\label{lemma:R3computation}
	$h_1-\tilde{h}_1=E_{-1}\circ d-E_{-1}'\circ d'$.
\end{lemma}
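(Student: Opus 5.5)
We prove the identity $h_1-\tilde{h}_1=E_{-1}\circ d-E_{-1}'\circ d'$ by direct computation on the $E_2$ page. The argument is organized to mirror the proof of Proposition \ref{lem:trivial}.

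\textbf{Step 1: identify both sides.} By Figures \ref{fig:R3CH1} and \ref{R3comparison}, $h_1$ and $\tilde{h}_1$ are the diagonal components of the maps that remove the two crossings $c_1,c_2$. The map $h_1$ runs from the cube with the top pair activated (the $0$-resolution of the central crossing, before the isotopy); $\tilde{h}_1$ is the analogous component for the bottom pair in the comparison map. Each diagonal component changes the resolution of the central crossing and simultaneously removes two crossings. Such a component is precisely one of the blue ``extra'' maps $b_t,b_l,b_r,b_b$ of Figure \ref{fig:R2extra}, with the central crossing playing the role of the external crossing. On the other side, $E_{-1}$ and $E_{-1}'$ are the homotopies between the $\widetilde{\mathrm{PKE}}_2$ and odd Khovanov RII-removal maps. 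These were computed explicitly in Subsection \ref{ss:r2}: in the first simple closure they are the red homotopy of Figure \ref{fig:chainmapRII+1} (and its reversal), and in the second closure they vanish.

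\textbf{Step 2: reduce by grading.} We apply the $\deg_h$ bookkeeping of Subsection \ref{ss:r1}. The only components of $h_1$ and $\tilde{h}_1$ that survive to $E_2$ are those preserving the filtration. These are the components landing in the $(0,1)$- and $(1,0)$-resolutions of the removed pair, relative to the $1$-resolution of the central crossing, exactly as in the list of grading changes preceding Remark \ref{rmk:mattmagic}. For each surviving component we identify the underlying 4-manifold. It is either a composite of band attachments, which gives a Morse map by Subsection \ref{single}, or the manifold $U$ with family $\mathbb{U}$, which gives a birth or death map times $c$ by Subsection \ref{family}. Components supported over $\overline{\mathbb{C}P^2}$ vanish by the calculation at the end of Subsection \ref{single}.

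\textbf{Step 3: case check.} The two resolutions of the central crossing fall into one of two situations, the two categories in the proof of Proposition \ref{lem:trivial}. Either they give different simple closures of the RII tangle, or they give the same one. In the first category, we write both sides as explicit composites: $E_{-1}\circ d$ and $E_{-1}'\circ d'$ are a merge or split composed with the red homotopy. We then check that they match $h_1-\tilde{h}_1$ term by term, using decorations to identify components. The signs are pinned down as in Figure \ref{fig:R2extra1}: anticommutativity of the faces fixes the signs of the edges, and the chain-map property of ``removing two crossings'' fixes the signs of the diagonals. In the second category, the red homotopies on the two sides coincide. The argument $d_4\circ b_r=h_1\circ d_1-d_3\circ h_3$ from Proposition \ref{lem:trivial}, together with the injectivity of the relevant split map, shows that the residual diagonal terms agree.

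\textbf{Main obstacle.} The hard part is the global sign consistency: showing that the signs forced in Step 3 by the anticommutativity and chain-map constraints are exactly those making the identity hold, rather than its negative. If the explicit sign chase becomes unmanageable, we fall back on Remark \ref{rmk:mattmagic}. Both $h_1-\tilde{h}_1$ and $E_{-1}\circ d-E_{-1}'\circ d'$ are filtered maps between complexes of unlinks concentrated in a single filtration level, and such maps are unique up to chain homotopy. This reduces the check to a single generator, for instance the top exterior power, where the sign can be read off directly.
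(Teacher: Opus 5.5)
Your outline misses the step that makes the identity provable, and several of its concrete claims are wrong.

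\textbf{Grading reduction and the missing cancellation.} Your Step 2 selects the wrong components. The diagonal pieces also move the central crossing from its $1$- to its $0$-resolution, so the surviving source shifts relative to the non-diagonal RII count. Order the central crossing last. Then the only components of $h_1$ and $\tilde h_1$ that survive to $\widetilde{\mathrm{PKE}}_2$ go from the $(0,0,1)$ resolution to the $(2,2,0)$ resolution. They do not involve the $(0,1)$ or $(1,0)$ resolutions of the removed pair. Each surviving component is a sum of two composites of single-crossing removals, according to whether the central resolution changes at the first or the second step. In the notation of Figure \ref{fig:R3local}, $h_1$ contributes $h_{2a}\circ h_{1a}+h_{4a}\circ h_{3a}$, and $\tilde h_1$ contributes $h_{2b}\circ h_{1b}+h_{4b}\circ h_{3b}$. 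The idea you are missing is that $h_{4a}\circ h_{3a}=h_{4b}\circ h_{3b}$, because both are induced by locally isotopic cobordisms regardless of closure. Hence $h_1-\tilde h_1=h_{2a}\circ h_{1a}-h_{2b}\circ h_{1b}$, and only this residual has to be matched.

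\textbf{The homotopies $E_{-1},E'_{-1}$ must be chosen.} You treat $E_{-1},E'_{-1}$ as already fixed by Subsection \ref{ss:r2}, and you misidentify them. The red homotopy of Figure \ref{fig:chainmapRII+1} is for \emph{adding} two crossings and points in the wrong direction. For removing two crossings, the $\widetilde{\mathrm{PKE}}_2$ and odd Khovanov maps agree on the nose in both simple closures, so the homotopy from Subsection \ref{ss:r2} is zero. With zero homotopies the identity fails in every closure where $h_{2b}\circ h_{1b}=0$ but $h_{2a}\circ h_{1a}\neq 0$.

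The lemma really asserts that admissible homotopies can be \emph{chosen} so that it holds. The paper takes $E_{-1}=0$, and in each of the five closures of the six endpoints (Table \ref{tab:Etable}) it takes an explicit nonzero $E'_{-1}$ from $(0,0,0)$ to $(2,2,0)$. It compares this with $E'_{-1}$ composed with the edge $(0,0,1)\to(0,0,0)$ of $d'$. Signs are then fixed using two facts: removing the first crossing is a chain map, and the right-hand top cube is a complex.

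\textbf{Step 3 and the fallback.} Your case split does not apply. In the vertical resolution of the central crossing, the moving strand crosses two different arcs, so there is no RII tangle there. The $d_4\circ b_r$ argument of Proposition \ref{lem:trivial} only shows that a term vanishes, not that residual terms agree. The fallback also fails. Neither $h_1-\tilde h_1$ nor $E_{-1}\circ d-E'_{-1}\circ d'$ is a chain map between unlink complexes, so uniqueness of filtered maps up to homotopy says nothing about their equality. That algebraic route can at most replace the lemma by the weaker homotopy statement in the paper's closing remark.
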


\begin{proof}
		Order the three crossings so that the central one is third. By grading constrains, the only nontrivial maps coming from $h_1$ and $\tilde{h}_1$ on the $E_2$ page are those whose source is the $(0,0,1)$ resolution and whose target is the $(2,2,0)$ resolution. These maps are the compositions of the blue arrows in Figure \ref{fig:R3local}; denote them by $h_{2}\circ h_1$ and $h_4\circ h_3$, where the subscript is given by the circled number. Note that $h_{4a}\circ h_{3a} = h_{4b}\circ h_{3b}$ since they are induced by locally isotopic cobordisms, independent of the choice of closure. Thus, we have 
\begin{align*}
h_1 - \tilde{h}_1 &= (h_{4a}\circ h_{3a} + h_{2a}\circ h_{1a}) - (h_{4b}\circ h_{3b} + h_{2b}\circ h_{1b})\\
	&= h_{2a}\circ h_{1a} - h_{2b}\circ h_{1b}.
\end{align*}
Therefore, the proof is reduced to providing $E_{-1}$ and $E_{-1}'$ such that $h_{2a}\circ h_{1a} - h_{2b}\circ h_{1b} = E_{-1}\circ d-E_{-1}'\circ d'.$ We complete this computation case-by-case with respect to the possible closures; see Table \ref{tab:Etable}. 

\null
\vfill
\begin{figure}[h]
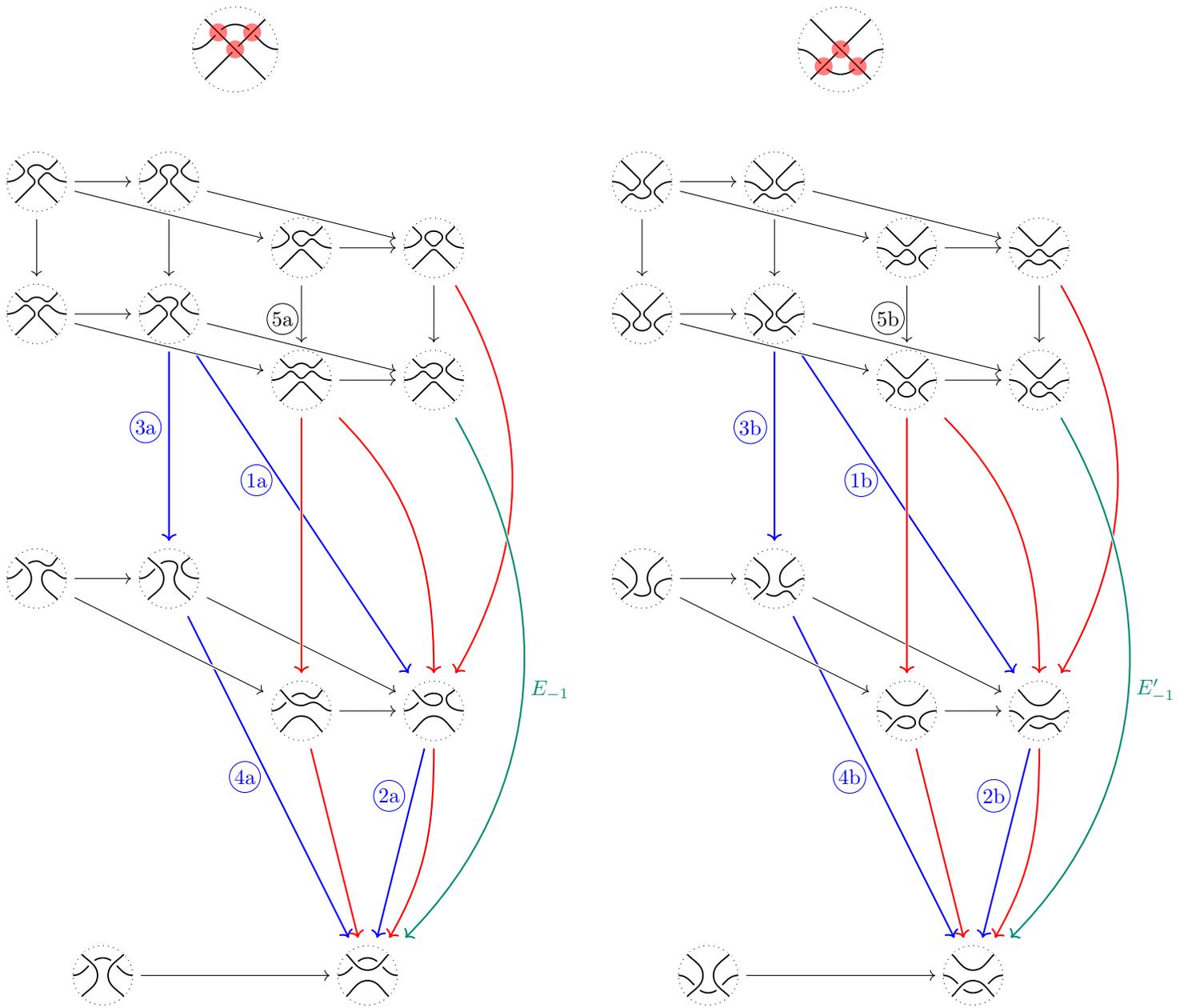

	\makebox[\textwidth][c]{
		\tikz[scale=2.2]{
			\node at (1.5,1) {$
				\tikz[baseline={([yshift=-.5ex]current bounding box.center)}, scale=1]
				{
					\draw[dotted] (.5,.5) circle(0.707);
					\draw[knot] (-0.207, 0.5) to[out=0, in=180] (0.5, 0.915);
					\draw[knot] (1.207, 0.5) to[out=180, in=0] (0.5, 0.915);
					\draw[knot, overcross] (0,0) -- (0.45, 0.45);
					\draw[knot] (0.45, 0.45) -- (0.55, 0.55);
					\draw[knot, overcross] (0.55, 0.55) -- (1,1);
					\draw[knot, overcross] (0,1) -- (0.45, 0.55);
					\draw[knot] (0,1) -- (0.01, 0.99);
					\draw[knot, overcross] (0.45, 0.55) -- (0.55, 0.45);
					\draw[knot] (0.55, 0.45) -- (1,0);
					\fill[red, fill opacity=0.5] (0.5, 0.5) circle(0.15);
					\fill[red, fill opacity=0.5] (0.782, 0.782) circle(0.15);
					\fill[red, fill opacity=0.5] (0.5-0.282, 0.782) circle(0.15);
				}
				$};
			\node(111) at (0,0) {$
				\tikz[baseline={([yshift=-.5ex]current bounding box.center)}, scale=0.7, rotate=-90]
				{
					\draw[dotted] (.5,.5) circle(0.707);
					\draw[knot] (0.5, 1.207) to[out=-90, in=45] (0.325, 0.825);
					\draw[knot] (0.175, 0.675) to[out=225, in=135] (0.175, 0.325);
					\draw[knot] (0.325, 0.175) to[out=-45, in=90] (0.5, -0.207);
					\draw[knot] (0,0) to (0.175, 0.175);
					\draw[knot] (0.325, 0.325) to (0.425, 0.425);
					\draw[knot] (0.575, 0.575) to (1,1);
					\draw[knot] (0,1) to (0.175, 0.825);
					\draw[knot] (0.325, 0.675) to (0.425, 0.575);
					\draw[knot] (0.575, 0.425) to (1,0);
					\draw[knot] (0.175, 0.175) to[out=45, in=135] (0.325, 0.175);
					\draw[knot] (0.175, 0.325) to[out=-45, in=-135] (0.325, 0.325);
					\draw[knot] (0.425, 0.425) to[out=45, in=135] (0.575, 0.425);
					\draw[knot] (0.425, 0.575) to[out=-45, in=-135] (0.575, 0.575);
					\draw[knot] (0.175, 0.675) to[out=45, in=-45] (0.175, 0.825);
					\draw[knot] (0.325, 0.675) to[out=135, in=-135] (0.325, 0.825);
				}
				$};
			\node(101) at (1,0) {$
				\tikz[baseline={([yshift=-.5ex]current bounding box.center)}, scale=0.7, rotate=-90]
				{
					\draw[dotted] (.5,.5) circle(0.707);
					\draw[knot] (0.5, 1.207) to[out=-90, in=45] (0.325, 0.825);
					\draw[knot] (0.175, 0.675) to[out=225, in=135] (0.175, 0.325);
					\draw[knot] (0.325, 0.175) to[out=-45, in=90] (0.5, -0.207);
					\draw[knot] (0,0) to (0.175, 0.175);
					\draw[knot] (0.325, 0.325) to (0.425, 0.425);
					\draw[knot] (0.575, 0.575) to (1,1);
					\draw[knot] (0,1) to (0.175, 0.825);
					\draw[knot] (0.325, 0.675) to (0.425, 0.575);
					\draw[knot] (0.575, 0.425) to (1,0);
					\draw[knot] (0.175, 0.175) to[out=45, in=135] (0.325, 0.175);
					\draw[knot] (0.175, 0.325) to[out=-45, in=-135] (0.325, 0.325);
					\draw[knot] (0.425, 0.425) to[out=45, in=135] (0.575, 0.425);
					\draw[knot] (0.425, 0.575) to[out=-45, in=-135] (0.575, 0.575);
					\draw[knot] (0.175, 0.825) to[out=-45, in=-135] (0.325, 0.825);
					\draw[knot] (0.175, 0.675) to[out=45, in=135] (0.325, 0.675);
				}
				$};
			\node(110) at (2,-.5) {$
				\tikz[baseline={([yshift=-.5ex]current bounding box.center)}, scale=0.7, rotate=-90]
				{
					\draw[dotted] (.5,.5) circle(0.707);
					\draw[knot] (0.5, 1.207) to[out=-90, in=45] (0.325, 0.825);
					\draw[knot] (0.175, 0.675) to[out=225, in=135] (0.175, 0.325);
					\draw[knot] (0.325, 0.175) to[out=-45, in=90] (0.5, -0.207);
					\draw[knot] (0,0) to (0.175, 0.175);
					\draw[knot] (0.325, 0.325) to (0.425, 0.425);
					\draw[knot] (0.575, 0.575) to (1,1);
					\draw[knot] (0,1) to (0.175, 0.825);
					\draw[knot] (0.325, 0.675) to (0.425, 0.575);
					\draw[knot] (0.575, 0.425) to (1,0);
					\draw[knot] (0.175, 0.175) to[out=45, in=135] (0.325, 0.175);
					\draw[knot] (0.175, 0.325) to[out=-45, in=-135] (0.325, 0.325);
					\draw[knot] (0.425, 0.425) to[out=45, in=-45] (0.425, 0.575); 
					\draw[knot] (0.575, 0.425) to[out=135, in=-135] (0.575, 0.575);
					\draw[knot] (0.175, 0.675) to[out=45, in=-45] (0.175, 0.825);
					\draw[knot] (0.325, 0.675) to[out=135, in=-135] (0.325, 0.825);
				}
				$};
			\node(100) at (3,-.5) {$
				\tikz[baseline={([yshift=-.5ex]current bounding box.center)}, scale=0.7, rotate=-90]
				{
					\draw[dotted] (.5,.5) circle(0.707);
					\draw[knot] (0.5, 1.207) to[out=-90, in=45] (0.325, 0.825);
					\draw[knot] (0.175, 0.675) to[out=225, in=135] (0.175, 0.325);
					\draw[knot] (0.325, 0.175) to[out=-45, in=90] (0.5, -0.207);
					\draw[knot] (0,0) to (0.175, 0.175);
					\draw[knot] (0.325, 0.325) to (0.425, 0.425);
					\draw[knot] (0.575, 0.575) to (1,1);
					\draw[knot] (0,1) to (0.175, 0.825);
					\draw[knot] (0.325, 0.675) to (0.425, 0.575);
					\draw[knot] (0.575, 0.425) to (1,0);
					\draw[knot] (0.175, 0.175) to[out=45, in=135] (0.325, 0.175);
					\draw[knot] (0.175, 0.325) to[out=-45, in=-135] (0.325, 0.325);
					\draw[knot] (0.425, 0.425) to[out=45, in=-45] (0.425, 0.575); 
					\draw[knot] (0.575, 0.425) to[out=135, in=-135] (0.575, 0.575);
					\draw[knot] (0.175, 0.825) to[out=-45, in=-135] (0.325, 0.825);
					\draw[knot] (0.175, 0.675) to[out=45, in=135] (0.325, 0.675);
				}
				$};
			\node(011) at (0,-1) {$
				\tikz[baseline={([yshift=-.5ex]current bounding box.center)}, scale=0.7, rotate=-90]
				{
					\draw[dotted] (.5,.5) circle(0.707);
					\draw[knot] (0.5, 1.207) to[out=-90, in=45] (0.325, 0.825);
					\draw[knot] (0.175, 0.675) to[out=225, in=135] (0.175, 0.325);
					\draw[knot] (0.325, 0.175) to[out=-45, in=90] (0.5, -0.207);
					\draw[knot] (0,0) to (0.175, 0.175);
					\draw[knot] (0.325, 0.325) to (0.425, 0.425);
					\draw[knot] (0.575, 0.575) to (1,1);
					\draw[knot] (0,1) to (0.175, 0.825);
					\draw[knot] (0.325, 0.675) to (0.425, 0.575);
					\draw[knot] (0.575, 0.425) to (1,0);
					\draw[knot] (0.175, 0.175) to[out=45, in=-45] (0.175, 0.325);
					\draw[knot] (0.325, 0.175) to[out=135, in=-135] (0.325, 0.325);
					\draw[knot] (0.425, 0.425) to[out=45, in=135] (0.575, 0.425);
					\draw[knot] (0.425, 0.575) to[out=-45, in=-135] (0.575, 0.575);
					\draw[knot] (0.175, 0.675) to[out=45, in=-45] (0.175, 0.825);
					\draw[knot] (0.325, 0.675) to[out=135, in=-135] (0.325, 0.825);
				}
				$};
			\node(001) at (1,-1) {$
				\tikz[baseline={([yshift=-.5ex]current bounding box.center)}, scale=0.7, rotate=-90]
				{
					\draw[dotted] (.5,.5) circle(0.707);
					\draw[knot] (0.5, 1.207) to[out=-90, in=45] (0.325, 0.825);
					\draw[knot] (0.175, 0.675) to[out=225, in=135] (0.175, 0.325);
					\draw[knot] (0.325, 0.175) to[out=-45, in=90] (0.5, -0.207);
					\draw[knot] (0,0) to (0.175, 0.175);
					\draw[knot] (0.325, 0.325) to (0.425, 0.425);
					\draw[knot] (0.575, 0.575) to (1,1);
					\draw[knot] (0,1) to (0.175, 0.825);
					\draw[knot] (0.325, 0.675) to (0.425, 0.575);
					\draw[knot] (0.575, 0.425) to (1,0);
					\draw[knot] (0.175, 0.175) to[out=45, in=-45] (0.175, 0.325);
					\draw[knot] (0.325, 0.175) to[out=135, in=-135] (0.325, 0.325);
					\draw[knot] (0.425, 0.425) to[out=45, in=135] (0.575, 0.425);
					\draw[knot] (0.425, 0.575) to[out=-45, in=-135] (0.575, 0.575);
					\draw[knot] (0.175, 0.825) to[out=-45, in=-135] (0.325, 0.825);
					\draw[knot] (0.175, 0.675) to[out=45, in=135] (0.325, 0.675);
				}
				$};
			\node(010) at (2,-1.5) {$
				\tikz[baseline={([yshift=-.5ex]current bounding box.center)}, scale=0.7, rotate=-90]
				{
					\draw[dotted] (.5,.5) circle(0.707);
					\draw[knot] (0.5, 1.207) to[out=-90, in=45] (0.325, 0.825);
					\draw[knot] (0.175, 0.675) to[out=225, in=135] (0.175, 0.325);
					\draw[knot] (0.325, 0.175) to[out=-45, in=90] (0.5, -0.207);
					\draw[knot] (0,0) to (0.175, 0.175);
					\draw[knot] (0.325, 0.325) to (0.425, 0.425);
					\draw[knot] (0.575, 0.575) to (1,1);
					\draw[knot] (0,1) to (0.175, 0.825);
					\draw[knot] (0.325, 0.675) to (0.425, 0.575);
					\draw[knot] (0.575, 0.425) to (1,0);
					\draw[knot] (0.175, 0.175) to[out=45, in=-45] (0.175, 0.325);
					\draw[knot] (0.325, 0.175) to[out=135, in=-135] (0.325, 0.325);
					\draw[knot] (0.425, 0.425) to[out=45, in=-45] (0.425, 0.575); 
					\draw[knot] (0.575, 0.425) to[out=135, in=-135] (0.575, 0.575);
					\draw[knot] (0.175, 0.675) to[out=45, in=-45] (0.175, 0.825);
					\draw[knot] (0.325, 0.675) to[out=135, in=-135] (0.325, 0.825);
				}
				$};
			\node(000) at (3,-1.5) {$
				\tikz[baseline={([yshift=-.5ex]current bounding box.center)}, scale=0.7, rotate=-90]
				{
					\draw[dotted] (.5,.5) circle(0.707);
					\draw[knot] (0.5, 1.207) to[out=-90, in=45] (0.325, 0.825);
					\draw[knot] (0.175, 0.675) to[out=225, in=135] (0.175, 0.325);
					\draw[knot] (0.325, 0.175) to[out=-45, in=90] (0.5, -0.207);
					\draw[knot] (0,0) to (0.175, 0.175);
					\draw[knot] (0.325, 0.325) to (0.425, 0.425);
					\draw[knot] (0.575, 0.575) to (1,1);
					\draw[knot] (0,1) to (0.175, 0.825);
					\draw[knot] (0.325, 0.675) to (0.425, 0.575);
					\draw[knot] (0.575, 0.425) to (1,0);
					\draw[knot] (0.175, 0.175) to[out=45, in=-45] (0.175, 0.325);
					\draw[knot] (0.325, 0.175) to[out=135, in=-135] (0.325, 0.325);
					\draw[knot] (0.425, 0.425) to[out=45, in=-45] (0.425, 0.575); 
					\draw[knot] (0.575, 0.425) to[out=135, in=-135] (0.575, 0.575);
					\draw[knot] (0.175, 0.825) to[out=-45, in=-135] (0.325, 0.825);
					\draw[knot] (0.175, 0.675) to[out=45, in=135] (0.325, 0.675);
				}
				$};
			\node(211) at (0,-3) {$
				\tikz[baseline={([yshift=-.5ex]current bounding box.center)}, scale=0.7]
				{
					\draw[dotted] (.5,.5) circle(0.707);
					\draw[knot] (-0.207, 0.5) to[out=0, in=180] (0.5, 0.915) to[out=0, in=180] (0.782, 0.8) to[out=0, in=-135] (1,1);
					\draw[knot] (1.207, 0.5) to[out=180, in=0] (0.782, 0.7) to[out=180, in=90] (0.625,0.5) to[out=-90, in=135] (1,0);
					\draw[knot, overcross] (0,0) to[out=45, in=-90] (0.375,0.5) to[out=90, in=-45] (0,1);
				}
				$};
			\node(201) at (1,-3) {$
				\tikz[baseline={([yshift=-.5ex]current bounding box.center)}, scale=0.7]
				{
					\draw[dotted] (.5,.5) circle(0.707);
					\draw[knot] (-0.207, 0.5) to[out=0, in=180] (0.5, 0.915) to[out=0, in=90] (0.7, 0.782) to[out=-90, in=90] (0.625,0.5) to[out=-90, in=135] (1,0);
					\draw[knot] (1,1) to[out=-135, in=90] (0.8, 0.782) to[out=-90, in=180] (1.207, 0.5);
					\draw[knot, overcross] (0,0) to[out=45, in=-90] (0.375,0.5) to[out=90, in=-45] (0,1);
				}
				$};
			\node(210) at (2,-4) {$
				\tikz[baseline={([yshift=-.5ex]current bounding box.center)}, scale=0.7]
				{
					\draw[dotted] (.5,.5) circle(0.707);
					\draw[knot] (-0.207, 0.5) to[out=0, in=180] (0.5, 0.915);
					\draw[knot, overcross] (0,0) to[out=45, in=180] (0.5,0.375) to[out=0, in=135] (1,0);
					\draw[knot, overcross] (0,1) to[out=-45, in=180] (0.5,0.625);
					\fill[white, fill opacity=1] (0.782, 0.782) circle(0.15);
					\draw[knot] (0.5, 0.915) to[out=0, in=180] (0.782, 0.8) to[out=0, in=-135] (1,1);
					\draw[knot] (0.5,0.625) to [out=0, in=180] (0.782, 0.7) to[out=0, in=180] (1.207, 0.5);
				}
				$};
			\node(200) at (3,-4) {$
				\tikz[baseline={([yshift=-.5ex]current bounding box.center)}, scale=0.7]
				{
					\draw[dotted] (.5,.5) circle(0.707);
					\draw[knot] (-0.207, 0.5) to[out=0, in=180] (0.5, 0.915);
					\draw[knot, overcross] (0,0) to[out=45, in=180] (0.5,0.375) to[out=0, in=135] (1,0);
					\draw[knot, overcross] (0,1) to[out=-45, in=180] (0.5,0.625);
					\fill[white, fill opacity=1] (0.782, 0.782) circle(0.15);
					\draw[knot] (0.5, 0.915) to[out=0, in=90] (0.7, 0.782) to[out=-90, in=0] (0.5,0.625);
					\draw[knot] (1,1) to[out=-135, in=90] (0.8, 0.782) to[out=-90, in=180] (1.207, 0.5);
				}
				$};
			\node(222) at (0.5,-6) {$
				\tikz[baseline={([yshift=-.5ex]current bounding box.center)}, scale=0.7]
				{
					\draw[dotted] (.5,.5) circle(0.707);
					\draw[knot] (-0.207, 0.5) to[out=0, in=180] (0.5, 0.915);
					\draw[knot] (1.207, 0.5) to[out=180, in=0] (0.5, 0.915);
					\draw[knot, overcross] (0,0) to[out=45, in=-90] (0.375,0.5) to[out=90, in=-45] (0,1);
					\draw[knot, overcross] (1,0) to[out=135, in=-90] (0.625,0.5) to[out=90, in=-135] (1,1);
				}
				$};
			\node(220) at (2.5,-6) {$
				\tikz[baseline={([yshift=-.5ex]current bounding box.center)}, scale=0.7]
				{
					\draw[dotted] (.5,.5) circle(0.707);
					\draw[knot] (-0.207, 0.5) to[out=0, in=180] (0.5, 0.915);
					\draw[knot] (1.207, 0.5) to[out=180, in=0] (0.5, 0.915);
					\draw[knot, overcross] (0,0) to[out=45, in=180] (0.5,0.375) to[out=0, in=135] (1,0);
					\draw[knot, overcross] (0,1) to[out=-45, in=180] (0.5,0.625) to[out=0, in=-135] (1,1);
				}
				$};
			\draw[->] (201) -- (200);
			\draw[->, thick, blue] (001) -- node[left, pos=0.4]{$\circled{3a}$} (201);
			\draw[white, ultra thick] (001) -- (200);
			\draw[->, thick, blue] (001) -- node[left, pos=0.4]{$\circled{1a}$} (200);
			\draw[->, thick, blue] (201) -- node[left, pos=0.5]{$\circled{4a}$}(220);
			\draw[->, thick, blue] (200) -- node[left, pos=0.25]{$\circled{2a}$} (220);
			\draw[->] (111) -- (110);
			\draw[->] (101) -- (100);
			\draw[white, ultra thick] (011) -- (010);
			\draw[->] (011) -- (010);
			\draw[->] (001) -- node[above, pos=0.4]{$\circled{5a}$}(000);
			\draw[->] (111) -- (101);
			\draw[->] (111) -- (011);
			\draw[->] (011) -- (001);
			\draw[->] (101) -- (001);
			\draw[->] (110) -- (100);
			\draw[->] (110) -- (010);
			\draw[->] (010) -- (000);
			\draw[->] (100) -- (000);
			\draw[->] (211) -- (201);
			\draw[white, ultra thick] (211) -- (210);
			\draw[white, ultra thick] (210) -- (200);
			\draw[->] (211) -- (210);
			\draw[->] (210) -- (200);
			\draw[->] (222) -- (220);
			\draw[->, thick, red] (100) to[out=-60, in=60] node[right]{} (200);
			\draw[white, ultra thick] (010) -- (210);
			\draw[->, thick, red] (010) --node[right, pos=0.8]{} (210);
			\draw[->, thick, red] (010) to[out=-45, in=90] node[right]{} (200);
			\draw[->, thick, red] (210) -- node[right]{} (220);
			\draw[->, thick, red] (200) to[out=-90,in=60]node[right]{} (220);
			\draw[->, ultra thick, white] (000) to[out=-60, in=45] (220);
			\draw[->, thick, PineGreen] (000) to[out=-60, in=45]node[right]{$E_{-1}$} (220);
		}
		\quad
		\tikz[scale=2.2]{
			\node at (1.5,1) {$
				\tikz[baseline={([yshift=-.5ex]current bounding box.center)}, scale=1]
				{
					\draw[dotted] (.5,.5) circle(0.707);
					\draw[knot] (-0.207, 0.5) to[out=0, in=180] (0.5, 0.085);
					\draw[knot] (1.207, 0.5) to[out=180, in=0] (0.5, 0.085);
					\draw[knot, overcross] (0,0) -- (0.45, 0.45);
					\draw[knot] (0.45, 0.45) -- (0.55, 0.55);
					\draw[knot, overcross] (0.55, 0.55) -- (1,1);
					\draw[knot, overcross] (0,1) -- (0.45, 0.55);
					\draw[knot] (0,1) -- (0.01, 0.99);
					\draw[knot, overcross] (0.45, 0.55) -- (0.55, 0.45);
					\draw[knot, overcross] (0.55, 0.45) -- (1,0);	
					\fill[red, fill opacity=0.5] (0.5, 0.5) circle(0.15);
					\fill[red, fill opacity=0.5] (0.218, 0.218) circle(0.15);
					\fill[red, fill opacity=0.5] (0.782, 0.218) circle(0.15);
				}
				$};
			\node(111) at (0,0) {$
				\tikz[baseline={([yshift=-.5ex]current bounding box.center)}, scale=0.7, rotate=90]
				{
					\draw[dotted] (.5,.5) circle(0.707);
					\draw[knot] (0.5, 1.207) to[out=-90, in=45] (0.325, 0.825);
					\draw[knot] (0.175, 0.675) to[out=225, in=135] (0.175, 0.325);
					\draw[knot] (0.325, 0.175) to[out=-45, in=90] (0.5, -0.207);
					\draw[knot] (0,0) to (0.175, 0.175);
					\draw[knot] (0.325, 0.325) to (0.425, 0.425);
					\draw[knot] (0.575, 0.575) to (1,1);
					\draw[knot] (0,1) to (0.175, 0.825);
					\draw[knot] (0.325, 0.675) to (0.425, 0.575);
					\draw[knot] (0.575, 0.425) to (1,0);
					\draw[knot] (0.175, 0.175) to[out=45, in=135] (0.325, 0.175);
					\draw[knot] (0.175, 0.325) to[out=-45, in=-135] (0.325, 0.325);
					\draw[knot] (0.425, 0.425) to[out=45, in=135] (0.575, 0.425);
					\draw[knot] (0.425, 0.575) to[out=-45, in=-135] (0.575, 0.575);
					\draw[knot] (0.175, 0.675) to[out=45, in=-45] (0.175, 0.825);
					\draw[knot] (0.325, 0.675) to[out=135, in=-135] (0.325, 0.825);
				}
				$};
			\node(101) at (1,0) {$
				\tikz[baseline={([yshift=-.5ex]current bounding box.center)}, scale=0.7, rotate=90]
				{
					\draw[dotted] (.5,.5) circle(0.707);
					\draw[knot] (0.5, 1.207) to[out=-90, in=45] (0.325, 0.825);
					\draw[knot] (0.175, 0.675) to[out=225, in=135] (0.175, 0.325);
					\draw[knot] (0.325, 0.175) to[out=-45, in=90] (0.5, -0.207);
					\draw[knot] (0,0) to (0.175, 0.175);
					\draw[knot] (0.325, 0.325) to (0.425, 0.425);
					\draw[knot] (0.575, 0.575) to (1,1);
					\draw[knot] (0,1) to (0.175, 0.825);
					\draw[knot] (0.325, 0.675) to (0.425, 0.575);
					\draw[knot] (0.575, 0.425) to (1,0);
					\draw[knot] (0.175, 0.175) to[out=45, in=-45] (0.175, 0.325);
					\draw[knot] (0.325, 0.175) to[out=135, in=-135] (0.325, 0.325);
					\draw[knot] (0.425, 0.425) to[out=45, in=135] (0.575, 0.425);
					\draw[knot] (0.425, 0.575) to[out=-45, in=-135] (0.575, 0.575);
					\draw[knot] (0.175, 0.675) to[out=45, in=-45] (0.175, 0.825);
					\draw[knot] (0.325, 0.675) to[out=135, in=-135] (0.325, 0.825);
				}
				$};
			\node(110) at (2,-.5) {$
				\tikz[baseline={([yshift=-.5ex]current bounding box.center)}, scale=0.7, rotate=90]
				{
					\draw[dotted] (.5,.5) circle(0.707);
					\draw[knot] (0.5, 1.207) to[out=-90, in=45] (0.325, 0.825);
					\draw[knot] (0.175, 0.675) to[out=225, in=135] (0.175, 0.325);
					\draw[knot] (0.325, 0.175) to[out=-45, in=90] (0.5, -0.207);
					\draw[knot] (0,0) to (0.175, 0.175);
					\draw[knot] (0.325, 0.325) to (0.425, 0.425);
					\draw[knot] (0.575, 0.575) to (1,1);
					\draw[knot] (0,1) to (0.175, 0.825);
					\draw[knot] (0.325, 0.675) to (0.425, 0.575);
					\draw[knot] (0.575, 0.425) to (1,0);
					\draw[knot] (0.175, 0.175) to[out=45, in=135] (0.325, 0.175);
					\draw[knot] (0.175, 0.325) to[out=-45, in=-135] (0.325, 0.325);
					\draw[knot] (0.425, 0.425) to[out=45, in=-45] (0.425, 0.575); 
					\draw[knot] (0.575, 0.425) to[out=135, in=-135] (0.575, 0.575);
					\draw[knot] (0.175, 0.675) to[out=45, in=-45] (0.175, 0.825);
					\draw[knot] (0.325, 0.675) to[out=135, in=-135] (0.325, 0.825);
				}
				$};
			\node(100) at (3,-.5) {$
				\tikz[baseline={([yshift=-.5ex]current bounding box.center)}, scale=0.7, rotate=90]
				{
					\draw[dotted] (.5,.5) circle(0.707);
					\draw[knot] (0.5, 1.207) to[out=-90, in=45] (0.325, 0.825);
					\draw[knot] (0.175, 0.675) to[out=225, in=135] (0.175, 0.325);
					\draw[knot] (0.325, 0.175) to[out=-45, in=90] (0.5, -0.207);
					\draw[knot] (0,0) to (0.175, 0.175);
					\draw[knot] (0.325, 0.325) to (0.425, 0.425);
					\draw[knot] (0.575, 0.575) to (1,1);
					\draw[knot] (0,1) to (0.175, 0.825);
					\draw[knot] (0.325, 0.675) to (0.425, 0.575);
					\draw[knot] (0.575, 0.425) to (1,0);
					\draw[knot] (0.175, 0.175) to[out=45, in=-45] (0.175, 0.325);
					\draw[knot] (0.325, 0.175) to[out=135, in=-135] (0.325, 0.325);
					\draw[knot] (0.425, 0.425) to[out=45, in=-45] (0.425, 0.575); 
					\draw[knot] (0.575, 0.425) to[out=135, in=-135] (0.575, 0.575);
					\draw[knot] (0.175, 0.675) to[out=45, in=-45] (0.175, 0.825);
					\draw[knot] (0.325, 0.675) to[out=135, in=-135] (0.325, 0.825);
				}
				$};
			\node(011) at (0,-1) {$
				\tikz[baseline={([yshift=-.5ex]current bounding box.center)}, scale=0.7, rotate=90]
				{
					\draw[dotted] (.5,.5) circle(0.707);
					\draw[knot] (0.5, 1.207) to[out=-90, in=45] (0.325, 0.825);
					\draw[knot] (0.175, 0.675) to[out=225, in=135] (0.175, 0.325);
					\draw[knot] (0.325, 0.175) to[out=-45, in=90] (0.5, -0.207);
					\draw[knot] (0,0) to (0.175, 0.175);
					\draw[knot] (0.325, 0.325) to (0.425, 0.425);
					\draw[knot] (0.575, 0.575) to (1,1);
					\draw[knot] (0,1) to (0.175, 0.825);
					\draw[knot] (0.325, 0.675) to (0.425, 0.575);
					\draw[knot] (0.575, 0.425) to (1,0);
					\draw[knot] (0.175, 0.175) to[out=45, in=135] (0.325, 0.175);
					\draw[knot] (0.175, 0.325) to[out=-45, in=-135] (0.325, 0.325);
					\draw[knot] (0.425, 0.425) to[out=45, in=135] (0.575, 0.425);
					\draw[knot] (0.425, 0.575) to[out=-45, in=-135] (0.575, 0.575);
					\draw[knot] (0.175, 0.825) to[out=-45, in=-135] (0.325, 0.825);
					\draw[knot] (0.175, 0.675) to[out=45, in=135] (0.325, 0.675);
				}
				$};
			\node(001) at (1,-1) {$
				\tikz[baseline={([yshift=-.5ex]current bounding box.center)}, scale=0.7, rotate=90]
				{
					\draw[dotted] (.5,.5) circle(0.707);
					\draw[knot] (0.5, 1.207) to[out=-90, in=45] (0.325, 0.825);
					\draw[knot] (0.175, 0.675) to[out=225, in=135] (0.175, 0.325);
					\draw[knot] (0.325, 0.175) to[out=-45, in=90] (0.5, -0.207);
					\draw[knot] (0,0) to (0.175, 0.175);
					\draw[knot] (0.325, 0.325) to (0.425, 0.425);
					\draw[knot] (0.575, 0.575) to (1,1);
					\draw[knot] (0,1) to (0.175, 0.825);
					\draw[knot] (0.325, 0.675) to (0.425, 0.575);
					\draw[knot] (0.575, 0.425) to (1,0);
					\draw[knot] (0.175, 0.175) to[out=45, in=-45] (0.175, 0.325);
					\draw[knot] (0.325, 0.175) to[out=135, in=-135] (0.325, 0.325);
					\draw[knot] (0.425, 0.425) to[out=45, in=135] (0.575, 0.425);
					\draw[knot] (0.425, 0.575) to[out=-45, in=-135] (0.575, 0.575);
					\draw[knot] (0.175, 0.825) to[out=-45, in=-135] (0.325, 0.825);
					\draw[knot] (0.175, 0.675) to[out=45, in=135] (0.325, 0.675);
				}
				$};
			\node(010) at (2,-1.5) {$
				\tikz[baseline={([yshift=-.5ex]current bounding box.center)}, scale=0.7, rotate=90]
				{
					\draw[dotted] (.5,.5) circle(0.707);
					\draw[knot] (0.5, 1.207) to[out=-90, in=45] (0.325, 0.825);
					\draw[knot] (0.175, 0.675) to[out=225, in=135] (0.175, 0.325);
					\draw[knot] (0.325, 0.175) to[out=-45, in=90] (0.5, -0.207);
					\draw[knot] (0,0) to (0.175, 0.175);
					\draw[knot] (0.325, 0.325) to (0.425, 0.425);
					\draw[knot] (0.575, 0.575) to (1,1);
					\draw[knot] (0,1) to (0.175, 0.825);
					\draw[knot] (0.325, 0.675) to (0.425, 0.575);
					\draw[knot] (0.575, 0.425) to (1,0);
					\draw[knot] (0.175, 0.175) to[out=45, in=135] (0.325, 0.175);
					\draw[knot] (0.175, 0.325) to[out=-45, in=-135] (0.325, 0.325);
					\draw[knot] (0.425, 0.425) to[out=45, in=-45] (0.425, 0.575); 
					\draw[knot] (0.575, 0.425) to[out=135, in=-135] (0.575, 0.575);
					\draw[knot] (0.175, 0.825) to[out=-45, in=-135] (0.325, 0.825);
					\draw[knot] (0.175, 0.675) to[out=45, in=135] (0.325, 0.675);
				}
				$};
			\node(000) at (3,-1.5) {$
				\tikz[baseline={([yshift=-.5ex]current bounding box.center)}, scale=0.7, rotate=90]
				{
					\draw[dotted] (.5,.5) circle(0.707);
					\draw[knot] (0.5, 1.207) to[out=-90, in=45] (0.325, 0.825);
					\draw[knot] (0.175, 0.675) to[out=225, in=135] (0.175, 0.325);
					\draw[knot] (0.325, 0.175) to[out=-45, in=90] (0.5, -0.207);
					\draw[knot] (0,0) to (0.175, 0.175);
					\draw[knot] (0.325, 0.325) to (0.425, 0.425);
					\draw[knot] (0.575, 0.575) to (1,1);
					\draw[knot] (0,1) to (0.175, 0.825);
					\draw[knot] (0.325, 0.675) to (0.425, 0.575);
					\draw[knot] (0.575, 0.425) to (1,0);
					\draw[knot] (0.175, 0.175) to[out=45, in=-45] (0.175, 0.325);
					\draw[knot] (0.325, 0.175) to[out=135, in=-135] (0.325, 0.325);
					\draw[knot] (0.425, 0.425) to[out=45, in=-45] (0.425, 0.575); 
					\draw[knot] (0.575, 0.425) to[out=135, in=-135] (0.575, 0.575);
					\draw[knot] (0.175, 0.825) to[out=-45, in=-135] (0.325, 0.825);
					\draw[knot] (0.175, 0.675) to[out=45, in=135] (0.325, 0.675);
				}
				$};
			\node(211) at (0,-3) {$\tikz[baseline={([yshift=-.5ex]current bounding box.center)}, scale=0.7, rotate=180]
				{
					\draw[dotted] (.5,.5) circle(0.707);
					\draw[knot] (0,0) to[out=45, in=-90] (0.375,0.5) to[out=90, in=-90] (0.3, 0.782) to[out=90, in=180] (0.5, 0.915) to[out=0, in=180] (1.207, 0.5);
					\draw[knot] (0,1) to[out=-45,in=90] (0.2, 0.782) to[out=-90,in=0] (-0.207,0.5);
					\draw[knot, overcross] (1,0) to[out=135, in=-90] (0.625,0.5) to[out=90, in=-135] (1,1);
				}$};
			\node(201) at (1,-3) {$\tikz[baseline={([yshift=-.5ex]current bounding box.center)}, scale=0.7, rotate=180]
				{
					\draw[dotted] (.5,.5) circle(0.707);
					\draw[knot] (1.207, 0.5) to[out=180, in=0] (0.5, 0.915) to[out=180, in=0] (0.218, 0.8) to[out=180, in=-45] (0,1);
					\draw[knot] (0,0) to[out=45, in=-90] (0.375,0.5) to[out=90, in=0] (0.218, 0.7) to[out=180, in=0] (-0.207, 0.5);
					\draw[knot, overcross] (1,0) to[out=135, in=-90] (0.625,0.5) to[out=90, in=-135] (1,1);
				}$};
			\node(210) at (2,-4) {$\tikz[baseline={([yshift=-.5ex]current bounding box.center)}, scale=0.7, rotate=180]
				{
					\draw[dotted] (.5,.5) circle(0.707);
					\draw[knot] (1.207, 0.5) to[out=180, in=0] (0.5, 0.915) to[out=180, in=90] (0.3,0.782) to[out=-90, in=180] (0.5,0.625);
					\draw[knot] (0,1) to[out=-45, in=90] (0.2, 0.782) to[out=-90, in=0] (-0.207, 0.5);
					\draw[knot, overcross] (0,0) to[out=45, in=180] (0.5,0.375) to[out=0, in=135] (1,0);
					\draw[knot, overcross] (0.5,0.625) to[out=0, in=-135] (1,1);
				}$};
			\node(200) at (3,-4) {$\tikz[baseline={([yshift=-.5ex]current bounding box.center)}, scale=0.7, rotate=180]
				{
					\draw[dotted] (.5,.5) circle(0.707);
					\draw[knot] (1.207, 0.5) to[out=180, in=0] (0.5, 0.915) to[out=180, in=0] (0.218, 0.8) to[out=180, in=-45] (0,1);
					\draw[knot, overcross] (0,0) to[out=45, in=180] (0.5,0.375) to[out=0, in=135] (1,0);
					\draw[knot] (-0.207,0.5) to[out=0, in=180] (0.218, 0.7) to[out=0, in=180] (0.5,0.625);
					\draw[knot, overcross] (0.5,0.625) to[out=0, in=-135] (1,1);
				}$};
			\node(222) at (0.5,-6) {$
				\tikz[baseline={([yshift=-.5ex]current bounding box.center)}, scale=0.7]
				{
					\draw[dotted] (.5,.5) circle(0.707);
					\draw[knot] (-0.207, 0.5) to[out=0, in=180] (0.5, 0.085);
					\draw[knot] (1.207, 0.5) to[out=180, in=0] (0.5, 0.085);
					\draw[knot, overcross] (0,0) to[out=45, in=-90] (0.375,0.5) to[out=90, in=-45] (0,1);
					\draw[knot, overcross] (1,0) to[out=135, in=-90] (0.625,0.5) to[out=90, in=-135] (1,1);
				}
				$};
			\node(220) at (2.5,-6) {$
				\tikz[baseline={([yshift=-.5ex]current bounding box.center)}, scale=0.7]
				{
					\draw[dotted] (.5,.5) circle(0.707);
					\draw[knot] (-0.207, 0.5) to[out=0, in=180] (0.5, 0.085);
					\draw[knot] (1.207, 0.5) to[out=180, in=0] (0.5, 0.085);
					\draw[knot, overcross] (0,0) to[out=45, in=180] (0.5,0.375) to[out=0, in=135] (1,0);
					\draw[knot, overcross] (0,1) to[out=-45, in=180] (0.5,0.625) to[out=0, in=-135] (1,1);
				}
				$};
			\draw[->] (201) -- (200);
			\draw[->, thick, blue] (001) -- node[left, pos=0.4]{$\circled{3b}$}(201);
			\draw[white, ultra thick] (001) -- (200);
			\draw[->, thick, blue] (001) --node[left, pos=0.4]{$\circled{1b}$} (200);
			\draw[->, thick, blue] (201) -- node[left, pos=0.5]{$\circled{4b}$} (220);
			\draw[->, thick, blue] (200) --node[left, pos=0.25]{$\circled{2b}$} (220);
			\draw[->] (111) -- (110);
			\draw[->] (101) -- (100);
			\draw[white, ultra thick] (011) -- (010);
			\draw[->] (011) -- (010);
			\draw[->] (001) -- node[above, pos=0.4]{$\circled{5b}$}(000);
			\draw[->] (111) -- (101);
			\draw[->] (111) -- (011);
			\draw[->] (011) -- (001);
			\draw[->] (101) -- (001);
			\draw[->] (110) -- (100);
			\draw[->] (110) -- (010);
			\draw[->] (010) -- (000);
			\draw[->] (100) -- (000);
			\draw[->] (211) -- (201);
			\draw[white, ultra thick] (211) -- (210);
			\draw[white, ultra thick] (210) -- (200);
			\draw[->] (211) -- (210);
			\draw[->] (210) -- (200);
			\draw[->] (222) -- (220);
			\draw[->, thick, red] (100) to[out=-60, in=60] (200);
			\draw[white, ultra thick] (010) -- (210);
			\draw[->, thick, red] (010) -- (210);
			\draw[->, thick, red] (010) to[out=-45, in=90] (200);
			\draw[->, thick, red] (210) -- (220);
			\draw[->, thick, red] (200) to[out=-90,in=60] (220);
			\draw[->, ultra thick, white] (000) to[out=-60, in=45] (220);
			\draw[->, thick, PineGreen] (000) to[out=-60, in=45]node[right]{$E_{-1}'$} (220);
		}
	}
	\caption{Local picture without closures}
	\label{fig:R3local}
\end{figure}
\vfill
\clearpage

\begin{table}[h]
\tikz[]{
    \draw[ultra thick] (0,0) rectangle (12,-12);
    \draw[ultra thick] (0,0) -- node[below, sloped]{closure} node[above,sloped]{map} (2,-2);
    \draw[ultra thick] (0,-2) -- (12,-2); 
    \draw[ thick] (0,-4) -- (12,-4); 
    \draw[ thick] (0,-6) -- (12,-6); 
    \draw[ thick] (0,-8) -- (12,-8);
    \draw[ thick] (0,-10) -- (12,-10); 
    \draw[ultra thick] (2,0) -- (2,-12);
    \draw[ thick] (4,0) -- (4,-12);
    \draw[ thick] (6,0) -- (6,-12);
    \draw[ thick] (8,0) -- (8,-12);
    \draw[ thick] (10,0) -- (10,-12);
    \node at (3,-1) {$\circled{2a} \circ \circled{1a}$};
    \node at (5,-1) {$E_{-1}$};
    \node at (7,-1) {$\circled{2b} \circ \circled{1b}$};
    \node at (9,-1) {$E_{-1}'$};
    \node at (11,-1) {$\circled{5b}$};
    \node at (1, -3) {$
    \tikz[baseline={([yshift=-.5ex]current bounding box.center)}, scale=0.7]
	{
    \draw[dotted] (.5,.5) circle(0.707);
    \draw[knot, Black, thick] (0,1) to[out=135, in=180, looseness = 2] (-0.207, 0.5);
    \draw[knot, Black, thick] (1,1) to[out=45, in=0, looseness = 2] (1.207, 0.5);
    \draw[knot, Black, thick] (0,0) to[out=-135, in=-45, looseness=2.5] (1,0);
    \node[scale=0.7] at (0,0) {$\bullet$};
    \node[scale=0.7] at (-0.207,0.5) {$\bullet$};
    \node[scale=0.7] at (0,1) {$\bullet$};
    \node[scale=0.7] at (1,1) {$\bullet$};
    \node[scale=0.7] at (1.207,0.5) {$\bullet$};
    \node[scale=0.7] at (1,0) {$\bullet$};
    }
    $};
    \node at (1,-5) {$
    \tikz[baseline={([yshift=-.5ex]current bounding box.center)}, scale=0.7]
	{
    \draw[dotted] (.5,.5) circle(0.707);
    \draw[knot, Black, thick] (0,1) to[out=135, in=180, looseness = 2] (-0.207, 0.5);
    \draw[knot, Black, thick] (0,0) to[out=-135, in=-135, looseness=2] (1.3,-0.3) to[out=45, in=45, looseness=2] (1,1);
    \draw[knot, Black, thick] (1,0) to[out=-45, in=0, looseness=2] (1.207, 0.5);
    \node[scale=0.7] at (0,0) {$\bullet$};
    \node[scale=0.7] at (-0.207,0.5) {$\bullet$};
    \node[scale=0.7] at (0,1) {$\bullet$};
    \node[scale=0.7] at (1,1) {$\bullet$};
    \node[scale=0.7] at (1.207,0.5) {$\bullet$};
    \node[scale=0.7] at (1,0) {$\bullet$};
    }
    $};
    \node at (1,-7) {$
    \tikz[baseline={([yshift=-.5ex]current bounding box.center)}, scale=0.7]
	{
    \draw[dotted] (.5,.5) circle(0.707);
    \draw[knot, Black, thick] (0,1) to[out=135, in=45, looseness = 2.5] (1,1);
    \draw[knot, Black, thick] (0,0) to[out=-135, in=-45, looseness=2.5] (1,0);
    \draw[knot, Black, thick] (-0.207,0.5) to[out=180, in=180, looseness=2] (0.5, 1.75) to[out=0, in=0, looseness=2] (1.207,0.5);
    \node[scale=0.7] at (0,0) {$\bullet$};
    \node[scale=0.7] at (-0.207,0.5) {$\bullet$};
    \node[scale=0.7] at (0,1) {$\bullet$};
    \node[scale=0.7] at (1,1) {$\bullet$};
    \node[scale=0.7] at (1.207,0.5) {$\bullet$};
    \node[scale=0.7] at (1,0) {$\bullet$};
    }
    $};
    \node at (1,-9)  {$
    \tikz[baseline={([yshift=-.5ex]current bounding box.center)}, scale=0.7]
	{
    \draw[dotted] (.5,.5) circle(0.707);
    \draw[knot, Black, thick] (1,1) to[out=45, in=0, looseness = 2] (1.207, 0.5);
    \draw[knot, Black, thick] (0,0) to[out=-135, in=180, looseness=2] (-0.207,0.5);
    \draw[knot, Black, thick] (1,0) to[out=-45, in=-45, looseness=2] (-0.3,-0.3) to[out=135, in=135, looseness=2] (0,1);
    \node[scale=0.7] at (0,0) {$\bullet$};
    \node[scale=0.7] at (-0.207,0.5) {$\bullet$};
    \node[scale=0.7] at (0,1) {$\bullet$};
    \node[scale=0.7] at (1,1) {$\bullet$};
    \node[scale=0.7] at (1.207,0.5) {$\bullet$};
    \node[scale=0.7] at (1,0) {$\bullet$};
    }
    $};
    \node at (1,-11)  {$
    \tikz[baseline={([yshift=-.5ex]current bounding box.center)}, scale=0.7]
	{
    \draw[dotted] (.5,.5) circle(0.707);
    \draw[knot, Black, thick] (0,0) to[out=-135, in=180, looseness=2] (-0.207,0.5);
    \draw[knot, Black, thick] (1,0) to[out=-45, in=0, looseness=2] (1.207, 0.5);
    \draw[knot, Black, thick] (0,1) to[out=135, in=45, looseness = 2.5] (1,1);
    \node[scale=0.7] at (0,0) {$\bullet$};
    \node[scale=0.7] at (-0.207,0.5) {$\bullet$};
    \node[scale=0.7] at (0,1) {$\bullet$};
    \node[scale=0.7] at (1,1) {$\bullet$};
    \node[scale=0.7] at (1.207,0.5) {$\bullet$};
    \node[scale=0.7] at (1,0) {$\bullet$};
    }
    $};
    \node at (3,-3) {$
    \tikz[baseline={([yshift=-.5ex]current bounding box.center)}, scale=.35]{
    \draw[knot, -] (0,2) .. controls (0,1.75) and (1,1.75) .. (1,2);
    \draw[knot, -] (0,2) .. controls (0,2.25) and (1,2.25) .. (1,2);
    \draw[knot, -] (0,-1) .. controls (0,-1.25) and (1,-1.25) .. (1,-1);
    \draw[dashed, knot, -] (0,-1) .. controls (0,-0.75) and (1,-0.75) .. (1,-1);
    \draw[knot, -] (2,0) .. controls (2,-.25) and (3,-.25) .. (3,0);
    \draw[dashed, knot, -] (2,0) .. controls (2,.25) and (3,.25) .. (3,0);
    \draw[knot, -] (2,0) .. controls (2,-0.75) and (3,-0.75) .. (3,0);
    \draw[knot, -] (3,0) .. controls (3,1.35) and (1,1.35) .. (1,2);
    \draw[knot, -] (0,0) -- (0,2);
    \draw[knot, -] (1,0) .. controls (1,0.75) and (2,0.75) .. (2,0);
    \draw[knot, -] (0,0) -- (0,-1);
    \draw[knot, -] (1,0) -- (1,-1);
    \draw[knot, -] (-2,2) .. controls (-2,1.75) and (-1,1.75) .. (-1,2);
    \draw[knot, -] (-2,2) .. controls (-2,2.25) and (-1,2.25) .. (-1,2);
    \draw[knot, -] (-2,-1) .. controls (-2,-1.25) and (-1,-1.25) .. (-1,-1);
    \draw[dashed, knot, -] (-2,-1) .. controls (-2,-0.75) and (-1,-0.75) .. (-1,-1);
    \draw[knot, -] (-2,2) -- (-2,-1);
    \draw[knot, -] (-1,2) -- (-1,-1);
    }
    $};
    \node at (3,-5) {$
    \tikz[baseline={([yshift=-.5ex]current bounding box.center)}, scale=.35]{
    \draw[knot, -] (0,2) .. controls (0,1.75) and (1,1.75) .. (1,2);
    \draw[knot, -] (0,2) .. controls (0,2.25) and (1,2.25) .. (1,2);
    \draw[knot, -] (0,-1) .. controls (0,-1.25) and (1,-1.25) .. (1,-1);
    \draw[dashed, knot, -] (0,-1) .. controls (0,-0.75) and (1,-0.75) .. (1,-1);
    \draw[knot, -] (2,0) .. controls (2,-.25) and (3,-.25) .. (3,0);
    \draw[dashed, knot, -] (2,0) .. controls (2,.25) and (3,.25) .. (3,0);
    \draw[knot, -] (2,0) .. controls (2,-0.75) and (3,-0.75) .. (3,0);
    \draw[knot, -] (3,0) .. controls (3,1.35) and (1,1.35) .. (1,2);
    \draw[knot, -] (0,0) -- (0,2);
    \draw[knot, -] (1,0) .. controls (1,0.75) and (2,0.75) .. (2,0);
    \draw[knot, -] (0,0) -- (0,-1);
    \draw[knot, -] (1,0) -- (1,-1);
    }
    $};
    \node at (3,-7) {$
    \tikz[baseline={([yshift=-.5ex]current bounding box.center)}, scale=.35]{
	\draw[knot, -]  (1,2) .. controls (1,3) and (0,3) .. (0,4);
	\draw[knot, -]  (2,2) .. controls (2,3) and (3,3) .. (3,4);
	\draw[knot, -] (1,4) .. controls (1,3) and (2,3) .. (2,4);
	\draw[knot, -] (0,4) .. controls (0,3.75) and (1,3.75) .. (1,4);
	\draw[knot, -] (0,4) .. controls (0,4.25) and (1,4.25) .. (1,4);
	\draw[knot, -] (2,4) .. controls (2,3.75) and (3,3.75) .. (3,4);
	\draw[knot, -] (2,4) .. controls (2,4.25) and (3,4.25) .. (3,4);
	\draw[knot, -] (1,1) .. controls (1,0.75) and (2,0.75) .. (2,1);
	\draw[knot, dashed, -] (1,1) .. controls (1,1.25) and (2,1.25) .. (2,1);
    \draw[knot, -] (1,1) -- (1,2);
    \draw[knot, -] (2,1) -- (2,2);
    \draw[knot, -] (4,2) .. controls (4,1.75) and (5,1.75) .. (5,2);
    \draw[knot, -, dashed] (4,2) .. controls (4,2.25) and (5,2.25) .. (5,2);
    \draw[knot, -] (4,4) .. controls (4,3.75) and (5,3.75) .. (5,4);
    \draw[knot, -] (4,4) .. controls (4,4.25) and (5,4.25) .. (5,4);
    \draw[knot, -] (4,4) -- (4,2);
    \draw[knot, -] (5,4) -- (5,2);
    \draw[knot, -] (4,2) .. controls (4,1.25) and (5,1.25) .. (5,2);
    }
    $};
    \node at (3,-9) {$
    \tikz[baseline={([yshift=-.5ex]current bounding box.center)}, scale=.35, rotate=180]{
	\draw[knot, -]  (1,2) .. controls (1,3) and (0,3) .. (0,4);
	\draw[knot, -]  (2,2) .. controls (2,3) and (3,3) .. (3,4);
	\draw[knot, -] (1,4) .. controls (1,3) and (2,3) .. (2,4);
	\draw[knot, -, dashed] (0,4) .. controls (0,3.75) and (1,3.75) .. (1,4);
	\draw[knot, -] (0,4) .. controls (0,4.25) and (1,4.25) .. (1,4);
	\draw[knot, -, dashed] (2,4) .. controls (2,3.75) and (3,3.75) .. (3,4);
	\draw[knot, -] (2,4) .. controls (2,4.25) and (3,4.25) .. (3,4);
	\draw[knot, -] (1,1) .. controls (1,0.75) and (2,0.75) .. (2,1);
	\draw[knot, -] (1,1) .. controls (1,1.25) and (2,1.25) .. (2,1);
    \draw[knot, -] (1,1) -- (1,2);
    \draw[knot, -] (2,1) -- (2,2);
    \draw[knot, -, dashed] (4,2) .. controls (4,1.75) and (5,1.75) .. (5,2);
    \draw[knot, -] (4,2) .. controls (4,2.25) and (5,2.25) .. (5,2);
    \draw[knot, -, dashed] (4,4) .. controls (4,3.75) and (5,3.75) .. (5,4);
    \draw[knot, -] (4,4) .. controls (4,4.25) and (5,4.25) .. (5,4);
    \draw[knot, -] (4,4) -- (4,2);
    \draw[knot, -] (5,4) -- (5,2);
    \draw[knot, -] (4,2) .. controls (4,1.25) and (5,1.25) .. (5,2);
    }
    $};
    \node at (3,-11) {$
    \tikz[baseline={([yshift=-.5ex]current bounding box.center)}, scale=.35]{
    \draw[knot, -] (0,2) .. controls (0,1.75) and (1,1.75) .. (1,2);
    \draw[knot, -] (0,2) .. controls (0,2.25) and (1,2.25) .. (1,2);
    \draw[knot, -] (2,2) .. controls (2,1.75) and (3,1.75) .. (3,2);
    \draw[knot, -] (2,2) .. controls (2,2.25) and (3,2.25) .. (3,2);
    \draw[knot, -] (0,-1) .. controls (0,-1.25) and (1,-1.25) .. (1,-1);
    \draw[dashed, knot, -] (0,-1) .. controls (0,-0.75) and (1,-0.75) .. (1,-1);
    \draw[knot, -] (2,-1) .. controls (2,-1.25) and (3,-1.25) .. (3,-1);
    \draw[dashed, knot, -] (2,-1) .. controls (2,-0.75) and (3,-0.75) .. (3,-1);
    \draw[knot, -] (2,0) .. controls (2,0.75) and (3,0.75) .. (3,0);
    \draw[knot, -] (1,-1) .. controls (1,0.65) and (3,0.65) .. (3,2);
    \draw[knot, -] (0,0) -- (0,2);
    \draw[knot, -] (1,2) .. controls (1,1.25) and (2,1.25) .. (2,2);
    \draw[knot, -] (0,-1) -- (0,0);
    \draw[knot, -] (2,-1) -- (2,0);
    \draw[knot, -] (3,-1) -- (3,0);
    }
    $};
    \node[scale=1.25] at (5,-3) {$0$};
    \node[scale=1.25] at (5,-5) {$0$};
    \node[scale=1.25] at (5,-7) {$0$};
    \node[scale=1.25] at (5,-9) {$0$};
    \node[scale=1.25] at (5,-11) {$0$};
    \node at (7,-3) {$
    \tikz[baseline={([yshift=-.5ex]current bounding box.center)}, scale=.35]{
	\draw[knot, -]  (1,2) .. controls (1,3) and (0,3) .. (0,4);
	\draw[knot, -]  (2,2) .. controls (2,3) and (3,3) .. (3,4);
	\draw[knot, -] (1,4) .. controls (1,3) and (2,3) .. (2,4);
	\draw[knot, -] (0,4) .. controls (0,3.75) and (1,3.75) .. (1,4);
	\draw[knot, -] (0,4) .. controls (0,4.25) and (1,4.25) .. (1,4);
	\draw[knot, -] (2,4) .. controls (2,3.75) and (3,3.75) .. (3,4);
	\draw[knot, -] (2,4) .. controls (2,4.25) and (3,4.25) .. (3,4);
    \draw[knot, -] (1,1) -- (1,2);
    \draw[knot, -] (2,1) -- (2,2);
	\draw[knot, -] (1,1) .. controls (1,0.75) and (2,0.75) .. (2,1);
	\draw[knot, dashed, -] (1,1) .. controls (1,1.25) and (2,1.25) .. (2,1);
    \draw[knot, -] (4,1) .. controls (4,0.75) and (5,0.75) .. (5,1);
    \draw[knot, -, dashed] (4,1) .. controls (4,1.25) and (5,1.25) .. (5,1);
    \draw[knot, -] (4,2) -- (4,1);
    \draw[knot, -] (5,2) -- (5,1);
    \draw[knot, -] (4,2) .. controls (4,2.75) and (5,2.75) .. (5,2);
}
    $};
    \node[scale=1.25] at (7,-5) {$0$};
    \node at (7,-7) {$
    \tikz[baseline={([yshift=-.5ex]current bounding box.center)}, scale=.35]{
	\draw[knot, -]  (1,2) .. controls (1,3) and (0,3) .. (0,4);
	\draw[knot, -]  (2,2) .. controls (2,3) and (3,3) .. (3,4);
	\draw[knot, -] (1,4) .. controls (1,3) and (2,3) .. (2,4);
	\draw[knot, -] (0,4) .. controls (0,3.75) and (1,3.75) .. (1,4);
	\draw[knot, -] (0,4) .. controls (0,4.25) and (1,4.25) .. (1,4);
	\draw[knot, -] (2,4) .. controls (2,3.75) and (3,3.75) .. (3,4);
	\draw[knot, -] (2,4) .. controls (2,4.25) and (3,4.25) .. (3,4);
	\draw[knot, -] (1,2) .. controls (1,1.75) and (2,1.75) .. (2,2);
	\draw[knot, dashed, -] (1,2) .. controls (1,2.25) and (2,2.25) .. (2,2);
    \draw[knot, -] (1,2) .. controls (1,1.25) and (2,1.25) .. (2,2);
    \draw[knot, -] (-2,1) .. controls (-2,0.75) and (-1,0.75) .. (-1,1);
    \draw[knot, -, dashed] (-2,1) .. controls (-2,1.25) and (-1,1.25) .. (-1,1);
    \draw[knot, -] (-2,4) .. controls (-2,3.75) and (-1,3.75) .. (-1,4);
    \draw[knot, -] (-2,4) .. controls (-2,4.25) and (-1,4.25) .. (-1,4);
    \draw[knot, -] (-2,4) -- (-2,1);
    \draw[knot, -] (-1,4) -- (-1,1);
    }
    $};
    \node[scale=1.25] at (7,-9) {$0$};
    \node[scale=1.25] at (7,-11) {$0$};
    \node at (9,-3) {$
    \tikz[baseline={([yshift=-.5ex]current bounding box.center)}, scale=.35]{
    \draw[knot, -] (-2,2) .. controls (-2,1.75) and (-1,1.75) .. (-1,2);
    \draw[knot, -] (-2,2) .. controls (-2,2.25) and (-1,2.25) .. (-1,2);
    \draw[knot, -] (-2,-1) .. controls (-2,-1.25) and (-1,-1.25) .. (-1,-1);
    \draw[dashed, knot, -] (-2,-1) .. controls (-2,-0.75) and (-1,-0.75) .. (-1,-1);
    \draw[knot, -] (-2,2) -- (-2,-1);
    \draw[knot, -] (-1,2) -- (-1,-1);
    \draw[knot, -] (0,2) .. controls (0,1.75) and (1,1.75) .. (1,2);
    \draw[knot, -] (0,2) .. controls (0,2.25) and (1,2.25) .. (1,2);
    \draw[knot, -] (0,2) -- (0,0.5);
    \draw[knot, -] (1,2) -- (1,0.5);
    \draw[knot, -] (0,0.5) .. controls (0, 0.25) and (1, 0.25) .. (1,0.5);
    \draw[knot, -, dashed] (0,0.5) .. controls (0, 0.75) and (1, 0.75) .. (1,0.5);
    \draw[knot, -] (0,0.5) .. controls (0,-0.25) and (1,-0.25) .. (1,0.5);
    }
    $};
    \node at (9,-5) {$
    \tikz[baseline={([yshift=-.5ex]current bounding box.center)}, scale=.35]{
    \begin{scope}[xshift=-4cm]
    \draw[knot, -] (2,2) .. controls (2,1.75) and (3,1.75) .. (3,2);
    \draw[knot, -] (2,2) .. controls (2,2.25) and (3,2.25) .. (3,2);
    \draw[knot, -] (2,-1) .. controls (2,-1.25) and (3,-1.25) .. (3,-1);
    \draw[dashed, knot, -] (2,-1) .. controls (2,-0.75) and (3,-0.75) .. (3,-1);
    \draw[knot, -] (2,2) -- (2,-1);
    \draw[knot, -] (3,2) -- (3,-1);
    \end{scope}
    \draw[knot, -] (0,-1) .. controls (0,-1.25) and (1,-1.25) .. (1,-1);
    \draw[dashed, knot, -] (0,-1) .. controls (0,-0.75) and (1,-0.75) .. (1,-1);
    \draw[knot, -] (0,-1) -- (0,0.5);
    \draw[knot, -] (1,-1) -- (1,0.5);
    \draw[knot, -] (0,0.5) .. controls (0, 0.25) and (1, 0.25) .. (1,0.5);
    \draw[knot, -, dashed] (0,0.5) .. controls (0, 0.75) and (1, 0.75) .. (1,0.5);
    \draw[knot, -] (0,0.5) .. controls (0,1.25) and (1,1.25) .. (1,0.5);
    }
    $};
    \node at (9,-7) {$
    \tikz[baseline={([yshift=-.5ex]current bounding box.center)}, scale=.35]{
    \draw[knot, -] (-2,2) .. controls (-2,1.75) and (-1,1.75) .. (-1,2);
    \draw[knot, -] (-2,2) .. controls (-2,2.25) and (-1,2.25) .. (-1,2);
    \draw[knot, -] (-2,-1) .. controls (-2,-1.25) and (-1,-1.25) .. (-1,-1);
    \draw[dashed, knot, -] (-2,-1) .. controls (-2,-0.75) and (-1,-0.75) .. (-1,-1);
    \draw[knot, -] (-2,2) -- (-2,-1);
    \draw[knot, -] (-1,2) -- (-1,-1);
    \draw[knot, -] (0,2) .. controls (0,1.75) and (1,1.75) .. (1,2);
    \draw[knot, -] (0,2) .. controls (0,2.25) and (1,2.25) .. (1,2);
    \draw[knot, -] (0,2) -- (0,0.5);
    \draw[knot, -] (1,2) -- (1,0.5);
    \draw[knot, -] (2,2) .. controls (2,1.75) and (3,1.75) .. (3,2);
    \draw[knot, -] (2,2) .. controls (2,2.25) and (3,2.25) .. (3,2);
    \draw[knot, -] (2,-1) .. controls (2,-1.25) and (3,-1.25) .. (3,-1);
    \draw[dashed, knot, -] (2,-1) .. controls (2,-0.75) and (3,-0.75) .. (3,-1);
    \draw[knot, -] (2,2) -- (2,-1);
    \draw[knot, -] (3,2) -- (3,-1);
    \draw[knot, -] (0,0.5) .. controls (0, 0.25) and (1, 0.25) .. (1,0.5);
    \draw[knot, -, dashed] (0,0.5) .. controls (0, 0.75) and (1, 0.75) .. (1,0.5);
    \draw[knot, -] (0,0.5) .. controls (0,-0.25) and (1,-0.25) .. (1,0.5);
    }
    $};
    \node at (9,-9) {$
    \tikz[baseline={([yshift=-.5ex]current bounding box.center)}, scale=.35]{
    \draw[knot, -] (2,2) .. controls (2,1.75) and (3,1.75) .. (3,2);
    \draw[knot, -] (2,2) .. controls (2,2.25) and (3,2.25) .. (3,2);
    \draw[knot, -] (2,-1) .. controls (2,-1.25) and (3,-1.25) .. (3,-1);
    \draw[dashed, knot, -] (2,-1) .. controls (2,-0.75) and (3,-0.75) .. (3,-1);
    \draw[knot, -] (2,2) -- (2,-1);
    \draw[knot, -] (3,2) -- (3,-1);
    \draw[knot, -] (0,-1) .. controls (0,-1.25) and (1,-1.25) .. (1,-1);
    \draw[dashed, knot, -] (0,-1) .. controls (0,-0.75) and (1,-0.75) .. (1,-1);
    \draw[knot, -] (0,-1) -- (0,0.5);
    \draw[knot, -] (1,-1) -- (1,0.5);
    \draw[knot, -] (0,0.5) .. controls (0, 0.25) and (1, 0.25) .. (1,0.5);
    \draw[knot, -, dashed] (0,0.5) .. controls (0, 0.75) and (1, 0.75) .. (1,0.5);
    \draw[knot, -] (0,0.5) .. controls (0,1.25) and (1,1.25) .. (1,0.5);
    }
    $};
    \node at (9,-11) {$
    \tikz[baseline={([yshift=-.5ex]current bounding box.center)}, scale=.35]{
    \begin{scope}[xshift=-6cm]
    \draw[knot, -] (2,2) .. controls (2,1.75) and (3,1.75) .. (3,2);
    \draw[knot, -] (2,2) .. controls (2,2.25) and (3,2.25) .. (3,2);
    \draw[knot, -] (2,-1) .. controls (2,-1.25) and (3,-1.25) .. (3,-1);
    \draw[dashed, knot, -] (2,-1) .. controls (2,-0.75) and (3,-0.75) .. (3,-1);
    \draw[knot, -] (2,2) -- (2,-1);
    \draw[knot, -] (3,2) -- (3,-1);
    \end{scope}
    \draw[knot, -] (-2,2) .. controls (-2,1.75) and (-1,1.75) .. (-1,2);
    \draw[knot, -] (-2,2) .. controls (-2,2.25) and (-1,2.25) .. (-1,2);
    \draw[knot, -] (-2,-1) .. controls (-2,-1.25) and (-1,-1.25) .. (-1,-1);
    \draw[dashed, knot, -] (-2,-1) .. controls (-2,-0.75) and (-1,-0.75) .. (-1,-1);
    \draw[knot, -] (-2,2) -- (-2,-1);
    \draw[knot, -] (-1,2) -- (-1,-1);
    \draw[knot, -] (0,-1) .. controls (0,-1.25) and (1,-1.25) .. (1,-1);
    \draw[dashed, knot, -] (0,-1) .. controls (0,-0.75) and (1,-0.75) .. (1,-1);
    \draw[knot, -] (0,-1) -- (0,0.5);
    \draw[knot, -] (1,-1) -- (1,0.5);
    \draw[knot, -] (0,0.5) .. controls (0, 0.25) and (1, 0.25) .. (1,0.5);
    \draw[knot, -, dashed] (0,0.5) .. controls (0, 0.75) and (1, 0.75) .. (1,0.5);
    \draw[knot, -] (0,0.5) .. controls (0,1.25) and (1,1.25) .. (1,0.5);
    }
    $};
    \node at (11, -3) {$
    \tikz[baseline={([yshift=-.5ex]current bounding box.center)}, scale=.35]{
	\draw[knot, -] (0,0) .. controls (0,1.5) and (1,1.5) .. (1,3);
	\draw[knot, -] (1,0) .. controls (1,1.5) and (2,1.5) .. (2,0);
	\draw[knot, -] (3,0) .. controls (3,1.5) and (2,1.5) .. (2,3);
	\draw[knot, -] (0,0) .. controls (0,-.25) and (1,-.25) .. (1,0);
	\draw[dashed, knot, -] (0,0) .. controls (0,.25) and (1,.25) .. (1,0);
	\draw[knot, -] (2,0) .. controls (2,-.25) and (3,-.25) .. (3,0);
	\draw[dashed, knot, -] (2,0) .. controls (2,.25) and (3,.25) .. (3,0);
	\draw[knot, -] (1,3) .. controls (1,2.75) and (2,2.75) .. (2,3);
	\draw[knot, -] (1,3) .. controls (1,3.25) and (2,3.25) .. (2,3);
}
    $};
    \node at (11, -5) {$
    \tikz[baseline={([yshift=-.5ex]current bounding box.center)}, scale=.35]{
    \draw[knot, -] (1,0) .. controls (1,-0.25) and (2,-0.25) .. (2,0);
	\draw[knot, -, dashed] (1,0) .. controls (1,0.25) and (2,0.25) .. (2,0);
    \draw[knot, -] (0,3) .. controls (0,2.75) and (1,2.75) .. (1,3);
	\draw[knot, -] (0,3) .. controls (0,3.25) and (1,3.25) .. (1,3);
    \draw[knot, -] (2,3) .. controls (2,2.75) and (3,2.75) .. (3,3);
	\draw[knot, -] (2,3) .. controls (2,3.25) and (3,3.25) .. (3,3);
    \draw[knot, -] (0,3) .. controls (0,1.5) and (1,1.5) .. (1,0);
    \draw[knot, -] (3,3) .. controls (3,1.5) and (2,1.5) .. (2,0);
    \draw[knot, -] (1,3) .. controls (1,1.5) and (2,1.5) .. (2,3);
}
    $};
    \node at (11, -7) {$
    \tikz[baseline={([yshift=-.5ex]current bounding box.center)}, scale=.35]{
    \draw[knot, -] (1,0) .. controls (1,-0.25) and (2,-0.25) .. (2,0);
	\draw[knot, -, dashed] (1,0) .. controls (1,0.25) and (2,0.25) .. (2,0);
    \draw[knot, -] (0,3) .. controls (0,2.75) and (1,2.75) .. (1,3);
	\draw[knot, -] (0,3) .. controls (0,3.25) and (1,3.25) .. (1,3);
    \draw[knot, -] (2,3) .. controls (2,2.75) and (3,2.75) .. (3,3);
	\draw[knot, -] (2,3) .. controls (2,3.25) and (3,3.25) .. (3,3);
    \draw[knot, -] (0,3) .. controls (0,1.5) and (1,1.5) .. (1,0);
    \draw[knot, -] (3,3) .. controls (3,1.5) and (2,1.5) .. (2,0);
    \draw[knot, -] (1,3) .. controls (1,1.5) and (2,1.5) .. (2,3);
}
    $};
    \node at (11, -9) {$
    \tikz[baseline={([yshift=-.5ex]current bounding box.center)}, scale=.35]{
    \draw[knot, -] (-2,0) .. controls (-2,-0.25) and (-1,-0.25) .. (-1,0);
    \draw[knot, -, dashed] (-2,0) .. controls (-2,0.25) and (-1,0.25) .. (-1,0);
    \draw[knot, -] (-2,3) .. controls (-2,3-0.25) and (-1,3-0.25) .. (-1,3);
    \draw[knot, -] (-2,3) .. controls (-2,3.25) and (-1,3.25) .. (-1,3);
    \draw[knot, -] (-2,0) -- (-2,3);
    \draw[knot, -] (-1,0) -- (-1,3);
	\draw[knot, -] (0,0) .. controls (0,1.5) and (1,1.5) .. (1,3);
	\draw[knot, -] (1,0) .. controls (1,1.5) and (2,1.5) .. (2,0);
	\draw[knot, -] (3,0) .. controls (3,1.5) and (2,1.5) .. (2,3);
	\draw[knot, -] (0,0) .. controls (0,-.25) and (1,-.25) .. (1,0);
	\draw[dashed, knot, -] (0,0) .. controls (0,.25) and (1,.25) .. (1,0);
	\draw[knot, -] (2,0) .. controls (2,-.25) and (3,-.25) .. (3,0);
	\draw[dashed, knot, -] (2,0) .. controls (2,.25) and (3,.25) .. (3,0);
	\draw[knot, -] (1,3) .. controls (1,2.75) and (2,2.75) .. (2,3);
	\draw[knot, -] (1,3) .. controls (1,3.25) and (2,3.25) .. (2,3);
}
    $};
    \node at (11, -11) {$
    \tikz[baseline={([yshift=-.5ex]current bounding box.center)}, scale=.35]{
    \draw[knot, -] (1,0) .. controls (1,-0.25) and (2,-0.25) .. (2,0);
	\draw[knot, -, dashed] (1,0) .. controls (1,0.25) and (2,0.25) .. (2,0);
    \draw[knot, -] (0,3) .. controls (0,2.75) and (1,2.75) .. (1,3);
	\draw[knot, -] (0,3) .. controls (0,3.25) and (1,3.25) .. (1,3);
    \draw[knot, -] (2,3) .. controls (2,2.75) and (3,2.75) .. (3,3);
	\draw[knot, -] (2,3) .. controls (2,3.25) and (3,3.25) .. (3,3);
    \draw[knot, -] (0,3) .. controls (0,1.5) and (1,1.5) .. (1,0);
    \draw[knot, -] (3,3) .. controls (3,1.5) and (2,1.5) .. (2,0);
    \draw[knot, -] (1,3) .. controls (1,1.5) and (2,1.5) .. (2,3);
    \draw[knot, -] (5,0) .. controls (5,-0.25) and (4,-0.25) .. (4,0);
    \draw[knot, -, dashed] (5,0) .. controls (5,0.25) and (4,0.25) .. (4,0);
    \draw[knot, -] (5,3) .. controls (5,3-0.25) and (4,3-0.25) .. (4,3);
    \draw[knot, -] (5,3) .. controls (5,3.25) and (4,3.25) .. (4,3);
    \draw[knot, -] (5,0) -- (5,3);
    \draw[knot, -] (4,0) -- (4,3);
}
    $};
}
\caption{}
\label{tab:Etable}
\end{table}
		
To finish the proof, we need keep track of signs. We will do this sign computation for one closure, and leave the remaining cases for the reader to check. Consider Figure \ref{fig:R3closure}. In this case, note that the only nontrivial maps are $h_{2a}\circ h_{1a}$ and $E_{-1}' \circ d'$; see the fourth row of Table \ref{tab:Etable}. Choose consistent decorations on the two Reidemeister III diagrams and fix the same orientation for the family of metrics so that the sign assignments on the top cubes coincide. For the death maps arising from removing crossings, we fix the identification between the unlinks so that the caps are applied to the components from which the decoration arrow starts. Since the red maps together with the green chain homotopies are equal to the Reidemeister II map on odd Khovanov homology, we have that
\[
\epsilon_3\epsilon_4=-1,\qquad \text{and} \qquad \epsilon_5\epsilon_6=1.
\]
Then, using the fact that removing the first crossing on the left diagram induces a chain map, we have $\epsilon=-\epsilon_1\epsilon_2\epsilon_3$. Also, since the top cube of the right diagram is a chain complex, we have $\epsilon'=-\epsilon_1\epsilon_2\epsilon_5$. The sign on $h_{2a} \circ h_{1a}$ is given by $\epsilon\epsilon_4$ and the sign on $E_{-1}'\circ d'$ is given by $\epsilon' \epsilon_6$. Applying the equations above,
\[
\epsilon\epsilon_4=-\epsilon_1\epsilon_2\epsilon_3\epsilon_4=\epsilon_1\epsilon_2=-\epsilon'\epsilon_5=-\epsilon'\epsilon_6.
\]
Hence, the signs on the two maps are opposite, and cancel with one another.
\end{proof}

By Lemma \ref{lemma:R3computation}, $\widehat{H}=(G-g')\circ d'$. On one hand, $G-g'$ is also a chain homotopy between the identity map and $\bar{e}'\circ (\bar{e}')^{-1}$. On the other hand, given any two chain homotopies $I$ and $J$ between the identity map and $\bar{e}'\circ (\bar{e}')^{-1}$, the diagonal maps $I \circ d'$ and $J \circ d'$ are homotopic. Therefore, $\widehat{H}$ is the desired homotopy equivalence, concluding the proof.

\pagebreak

\begin{figure}[h]
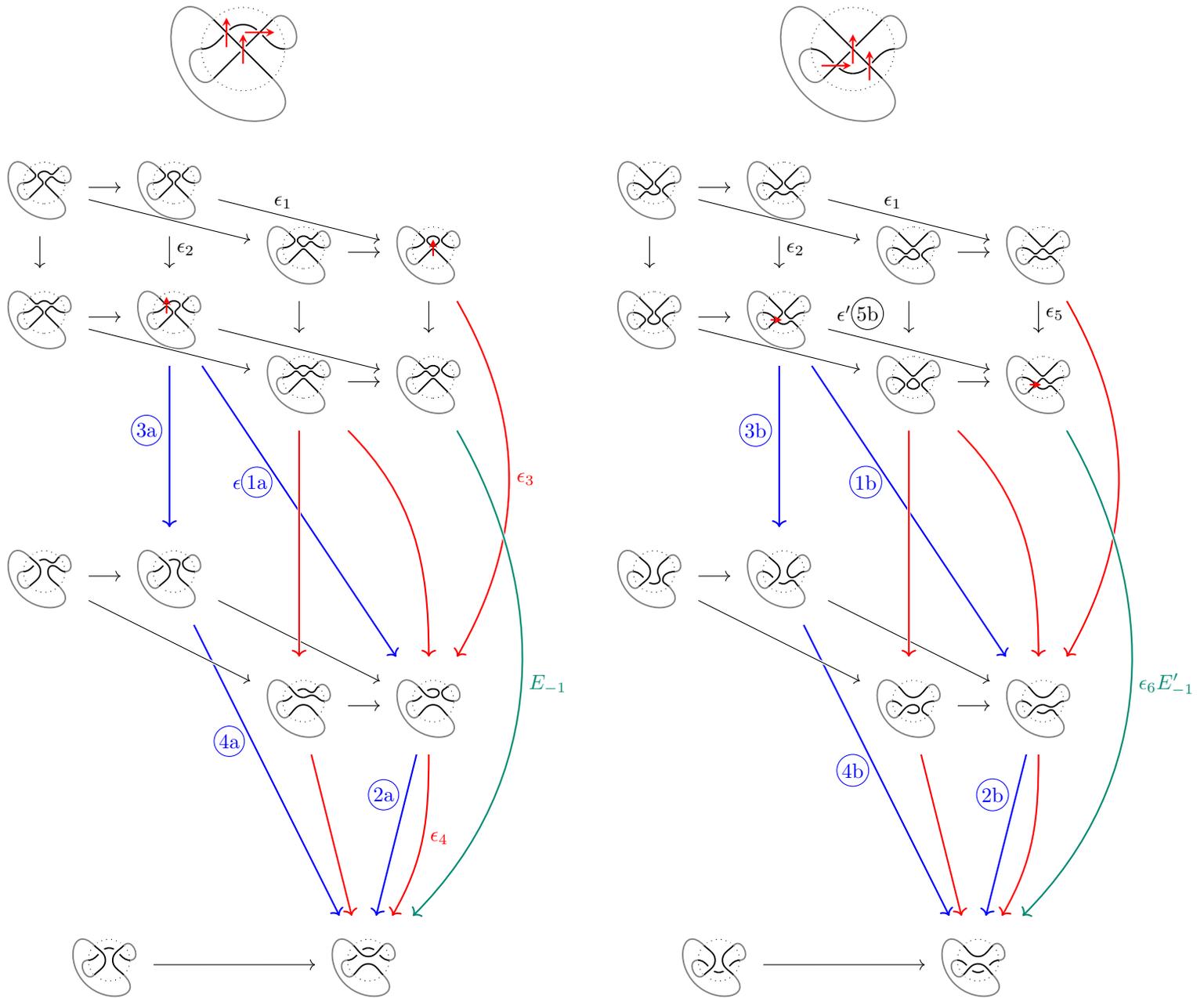

	\makebox[\textwidth][c]{
		\tikz[scale=2.2]{
	    \node at (1.5,1) {$
		\tikz[baseline={([yshift=-.5ex]current bounding box.center)}, scale=1]
		{
			\draw[dotted] (.5,.5) circle(0.707);
			\draw[knot] (-0.207, 0.5) to[out=0, in=180] (0.5, 0.915);
			\draw[knot] (1.207, 0.5) to[out=180, in=0] (0.5, 0.915);
			\draw[knot, overcross] (0,0) -- (0.45, 0.45);
			\draw[knot] (0.45, 0.45) -- (0.55, 0.55);
			\draw[knot, overcross] (0.55, 0.55) -- (1,1);
			\draw[knot, overcross] (0,1) -- (0.45, 0.55);
			\draw[knot] (0,1) -- (0.01, 0.99);
			\draw[knot, overcross] (0.45, 0.55) -- (0.55, 0.45);
			\draw[knot] (0.55, 0.45) -- (1,0);
			\draw[-stealth, red, thick] (0.5, 0.5-0.25) -- (0.5, 0.5+0.25);
			\draw[-stealth, red, thick] (0.782-0.25, 0.782) -- (0.782+0.25, 0.782);
			\draw[-stealth, red, thick] (0.5-0.282, 0.782-0.25) -- (0.5-0.282, 0.782+0.25);
			
			\draw[knot, gray] (1,1) to[out=45, in=0, looseness = 2] (1.207, 0.5);
			\draw[knot, gray] (0,0) to[out=-135, in=180, looseness=2] (-0.207,0.5);
			\draw[knot, gray] (1,0) to[out=-45, in=-45, looseness=2] (-0.3,-0.3) to[out=135, in=135, looseness=2] (0,1);
		        }
		$};
			\node(111) at (0,0) {$
				\tikz[baseline={([yshift=-.5ex]current bounding box.center)}, scale=0.5, rotate=-90]
				{
					\draw[dotted] (.5,.5) circle(0.707);
					\draw[knot] (0.5, 1.207) to[out=-90, in=45] (0.325, 0.825);
					\draw[knot] (0.175, 0.675) to[out=225, in=135] (0.175, 0.325);
					\draw[knot] (0.325, 0.175) to[out=-45, in=90] (0.5, -0.207);
					\draw[knot] (0,0) to (0.175, 0.175);
					\draw[knot] (0.325, 0.325) to (0.425, 0.425);
					\draw[knot] (0.575, 0.575) to (1,1);
					\draw[knot] (0,1) to (0.175, 0.825);
					\draw[knot] (0.325, 0.675) to (0.425, 0.575);
					\draw[knot] (0.575, 0.425) to (1,0);
					\draw[knot] (0.175, 0.175) to[out=45, in=135] (0.325, 0.175);
					\draw[knot] (0.175, 0.325) to[out=-45, in=-135] (0.325, 0.325);
					\draw[knot] (0.425, 0.425) to[out=45, in=135] (0.575, 0.425);
					\draw[knot] (0.425, 0.575) to[out=-45, in=-135] (0.575, 0.575);
					\draw[knot] (0.175, 0.675) to[out=45, in=-45] (0.175, 0.825);
					\draw[knot] (0.325, 0.675) to[out=135, in=-135] (0.325, 0.825);
					\draw[knot, gray] (0.5,-0.207) to[out=-90, in=-45, looseness = 2] (1,0);
					\draw[knot, gray] (0.5,1.207) to[out=90, in=135, looseness=2] (0,1);
					\draw[knot, gray] (0,0) to[out=-135, in=-135, looseness=2] (1.3,-0.3) to[out=45, in=45, looseness=2] (1,1);
				}
				$};
			\node(101) at (1,0) {$
				\tikz[baseline={([yshift=-.5ex]current bounding box.center)}, scale=0.5, rotate=-90]
				{
					\draw[dotted] (.5,.5) circle(0.707);
					\draw[knot] (0.5, 1.207) to[out=-90, in=45] (0.325, 0.825);
					\draw[knot] (0.175, 0.675) to[out=225, in=135] (0.175, 0.325);
					\draw[knot] (0.325, 0.175) to[out=-45, in=90] (0.5, -0.207);
					\draw[knot] (0,0) to (0.175, 0.175);
					\draw[knot] (0.325, 0.325) to (0.425, 0.425);
					\draw[knot] (0.575, 0.575) to (1,1);
					\draw[knot] (0,1) to (0.175, 0.825);
					\draw[knot] (0.325, 0.675) to (0.425, 0.575);
					\draw[knot] (0.575, 0.425) to (1,0);
					\draw[knot] (0.175, 0.175) to[out=45, in=135] (0.325, 0.175);
					\draw[knot] (0.175, 0.325) to[out=-45, in=-135] (0.325, 0.325);
					\draw[knot] (0.425, 0.425) to[out=45, in=135] (0.575, 0.425);
					\draw[knot] (0.425, 0.575) to[out=-45, in=-135] (0.575, 0.575);
					\draw[knot] (0.175, 0.825) to[out=-45, in=-135] (0.325, 0.825);
					\draw[knot] (0.175, 0.675) to[out=45, in=135] (0.325, 0.675);
					\draw[knot, gray] (0.5,-0.207) to[out=-90, in=-45, looseness = 2] (1,0);
					\draw[knot, gray] (0.5,1.207) to[out=90, in=135, looseness=2] (0,1);
					\draw[knot, gray] (0,0) to[out=-135, in=-135, looseness=2] (1.3,-0.3) to[out=45, in=45, looseness=2] (1,1);
				}
				$};
			\node(110) at (2,-.5) {$
				\tikz[baseline={([yshift=-.5ex]current bounding box.center)}, scale=0.5, rotate=-90]
				{
					\draw[dotted] (.5,.5) circle(0.707);
					\draw[knot] (0.5, 1.207) to[out=-90, in=45] (0.325, 0.825);
					\draw[knot] (0.175, 0.675) to[out=225, in=135] (0.175, 0.325);
					\draw[knot] (0.325, 0.175) to[out=-45, in=90] (0.5, -0.207);
					\draw[knot] (0,0) to (0.175, 0.175);
					\draw[knot] (0.325, 0.325) to (0.425, 0.425);
					\draw[knot] (0.575, 0.575) to (1,1);
					\draw[knot] (0,1) to (0.175, 0.825);
					\draw[knot] (0.325, 0.675) to (0.425, 0.575);
					\draw[knot] (0.575, 0.425) to (1,0);
					\draw[knot] (0.175, 0.175) to[out=45, in=135] (0.325, 0.175);
					\draw[knot] (0.175, 0.325) to[out=-45, in=-135] (0.325, 0.325);
					\draw[knot] (0.425, 0.425) to[out=45, in=-45] (0.425, 0.575); 
					\draw[knot] (0.575, 0.425) to[out=135, in=-135] (0.575, 0.575);
					\draw[knot] (0.175, 0.675) to[out=45, in=-45] (0.175, 0.825);
					\draw[knot] (0.325, 0.675) to[out=135, in=-135] (0.325, 0.825);
					\draw[knot, gray] (0.5,-0.207) to[out=-90, in=-45, looseness = 2] (1,0);
					\draw[knot, gray] (0.5,1.207) to[out=90, in=135, looseness=2] (0,1);
					\draw[knot, gray] (0,0) to[out=-135, in=-135, looseness=2] (1.3,-0.3) to[out=45, in=45, looseness=2] (1,1);
				}
				$};
			\node(100) at (3,-.5) {$
				\tikz[baseline={([yshift=-.5ex]current bounding box.center)}, scale=0.5, rotate=-90]
				{
					\draw[dotted] (.5,.5) circle(0.707);
					\draw[knot] (0.5, 1.207) to[out=-90, in=45] (0.325, 0.825);
					\draw[knot] (0.175, 0.675) to[out=225, in=135] (0.175, 0.325);
					\draw[knot] (0.325, 0.175) to[out=-45, in=90] (0.5, -0.207);
					\draw[knot] (0,0) to (0.175, 0.175);
					\draw[knot] (0.325, 0.325) to (0.425, 0.425);
					\draw[knot] (0.575, 0.575) to (1,1);
					\draw[knot] (0,1) to (0.175, 0.825);
					\draw[knot] (0.325, 0.675) to (0.425, 0.575);
					\draw[knot] (0.575, 0.425) to (1,0);
					\draw[knot] (0.175, 0.175) to[out=45, in=135] (0.325, 0.175);
					\draw[knot] (0.175, 0.325) to[out=-45, in=-135] (0.325, 0.325);
					\draw[knot] (0.425, 0.425) to[out=45, in=-45] node(X)[pos=0.5]{} (0.425, 0.575); 
					\draw[knot] (0.575, 0.425) to[out=135, in=-135] node(Y)[pos=0.5]{} (0.575, 0.575);
					\draw[knot] (0.175, 0.825) to[out=-45, in=-135] (0.325, 0.825);
					\draw[knot] (0.175, 0.675) to[out=45, in=135] (0.325, 0.675);
					\draw[knot, gray] (0.5,-0.207) to[out=-90, in=-45, looseness = 2] (1,0);
					\draw[knot, gray] (0.5,1.207) to[out=90, in=135, looseness=2] (0,1);
					\draw[knot, gray] (0,0) to[out=-135, in=-135, looseness=2] (1.3,-0.3) to[out=45, in=45, looseness=2] (1,1);
					\draw[red, stealth-, thick, shorten >=.1mm, shorten <=.1mm] (X.north) -- (Y.south);
				}
				$};
			\node(011) at (0,-1) {$
				\tikz[baseline={([yshift=-.5ex]current bounding box.center)}, scale=0.5, rotate=-90]
				{
					\draw[dotted] (.5,.5) circle(0.707);
					\draw[knot] (0.5, 1.207) to[out=-90, in=45] (0.325, 0.825);
					\draw[knot] (0.175, 0.675) to[out=225, in=135] (0.175, 0.325);
					\draw[knot] (0.325, 0.175) to[out=-45, in=90] (0.5, -0.207);
					\draw[knot] (0,0) to (0.175, 0.175);
					\draw[knot] (0.325, 0.325) to (0.425, 0.425);
					\draw[knot] (0.575, 0.575) to (1,1);
					\draw[knot] (0,1) to (0.175, 0.825);
					\draw[knot] (0.325, 0.675) to (0.425, 0.575);
					\draw[knot] (0.575, 0.425) to (1,0);
					\draw[knot] (0.175, 0.175) to[out=45, in=-45] (0.175, 0.325);
					\draw[knot] (0.325, 0.175) to[out=135, in=-135] (0.325, 0.325);
					\draw[knot] (0.425, 0.425) to[out=45, in=135] (0.575, 0.425);
					\draw[knot] (0.425, 0.575) to[out=-45, in=-135] (0.575, 0.575);
					\draw[knot] (0.175, 0.675) to[out=45, in=-45] (0.175, 0.825);
					\draw[knot] (0.325, 0.675) to[out=135, in=-135] (0.325, 0.825);
					\draw[knot, gray] (0.5,-0.207) to[out=-90, in=-45, looseness = 2] (1,0);
					\draw[knot, gray] (0.5,1.207) to[out=90, in=135, looseness=2] (0,1);
					\draw[knot, gray] (0,0) to[out=-135, in=-135, looseness=2] (1.3,-0.3) to[out=45, in=45, looseness=2] (1,1);
				}
				$};
			\node(001) at (1,-1) {$
				\tikz[baseline={([yshift=-.5ex]current bounding box.center)}, scale=0.5, rotate=-90]
				{
					\draw[dotted] (.5,.5) circle(0.707);
					\draw[knot] (0.5, 1.207) to[out=-90, in=45] (0.325, 0.825);
					\draw[knot] (0.175, 0.675) to[out=225, in=135] (0.175, 0.325);
					\draw[knot] (0.325, 0.175) to[out=-45, in=90] (0.5, -0.207);
					\draw[knot] (0,0) to (0.175, 0.175);
					\draw[knot] (0.325, 0.325) to (0.425, 0.425);
					\draw[knot] (0.575, 0.575) to (1,1);
					\draw[knot] (0,1) to (0.175, 0.825);
					\draw[knot] (0.325, 0.675) to (0.425, 0.575);
					\draw[knot] (0.575, 0.425) to (1,0);
					\draw[knot] (0.175, 0.175) to[out=45, in=-45] node(X)[pos=0.5]{} (0.175, 0.325);
					\draw[knot] (0.325, 0.175) to[out=135, in=-135] node(Y)[pos=0.5]{} (0.325, 0.325);
					\draw[knot] (0.425, 0.425) to[out=45, in=135] (0.575, 0.425);
					\draw[knot] (0.425, 0.575) to[out=-45, in=-135] (0.575, 0.575);
					\draw[knot] (0.175, 0.825) to[out=-45, in=-135] (0.325, 0.825);
					\draw[knot] (0.175, 0.675) to[out=45, in=135] (0.325, 0.675);
					\draw[knot, gray] (0.5,-0.207) to[out=-90, in=-45, looseness = 2] (1,0);
					\draw[knot, gray] (0.5,1.207) to[out=90, in=135, looseness=2] (0,1);
					\draw[knot, gray] (0,0) to[out=-135, in=-135, looseness=2] (1.3,-0.3) to[out=45, in=45, looseness=2] (1,1);
					\draw[red, stealth-, thick, shorten >=.1mm, shorten <=.1mm] (X.north) -- (Y.south);
				}
				$};
			\node(010) at (2,-1.5) {$
				\tikz[baseline={([yshift=-.5ex]current bounding box.center)}, scale=0.5, rotate=-90]
				{
					\draw[dotted] (.5,.5) circle(0.707);
					\draw[knot] (0.5, 1.207) to[out=-90, in=45] (0.325, 0.825);
					\draw[knot] (0.175, 0.675) to[out=225, in=135] (0.175, 0.325);
					\draw[knot] (0.325, 0.175) to[out=-45, in=90] (0.5, -0.207);
					\draw[knot] (0,0) to (0.175, 0.175);
					\draw[knot] (0.325, 0.325) to (0.425, 0.425);
					\draw[knot] (0.575, 0.575) to (1,1);
					\draw[knot] (0,1) to (0.175, 0.825);
					\draw[knot] (0.325, 0.675) to (0.425, 0.575);
					\draw[knot] (0.575, 0.425) to (1,0);
					\draw[knot] (0.175, 0.175) to[out=45, in=-45] (0.175, 0.325);
					\draw[knot] (0.325, 0.175) to[out=135, in=-135] (0.325, 0.325);
					\draw[knot] (0.425, 0.425) to[out=45, in=-45] (0.425, 0.575); 
					\draw[knot] (0.575, 0.425) to[out=135, in=-135] (0.575, 0.575);
					\draw[knot] (0.175, 0.675) to[out=45, in=-45] (0.175, 0.825);
					\draw[knot] (0.325, 0.675) to[out=135, in=-135] (0.325, 0.825);
					\draw[knot, gray] (0.5,-0.207) to[out=-90, in=-45, looseness = 2] (1,0);
					\draw[knot, gray] (0.5,1.207) to[out=90, in=135, looseness=2] (0,1);
					\draw[knot, gray] (0,0) to[out=-135, in=-135, looseness=2] (1.3,-0.3) to[out=45, in=45, looseness=2] (1,1);
				}
				$};
			\node(000) at (3,-1.5) {$
				\tikz[baseline={([yshift=-.5ex]current bounding box.center)}, scale=0.5, rotate=-90]
				{
					\draw[dotted] (.5,.5) circle(0.707);
					\draw[knot] (0.5, 1.207) to[out=-90, in=45] (0.325, 0.825);
					\draw[knot] (0.175, 0.675) to[out=225, in=135] (0.175, 0.325);
					\draw[knot] (0.325, 0.175) to[out=-45, in=90] (0.5, -0.207);
					\draw[knot] (0,0) to (0.175, 0.175);
					\draw[knot] (0.325, 0.325) to (0.425, 0.425);
					\draw[knot] (0.575, 0.575) to (1,1);
					\draw[knot] (0,1) to (0.175, 0.825);
					\draw[knot] (0.325, 0.675) to (0.425, 0.575);
					\draw[knot] (0.575, 0.425) to (1,0);
					\draw[knot] (0.175, 0.175) to[out=45, in=-45] (0.175, 0.325);
					\draw[knot] (0.325, 0.175) to[out=135, in=-135] (0.325, 0.325);
					\draw[knot] (0.425, 0.425) to[out=45, in=-45] (0.425, 0.575); 
					\draw[knot] (0.575, 0.425) to[out=135, in=-135] (0.575, 0.575);
					\draw[knot] (0.175, 0.825) to[out=-45, in=-135] (0.325, 0.825);
					\draw[knot] (0.175, 0.675) to[out=45, in=135] (0.325, 0.675);
					\draw[knot, gray] (0.5,-0.207) to[out=-90, in=-45, looseness = 2] (1,0);
					\draw[knot, gray] (0.5,1.207) to[out=90, in=135, looseness=2] (0,1);
					\draw[knot, gray] (0,0) to[out=-135, in=-135, looseness=2] (1.3,-0.3) to[out=45, in=45, looseness=2] (1,1);
				}
				$};
			\node(211) at (0,-3) {$
				\tikz[baseline={([yshift=-.5ex]current bounding box.center)}, scale=0.5]
				{
					\draw[dotted] (.5,.5) circle(0.707);
					\draw[knot] (-0.207, 0.5) to[out=0, in=180] (0.5, 0.915) to[out=0, in=180] (0.782, 0.8) to[out=0, in=-135] (1,1);
					\draw[knot] (1.207, 0.5) to[out=180, in=0] (0.782, 0.7) to[out=180, in=90] (0.625,0.5) to[out=-90, in=135] (1,0);
					\draw[knot, overcross] (0,0) to[out=45, in=-90] (0.375,0.5) to[out=90, in=-45] (0,1);
					\draw[knot, gray] (1,1) to[out=45, in=0, looseness = 2] (1.207, 0.5);
					\draw[knot, gray] (0,0) to[out=-135, in=180, looseness=2] (-0.207,0.5);
					\draw[knot, gray] (1,0) to[out=-45, in=-45, looseness=2] (-0.3,-0.3) to[out=135, in=135, looseness=2] (0,1);
				}
				$};
			\node(201) at (1,-3) {$
				\tikz[baseline={([yshift=-.5ex]current bounding box.center)}, scale=0.5]
				{
					\draw[dotted] (.5,.5) circle(0.707);
					\draw[knot] (-0.207, 0.5) to[out=0, in=180] (0.5, 0.915) to[out=0, in=90] (0.7, 0.782) to[out=-90, in=90] (0.625,0.5) to[out=-90, in=135] (1,0);
					\draw[knot] (1,1) to[out=-135, in=90] (0.8, 0.782) to[out=-90, in=180] (1.207, 0.5);
					\draw[knot, overcross] (0,0) to[out=45, in=-90] (0.375,0.5) to[out=90, in=-45] (0,1);
					\draw[knot, gray] (1,1) to[out=45, in=0, looseness = 2] (1.207, 0.5);
					\draw[knot, gray] (0,0) to[out=-135, in=180, looseness=2] (-0.207,0.5);
					\draw[knot, gray] (1,0) to[out=-45, in=-45, looseness=2] (-0.3,-0.3) to[out=135, in=135, looseness=2] (0,1);
				}
				$};
			\node(210) at (2,-4) {$
				\tikz[baseline={([yshift=-.5ex]current bounding box.center)}, scale=0.5]
				{
					\draw[dotted] (.5,.5) circle(0.707);
					\draw[knot] (-0.207, 0.5) to[out=0, in=180] (0.5, 0.915);
					\draw[knot, overcross] (0,0) to[out=45, in=180] (0.5,0.375) to[out=0, in=135] (1,0);
					\draw[knot, overcross] (0,1) to[out=-45, in=180] (0.5,0.625);
					\fill[white, fill opacity=1] (0.782, 0.782) circle(0.15);
					\draw[knot] (0.5, 0.915) to[out=0, in=180] (0.782, 0.8) to[out=0, in=-135] (1,1);
					\draw[knot] (0.5,0.625) to [out=0, in=180] (0.782, 0.7) to[out=0, in=180] (1.207, 0.5);
					\draw[knot, gray] (1,1) to[out=45, in=0, looseness = 2] (1.207, 0.5);
					\draw[knot, gray] (0,0) to[out=-135, in=180, looseness=2] (-0.207,0.5);
					\draw[knot, gray] (1,0) to[out=-45, in=-45, looseness=2] (-0.3,-0.3) to[out=135, in=135, looseness=2] (0,1);
				}
				$};
			\node(200) at (3,-4) {$
				\tikz[baseline={([yshift=-.5ex]current bounding box.center)}, scale=0.5]
				{
					\draw[dotted] (.5,.5) circle(0.707);
					\draw[knot] (-0.207, 0.5) to[out=0, in=180] (0.5, 0.915);
					\draw[knot, overcross] (0,0) to[out=45, in=180] (0.5,0.375) to[out=0, in=135] (1,0);
					\draw[knot, overcross] (0,1) to[out=-45, in=180] (0.5,0.625);
					\fill[white, fill opacity=1] (0.782, 0.782) circle(0.15);
					\draw[knot] (0.5, 0.915) to[out=0, in=90] (0.7, 0.782) to[out=-90, in=0] (0.5,0.625);
					\draw[knot] (1,1) to[out=-135, in=90] (0.8, 0.782) to[out=-90, in=180] (1.207, 0.5);
					\draw[knot, gray] (1,1) to[out=45, in=0, looseness = 2] (1.207, 0.5);
					\draw[knot, gray] (0,0) to[out=-135, in=180, looseness=2] (-0.207,0.5);
					\draw[knot, gray] (1,0) to[out=-45, in=-45, looseness=2] (-0.3,-0.3) to[out=135, in=135, looseness=2] (0,1);
				}
				$};
			\node(222) at (0.5,-6) {$
				\tikz[baseline={([yshift=-.5ex]current bounding box.center)}, scale=0.5]
				{
					\draw[dotted] (.5,.5) circle(0.707);
					\draw[knot] (-0.207, 0.5) to[out=0, in=180] (0.5, 0.915);
					\draw[knot] (1.207, 0.5) to[out=180, in=0] (0.5, 0.915);
					\draw[knot, overcross] (0,0) to[out=45, in=-90] (0.375,0.5) to[out=90, in=-45] (0,1);
					\draw[knot, overcross] (1,0) to[out=135, in=-90] (0.625,0.5) to[out=90, in=-135] (1,1);
					\draw[knot, gray] (1,1) to[out=45, in=0, looseness = 2] (1.207, 0.5);
					\draw[knot, gray] (0,0) to[out=-135, in=180, looseness=2] (-0.207,0.5);
					\draw[knot, gray] (1,0) to[out=-45, in=-45, looseness=2] (-0.3,-0.3) to[out=135, in=135, looseness=2] (0,1);
				}
				$};
			\node(220) at (2.5,-6) {$
				\tikz[baseline={([yshift=-.5ex]current bounding box.center)}, scale=0.5]
				{
					\draw[dotted] (.5,.5) circle(0.707);
					\draw[knot] (-0.207, 0.5) to[out=0, in=180] (0.5, 0.915);
					\draw[knot] (1.207, 0.5) to[out=180, in=0] (0.5, 0.915);
					\draw[knot, overcross] (0,0) to[out=45, in=180] (0.5,0.375) to[out=0, in=135] (1,0);
					\draw[knot, overcross] (0,1) to[out=-45, in=180] (0.5,0.625) to[out=0, in=-135] (1,1);
					\draw[knot, gray] (1,1) to[out=45, in=0, looseness = 2] (1.207, 0.5);
					\draw[knot, gray] (0,0) to[out=-135, in=180, looseness=2] (-0.207,0.5);
					\draw[knot, gray] (1,0) to[out=-45, in=-45, looseness=2] (-0.3,-0.3) to[out=135, in=135, looseness=2] (0,1);
				}
				$};
			\draw[->] (201) -- (200);
			\draw[->, thick, blue] (001) -- node[left, pos=0.4]{$\circled{3a}$} (201);
			\draw[white, ultra thick] (001) -- (200);
			\draw[->, thick, blue] (001) -- node[left, pos=0.4]{$\epsilon\circled{1a}$} (200);
			\draw[->, thick, blue] (201) -- node[left, pos=0.4]{$\circled{4a}$} (220);
			\draw[->, thick, blue] (200) -- node[left, pos=0.25]{$\circled{2a}$} (220);
			\draw[->] (111) -- (110);
			\draw[->] (101) -- node[above, pos=0.4]{$\epsilon_1$}(100);
			\draw[white, ultra thick] (011) -- (010);
			\draw[->] (011) -- (010);
			\draw[->] (001) -- (000);
			\draw[->] (111) -- (101);
			\draw[->] (111) -- (011);
			\draw[->] (011) -- (001);
			\draw[->] (101) -- node[right, pos=0.4]{$\epsilon_2$} (001);
			\draw[->] (110) -- (100);
			\draw[->] (110) -- (010);
			\draw[->] (010) -- (000);
			\draw[->] (100) -- (000);
			\draw[->] (211) -- (201);
			\draw[white, ultra thick] (211) -- (210);
			\draw[white, ultra thick] (210) -- (200);
			\draw[->] (211) -- (210);
			\draw[->] (210) -- (200);
			\draw[->] (222) -- (220);
			\draw[->, thick, red] (100) to[out=-60, in=60] node[right]{$\epsilon_3$} (200);
			\draw[white, ultra thick] (010) -- (210);
			\draw[->, thick, red] (010) --node[right, pos=0.8]{} (210);
			\draw[->, thick, red] (010) to[out=-45, in=90] node[right]{} (200);
			\draw[->, thick, red] (210) -- node[right]{} (220);
			\draw[->, thick, red] (200) to[out=-90,in=60]node[right]{$\epsilon_4$} (220);
			\draw[->, ultra thick, white] (000) to[out=-60, in=45] (220);
			\draw[->, thick, PineGreen] (000) to[out=-60, in=45]node[right]{$E_{-1}$} (220);
		}
		\quad
		\tikz[scale=2.2]{
\node at (1.5,1) {$
	\tikz[baseline={([yshift=-.5ex]current bounding box.center)}, scale=1]
	{
		\draw[dotted] (.5,.5) circle(0.707);
		\draw[knot] (-0.207, 0.5) to[out=0, in=180] (0.5, 0.085);
		\draw[knot] (1.207, 0.5) to[out=180, in=0] (0.5, 0.085);
		\draw[knot, overcross] (0,0) -- (0.45, 0.45);
		\draw[knot] (0.45, 0.45) -- (0.55, 0.55);
		\draw[knot, overcross] (0.55, 0.55) -- (1,1);
		\draw[knot, overcross] (0,1) -- (0.45, 0.55);
		\draw[knot] (0,1) -- (0.01, 0.99);
		\draw[knot, overcross] (0.45, 0.55) -- (0.55, 0.45);
		\draw[knot, overcross] (0.55, 0.45) -- (1,0);	
		\draw[-stealth, red, thick] (0.5, 0.5-0.25) -- (0.5, 0.5+0.25);
		\draw[-stealth, red, thick] (0.218-0.25, 0.218) -- (0.218+0.25, 0.218);
		\draw[-stealth, red, thick] (0.782, 0.218-0.25) -- (0.782, 0.218+0.25);
		\draw[knot, gray] (1,1) to[out=45, in=0, looseness = 2] (1.207, 0.5);
		\draw[knot, gray] (0,0) to[out=-135, in=180, looseness=2] (-0.207,0.5);
		\draw[knot, gray] (1,0) to[out=-45, in=-45, looseness=2] (-0.3,-0.3) to[out=135, in=135, looseness=2] (0,1);
	}
	$};
			\node(111) at (0,0) {$
				\tikz[baseline={([yshift=-.5ex]current bounding box.center)}, scale=0.5, rotate=90]
				{
					\draw[dotted] (.5,.5) circle(0.707);
					\draw[knot] (0.5, 1.207) to[out=-90, in=45] (0.325, 0.825);
					\draw[knot] (0.175, 0.675) to[out=225, in=135] (0.175, 0.325);
					\draw[knot] (0.325, 0.175) to[out=-45, in=90] (0.5, -0.207);
					\draw[knot] (0,0) to (0.175, 0.175);
					\draw[knot] (0.325, 0.325) to (0.425, 0.425);
					\draw[knot] (0.575, 0.575) to (1,1);
					\draw[knot] (0,1) to (0.175, 0.825);
					\draw[knot] (0.325, 0.675) to (0.425, 0.575);
					\draw[knot] (0.575, 0.425) to (1,0);
					\draw[knot] (0.175, 0.175) to[out=45, in=135] (0.325, 0.175);
					\draw[knot] (0.175, 0.325) to[out=-45, in=-135] (0.325, 0.325);
					\draw[knot] (0.425, 0.425) to[out=45, in=135] (0.575, 0.425);
					\draw[knot] (0.425, 0.575) to[out=-45, in=-135] (0.575, 0.575);
					\draw[knot] (0.175, 0.675) to[out=45, in=-45] (0.175, 0.825);
					\draw[knot] (0.325, 0.675) to[out=135, in=-135] (0.325, 0.825);
					\draw[knot, gray] (0.5,-0.207) to[out=-90, in=-45, looseness = 2] (1,0);
					\draw[knot, gray] (0.5,1.207) to[out=90, in=135, looseness=2] (0,1);
					\draw[knot, gray] (0,0) to[out=-135, in=-135, looseness=2] (-0.3,1.3) to[out=45, in=45, looseness=2] (1,1);
				}
				$};
			\node(101) at (1,0) {$
				\tikz[baseline={([yshift=-.5ex]current bounding box.center)}, scale=0.5, rotate=90]
				{
					\draw[dotted] (.5,.5) circle(0.707);
					\draw[knot] (0.5, 1.207) to[out=-90, in=45] (0.325, 0.825);
					\draw[knot] (0.175, 0.675) to[out=225, in=135] (0.175, 0.325);
					\draw[knot] (0.325, 0.175) to[out=-45, in=90] (0.5, -0.207);
					\draw[knot] (0,0) to (0.175, 0.175);
					\draw[knot] (0.325, 0.325) to (0.425, 0.425);
					\draw[knot] (0.575, 0.575) to (1,1);
					\draw[knot] (0,1) to (0.175, 0.825);
					\draw[knot] (0.325, 0.675) to (0.425, 0.575);
					\draw[knot] (0.575, 0.425) to (1,0);
					\draw[knot] (0.175, 0.175) to[out=45, in=-45] (0.175, 0.325);
					\draw[knot] (0.325, 0.175) to[out=135, in=-135] (0.325, 0.325);
					\draw[knot] (0.425, 0.425) to[out=45, in=135] (0.575, 0.425);
					\draw[knot] (0.425, 0.575) to[out=-45, in=-135] (0.575, 0.575);
					\draw[knot] (0.175, 0.675) to[out=45, in=-45] (0.175, 0.825);
					\draw[knot] (0.325, 0.675) to[out=135, in=-135] (0.325, 0.825);
					\draw[knot, gray] (0.5,-0.207) to[out=-90, in=-45, looseness = 2] (1,0);
					\draw[knot, gray] (0.5,1.207) to[out=90, in=135, looseness=2] (0,1);
					\draw[knot, gray] (0,0) to[out=-135, in=-135, looseness=2] (-0.3,1.3) to[out=45, in=45, looseness=2] (1,1);
				}
				$};
			\node(110) at (2,-.5) {$
				\tikz[baseline={([yshift=-.5ex]current bounding box.center)}, scale=0.5, rotate=90]
				{
					\draw[dotted] (.5,.5) circle(0.707);
					\draw[knot] (0.5, 1.207) to[out=-90, in=45] (0.325, 0.825);
					\draw[knot] (0.175, 0.675) to[out=225, in=135] (0.175, 0.325);
					\draw[knot] (0.325, 0.175) to[out=-45, in=90] (0.5, -0.207);
					\draw[knot] (0,0) to (0.175, 0.175);
					\draw[knot] (0.325, 0.325) to (0.425, 0.425);
					\draw[knot] (0.575, 0.575) to (1,1);
					\draw[knot] (0,1) to (0.175, 0.825);
					\draw[knot] (0.325, 0.675) to (0.425, 0.575);
					\draw[knot] (0.575, 0.425) to (1,0);
					\draw[knot] (0.175, 0.175) to[out=45, in=135] (0.325, 0.175);
					\draw[knot] (0.175, 0.325) to[out=-45, in=-135] (0.325, 0.325);
					\draw[knot] (0.425, 0.425) to[out=45, in=-45] (0.425, 0.575); 
					\draw[knot] (0.575, 0.425) to[out=135, in=-135] (0.575, 0.575);
					\draw[knot] (0.175, 0.675) to[out=45, in=-45] (0.175, 0.825);
					\draw[knot] (0.325, 0.675) to[out=135, in=-135] (0.325, 0.825);
					\draw[knot, gray] (0.5,-0.207) to[out=-90, in=-45, looseness = 2] (1,0);
					\draw[knot, gray] (0.5,1.207) to[out=90, in=135, looseness=2] (0,1);
					\draw[knot, gray] (0,0) to[out=-135, in=-135, looseness=2] (-0.3,1.3) to[out=45, in=45, looseness=2] (1,1);
				}
				$};
			\node(100) at (3,-.5) {$
				\tikz[baseline={([yshift=-.5ex]current bounding box.center)}, scale=0.5, rotate=90]
				{
					\draw[dotted] (.5,.5) circle(0.707);
					\draw[knot] (0.5, 1.207) to[out=-90, in=45] (0.325, 0.825);
					\draw[knot] (0.175, 0.675) to[out=225, in=135] (0.175, 0.325);
					\draw[knot] (0.325, 0.175) to[out=-45, in=90] (0.5, -0.207);
					\draw[knot] (0,0) to (0.175, 0.175);
					\draw[knot] (0.325, 0.325) to (0.425, 0.425);
					\draw[knot] (0.575, 0.575) to (1,1);
					\draw[knot] (0,1) to (0.175, 0.825);
					\draw[knot] (0.325, 0.675) to (0.425, 0.575);
					\draw[knot] (0.575, 0.425) to (1,0);
					\draw[knot] (0.175, 0.175) to[out=45, in=-45] (0.175, 0.325);
					\draw[knot] (0.325, 0.175) to[out=135, in=-135] (0.325, 0.325);
					\draw[knot] (0.425, 0.425) to[out=45, in=-45] (0.425, 0.575); 
					\draw[knot] (0.575, 0.425) to[out=135, in=-135] (0.575, 0.575);
					\draw[knot] (0.175, 0.675) to[out=45, in=-45] (0.175, 0.825);
					\draw[knot] (0.325, 0.675) to[out=135, in=-135] (0.325, 0.825);
					\draw[knot, gray] (0.5,-0.207) to[out=-90, in=-45, looseness = 2] (1,0);
					\draw[knot, gray] (0.5,1.207) to[out=90, in=135, looseness=2] (0,1);
					\draw[knot, gray] (0,0) to[out=-135, in=-135, looseness=2] (-0.3,1.3) to[out=45, in=45, looseness=2] (1,1);
				}
				$};
			\node(011) at (0,-1) {$
				\tikz[baseline={([yshift=-.5ex]current bounding box.center)}, scale=0.5, rotate=90]
				{
					\draw[dotted] (.5,.5) circle(0.707);
					\draw[knot] (0.5, 1.207) to[out=-90, in=45] (0.325, 0.825);
					\draw[knot] (0.175, 0.675) to[out=225, in=135] (0.175, 0.325);
					\draw[knot] (0.325, 0.175) to[out=-45, in=90] (0.5, -0.207);
					\draw[knot] (0,0) to (0.175, 0.175);
					\draw[knot] (0.325, 0.325) to (0.425, 0.425);
					\draw[knot] (0.575, 0.575) to (1,1);
					\draw[knot] (0,1) to (0.175, 0.825);
					\draw[knot] (0.325, 0.675) to (0.425, 0.575);
					\draw[knot] (0.575, 0.425) to (1,0);
					\draw[knot] (0.175, 0.175) to[out=45, in=135] (0.325, 0.175);
					\draw[knot] (0.175, 0.325) to[out=-45, in=-135] (0.325, 0.325);
					\draw[knot] (0.425, 0.425) to[out=45, in=135] (0.575, 0.425);
					\draw[knot] (0.425, 0.575) to[out=-45, in=-135] (0.575, 0.575);
					\draw[knot] (0.175, 0.825) to[out=-45, in=-135] (0.325, 0.825);
					\draw[knot] (0.175, 0.675) to[out=45, in=135] (0.325, 0.675);
					\draw[knot, gray] (0.5,-0.207) to[out=-90, in=-45, looseness = 2] (1,0);
					\draw[knot, gray] (0.5,1.207) to[out=90, in=135, looseness=2] (0,1);
					\draw[knot, gray] (0,0) to[out=-135, in=-135, looseness=2] (-0.3,1.3) to[out=45, in=45, looseness=2] (1,1);
				}
				$};
		\node(001) at (1,-1) {$
			\tikz[baseline={([yshift=-.5ex]current bounding box.center)}, scale=0.5, rotate=90]
			{
				\draw[dotted] (.5,.5) circle(0.707);
				\draw[knot] (0.5, 1.207) to[out=-90, in=45] (0.325, 0.825);
				\draw[knot] (0.175, 0.675) to[out=225, in=135] (0.175, 0.325);
				\draw[knot] (0.325, 0.175) to[out=-45, in=90] (0.5, -0.207);
				\draw[knot] (0,0) to (0.175, 0.175);
				\draw[knot] (0.325, 0.325) to (0.425, 0.425);
				\draw[knot] (0.575, 0.575) to (1,1);
				\draw[knot] (0,1) to (0.175, 0.825);
				\draw[knot] (0.325, 0.675) to (0.425, 0.575);
				\draw[knot] (0.575, 0.425) to (1,0);
				\draw[knot] (0.175, 0.175) to[out=45, in=-45] (0.175, 0.325);
				\draw[knot] (0.325, 0.175) to[out=135, in=-135] (0.325, 0.325);
				\draw[knot] (0.425, 0.425) to[out=45, in=135] (0.575, 0.425);
				\draw[knot] (0.425, 0.575) to[out=-45, in=-135] (0.575, 0.575);
				\draw[knot] (0.175, 0.825) to[out=-45, in=-135] node(X)[pos=0.5]{} (0.325, 0.825);
				\draw[knot] (0.175, 0.675) to[out=45, in=135] node(Y)[pos=0.5]{} (0.325, 0.675);
				\draw[knot, gray] (0.5,-0.207) to[out=-90, in=-45, looseness = 2] (1,0);
				\draw[knot, gray] (0.5,1.207) to[out=90, in=135, looseness=2] (0,1);
				\draw[knot, gray] (0,0) to[out=-135, in=-135, looseness=2] (-0.3,1.3) to[out=45, in=45, looseness=2] (1,1);
				\draw[red, stealth-, thick, shorten >=.1mm, shorten <=.1mm] (X.east) -- (Y.west);
			}
			$};
			\node(010) at (2,-1.5) {$
				\tikz[baseline={([yshift=-.5ex]current bounding box.center)}, scale=0.5, rotate=90]
				{
					\draw[dotted] (.5,.5) circle(0.707);
					\draw[knot] (0.5, 1.207) to[out=-90, in=45] (0.325, 0.825);
					\draw[knot] (0.175, 0.675) to[out=225, in=135] (0.175, 0.325);
					\draw[knot] (0.325, 0.175) to[out=-45, in=90] (0.5, -0.207);
					\draw[knot] (0,0) to (0.175, 0.175);
					\draw[knot] (0.325, 0.325) to (0.425, 0.425);
					\draw[knot] (0.575, 0.575) to (1,1);
					\draw[knot] (0,1) to (0.175, 0.825);
					\draw[knot] (0.325, 0.675) to (0.425, 0.575);
					\draw[knot] (0.575, 0.425) to (1,0);
					\draw[knot] (0.175, 0.175) to[out=45, in=135] (0.325, 0.175);
					\draw[knot] (0.175, 0.325) to[out=-45, in=-135] (0.325, 0.325);
					\draw[knot] (0.425, 0.425) to[out=45, in=-45] (0.425, 0.575); 
					\draw[knot] (0.575, 0.425) to[out=135, in=-135] (0.575, 0.575);
					\draw[knot] (0.175, 0.825) to[out=-45, in=-135] (0.325, 0.825);
					\draw[knot] (0.175, 0.675) to[out=45, in=135] (0.325, 0.675);
					\draw[knot, gray] (0.5,-0.207) to[out=-90, in=-45, looseness = 2] (1,0);
					\draw[knot, gray] (0.5,1.207) to[out=90, in=135, looseness=2] (0,1);
					\draw[knot, gray] (0,0) to[out=-135, in=-135, looseness=2] (-0.3,1.3) to[out=45, in=45, looseness=2] (1,1);
				}
				$};
	\node(000) at (3,-1.5) {$
		\tikz[baseline={([yshift=-.5ex]current bounding box.center)}, scale=0.5, rotate=90]
		{
			\draw[dotted] (.5,.5) circle(0.707);
			\draw[knot] (0.5, 1.207) to[out=-90, in=45] (0.325, 0.825);
			\draw[knot] (0.175, 0.675) to[out=225, in=135] (0.175, 0.325);
			\draw[knot] (0.325, 0.175) to[out=-45, in=90] (0.5, -0.207);
			\draw[knot] (0,0) to (0.175, 0.175);
			\draw[knot] (0.325, 0.325) to (0.425, 0.425);
			\draw[knot] (0.575, 0.575) to (1,1);
			\draw[knot] (0,1) to (0.175, 0.825);
			\draw[knot] (0.325, 0.675) to (0.425, 0.575);
			\draw[knot] (0.575, 0.425) to (1,0);
			\draw[knot] (0.175, 0.175) to[out=45, in=-45] (0.175, 0.325);
			\draw[knot] (0.325, 0.175) to[out=135, in=-135] (0.325, 0.325);
			\draw[knot] (0.425, 0.425) to[out=45, in=-45] (0.425, 0.575); 
			\draw[knot] (0.575, 0.425) to[out=135, in=-135] (0.575, 0.575);
			\draw[knot] (0.175, 0.825) to[out=-45, in=-135] node(X)[pos=0.5]{} (0.325, 0.825);
			\draw[knot] (0.175, 0.675) to[out=45, in=135] node(Y)[pos=0.5]{} (0.325, 0.675);
			\draw[knot, gray] (0.5,-0.207) to[out=-90, in=-45, looseness = 2] (1,0);
			\draw[knot, gray] (0.5,1.207) to[out=90, in=135, looseness=2] (0,1);
			\draw[knot, gray] (0,0) to[out=-135, in=-135, looseness=2] (-0.3,1.3) to[out=45, in=45, looseness=2] (1,1);
			\draw[red, stealth-, thick, shorten >=.1mm, shorten <=.1mm] (X.east) -- (Y.west);
		}
		$};
			\node(211) at (0,-3) {$\tikz[baseline={([yshift=-.5ex]current bounding box.center)}, scale=0.5, rotate=180]
				{
					\draw[dotted] (.5,.5) circle(0.707);
					\draw[knot] (0,0) to[out=45, in=-90] (0.375,0.5) to[out=90, in=-90] (0.3, 0.782) to[out=90, in=180] (0.5, 0.915) to[out=0, in=180] (1.207, 0.5);
					\draw[knot] (0,1) to[out=-45,in=90] (0.2, 0.782) to[out=-90,in=0] (-0.207,0.5);
					\draw[knot, overcross] (1,0) to[out=135, in=-90] (0.625,0.5) to[out=90, in=-135] (1,1);
					\draw[knot, gray] (1,1) to[out=45, in=0, looseness = 2] (1.207, 0.5);
					\draw[knot, gray] (0,0) to[out=-135, in=180, looseness=2] (-0.207,0.5);
					\draw[knot, gray] (1,0) to[out=-45, in=-45, looseness=2] (1.3,1.3) to[out=135, in=135, looseness=2] (0,1);
				}$};
			\node(201) at (1,-3) {$\tikz[baseline={([yshift=-.5ex]current bounding box.center)}, scale=0.5, rotate=180]
				{
					\draw[dotted] (.5,.5) circle(0.707);
					\draw[knot] (1.207, 0.5) to[out=180, in=0] (0.5, 0.915) to[out=180, in=0] (0.218, 0.8) to[out=180, in=-45] (0,1);
					\draw[knot] (0,0) to[out=45, in=-90] (0.375,0.5) to[out=90, in=0] (0.218, 0.7) to[out=180, in=0] (-0.207, 0.5);
					\draw[knot, overcross] (1,0) to[out=135, in=-90] (0.625,0.5) to[out=90, in=-135] (1,1);
					\draw[knot, gray] (1,1) to[out=45, in=0, looseness = 2] (1.207, 0.5);
					\draw[knot, gray] (0,0) to[out=-135, in=180, looseness=2] (-0.207,0.5);
					\draw[knot, gray] (1,0) to[out=-45, in=-45, looseness=2] (1.3,1.3) to[out=135, in=135, looseness=2] (0,1);
				}$};
			\node(210) at (2,-4) {$\tikz[baseline={([yshift=-.5ex]current bounding box.center)}, scale=0.5, rotate=180]
				{
					\draw[dotted] (.5,.5) circle(0.707);
					\draw[knot] (1.207, 0.5) to[out=180, in=0] (0.5, 0.915) to[out=180, in=90] (0.3,0.782) to[out=-90, in=180] (0.5,0.625);
					\draw[knot] (0,1) to[out=-45, in=90] (0.2, 0.782) to[out=-90, in=0] (-0.207, 0.5);
					\draw[knot, overcross] (0,0) to[out=45, in=180] (0.5,0.375) to[out=0, in=135] (1,0);
					\draw[knot, overcross] (0.5,0.625) to[out=0, in=-135] (1,1);
					\draw[knot, gray] (1,1) to[out=45, in=0, looseness = 2] (1.207, 0.5);
					\draw[knot, gray] (0,0) to[out=-135, in=180, looseness=2] (-0.207,0.5);
					\draw[knot, gray] (1,0) to[out=-45, in=-45, looseness=2] (1.3,1.3) to[out=135, in=135, looseness=2] (0,1);
				}$};
			\node(200) at (3,-4) {$\tikz[baseline={([yshift=-.5ex]current bounding box.center)}, scale=0.5, rotate=180]
				{
					\draw[dotted] (.5,.5) circle(0.707);
					\draw[knot] (1.207, 0.5) to[out=180, in=0] (0.5, 0.915) to[out=180, in=0] (0.218, 0.8) to[out=180, in=-45] (0,1);
					\draw[knot, overcross] (0,0) to[out=45, in=180] (0.5,0.375) to[out=0, in=135] (1,0);
					\draw[knot] (-0.207,0.5) to[out=0, in=180] (0.218, 0.7) to[out=0, in=180] (0.5,0.625);
					\draw[knot, overcross] (0.5,0.625) to[out=0, in=-135] (1,1);
					\draw[knot, gray] (1,1) to[out=45, in=0, looseness = 2] (1.207, 0.5);
					\draw[knot, gray] (0,0) to[out=-135, in=180, looseness=2] (-0.207,0.5);
					\draw[knot, gray] (1,0) to[out=-45, in=-45, looseness=2] (1.3,1.3) to[out=135, in=135, looseness=2] (0,1);
				}$};
			\node(222) at (0.5,-6) {$
				\tikz[baseline={([yshift=-.5ex]current bounding box.center)}, scale=0.5]
				{
					\draw[dotted] (.5,.5) circle(0.707);
					\draw[knot] (-0.207, 0.5) to[out=0, in=180] (0.5, 0.085);
					\draw[knot] (1.207, 0.5) to[out=180, in=0] (0.5, 0.085);
					\draw[knot, overcross] (0,0) to[out=45, in=-90] (0.375,0.5) to[out=90, in=-45] (0,1);
					\draw[knot, overcross] (1,0) to[out=135, in=-90] (0.625,0.5) to[out=90, in=-135] (1,1);
					\draw[knot, gray] (1,1) to[out=45, in=0, looseness = 2] (1.207, 0.5);
					\draw[knot, gray] (0,0) to[out=-135, in=180, looseness=2] (-0.207,0.5);
					\draw[knot, gray] (1,0) to[out=-45, in=-45, looseness=2] (-0.3,-0.3) to[out=135, in=135, looseness=2] (0,1);
				}
				$};
			\node(220) at (2.5,-6) {$
				\tikz[baseline={([yshift=-.5ex]current bounding box.center)}, scale=0.5]
				{
					\draw[dotted] (.5,.5) circle(0.707);
					\draw[knot] (-0.207, 0.5) to[out=0, in=180] (0.5, 0.085);
					\draw[knot] (1.207, 0.5) to[out=180, in=0] (0.5, 0.085);
					\draw[knot, overcross] (0,0) to[out=45, in=180] (0.5,0.375) to[out=0, in=135] (1,0);
					\draw[knot, overcross] (0,1) to[out=-45, in=180] (0.5,0.625) to[out=0, in=-135] (1,1);
					\draw[knot, gray] (1,1) to[out=45, in=0, looseness = 2] (1.207, 0.5);
					\draw[knot, gray] (0,0) to[out=-135, in=180, looseness=2] (-0.207,0.5);
					\draw[knot, gray] (1,0) to[out=-45, in=-45, looseness=2] (-0.3,-0.3) to[out=135, in=135, looseness=2] (0,1);
				}
				$};
			\draw[->] (201) -- (200);
			\draw[->, thick, blue] (001) -- node[left, pos=0.4]{$\circled{3b}$}(201);
			\draw[white, ultra thick] (001) -- (200);
			\draw[->, thick, blue] (001) --node[left, pos=0.4]{$\circled{1b}$} (200);
			\draw[->, thick, blue] (201) -- node[left, pos=0.5]{$\circled{4b}$}(220);
			\draw[->, thick, blue] (200) --node[left, pos=0.25]{$\circled{2b}$} (220);
			\draw[->] (111) --  (110);
			\draw[->] (101) -- node[above, pos=0.4]{$\epsilon_1$}(100);
			\draw[white, ultra thick] (011) -- (010);
			\draw[->] (011) -- (010);
			\draw[->] (001) -- node[above, pos=0.2]{$\epsilon'\circled{5b}$}(000);
			\draw[->] (111) -- (101);
			\draw[->] (111) --(011);
			\draw[->] (011) -- (001);
			\draw[->] (101) --node[right, pos=0.4]{$\epsilon_2$} (001);
			\draw[->] (110) -- (100);
			\draw[->] (110) -- (010);
			\draw[->] (010) -- (000);
			\draw[->] (100) -- node[right, pos=0.4]{$\epsilon_5$}(000);
			\draw[->] (211) -- (201);
			\draw[white, ultra thick] (211) -- (210);
			\draw[white, ultra thick] (210) -- (200);
			\draw[->] (211) -- (210);
			\draw[->] (210) -- (200);
			\draw[->] (222) -- (220);
			\draw[->, thick, red] (100) to[out=-60, in=60] (200);
			\draw[white, ultra thick] (010) -- (210);
			\draw[->, thick, red] (010) -- (210);
			\draw[->, thick, red] (010) to[out=-45, in=90] (200);
			\draw[->, thick, red] (210) -- (220);
			\draw[->, thick, red] (200) to[out=-90,in=60] (220);
			\draw[->, ultra thick, white] (000) to[out=-60, in=45] (220);
			\draw[->, thick, PineGreen] (000) to[out=-60, in=45]node[right]{$\epsilon_6E_{-1}'$} (220);
		}
	}
	\caption{Computation for one closure}
	\label{fig:R3closure}
\end{figure}

\begin{remark}
We conclude by describing an alternative approach to Reidemeister III moves. We thank Matt Stoffregen again for sharing this idea.

First, decompose the Reidemeister III maps of Figure \ref{fig:R3CH1} further to obtain the diagram in Figure \ref{fig:R3further}. Then Lemma \ref{lemma:R3computation} implies that the composition of the three maps in the red rectangle of Figure \ref{fig:R3further} is chain homotopic to the identity map. Alternatively, one can prove this using an algebraic argument similar to the one outlined in the remark of Subsection \ref{ss:r2}.
	\begin{figure}[h]
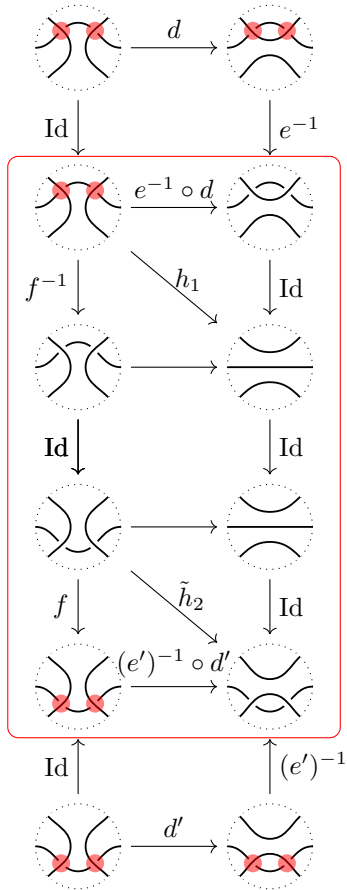

	\centering
	\tikz[baseline={([yshift=-.5ex]current bounding box.center)}, scale=0.85]
	{
		\node(A) at (3,0) 
		{$
			\tikz[baseline={([yshift=-.5ex]current bounding box.center)}, scale=0.8]
			{
				\draw[dotted] (.5,.5) circle(0.707);
				\draw[knot] (-0.207, 0.5) to[out=0, in=180] (0.5, 0.085);
				\draw[knot] (1.207, 0.5) to[out=180, in=0] (0.5, 0.085);
				\draw[knot, overcross] (0,0) to[out=45, in=-90] (0.375,0.5) to[out=90, in=-45] (0,1);
				\draw[knot, overcross] (1,0) to[out=135, in=-90] (0.625,0.5) to[out=90, in=-135] (1,1);
				\fill[red, fill opacity=0.5] (0.218, 0.218) circle(0.15);
				\fill[red, fill opacity=0.5] (0.782, 0.218) circle(0.15);
			}
			$};
		
		\node(B) at (6,0) 	{$
			\tikz[baseline={([yshift=-.5ex]current bounding box.center)}, scale=0.8]
			{
				\draw[dotted] (.5,.5) circle(0.707);
				\draw[knot] (-0.207, 0.5) to[out=0, in=180] (0.5, 0.085);
				\draw[knot] (1.207, 0.5) to[out=180, in=0] (0.5, 0.085);
				\draw[knot, overcross] (0,0) to[out=45, in=180] (0.5,0.375) to[out=0, in=135] (1,0);
				\draw[knot, overcross] (0,1) to[out=-45, in=180] (0.5,0.625) to[out=0, in=-135] (1,1);
				\fill[red, fill opacity=0.5] (0.218, 0.218) circle(0.15);
				\fill[red, fill opacity=0.5] (0.782, 0.218) circle(0.15);
			}
			$};
		
		\node(A1) at (3,2.5) 	{$
			\tikz[baseline={([yshift=-.5ex]current bounding box.center)}, scale=0.8]
			{
				\draw[dotted] (.5,.5) circle(0.707);
				\draw[knot] (-0.207, 0.5) to[out=0, in=180] (0.5, 0.085);
				\draw[knot] (1.207, 0.5) to[out=180, in=0] (0.5, 0.085);
				\draw[knot, overcross] (0,0) to[out=45, in=-90] (0.375,0.5) to[out=90, in=-45] (0,1);
				\draw[knot, overcross] (1,0) to[out=135, in=-90] (0.625,0.5) to[out=90, in=-135] (1,1);
				\fill[red, fill opacity=0.5] (0.218, 0.218) circle(0.15);
				\fill[red, fill opacity=0.5] (0.782, 0.218) circle(0.15);
			}
			$};
		\node(B1) at (6,2.5) {$
			\tikz[baseline={([yshift=-.5ex]current bounding box.center)}, scale=0.8]
			{
				\draw[dotted] (.5,.5) circle(0.707);
				\draw[knot] (-0.207, 0.5) to[out=0, in=180] (0.5, 0.085);
				\draw[knot] (1.207, 0.5) to[out=180, in=0] (0.5, 0.085);
				\draw[knot, overcross] (0,0) to[out=45, in=180] (0.5,0.375) to[out=0, in=135] (1,0);
				\draw[knot, overcross] (0,1) to[out=-45, in=180] (0.5,0.625) to[out=0, in=-135] (1,1);
			}
			$};
		\node(C) at (3,5) {$
			\tikz[baseline={([yshift=-.5ex]current bounding box.center)}, scale=0.8]
			{
				\draw[dotted] (.5,.5) circle(0.707);
				\draw[knot] (-0.207, 0.5) to[out=0, in=180] (0.5, 0.085);
				\draw[knot] (1.207, 0.5) to[out=180, in=0] (0.5, 0.085);
				\draw[knot, overcross] (0,0) to[out=45, in=-90] (0.375,0.5) to[out=90, in=-45] (0,1);
				\draw[knot, overcross] (1,0) to[out=135, in=-90] (0.625,0.5) to[out=90, in=-135] (1,1);
			}
			$};
		\node(D) at (6,5) {$
			\tikz[baseline={([yshift=-.5ex]current bounding box.center)}, scale=0.8]
			{
				\draw[dotted] (.5,.5) circle(0.707);
				\draw[knot] (-0.207, 0.5) -- (1.207, 0.5);
				\draw[knot, overcross] (0,0) to[out=45, in=180] (0.5,0.25) to[out=0, in=135] (1,0);
				\draw[knot, overcross] (0,1) to[out=-45, in=180] (0.5,0.75) to[out=0, in=-135] (1,1);
			}
			$};
		\node(E) at (3,7.5) {$
			\tikz[baseline={([yshift=-.5ex]current bounding box.center)}, scale=0.8]
			{
				\draw[dotted] (.5,.5) circle(0.707);
				\draw[knot] (-0.207, 0.5) to[out=0, in=180] (0.5, 0.915);
				\draw[knot] (1.207, 0.5) to[out=180, in=0] (0.5, 0.915);
				\draw[knot, overcross] (0,0) to[out=45, in=-90] (0.375,0.5) to[out=90, in=-45] (0,1);
				\draw[knot, overcross] (1,0) to[out=135, in=-90] (0.625,0.5) to[out=90, in=-135] (1,1);
			}
			$};
		\node(F) at (6,7.5) {$
			\tikz[baseline={([yshift=-.5ex]current bounding box.center)}, scale=0.8]
			{
				\draw[dotted] (.5,.5) circle(0.707);
				\draw[knot] (-0.207, 0.5) -- (1.207, 0.5);
				\draw[knot, overcross] (0,0) to[out=45, in=180] (0.5,0.25) to[out=0, in=135] (1,0);
				\draw[knot, overcross] (0,1) to[out=-45, in=180] (0.5,0.75) to[out=0, in=-135] (1,1);
			}
			$};
		\node(E1) at (3,10) {$
			\tikz[baseline={([yshift=-.5ex]current bounding box.center)}, scale=0.8]
			{
				\draw[dotted] (.5,.5) circle(0.707);
				\draw[knot] (-0.207, 0.5) to[out=0, in=180] (0.5, 0.915);
				\draw[knot] (1.207, 0.5) to[out=180, in=0] (0.5, 0.915);
				\draw[knot, overcross] (0,0) to[out=45, in=-90] (0.375,0.5) to[out=90, in=-45] (0,1);
				\draw[knot, overcross] (1,0) to[out=135, in=-90] (0.625,0.5) to[out=90, in=-135] (1,1);
				\fill[red, fill opacity=0.5] (0.782, 0.782) circle(0.15);
				\fill[red, fill opacity=0.5] (0.5-0.282, 0.782) circle(0.15);
			}
			$};
		\node(F1) at (6,10) {$
			\tikz[baseline={([yshift=-.5ex]current bounding box.center)}, scale=0.8]
			{
				\draw[dotted] (.5,.5) circle(0.707);
				\draw[knot] (-0.207, 0.5) to[out=0, in=180] (0.5, 0.915);
				\draw[knot] (1.207, 0.5) to[out=180, in=0] (0.5, 0.915);
				\draw[knot, overcross] (0,0) to[out=45, in=180] (0.5,0.375) to[out=0, in=135] (1,0);
				\draw[knot, overcross] (0,1) to[out=-45, in=180] (0.5,0.625) to[out=0, in=-135] (1,1);
			}
			$};
		\node(G) at (3,12.5) {$
			\tikz[baseline={([yshift=-.5ex]current bounding box.center)}, scale=0.8]
			{
				\draw[dotted] (.5,.5) circle(0.707);
				\draw[knot] (-0.207, 0.5) to[out=0, in=180] (0.5, 0.915);
				\draw[knot] (1.207, 0.5) to[out=180, in=0] (0.5, 0.915);
				\draw[knot, overcross] (0,0) to[out=45, in=-90] (0.375,0.5) to[out=90, in=-45] (0,1);
				\draw[knot, overcross] (1,0) to[out=135, in=-90] (0.625,0.5) to[out=90, in=-135] (1,1);
				\fill[red, fill opacity=0.5] (0.782, 0.782) circle(0.15);
				\fill[red, fill opacity=0.5] (0.5-0.282, 0.782) circle(0.15);
			}
			$};
		\node(H) at (6,12.5) {$
			\tikz[baseline={([yshift=-.5ex]current bounding box.center)}, scale=0.8]
			{
				\draw[dotted] (.5,.5) circle(0.707);
				\draw[knot] (-0.207, 0.5) to[out=0, in=180] (0.5, 0.915);
				\draw[knot] (1.207, 0.5) to[out=180, in=0] (0.5, 0.915);
				\draw[knot, overcross] (0,0) to[out=45, in=180] (0.5,0.375) to[out=0, in=135] (1,0);
				\draw[knot, overcross] (0,1) to[out=-45, in=180] (0.5,0.625) to[out=0, in=-135] (1,1);
				\fill[red, fill opacity=0.5] (0.782, 0.782) circle(0.15);
				\fill[red, fill opacity=0.5] (0.5-0.282, 0.782) circle(0.15);
			}
			$};

		\draw[->] (A) to node[pos=0.5, above]{$d'$} (B);
		\draw[->] (A1) to node[pos=0.5, above]{$(e')^{-1}\circ d'$} (B1);
		\draw[->] (C) to node[pos=0.5, above]{} (D);
		\draw[->] (E) to node[pos=0.5, above]{} (F);
		\draw[->] (E1) to node[pos=0.5, above]{$e^{-1}\circ d$} (F1);
		\draw[->] (G) to node[pos=0.5, above]{$d$} (H);
		\draw[->] (A) to node[pos=0.5, left]{$\Id$} (A1);
		\draw[->] (B) to node[pos=0.5, right]{$(e')^{-1}$} (B1);
		\draw[->] (C) to node[pos=0.5, left]{$f$} (A1);
		\draw[->] (D) to node[pos=0.5, right]{$\Id$} (B1);
		\draw[->] (E) to node[pos=0.5, left]{$\Id$} (C);
		\draw[->] (E) to node[pos=0.5, left]{$\Id$} (C);
		\draw[->] (E) to node[pos=0.5, left]{$\Id$} (C);
		\draw[->] (F) to node[pos=0.5, right]{$\Id$} (D);
		\draw[->] (G) to node[pos=0.5, left]{$\Id$} (E1);
		\draw[->] (H) to node[pos=0.5, right]{$e^{-1}$} (F1);
		\draw[->] (E1) to node[pos=0.5, left]{$f^{-1}$} (E);
		\draw[->] (F1) to node[pos=0.5, right]{$\Id$} (F);		
		\draw[->] (C) to node[pos=0.35, right=1mm]{$\tilde{h}_2$} (B1);
		\draw[->] (E1) to node[pos=0.4, right]{$h_1$} (F);
				\begin{scope}[on background layer]
			\draw[red,rounded corners]
			($(A1)+(-1.1,-0.8)$) rectangle ($(F1)+(1.1,0.8)$);
		\end{scope}
		
	}
	\caption{Further decomposition of Figure \ref{fig:R3CH1}}
	\label{fig:R3further}
\end{figure}
\end{remark}

\bibliographystyle{alpha}
\bibliography{OKH_and_2knots}

\end{document}